\documentclass[a4paper, english,11pt, reqno]{amsart}

\newcommand{\cA}{\mathcal{A}}

\newcommand{\cD}{\mathcal{D}}
\newcommand{\cE}{\mathcal{E}}

\newcommand{\cH}{\mathcal{H}}
\newcommand{\cI}{\mathcal{I}}
\newcommand{\cJ}{\mathcal{J}}
\newcommand{\cK}{\mathcal{K}}
\newcommand{\cL}{\mathcal{L}}
\newcommand{\cM}{\mathcal{M}}

\newcommand{\cO}{\mathcal{O}}

\newcommand{\cR}{\mathcal{R}}

\newcommand{\cX}{\mathcal{X}}
\newcommand{\cY}{\mathcal{Y}}

\usepackage{amsmath, amsfonts, dsfont,amssymb, amsthm, tikz-cd, enumerate, hyperref,mathtools, mathrsfs, geometry, verbatim, lilyglyphs,marginnote, romannum, todonotes}
\geometry{centering,vcentering,asymmetric,marginratio=1:1,vscale=0.77,hscale=0.73, marginparwidth= 0.115\paperwidth}%usually 77/ 71
\usepackage[dvipsnames]{xcolor}

\usepackage{graphicx}

\DeclareRobustCommand{\SkipTocEntry}[5]{}

\newcommand{\ddc}{\mathrm{dd^c}}

\newcommand{\K}{\mathcal K}
\newcommand{\uz}{\underline z}
\newcommand{\uzp}{\underline {z^\prime}}
\newcommand{\uq}{\underline q}
\newcommand{\ia}{\mathfrak{a}}
\newcommand{\iac}{{\overline{\mathfrak{a}}}}
\newcommand{\iuc}{{\overline{\iu}}}
\newcommand{\ivc}{{\overline{\iv}}}
\newcommand{\ib}{\mathfrak{b}}
\newcommand{\iu}{I}
\newcommand{\iv}{J}
\newcommand{\iw}{K}
\newcommand{\ic}{\mathfrak{c}}
\newcommand{\flag}{\mathcal F}
\newcommand{\C}{\mathds C}
\newcommand{\J}{\mathcal J}
\newcommand{\ideal}{\mathscr I}
\newcommand{\Xbc}{(X\times\mathds P^1)_{\C^*}^\beth}
\newcommand{\Xb}{X^\beth}
\newcommand{\dualc}{\varprojlim_\mathcal X\, \Delta_\mathcal X }

\newcommand{\Q}{\mathds Q}

\newcommand{\R}{\mathds R}

\newcommand{\D}{\mathds D}
\newcommand{\N}{\mathds N}
\newcommand{\OX}{\mathcal O_X}
\newcommand{\OXp}{\mathcal O_{X,p}}
\newcommand{\Pro}{\mathds P^1}

\DeclareMathOperator{\id}{id}

\DeclareMathOperator{\PL}{PL}
\DeclareMathOperator{\Pos}{Pos}
\DeclareMathOperator{\Val}{Val}

\DeclareMathOperator{\Ric}{Ric}
\DeclareMathOperator{\MA}{MA}
\DeclareMathOperator{\Scal}{Scal}
\DeclareMathOperator{\ind}{ind}

\DeclareMathOperator{\codim}{codim}
\DeclareMathOperator{\spec}{Spec}
\DeclareMathOperator{\tspec}{TropSpec}
\DeclareMathOperator{\trivial}{triv}
\DeclareMathOperator{\bl}{Bl}
\DeclareMathOperator{\BC}{BC}
\DeclareMathOperator{\supp}{supp}
\DeclareMathOperator{\Aff}{Aff}

\newcommand{\dualpl}{\varinjlim_\mathcal X \,\Aff_\mathbb Q\,\Delta_\mathcal X}

\newcommand{\coxp}{\mathbin{\cdot}\mathcal O_{X\times \Pro}}

\newcommand{\VCarb}{\varinjlim_\mathcal X \VCar(\mathcal{X})}
\newcommand{\oR}{{[0,+\infty]}}
\newcommand{\oRn}{{[-\infty, 0]}}

\DeclareMathOperator{\VCar}{VCar}

\DeclareMathOperator{\val}{{val}}
\DeclareMathOperator{\chern}{c}
\DeclareMathOperator{\NA}{{NA}}
\DeclareMathOperator{\PSH}{PSH}

\DeclareMathOperator{\Hdom}{\mathcal H}

\newcommand{\Eabs}{\cE^1_{\text{abs}}}

\DeclareMathOperator{\Nef}{Nef}
\DeclareMathOperator{\triv}{\mathcal X_{triv}}
\newcommand{\trive}{X\times \Pro}
\DeclareMathOperator{\divisorial}{{div}}
\DeclareMathOperator{\an}{an}

\newcommand{\Xdivc}{(X\times\Pro)^{\divisorial}_{\mathds C^*}}
\newcommand{\Ydivc}{(Y\times\Pro)^{\divisorial}_{\mathds C^*}}
\DeclareMathOperator{\ord}{ord}
\DeclareMathOperator{\hol}{hol}

\DeclareMathOperator{\Cz}{C^0}

\DeclareMathOperator{\AVCar}{AVCar}

\newcommand{\Xval}{X^{\val}}
\newcommand{\dprime}{{\prime\prime}}
\newcommand{\Xdiv}{X^{\divisorial}}
\newcommand{\XPdiv}{(X\times\Pro)^{\divisorial}}
\makeatletter
\newcommand{\newreptheorem}[2]{\newtheorem*{rep@#1}{\rep@title}\newenvironment{rep#1}[1]{\def\rep@title{#2 \ref*{##1}}\begin{rep@#1}}{\end{rep@#1}}}
\makeatother

\newreptheorem{theorem}{Theorem}
\newreptheorem{lemma}{Lemma}

\newtheorem{defi}{Definition}[subsection]
\newtheorem{remark}[defi]{Remark}
\newtheorem{example}[defi]{Example}
\newtheorem*{example*}{Example}
\newtheorem{lemma}[defi]{Lemma}
\newtheorem{prop}[defi]{Proposition}
\newtheorem{corollary}[defi]{Corollary}
\newtheorem{theorem}[defi]{Theorem}
\newtheorem{theorema}{Theorem}

\newtheorem*{statement}{Statement}
\newtheorem*{quest}{Question}
\newtheorem*{conjecture}{Conjecture}

\numberwithin{equation}{subsection}

\newcommand\blfootnote[1]{%
  \begingroup
  \renewcommand\thefootnote{}\footnote{#1}%
  \addtocounter{footnote}{-1}%
  \endgroup
}
%\setmainfont{Asana-math.otf}

\AtBeginDocument{\pagenumbering{arabic}}

\newcommand{\E}{\mathrm E}

\newcommand{\dtz}{\frac{\mathrm d}{\mathrm dt}\Bigr\rvert_{t = 0}}
\hypersetup{
    colorlinks=true,
    linkcolor=blue,
    filecolor=magenta,      
    urlcolor=cyan,
    pdftitle={A non-Archimedean theory of complex spaces and the cscK problem},
}
\begin{document}
\title{A non-Archimedean theory of complex spaces and the cscK problem}
\author[P. Mesquita-Piccione]{Pietro Mesquita-Piccione}
\address{Sorbonne Universit\'e and Universit\'e Paris Cit\'e\\
CNRS\\
IMJ-PRG\\
F-75005 Paris\\
France}
\email{\href{mailto:piccione@imj-prg.fr}{piccione@imj-prg.fr}}
\begin{abstract}
In this paper we develop an analogue of the Berkovich analytification for non-necessarily algebraic complex spaces. 
We apply this theory to generalize to arbitrary compact Kähler manifolds a result of Chi Li, \cite{Li22geodesic}, proving that a stronger version of K-stability implies the existence of a unique constant scalar curvature Kähler metric.
\end{abstract}
\maketitle
\setcounter{tocdepth}{1}
\tableofcontents

\section*{Introduction}\blfootnote{This project has received funding from the European Union’s Horizon 2020 research and innovation programme under the Marie Skłodowska-Curie grant agreement No 945332. \includegraphics*[scale = 0.03]{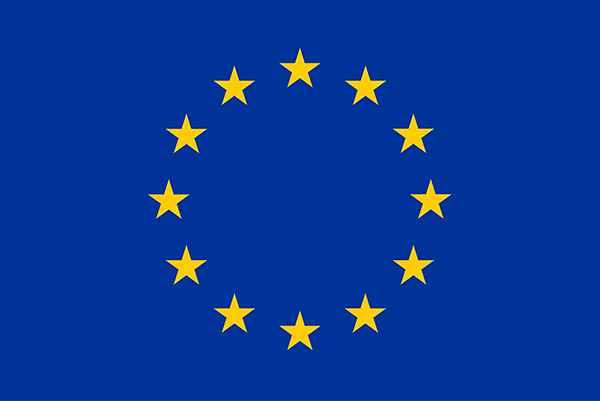}
}

\addtocontents{toc}{\SkipTocEntry}
\subsection*{The conjecture}

The study of algebraic and analytic properties of complex manifolds has been an active field of research in the area of Kähler Geometry. 

To approach many of the questions in this field, one requires generally results and techniques from both Differential and Algebraic Geometry, which shows the connection between those fields. 

One of the most important questions in the area is the Yau--Tian--Donaldson (YTD) conjecture. 
It deals with the following question: 

\begin{quest}
  Let $(X, \omega)$ be a Kähler manifold, and $\alpha\doteq [\omega]$ be the cohomology class of $\omega$.
Is there a Kähler metric $\omega^\prime\in \alpha$, in the same class as $\omega$, such that its scalar curvature 
\begin{equation}\label{csck}
\Scal(\omega^\prime) \equiv \underline{s}
\end{equation} is constant?
\end{quest}

When $X$ is a Fano variety, i.e. $X$ is anticanonically polarized, Chen--Donaldson--Sun \cite{CDS15kahler1,CDS15kahler2,CDS15kahler3} and Tian \cite{Tia15kstability} showed, using the continuity method, that $X$ admits a constant scalar curvature Kähler metric (cscK metric) if, and only if, $X$ is K-stable. 
Later other proofs became available, like \cite{BBJ21YTD}, \cite{Sze16partialc0,DS16KE}, \cite{CSW18KRflow},  and \cite{DZ24twisted} among others.

K-stability is an algebro-geometric notion that deals with the study of $1$-parameter degenerations of the manifold $X$, together with a numerical invariant. 
This notion is an infinite dimensional analogue of stability in the sense of Geomtric Invariant Theory (GIT). 
In that context, by the Kempf--Ness Theorem, studying the stability of an orbit of a hamiltonian group action is equivalent to finding zeros of an associated moment map. 
%In the infinite dimensional setting, this ``Kempf--Ness type problem'' is at the core of a number of theorems and conjectures in Differential Geometry, including the constant scalar curvature problem, see \cite{CY18moment, DH24universal} and references therein. 
%For instance, in the Kobayashi--Hitchin Correspondence, the stability of a holomorphic vector bundle is equivalent to the existence of a Hermitian--Einstein metric on the same vector bundle.

The general --still open-- version of the Yau--Tian--Donaldson conjecture reads:
\begin{conjecture}[YTD Conjecture]
  There exists a unique constant scalar curvature Kähler (cscK) metric in $\alpha$ if, and only if, $(X, \alpha)$ is uniformly K-stable in the sense of \cite{SD18,DR17kstability}.
\end{conjecture}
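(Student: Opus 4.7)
The plan is to attack both directions of the biconditional by exploiting the interplay between the space of Kähler potentials $\cE^1(\alpha)$ and its non-archimedean counterpart $\cE^1_{\NA}(\alpha)$ built from the complex-analytic Berkovich-type framework developed in this paper. The central object is the Mabuchi energy $M$, which should admit a non-archimedean analogue $M^{\NA}$ whose value on test configurations recovers (up to normalization) the Donaldson--Futaki invariant. Uniform K-stability in the sense of Dervan and of Sjöström Dyrefelt then amounts to $M^{\NA}(\phi)\geq \delta\, J^{\NA}(\phi)$ for some $\delta>0$ and all $\phi\in\cE^1_{\NA}$, and the conjecture reduces to two equivalences: (i) properness of $M$ on $\cE^1$ is equivalent to the existence (and, by Berman--Berndtsson, uniqueness modulo $\mathrm{Aut}_0$) of a cscK metric, and (ii) properness of $M$ on $\cE^1$ is equivalent to $M^{\NA}\geq\delta J^{\NA}$ on $\cE^1_{\NA}$.

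For the easier direction ($\Rightarrow$), assume a cscK metric exists in $\alpha$. I would first use Berman--Berndtsson convexity along weak geodesics to infer that $M$ is bounded below and proper modulo the reduced automorphism group (this step extends the Chen--Cheng solution of the properness conjecture to the present setting). The next ingredient is a \emph{slope formula}: to every maximal geodesic ray $(\phi_t)_{t\geq 0}\subset\cE^1$ the non-archimedean theory developed earlier in the paper associates a limit $\phi_{\NA}\in\cE^1_{\NA}$, and one shows $\lim_{t\to\infty} M(\phi_t)/t=M^{\NA}(\phi_{\NA})$ together with $\lim J(\phi_t)/t=J^{\NA}(\phi_{\NA})$. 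Properness of $M$ then transfers to the inequality $M^{\NA}\geq \delta J^{\NA}$, i.e.\ uniform K-stability.

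For the hard direction ($\Leftarrow$), one argues by contradiction. Given a sequence $\phi_j\in\cE^1$ with $J(\phi_j)=:T_j\to\infty$ and $M(\phi_j)$ bounded, I would run the geodesic from a fixed reference potential to $\phi_j$; Berman--Berndtsson convexity keeps $M$ under control on the rescaled segment. Extracting a limit as $T_j\to\infty$ yields a maximal geodesic ray whose non-archimedean limit $\phi_{\NA}\in\cE^1_{\NA}$ satisfies $J^{\NA}(\phi_{\NA})=1$, and the matching upper slope inequality $M^{\NA}(\phi_{\NA})\leq \liminf M(\phi_j)/T_j=0$ would contradict the assumed uniform K-stability. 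It is exactly here that the non-archimedean theory for arbitrary complex spaces is indispensable, since the projective-algebraic machinery of Boucksom--Hisamoto--Jonsson is insufficient for a general Kähler class; the analytification built in this paper supplies the space $\cE^1_{\NA}(\alpha)$ and the pairing with finite-energy potentials needed for the limit $\phi_{\NA}$ to make sense.

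The principal obstacle --- and the reason the full YTD conjecture in this generality remains open --- is the upper slope inequality for $M$ in the ($\Leftarrow$) direction. The Mabuchi energy splits as an entropy term plus explicit energy-type terms; the energy pieces behave upper semicontinuously under non-archimedean limits by pluripotential theory, but the entropy $H(\phi_t)$ along a weak geodesic ray is only controlled under regularity assumptions on the ray that presently do not follow from a bound on $M$ alone. The main theorem of this paper overcomes the gap by strengthening the K-stability hypothesis --- enough to force extra regularity on competing geodesic rays --- and removing this strengthening, i.e.\ establishing the entropy slope formula under plain uniform K-stability, is precisely the remaining step that would turn the conjecture into a theorem.
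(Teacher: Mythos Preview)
The statement you are attempting is presented in the paper as an open \emph{conjecture}; there is no proof in the paper to compare against. The paper's main result (Theorem~\ref{thma;ytd}) establishes only the $(\Leftarrow)$ direction under the \emph{stronger} hypothesis of uniform K-stability over $\cE^1(\alpha)$, while the $(\Rightarrow)$ direction is already known by \cite{SD18,DR17kstability}.

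Your proposal contains a conflation that hides where the genuine gap lies. You assert that uniform K-stability in the sense of \cite{SD18,DR17kstability} ``amounts to $M^{\NA}(\phi)\ge\delta\,J^{\NA}(\phi)$ for all $\phi\in\cE^1_{\NA}$''. It does not: their notion is formulated via (cohomological) test configurations, which in this paper's language is coercivity of $\mathrm M_\alpha$ on $\Hdom(\alpha)$, the PL psh functions, \emph{not} on the larger space $\cE^1(\alpha)$. The gap between these two coercivity statements is precisely the gap between the conjecture and Theorem~\ref{thma;ytd}.

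Accordingly, your diagnosis of the obstacle is off target. The paper \emph{does} prove that any destabilizing geodesic ray is automatically maximal (Proposition~\ref{prop;maximal}, following Li), and \emph{does} establish the entropy slope inequality $\mathrm H_\alpha(\varphi)\le\lim_s \mathrm H_\mu(V_s)/s$ for the associated $\varphi\in\cE^1(\alpha)$ (Proposition~\ref{prop;entropyineq}); together these yield $\mathrm M_\alpha(\varphi)\le 0$ for some $\varphi\in\cE^1(\alpha)$. No extra regularity on the ray is missing. What is missing --- as the paper states explicitly in the discussion following Theorem~\ref{thma;ytd} --- is the implication
\[
\mathrm M_\alpha\ge\delta\,\mathrm J_\alpha \text{ on }\Hdom(\alpha)\ \Longrightarrow\ \mathrm M_\alpha\ge\delta'\,\mathrm J_\alpha \text{ on }\cE^1(\alpha),
\]
i.e.\ a regularization step extending non-archimedean coercivity from PL to finite-energy potentials. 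In the projective case Li obtains an intermediate version via the solution of the non-archimedean Monge--Amp\`ere equation from \cite{BFJ15solution}, which is not available in the transcendental setting treated here.
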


\addtocontents{toc}{\SkipTocEntry}
\subsection*{A variational approach}
The PDE of equation~\eqref{csck} has a variational interpretation that will play the key role in our approach to the YTD Conjecture.
This approach was developed by many authors, see \cite{BB17convexity}, \cite{BBEGZ19ke}, \cite{Che00space}, \cite{Dar15mabuchi}, and \cite{DR17tian} for some important work.
We describe this approach now.

By the $\partial\overline\partial$-lemma, if $\omega^\prime\in \alpha$ is a Kähler metric in the cohomology class of $\omega$, then $\omega^\prime - \omega = \mathrm{dd^c} u$, for $u \colon X\to \R$ a smooth function satisfying:
\[\omega + \mathrm{dd^c} u>0.\]
We denote $\omega^\prime$ by $\omega_u$, and we call $u$ a \emph{potential of $\omega^\prime$}.
In these terms, Equation~\eqref{csck} translates to a fourth order elliptic PDE on the potential.

The space of the potentials, in a given class $[\omega]$, is denoted by 
\[\mathcal H(\omega) \doteq \left\{u\colon X\to \R\bigm\vert u \text{ smooth, and } \omega+\mathrm{dd^c}u >0\right\}.\]

In order to have a good geometric setting to carry out the variational calculus for studying this PDE, we must consider a completion\footnote{for the Darvas metric $d_1$, see \cite{Dar15mabuchi}.} of $\mathcal H(\omega)$, the space $\cE^1(\omega)$.

By the groundbreaking work of Chen--Cheng \cite{CC21csck1,CC21csck2}, we know that there exists a unique solution for~\eqref{csck} if and only if a given functional, called the \emph{Mabuchi functional}, $\mathrm M_\omega\colon \cE^1(\omega)\to\R$, satisfies:
\begin{equation}
\mathrm M_\omega\ge \delta \mathrm J_\omega -C,
\end{equation}
for some $C,\delta>0$, and where $\mathrm J_\omega$ is a ``norm like" functional.
If it is the case, $\mathrm M_\omega$ is said to be \emph{coercive}.

In this setting the YTD conjecture can be stated as follows:
\begin{conjecture}[YTD Conjectrue]
  The Mabuchi functional is coercive if, and only if, $(X, \alpha)$ is \emph{uniformly K-stable} in the sense of \cite{SD18,DR17kstability}. 
\end{conjecture}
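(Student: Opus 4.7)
The plan is to prove the hard direction: uniform K-stability implies coercivity of the Mabuchi functional. The converse is comparatively soft; it follows by testing coercivity of $\mathrm M_\omega$ along the geodesic rays attached to arbitrary non-archimedean potentials and passing to the asymptotic slope at infinity, once the archimedean/non-archimedean dictionary is in place.

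First, I would use the non-archimedean theory of complex spaces developed in this paper to construct an analogue $\cE^{1,\NA}(\alpha)$ of $\cE^1(\omega)$, together with a non-archimedean Mabuchi functional $\mathrm M^{\NA}$ and a non-archimedean J-functional $\mathrm J^{\NA}$, so that uniform K-stability translates into the non-archimedean coercivity inequality $\mathrm M^{\NA}\ge \delta\mathrm J^{\NA} - C$ on $\cE^{1,\NA}(\alpha)$. Next, I would establish a correspondence between elements $\phi\in\cE^{1,\NA}(\alpha)$ and \emph{maximal geodesic rays} of finite energy $(u_t)_{t\ge 0}\subset \cE^1(\omega)$, and prove the slope formulas
\[ \lim_{t\to\infty}\frac{\mathrm J_\omega(u_t)}{t} \;=\; \mathrm J^{\NA}(\phi), \qquad \lim_{t\to\infty}\frac{\mathrm M_\omega(u_t)}{t} \;\ge\; \mathrm M^{\NA}(\phi). \]
In the projective case such rays arise from test configurations via resolutions of the total space; here, the Berkovich-type analytification for complex spaces developed in the paper is precisely what allows the analogous construction for general Kähler classes.

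To deduce coercivity of $\mathrm M_\omega$ from non-archimedean coercivity, I would follow the strategy of Chi Li \cite{Li22geodesic}, adapted in the Fano case by Berman--Boucksom--Jonsson in \cite{BBJ21YTD}. Suppose for contradiction that $\mathrm M_\omega$ is not coercive; then there is a minimizing sequence $(u_j)\subset \cE^1(\omega)$ with $\mathrm J_\omega(u_j)\to +\infty$ and $\mathrm M_\omega(u_j)/\mathrm J_\omega(u_j)\to 0$. Using the convexity of $\mathrm M_\omega$ along weak geodesics (Berman--Berndtsson) together with a compactness argument in $\cE^1(\omega)$, one extracts from this sequence a destabilizing maximal geodesic ray $(u_t)$ with positive slope for $\mathrm J_\omega$ but non-positive slope for $\mathrm M_\omega$; the slope formulas above then contradict the non-archimedean coercivity. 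Appealing to Chen--Cheng \cite{CC21csck1, CC21csck2}, coercivity of $\mathrm M_\omega$ yields the unique cscK metric in $\alpha$.

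The main obstacle will be the slope formula for the Mabuchi functional in the general Kähler setting. The entropy term in $\mathrm M_\omega$ is notoriously hard to control along families of positive currents, and in the non-projective case one cannot rely on algebraic resolutions of a total space; this is precisely where the full analytic machinery of the non-archimedean theory of complex spaces developed in the earlier sections becomes essential. Matching the entropy asymptotics of $\mathrm M_\omega(u_t)/t$ with the correct non-archimedean counterpart, expressed as the integral of a log discrepancy against a non-archimedean Monge--Ampère measure on the Berkovich-type space $\Xb$, is the central technical difficulty, and it is where a stronger form of K-stability is likely to be needed to close the argument.
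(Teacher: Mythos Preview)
The statement you are attempting to prove is a \emph{conjecture}, and the paper does not prove it; it remains open. What the paper actually proves is the strictly weaker Theorem~\ref{thma;ytd}: if $(X,\alpha)$ is uniformly K-stable \emph{over $\cE^1$}, then $\alpha$ contains a unique cscK metric. The distinction between these two statements is exactly the gap in your proposal.

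Concretely, your first step asserts that uniform K-stability ``translates into the non-archimedean coercivity inequality $\mathrm M^{\NA}\ge \delta\mathrm J^{\NA} - C$ on $\cE^{1,\NA}(\alpha)$''. This is false as stated: uniform K-stability in the sense of \cite{SD18,DR17kstability} is a condition on test configurations, and in the non-archimedean language of the paper it is the coercivity of $\mathrm M_\alpha$ over $\Hdom(\alpha)$, not over $\cE^1(\alpha)$. The implication
\[
\mathrm M_{\alpha}|_{\Hdom(\alpha)} \ge \delta\, \mathrm J_{\alpha}|_{\Hdom(\alpha)} \;\Longrightarrow\; \mathrm M_{\alpha}|_{\cE^1(\alpha)} \ge \delta'\, \mathrm J_{\alpha}|_{\cE^1(\alpha)}
\]
is precisely what is missing, as the paper explains in the section on Chi Li's result. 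In the projective case Li obtains an intermediate version (coercivity over $\CPSH$ implies coercivity over $\cE^1$) using the solution of the non-archimedean Monge--Amp\`ere equation from \cite{BFJ15solution}; no such tool is available in the transcendental setting, and your proposal offers no substitute.

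The remainder of your outline---extracting a destabilizing geodesic ray, showing it is maximal via Li's entropy argument, and applying the slope formula and entropy inequality to reach a contradiction---is correct and is exactly the proof of Theorem~\ref{thma;ytd} given in Section~\ref{sec;csck}. Your final sentence (``a stronger form of K-stability is likely to be needed'') is in fact the honest conclusion: the argument proves the theorem under the hypothesis of K-stability over $\cE^1$, not the conjecture.
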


\begin{remark}
  When $X$ is a Kähler variety of klt singularities either $\Q$-Gorenstein smoothable, or admitting a resolution of singularities of nef relative anticanonical bundle,  by \cite{PTT24singularcsckmetrics} in the first case, and combining works of \cite{BJT24extremal, PT24singular} in the second case, we have that the coercivity of the Mabuchi functional implies the existence of a positive closed current in $\alpha$, that is a cscK metric on the regular locus, and has bounded potential.
  In this generality, the above conjecture is completely open.
\end{remark}

\subsubsection*{Varional approach in the Fano setting}
In the Fano case, Berman--Boucksom--Jonsson, in \cite{BBJ21YTD}, used the variational approach to prove the conjecture.
Their proof relied on two observations:
\begin{itemize}
  \item It was already known, in the Fano case, that the existence of a cscK metric is equivalent to the \emph{coercivity} of a different --and simpler-- functional defined on the space of potentials $\mathrm D\colon\cE^1(\omega)\to \R$, the \emph{Ding functional}.
  \item By \cite{BHJ17uniform}, there is a non-Archimedean description of K-stability: there is a non-Archimedean counterpart of $\cH(\omega)$ on the \emph{Berkovich analytification} of $X$, $X^{\an}$, denoted $\cH^{\NA}$, in  which we can define a non-Archimedean analogue of the Ding functional, $\mathrm D^{\NA}\colon \cH^{\NA}\to \R$, and  K-stability becomes equivalent to the coercivity of this functional.
\end{itemize}

Thus, using the simpler analysis of the Ding functional, Berman--Boucksom--Jonsson proved that the slope formulas for psh rays of \emph{algebraic singularities} guarantee the coercivity of the functional $\mathrm D$, as soon as one supposes the coercivity of the non-Archimedean counterpart $\mathrm D^{\NA}$.

In the general case the conjecture is still open, \cite{SD18} and \cite{DR17kstability} independetly proved --in the Kähler case-- one direction of the conjecture: the coercivity of the Mabuchi functional implies uniform K-stability.

The best result in the reciprocal direction is due Chi Li: in \cite{Li22geodesic}, he adapted the strategy of Berman--Boucksom--Jonsson to the Mabuchi functional, getting a weaker form of the open direction of the YTD conjecture for projective manifolds, i.e. he proved that stronger version of K-stability implies the coercivity of the Mabuchi functional.

The key ingredient in Chi Li's paper consists of proving that a \emph{distabilizing ray}, a ray of functions that contradicts the coercivity of the Mabuchi functional $\mathrm M_\omega$, must be a \emph{maximal geodesic ray}, that is, it must come from a \emph{non-Archimedean potential of finite energy}:
Like for $\cH(\omega)$, the set of potentials of finite energy $\cE^1(\omega)$ also has a non-Archimedean counterpart, $\cE^{1, \NA}$, and there is a correspondence between such potentials of finite energy and maximal geodesic rays, a disguished class of rays in $\cE^1(\omega)$. 
Having this in hand, he uses the finer slope formulas for maximal rays, and then the strategy of \cite{BBJ21YTD} follows in this general polarized case.

The goal of this paper is to generalize this result of Chi Li to the transcendental setting. 
We prove:
\begin{theorema}\label{thma;ytd}
  Let $(X,\alpha)$ be a compact Kähler manifold that is uniformly $K$-stable over $\cE^1$. 
  Then, $\alpha$ contains a unique cscK metric. 
\end{theorema}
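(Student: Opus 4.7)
The plan is to follow the variational scheme of Berman--Boucksom--Jonsson \cite{BBJ21YTD} in the Fano case, as extended to the polarized cscK setting by Chi Li \cite{Li22geodesic}, replacing the Berkovich analytification of a projective variety by the non-archimedean theory of complex spaces developed earlier in this paper. By Chen--Cheng \cite{CC21csck1,CC21csck2}, the existence and uniqueness of a cscK metric in $\alpha$ is equivalent to the coercivity of the Mabuchi functional $\mathrm M_\omega\colon\cE^1(\omega)\to\R$, so it suffices to prove that uniform K-stability over $\cE^1$ implies this coercivity.

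I would argue by contradiction: suppose $\mathrm M_\omega$ is not coercive, so there exists a sequence $(u_j)\subset\cE^1(\omega)$, normalized by $\sup u_j=0$, with $\mathrm J_\omega(u_j)\to\infty$ and $\mathrm M_\omega(u_j)\le \varepsilon_j\,\mathrm J_\omega(u_j)$ for some $\varepsilon_j\to 0^+$. Setting $t_j\doteq d_1(0,u_j)\sim \mathrm J_\omega(u_j)$ and placing $u_j$ at time $t_j$ along the $d_1$-geodesic joining $0$ to $u_j$, a standard compactness argument in $\cE^1(\omega)$ produces a unit-speed geodesic ray $(u_t)_{t\ge 0}$ along which $\mathrm J_\omega(u_t)/t\to c>0$ and $\mathrm M_\omega(u_t)/t\le 0$. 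The crucial point, extending Chi Li's key observation to our setting, is that such a destabilizing ray must be \emph{maximal}: among all $\omega$-psh subgeodesic rays with the same asymptotic behaviour, it is the largest. This regularity is exactly what identifies it with a non-archimedean potential of finite energy.

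The non-archimedean formalism for complex spaces constructed in this paper provides a bijection between maximal geodesic rays in $\cE^1(\omega)$ and non-archimedean potentials $\varphi\in\cE^{1,\NA}$ of finite energy, and in particular attaches such a $\varphi$ to our ray $(u_t)$. The analytic heart of the argument is then to prove the slope identities in this transcendental setting,
\[
\lim_{t\to\infty}\frac{\mathrm M_\omega(u_t)}{t}=\mathrm M^{\NA}(\varphi),
\qquad
\lim_{t\to\infty}\frac{\mathrm J_\omega(u_t)}{t}=\mathrm J^{\NA}(\varphi),
\]
which combined with the previous step force $\mathrm M^{\NA}(\varphi)\le 0$ and $\mathrm J^{\NA}(\varphi)>0$, contradicting the uniform K-stability hypothesis $\mathrm M^{\NA}\ge\delta\,\mathrm J^{\NA}$ on $\cE^{1,\NA}$. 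Uniqueness in Theorem~\ref{thma;ytd} is then classical once existence is known, using strict convexity of $\mathrm M_\omega$ along non-trivial geodesics together with the fact that uniform K-stability forces $\mathrm{Aut}(X,\alpha)$ to be discrete.

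The main obstacle is the slope formula for the Mabuchi functional along a maximal geodesic ray in the non-projective Kähler setting. In the polarized case this rests on intersection-theoretic computations on a relative compactification carrying an ample line bundle, tools which simply do not exist in the transcendental Kähler context. The non-archimedean theory of complex spaces built in this paper, with its test configurations, divisorial and valuative structure, and positivity notions, is tailored precisely to supply a substitute infrastructure in which $\cE^{1,\NA}$, $\mathrm M^{\NA}$ and $\mathrm J^{\NA}$ are defined and the slope formulas above can be proved; once they are established, the Berman--Boucksom--Jonsson/Chi Li scheme carries over without further conceptual difficulty.
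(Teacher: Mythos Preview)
Your overall strategy matches the paper's: reduce to coercivity via Chen--Cheng, assume non-coercivity, extract a destabilizing geodesic ray, show it is maximal (Proposition~\ref{prop;maximal}, which is Li's result and whose proof carries over verbatim), and compare slopes with the non-archimedean invariants to contradict uniform K-stability over $\cE^1$.

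The gap is in your slope claim for the Mabuchi functional. You assert the \emph{equality}
\[
\lim_{t\to\infty}\frac{\mathrm M_\omega(u_t)}{t}=\mathrm M_\alpha(\varphi),
\]
and call establishing it ``the analytic heart of the argument.'' This equality is not proved in the paper, and indeed is not known even in the projective case: it would amount to the so-called entropy regularization conjecture. What the paper actually proves is the \emph{inequality}
\[
\lim_{t\to\infty}\frac{\mathrm M_\omega(u_t)}{t}\ \ge\ \mathrm M_\alpha(\varphi)
\]
(Corollary~\ref{cor;inequality}), obtained by combining the genuine slope \emph{equalities} for $\mathrm E_\omega$, $\mathrm E_\omega^\eta$ and $\mathrm J_\omega$ (Theorem~\ref{thm;kempf-ness}) with the one-sided entropy estimate $\mathrm H_\alpha(\varphi)\le\lim_s s^{-1}\mathrm H_\mu(U_s)$ (Proposition~\ref{prop;entropyineq}). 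This inequality is exactly what is needed: from $\lim_s s^{-1}\mathrm M_\omega(U_s)\le 0$ one deduces $\mathrm M_\alpha(\varphi)\le 0$, while the slope equality for $\mathrm J$ gives $\mathrm J_\alpha(\varphi)>0$, contradicting $\mathrm M_\alpha\ge\delta\,\mathrm J_\alpha$. So your plan works once you replace the claimed Mabuchi slope equality by the correct one-sided inequality; but as written, the step you single out as the crux is both unproved and unnecessary.
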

  
\addtocontents{toc}{\SkipTocEntry}
\subsection*{General strategy and main results}

By the work of Sjöström Dyrefelt and Dervan--Ross, there exists a theory of K-stability for Kähler manifolds, but in order to adapt the strategy of Chi Li --or ultimately of BBJ--, one needs a non-Archimedean theory for a transcendental complex manifold, where the language of K-stability can be translated to.
%In order to adapt his result to the transcendental setting, we devolop a non-Archimedean theory for a transcendental complex manifold, and from there follow the approach of \cite{Li22geodesic}, and ultimately of \cite{BBJ21YTD}.

We hence start by developing this non-Archimedean theory.
More precisely, we associate to a complex manifold, $X$, a ``non-Archimedean" compact Hausdorff topological space of \emph{semivaluations} on $X$, defined as the \emph{tropical spectrum} of the set of coherent ideals of $X$, which we denote by $X^\beth$.
This notion coincides with the Berkovich analytification of $X$ whenever the latter is a proper algebraic variety\footnote{We recall that if $X$ is a proper scheme over a trivially valued field, its Berkovich analytification $X^{\an}$ is defined as the set of all valuations $v\colon \C(Y)^*\to \R$, for $Y\subseteq X$ a subvariety, and $\C(Y)$ its function field.
When $X$ is a compact complex manifold, we may have that its function field is trivial, and moreover it may not have any non-trivial subvarieties. We cannot use this approach to define $X^{\beth}$.}, and moreover it also has a description as the limit of a Dual Complex, just like in algebraic setting.

The most important class of semivaluations is the one of the \emph{divisorial valuations}, which are given by
\[\OX\supseteq\iu\mapsto \ord_F(\iu\cdot\mathcal O_Y)\]
for $F\subseteq Y\to X$ a smooth irreducible reduced divisor on a normal model of $X$. 
The next main theorem we prove establishes the density of the set of divisorial valuations, $\Xdiv$, on $X^\beth$.
\begin{theorema}
\label{thma;dense}
$\Xdiv$ is dense in $X^\beth$.
\end{theorema}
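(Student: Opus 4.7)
The plan is to approximate an arbitrary semivaluation $v \in X^\beth$ by divisorial valuations via Hironaka's principalization of coherent ideals on $X$. By the definition of the topology on the tropical spectrum, a basic open neighborhood of $v$ is of the form
\[U = \{w \in X^\beth : w(\mathfrak{a}_i) \in (a_i, b_i), \ i = 1, \ldots, n\},\]
with $\mathfrak{a}_1, \ldots, \mathfrak{a}_n$ coherent ideals on $X$ and $a_i < b_i$ in $[0,\infty]$, so it suffices to produce a divisorial valuation in every nonempty such $U$.

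The next step is to invoke analytic principalization to produce a proper modification $\pi \colon Y \to X$ with $Y$ smooth and each $\mathfrak{a}_i \cdot \mathcal O_Y = \mathcal O_Y(-D_i)$ for effective Cartier divisors $D_i = \sum_j a_{ij} F_j$, where $\sum_j F_j$ is a reduced SNC divisor and $a_{ij} \in \N$. Properness of $\pi$ should force the pushforward map $\pi_* \colon Y^\beth \to X^\beth$, defined by $\pi_* \tilde v(\mathfrak a) = \tilde v(\mathfrak a \cdot \mathcal O_Y)$, to be surjective, so one can pick a preimage $\tilde v \in Y^\beth$ of $v$. Since $\mathfrak a_i \cdot \mathcal O_Y$ is invertible, this yields
\[v(\mathfrak{a}_i) = \tilde v(\mathfrak{a}_i \cdot \mathcal{O}_Y) = \sum_j a_{ij} \tilde v(F_j),\]
so the values $v(\mathfrak{a}_i)$ depend only on the tuple $(\tilde v(F_j))_j$, which is a point of the dual complex $\Delta_Y$ of $\sum_j F_j$. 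I would then approximate this tuple by a rational tuple $(t_j)_j$ in the same simplex; the monomial valuation $v_t$ on $Y$ determined by $v_t(F_j) = t_j$ is divisorial, being realized as a positive multiple of $\ord_E$ for an exceptional component $E$ of a toric modification of $Y$ along the associated SNC stratum. Pushing forward, $\pi_* v_t$ is a divisorial valuation on $X$ satisfying $\pi_* v_t(\mathfrak{a}_i) = \sum_j a_{ij} t_j$, which lies arbitrarily close to $v(\mathfrak{a}_i)$ once $t_j$ is close enough to $\tilde v(F_j)$, placing $\pi_* v_t$ in $U$.

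The main obstacles are the two analytic ingredients silently used in the argument: the surjectivity of $\pi_*$ for proper modifications — an analytic valuative criterion for semivaluations on tropical spectra — and the realization of rational monomial valuations as divisorial via toric analytic blow-ups along SNC strata. Both should follow from the foundational machinery developed earlier in the paper, in particular the description $X^\beth \cong \dualc$ as the inverse limit of dual complexes over smooth models, under which the union of $\Q$-points of the $\Delta_\mathcal X$ coincides with the set of divisorial valuations and is dense in the limit essentially by construction.
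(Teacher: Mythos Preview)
Your approach is natural and close to how one proves density in the algebraic trivially valued setting, but as written it has a genuine circularity and a real gap that the paper sidesteps by a different route.

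The circularity is in your final paragraph: you invoke the identification $X^\beth \simeq \varprojlim_{\cX}\Delta_{\cX}$ to justify the surjectivity of $\pi_\ast$ and the divisoriality of rational monomial points. In the paper this dual complex description is Theorem~\ref{thm;dualcomplex}, proved in Section~\ref{sec;duallog}, and its proof uses Theorem~\ref{teo;dense} (the very density statement) to check that the $\PL$-isomorphism $\eta$ is induced by $p$. So you cannot appeal to it here. The divisoriality of rational monomial valuations is available earlier (the remark at the end of Section~\ref{sec:monomialcone}), but the surjectivity of $\pi^\beth\colon Y^\beth\to X^\beth$ is not: Proposition~\ref{prop:bijval} only gives a bijection $Y^{\val}\simeq X^{\val}$, and its proof genuinely needs finiteness to subtract $v(\iw)$. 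For a semivaluation $v$ with nontrivial support $S_X(v)\subsetneq X$ your lift $\tilde v$ is not constructed, the coordinates $\tilde v(F_j)$ may be $+\infty$, and the simplex approximation step breaks down. A direct analytic valuative criterion for $\pi^\beth$ on all of $X^\beth$ is not provided anywhere in the paper.

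The paper avoids this by passing to the $\C^\ast$-equivariant picture. Via the Gauss extension $\sigma\colon X^\beth\xrightarrow{\sim}(X\times\Pro)^\beth_{\C^\ast}$ every semivaluation becomes \emph{finite} on flag ideals, so there is no support issue. One then proves two things: (i) $\PL(X^\beth)\simeq\varinjlim_{\cX}\VCar(\cX)$ (Theorem~\ref{vcarpl}), so that a $\PL$ function $\varphi$ is a vertical $b$-divisor $D=\sum_E b_E\,\varphi(v_E)\,E$; and (ii) $r\bigl((X\times\Pro)^{\divisorial}_{\C^\ast}\bigr)=\Xdiv$ (Theorem~\ref{thm;divisorialpointscoincide}), which is the technical heart and is obtained by localizing at a point $p$, passing to the excellent scheme $\spec\cO_{X,p}$, and applying Zariski's Lemma~\ref{lem:zariskiartin} there. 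Density then follows immediately: if $\varphi\in\PL$ vanishes on $\Xdiv$ it vanishes on every $v_E$, hence $D=0$, hence $\varphi=0$; since $\PL$ separates points of $X^\beth$, $\Xdiv$ is dense. Your direct approach would work for $v\in X^{\val}$ once you use Proposition~\ref{prop:bijval} to lift, but to cover $X^\beth\setminus X^{\val}$ you would still need an extra argument (e.g.\ that $X^{\val}$ is dense in $X^\beth$), which again is not available prior to Theorem~\ref{teo;dense}.
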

The proof of Theorem~\ref{thma;dense} crucially requires a description of the divisorial valuations of $X$ as the $\C^*$-invariant divisorial valuations on $X\times\Pro$. 
We get this description by studying the divisorial valuations on ``local algebraic models" of $X$, that is, for each $p\in X$, we study the divisorial valuations of the scheme $X_p\doteq \spec \OXp$.

After establishing some first basic results, we develop a non-Archimedean pluripotential theory for $X^\beth$, key for the non-Archimedean approach of \cite{BBJ21YTD}.
We define non-Archimedean psh functions, and a mixed energy coupling, that allow us to:
\begin{enumerate}
  \item Use the synthetic pluripotential theory of \cite{BJ23synthetic}.
  \item Define non-Archimedean versions of classical functionals like: the \emph{Monge--Ampère energy}; the \emph{twisted energy}; the \emph{entropy}; and finally the J functional.  
\end{enumerate}
We then denote by $\cE^1(\alpha)$ the set of \emph{non-Archimedean potentials of finite Monge--Ampère energy}.

In the algebraic setting this theory coincides with the one of \cite{BJ22trivval}.
In the transcendental setting Darvas--Xia--Zhang  develop a non-Archimedean pluripotential theory for a big class, on \cite{DXZ23transcendental}, that coincides with ours on a Kähler class.
Their non-Archimedean pluripotential theory is not over a ``non-Archimedean space", and hence ours is a more direct analogue of the algebraic theory.
In particular, we can make sense of Monge--Ampère equations in our case, while it is not clear how to interpret them in their formalism.

Like in \cite{BBJ21YTD}, we have that the pluripotential theory developed here behaves well with the complex one:
to each psh ray of potentials $U_t$ we have associated a non-Archimedean psh function $U^\beth$.
Reciprocally, to each non-Archimedean psh function of finite energy $\varphi\in \cE^1(\alpha)$, there exists a psh ray $V_t$ such that $V^\beth = \varphi$.
Moreover, there exists a 1to1 correspondence between \emph{maximal geodesic rays} in $\cE^1(\omega)$ and non-Archimedean potentials in $\cE^1(\alpha)$.
This is the basis of the correspondence with the theory of \cite{DXZ23transcendental}.

For maximal geodesic rays we get formulas of the type:
\begin{equation}\label{eq;introconvergence1}
\lim_{s\to\infty }\frac{\mathrm F(U_s)}{s}= \mathrm F^\beth (U^\beth),
\end{equation}
for $\mathrm F$ either $\mathrm E$, $\mathrm E^\eta$, or $\mathrm J$, and an inequality for the entropy:
\begin{equation}\label{eq;introinequality}
\lim  \frac{\mathrm H_\omega (U_s)}{s} \ge \mathrm H_{\alpha}(U^\beth).
\end{equation}
Adding all together we get:
\[\lim_{s\to\infty} \frac{\mathrm M_{\omega}(U_s)}{s} \ge \mathrm M_{\alpha} (U^\beth),\]
where $\mathrm M_\alpha$ is the \emph{non-Archimedean Mabuchi functional}, the non-Archimedean counterpart of $\mathrm M_\omega$ providing the good inequality to conclude the proof of Theorem~\ref{thma;ytd}.

Indeed, just like in the projective setting,  if $\mathrm M_\omega$ is not coercive we can find a geodesic ray $U_t\in \cE^1$ such that $t\mapsto \mathrm M_\omega(U_t)$ is decreasing for $t$ large.
We call such a ray a \emph{destabilizing geodesic ray}.

Analogously to \cite{Li22geodesic}, every destabilizing ray is maximal, that is, it is a geodesic ray coming from a non-Archimedean potential of finite energy, $\varphi\in\cE^1(\alpha)$.
In particular:
\begin{equation}
\lim_{t\to\infty} \frac{\mathrm M_\omega(U_t)}{t}\ge \mathrm M_{\alpha}(\varphi).
\end{equation}

Like this, we prove that if $(X,\omega)$ is such that $\mathrm M_{\alpha}\colon \cE^1(\alpha)\to \R$ is positive, then there exists a unique cscK metric on $\alpha$,
proving Theorem~\ref{thma;ytd}.

\medskip
\addtocontents{toc}{\SkipTocEntry}
\subsection*{The YTD conjecture and Chi Li's result}
\label{sec;questions}
In the non-Archimedean terms of the present paper K-stability reads as the coercivity of the non-Archimedean Mabuchi functional:
\begin{defi}
The pair $(X, \alpha)$ is \emph{uniformly K-stable} if there exists a $\delta>0$ such that 
\[\mathrm M_{\alpha}(\varphi) \ge \delta \mathrm J_{\alpha}(\varphi)\]
for every $\varphi\in \Hdom(\alpha)$, that is if $\mathrm M_\alpha$ is coercive over $\cH(\alpha)$.
\end{defi}

In this paper, we have proved that a stronger version of K-stability, uniform K-stability over $\cE^1(\alpha)$, implies the coercivity of the K-energy, and hence the existence of a unique cscK metric.
In order to prove the conjecture one would need to prove that 
\[\mathrm M_{\alpha}|_{\Hdom(\alpha)} \ge \delta \mathrm J_{\alpha}|_{\Hdom(\alpha)}\implies \mathrm M_{\alpha}|_{\cE^1(\alpha)} \ge \delta^\prime \mathrm J_{\alpha}|_{\cE^1(\alpha)},\]
for some $\delta, \delta^\prime>0$.

In \cite{Li22geodesic}, Chi Li proves that, in the projective case, one has
\[\mathrm M_{\alpha}|_{\Cz\cap \PSH(\alpha)} \ge \delta \mathrm J_{\alpha}|_{\Cz\cap \PSH(\alpha)}\implies \mathrm M_{\alpha}|_{\cE^1(\alpha)} \ge \delta \mathrm J_{\alpha}|_{\cE^1(\alpha)},\]
obtaining that an intermediate version of K-stability implies the coercivity of the K-energy.

In this paper we not able to get a sharper result, like this one of Chi Li, in the transcendental setting. 
His result relies on the solution of the non-Archimedean Monge--Ampère equation of \cite{BFJ15solution} that we don't have at our disposal. 
\begin{remark}
  We would like to note that in an upcoming work of David Witt Nyström together with the author \cite{MPWN25nama}, they are able to establish the analogue of \cite{BFJ15solution} in the setting of the present paper. 
  They solve the non-Archimedean Monge–Ampère equation for non-algebraic Kähler manifolds and, in particular, prove that the intermediate notion of uniform K-stability of Chi Li implies uniform K-stability over $\cE^1$. This recovers Chi Li’s result in the projective setting.
  Moreover, this forthcoming work provides a new valuative criterion for K-stability over $\cE^1$.
\end{remark}

Furthermore, in the same paper Chi Li also gives a version of his theorem when $X$ admits automorphisms, which we don't treat in present paper.

%\subsection{Monge--Ampère solutions on $X^\beth$}
\begin{comment}
\subsubsection*{Hybrid spaces}
TO DO 

\subsubsection*{Regularity of $X$}
%\color{red}The main goal of the present article is to study the analytification of a complex manifold $X$, as explained before when $X$ is a proper algebraic variety, e.g. when $X$ is a projective manifold, there is usual Berkovich analytification.
TO DO
\end{comment}

\addtocontents{toc}{\SkipTocEntry}
\subsection*{Organization of the paper}

\begin{itemize}
\item In Section~\ref{sec;berkovich} we define our ``non-Archimedean analytification" of a locally ringed space, we do some basic constructions and we compare it to Berkovich analytification of a scheme.

\item Section~\ref{sec;Xbethanalytic} deals with the study of $X^\beth$, when $X$ is a normal compact complex space. We prove Theorem~\ref{thma;dense}, getting a theory which resembles closely the one of the Berkovich analytification of a projective variety over $\C$. 

\item Section~\ref{sec;duallog} contains a dual complex description of $X^\beth$, analogue to the one found in \cite{BJ22trivval}.

\item Section~\ref{sec;nonarchimedeanpluri} is devoted to developing the non-Archimedean pluripotential theory of $X^\beth$.

\item In Section~\ref{sec;complexandnon} we compare the pluripotential theory of $X^\beth$ with the transcendental non-Archimedean theory of \cite{DXZ23transcendental, xia24operations} , and get slope formulas, generalizing the results of \cite{SD18,DR17kstability, Li22geodesic}.

\item In Section~\ref{sec;csck} we define \emph{strong K-stability}, and prove Theorem~\ref{thma;ytd}, the main result of the paper.

%\item Finally, in Section~\ref{sec;questions} we state some open questions, and possible roads.
\end{itemize}

\addtocontents{toc}{\SkipTocEntry}
\subsection*{Acknowledgements}
I would specially like to thank my PhD advisors Sébastien Boucksom and Tat Dat Tô for all their generous help, support and patience while discussing the present article for many months.

I would also like to thank Mingchen Xia for great conversations and advice, Ruadhaí Dervan, Chung-Ming Pan and Antonio Trusiani for spotting some typos in an earlier draft, and all my friends for their support.

\addtocontents{toc}{\SkipTocEntry}
\section*{Notations and Conventions}
Through out this paper, a \emph{ring} will always be unital and commutative.
By an \emph{analytic variety} we mean a reduced and irreducible complex analytic space.
Given $X$ an analytic variety we denote by $\widetilde X$ the normalization of $X$, cf. \cite[Chapter 6]{GR12coherent}. 
Moreover, we simply call \emph{ideal} a coherent ideal sheaf of $\OX$.
%\marginpar{\textcolor{red}{add coherent}}
We call \emph{flag ideal} a $\C^*$-invariant fractional coherent ideal of $\mathcal O_{X\times\Pro}$, supported on $X\times\{0\}$, and we denote the set of such ideals by $\flag$.

A flag ideal $\ia\in \flag$ can be written as a sum:
\begin{equation}\label{eq;flagideal}
\ia = \sum_{\lambda\in \mathds Z} \ia_{\lambda} t^\lambda,
\end{equation}
for $(\ia_\lambda)_\lambda$ an increasing sequence of ideals of $X$ such that $\ia_\lambda = 0$ for $\lambda\ll 0$ and $\ia_\lambda  = \OX$ for $\lambda\gg 0$.

An ideal on $X$ will typically be denoted by $\iu,\iv$ or $\iw$.
An ideal on $X\times\Pro$ will typically be denoted either $\ia$ or $\ib$.

An ideal $\iu$ on $X$  is a \emph{prime ideal} if given $\iv, \iw$ ideals on $X$ satisfying: \[\iu\supseteq \iv\mathbin{\cdot}\iw,\] then either $\iu\supseteq\iv$, or $\iu\supseteq\iw$.
An ideal is prime if, and only if, it is a radical ideal and the underlying analytic subspace is irreducible.
That is, prime ideals of $X$ are exactly the ideals attached to analytic subvarieties.

Let $X$ be an analytic variety, $\iu$ an ideal, and $q\in \Q_{>0}$, a map $g\colon X\to \left[-\infty, +\infty\right[$ has \emph{singularities of type $\iu^{q}$} if locally:
\[ g(z) = q\log\sum_{i=1}^k\lvert f_i\rvert(z) + O(1),\]
for $f_1, \dotsc, f_k$ local generators of $\iu$.

If $X$ is a Kähler manifold, we denote by $\K(X)$ the set of Kähler forms on $X$, by $\Pos(X)\subseteq H^{1,1}(X)$ the set of Kähler classes, and $\Nef(X)$ the set of nef classes, i.e. the closure $\overline{\Pos(X)}\subseteq H^{1,1}(X)$.
%Moreover, if we have fixed a holomorphic map $f\colon X\to Y$, we denote by $\Pos(X/_fY) = \Pos(X/Y)$ the set of classes which are $f$-relatively Kähler, and analogously $\Nef(X/_fY) = \Nef(X/Y)$ the set of nef classes relatively to $Y$.

For $\omega\in \cK(X)$, we denote by $\Ric(\omega)$ its \emph{Ricci form}. 
The trace of $\Ric(\omega)$ is denoted by $\Scal(\omega)$, the \emph{scalar curvature}, and the cohomological quantity:
\[n\cdot\frac{[\Ric(\omega)]\cdot[\omega]^{n-1}}{[\omega]^{n}},\] 
the average of the scalar curvature, by $\underline s$.
Moreover, if $\alpha$ denotes the cohomology class of a Kähler form $\omega$ we denote by $V_\alpha$ or $V_\omega$ the volume $\int_X \omega^n>0.$

For $x, y\in \R$, we write $x\lesssim y$ if there exists a uniform constant $C_n>0$, depending only on an integer $n$ given in the setup, such that $x\le C_n y.$
%x,y∈R,x≲yorx=O(y)meaninthispaperx≤CnyforaconstantCn >0 only depending on a given integer n fixed in the setup, and x ≈ y if x ≲ y and y ≲ x.

%\textcolor{red}{singularity type}

%We say that a $(S, +, \mathbin{\cdot})$ is a \emph{semi-ring} if $S$ satisfies all the ring axioms, but for the existence of the additive inverse.

%By a \emph{tropical algebra} we mean a $\Q$-vector space with two operations that makes it a semi-ring.
%Usually, we will consider sub-algebras of the continuous real valued functions defined on a topological space, with $(\max, +)$ as operations.

%Whenever non specified every arrow is a projective birational morphism. 

\section{Berkovich spectra as tropical spectra}
\label{sec;berkovich}
\subsection{Berkovich spectrum}
Let $X$ be an affine scheme, that is $X=\spec R$ for some ring $R$.
 
If we consider $R$ as a normed ring, with the trivial norm (the norm that to $a\in R$ associates $\lVert a\rVert_{\trivial}$ that is $1$ if $a\neq 0$, and $0$ otherwise), then $R$ can be seen as a Banach ring.
In particular, we can associate to it a compact Hausdorff topological space, the \emph{Berkovich spectrum of $R$}, first defined in \cite{Berkovich90}.
\begin{defi}
The \emph{Berkovich spectrum} associated to $\left(R,\lVert \mathbin{\cdot}\rVert_{\trivial} \right)$, denoted by $\cM(R)$, is the set of bounded multiplicative semi-norms on $R$, i.e. \[N\in \cM(R)\iff N\le \lVert\cdot\rVert_{\trivial},\]
equipped with the Hausdorff topology of pointwise convergence.
\end{defi}
The topology of pointwise convergence is the induced subspace topology given by natural inclusion 
\begin{equation}\label{eq;product}
\cM(R)\subseteq \prod_{a\in R} [0,1],
\end{equation}
as an easy consequence of Tychonoff's theorem $\cM(R)$ is compact.

To a semi-norm $N\in \cM(R)$ we can attach a \emph{semivaluation} on $R$, by taking $-\log N$.
Thus, equivalently, we can consider the Berkovich spectrum of $R$ as the set of semivaluations of $R$ with values on $\oR$, which, following the notation of \cite{Thu07}, we will denote by $X^\beth$, that is:
\begin{equation}
X^\beth\doteq\left\{
\begin{array}{ccc}
         & \vline &  v(a\mathbin{\cdot} b) = v(a)+v(b) \\
        v\colon R\to {\oR} & \vline & v(a +b) \ge \min\{v(a), v(b)\} \\
        & \vline & v(1) =0\enspace \&\enspace v(0) = +\infty
\end{array}\right\}.
\end{equation}

%In fact, since the right hand side of~\eqref{eq;product} is compact, by Tychonoff's theorem, and $X^\beth$ is a closed subset, it follows that $X^\beth$ is compact.

The construction is functorial, that is, given $Y$ another affine scheme and a morphism $f\colon Y\to X$, we can associate a continuous map between $Y^\beth $ and $X^\beth$:
\begin{align*}
f^\beth\colon Y^\beth &\to X^\beth\\
v&\mapsto  f^\beth (v)\colon a\mapsto v\left(f^\#(a)\right),
\end{align*}
compatible with compositions.

The Berkovich spectrum comes with a natural class of continuous functions.
Given $a\in R$, we associate $\lvert a\rvert \colon \cM(R)\to \left [0, +\infty\right[$ by the formula $N\mapsto N(a)$, or equivalently:
\begin{align*}
\log\lvert a\rvert\colon X^\beth&\to \left[-\infty, 0\right]\\
v&\mapsto -v(a).
\end{align*}
The notation is so that $\exp(\log\lvert a\rvert) = \lvert a\rvert$.

There is an equivalent formulation of Berkovich spectrum of a trivially normed ring, namely the \emph{tropical spectrum} of the semi-ring of its ideals of finite type.

Now, we study such an object.
\subsection{Tropical spectrum}
\label{sec;tspec}
For more details on this section, see Appendix~\ref{apx;semi-rings}.
\begin{defi}
Let $(S, +, \mathbin{\cdot})$ be a semi-ring, and consider the semi-ring of the extended real line $(\left]-\infty, +\infty\right], \min, +)$, we define the
\emph{tropical spectrum of $S$} as the topological space given by the set of \emph{tropical characters}, i.e. the semi-ring morphisms from $S$ to $\left]-\infty, +\infty\right]$:
\begin{equation}
	\tspec S  \doteq\left\{
\begin{array}{ccc}
         & \vline &\chi(a\mathbin{\cdot} b) = \chi(a)+\chi(b) \\
         \chi\colon S\to {\left]-\infty, +\infty\right]} & \vline   &\chi(0)= +\infty \\
         & \vline  &\chi(a +b) = \min\{\chi(a), \chi(b)\} 
\end{array}\right\},
\end{equation}
endowed with the pointwise convergence topology.
Moreover, if $S$ has a multiplicative identity, we ask $\chi(1) =0$. 
Equivalently, this is the same as asking that $\chi$ is not identically $+\infty$.
\end{defi}
As before, with this topology $\tspec S$ is compact and Hausdorff.

\smallskip
 
The tropical spectrum comes with a natural order relation, the usual order of functions.
Moreover, the $\R_{>0}$-action given by the usual scalar multiplication is compatible with the order structre. 
Furthermore, the $\R_{>0}$ action on $\tspec S$, induces an action on $\Cz(\tspec S,\R)$, for $t\in\R_{>0}$ we define:
\begin{equation}\label{eq;actiononfunctions}
	\left(t\mathbin{\cdot}\varphi\right)(v) = t\varphi(t^{-1} v).
\end{equation}
%Indeed, let $t\in\R_{>0}$ and $\chi\in\tspec S$, we define $t\mathbin{\cdot} \chi$ as the usual point-wise product of functions. 
 
 \begin{remark}
 For any semi-ring $S$, $\tspec S$ is non-empty. 
 Indeed, we define $\chi_{\trivial}\colon S\to \oR$ by the formula:
 \[\chi_{\trivial}(a) = 0, \quad \text{ for every } a\in S\setminus\{0\}.\]
It is easy to see that $\chi_{\trivial}$ is a semivaluation, and, moreover, it is a fixed point of the $\R_{>0}$-action.
 \end{remark}

\subsubsection{Comparison with the Berkovich spectrum}
Now, let $R$ be a ring, and $\ideal(R)$ be the set of ideals of finite type of $R$.
Together with the usual operations of sum and product $\ideal(R)$ can be seen as a semi-ring. 
Moreover, as an easy consequence of the algebraic structure of $\ideal(R)$, its tropical characters are all positive, that is:
\[	\tspec \ideal(R) = \left\{
\begin{array}{ccc}
         & \vline &  \chi(\iu\mathbin{\cdot} \iv) = \chi(\iu)+\chi(\iv) \\
        \chi\colon \ideal(R)\to {\oR} & \vline & \chi(\iu +\iv) = \min\{\chi(\iu), \chi(\iv)\} \\
        & \vline & \chi(R) =0 \enspace \& \enspace \chi(0) = +\infty 
\end{array}\right\}_,\]
see Appendix~\ref{apx;semi-rings} for more details.
\medskip

There is a natural continuous map:
 \[\tspec \ideal (R)\to \left(\spec R\right)^\beth,\] 
 that assigns to each tropical character $\chi\in\tspec\ideal(R)$ the semivaluation: \[R\ni a\mapsto \chi(a\mathbin{\cdot} R).\]

\begin{prop}
The natural map, $\tspec \ideal(R) \to \left(\spec R\right)^\beth$, is a homeomorphism.
\end{prop}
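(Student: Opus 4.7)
The plan is to construct an explicit inverse $\Psi\colon (\spec R)^\beth \to \tspec \ideal(R)$ and then verify that both the given map $\Phi$ and $\Psi$ are continuous. Since $\tspec \ideal(R)$ is compact and $(\spec R)^\beth$ is Hausdorff, once bijectivity and continuity of $\Phi$ are in hand, it suffices to check continuity of $\Psi$.

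Given a semivaluation $v\in (\spec R)^\beth$, I would define $\Psi(v) = \chi_v$ on a finitely generated ideal $\iu = (a_1, \dotsc, a_n)$ by
\[\chi_v(\iu) \doteq \min_{1\le i\le n} v(a_i).\]
The first task is to check this is independent of the choice of generators: if $\iu = (a_1,\dotsc,a_n)=(b_1,\dotsc,b_m)$, each $b_j = \sum_i c_{ji} a_i$, so the ultrametric and multiplicative properties of $v$ give $v(b_j)\ge \min_i v(a_i)$; reversing the roles yields equality of the two minima. With well-definedness established, the semiring morphism axioms for $\chi_v$ are direct consequences of the corresponding axioms for $v$: additivity follows because a generating set for $\iu+\iv$ is the union of generating sets, multiplicativity because $\iu\cdot\iv$ is generated by products $a_i b_j$ together with $v(a_i b_j) = v(a_i)+v(b_j)$, while $\chi_v(R)=v(1)=0$ and $\chi_v(0)=v(0)=+\infty$.

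The relations $\Phi\circ\Psi = \id$ and $\Psi\circ\Phi = \id$ are easy. For the first, one has $(\Phi\chi_v)(a) = \chi_v(aR) = v(a)$. For the second, given $\chi$ and $\iu = (a_1,\dotsc,a_n)$, one writes $\iu = \sum_i a_i R$ and applies additivity of $\chi$:
\[\chi(\iu) = \min_i \chi(a_i R) = \min_i (\Phi\chi)(a_i) = \chi_{\Phi\chi}(\iu).\]

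For continuity, both $\tspec \ideal(R)$ and $(\spec R)^\beth$ carry the topology of pointwise convergence on their respective parameter sets. If $\chi_\lambda\to\chi$ in $\tspec \ideal(R)$, then for every $a\in R$, $(\Phi\chi_\lambda)(a) = \chi_\lambda(aR)\to \chi(aR) = (\Phi\chi)(a)$, so $\Phi$ is continuous. Conversely, if $v_\lambda \to v$ in $(\spec R)^\beth$, then for each finitely generated $\iu = (a_1,\dotsc,a_n)$, the continuity of $\min$ on finitely many coordinates gives $\chi_{v_\lambda}(\iu) = \min_i v_\lambda(a_i) \to \min_i v(a_i) = \chi_v(\iu)$, proving continuity of $\Psi$. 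I expect no serious obstacle: the only delicate point is the well-definedness of $\chi_v$ under change of generators, and that reduces to a one-line ultrametric estimate.
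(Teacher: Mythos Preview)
Your proof is correct and follows essentially the same approach as the paper: constructing the inverse $\iu\mapsto \min_i v(a_i)$ over a finite generating set and noting that continuity of $\Phi$ together with compactness of $\tspec\ideal(R)$ and Hausdorffness of $(\spec R)^\beth$ forces $\Phi$ to be a homeomorphism. The paper's proof is more terse (it only states the inverse and invokes compactness), whereas you spell out well-definedness, the semi-ring axioms, and continuity of $\Psi$ explicitly; note that once $\Phi$ is a continuous bijection from a compact space to a Hausdorff space, checking continuity of $\Psi$ is actually unnecessary.
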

\begin{proof}
Since $\tspec \ideal(R)$ is compact it is enough to check that the map is bijective.

The desired inverse function, $\left(\spec R\right)^\beth\to \tspec \ideal(R)$, is the one that assigns to $v\in \left(\spec R\right)^\beth$  the character:
\[\ideal(R)\ni\iu\mapsto \min_{f\in \iu} v(f),\]
where the minimum is achieved on any (finite) set of generators.
\end{proof}

With this ``tropical" characterization of the Berkovich spectrum of a ring, we will extend this construction to locally ringed spaces.

\subsection{Semivaluations on locally ringed spaces}
Let $(X,\OX)$ be a locally ringed topological space, and denote by $\ideal_X$ the set of $\OX$-ideals locally of finite type.
The set $\ideal_X$ has a semi-ring structure given by the usual addition and multiplication of ideal sheaves, and we define:
\begin{defi}
Let $X$ be a locally ringed space, we define the space of \emph{semivaluations on $X$} as the tropical spectrum of $\ideal_X$\footnote{Again by Appendix~\ref{apx;semi-rings} we have that all the tropical characters are positive.}:
\begin{equation}
X^\beth\doteq\tspec \ideal_X = \left\{
\begin{array}{ccc}
         & \vline &  v(\iu\mathbin{\cdot} \iv) = v(\iu)+v(\iv) \\
        v\colon \ideal_X\to {\oR} & \vline & v(\iu +\iv) = \min\{v(\iu), v(\iv)\} \\
        & \vline & v(\OX) =0 \enspace \& \enspace v(0_X) = +\infty 
\end{array}\right\},
\end{equation}
Endowed with the usual spectrum topology: the pointwise convergence topology.
\end{defi}
Some geometrically relevant examples are as follows:
\begin{example}

\begin{enumerate}
\item If $X$ is a scheme locally of finite type over a field $k$, equiped with the trivial absolute value, $X^\beth$ is a subset of its Berkovich analytification, $X^{\an}$, the set of semivaluations centered on $X$. 
This construction goes back to \cite{Berkovich90} and \cite{Thu07}.
Whenever the scheme is proper, by the valuative criterion of properness $X^\beth = X^{\an}$.
\item If $X$ is a complex analytic space, $X^\beth$ is an analogue of the Berkovich analytification of algebraic varieties over $\C$.
The study of $X^\beth$ will be done on Section~\ref{sec;Xbethanalytic}, and will be the central object of study of the present paper. 
\item If $X$ is a Berkovich space we can also associate to it an ``analytified" $X^\beth$.
\end{enumerate}
\end{example}

Here again the construction is functorial:
a morphism  ${f\colon (X,\OX)\to (Y,\mathcal O_Y)}$, induces a mapping:
\[f^*\colon \ideal_Y\to \ideal_X\]
which in turn induces a continuous map:
\[f^\beth\colon X^\beth \to Y^\beth.\]

\begin{remark}

If $X$ is a proper algebraic variety over $\C$, the GAGA theorems --for the Berkovich and complex analytifications-- allow us to compare $(X^{\an})^\beth$, $(X_{\hol})^\beth$ and $X^\beth$, where $X_{\hol}$ denotes the usual complex analytification.

Indeed, the theorems provide us with morphisms of ringed spaces between $X_{\hol}$, $X$ and $X^{\an}$, which induce a 1to1 correspondence on the set of coherent ideal sheaves.
Therefore we have canonical homeomorphisms: \[(X^{\an})^\beth\simeq X^\beth\simeq (X_{\hol})^\beth.\]
Moreover, as explained before, for a proper $\C$-scheme $X^{an}$ coincides with $X^\beth$.
\end{remark}

\subsubsection{Properties of the functor $\beth$}

Let $X$ be a locally ringed space, and $\iu\subseteq \OX$ be an ideal locally of finite type.
Consider $Y\doteq \supp\left(\OX/\iu\right)\subseteq X$, together with the sheaf $\mathcal O_Y \doteq \left(\OX/\iu\right)|_Y$
We thus have that the inclusion
\[i\colon \left(Y, \mathcal O_Y\right)\hookrightarrow \left(X,\OX\right)\]
is a ringed space morphism, and moreover we have the following lemma.

\begin{prop}
Let $Y\subseteq X$ as above,  then 
\[i^\beth\colon Y^\beth\to X^\beth\]
is an embedding.
\end{prop}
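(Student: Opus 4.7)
The plan is to reduce the statement to a purely topological fact: a continuous injection from a compact space into a Hausdorff space is automatically a closed embedding. Both $Y^\beth$ and $X^\beth$ are compact Hausdorff (as tropical spectra, by the general argument via Tychonoff), and $i^\beth$ is continuous by the functoriality established above, so the entire content of the proposition reduces to proving injectivity of $i^\beth$.

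To prove injectivity, I would first unwind the definition: for $v \in Y^\beth$ and $\iv \in \ideal_X$, one has $i^\beth(v)(\iv) = v(i^{-1}\iv \cdot \mathcal O_Y)$, where $i^{-1}\iv \cdot \mathcal O_Y \in \ideal_Y$ denotes the pullback of $\iv$ under the surjection $\OX|_Y \twoheadrightarrow \mathcal O_Y$. Thus $v$ is determined by $i^\beth(v)$ as soon as every ideal $\iw \in \ideal_Y$ arises in this way from some $\iv \in \ideal_X$.

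The key step is therefore the following sheaf-theoretic correspondence: the assignment $\iv \mapsto i^{-1}\iv \cdot \mathcal O_Y$ sets up a bijection between ideals $\iv \in \ideal_X$ with $\iv \supseteq \iu$ and ideals $\iw \in \ideal_Y$. Locally this is the standard ring-theoretic correspondence between ideals of $\OX|_U$ containing $\iu|_U$ and ideals of $(\OX/\iu)|_U$. The finite type condition is preserved in both directions because $\iu$ itself is locally of finite type: given local generators $f_1,\dots,f_m$ of $\iu$ and local generators $\bar g_1,\dots,\bar g_n$ of $\iw$, one lifts the $\bar g_j$ to sections $g_j$ of $\OX$ and the ideal $\iv \doteq (f_1,\dots,f_m,g_1,\dots,g_n) \subseteq \OX$ is locally of finite type and satisfies $i^{-1}\iv \cdot \mathcal O_Y = \iw$. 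The hardest (but still routine) point in the write-up is checking that this local construction glues to a well-defined global ideal $\iv \in \ideal_X$; this follows because the local lifts agree up to elements of $\iu$, which are quotiented out.

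With this correspondence in hand, injectivity is immediate: if $i^\beth(v) = i^\beth(v')$, then $v(\iw) = v'(\iw)$ for every $\iw \in \ideal_Y$, hence $v = v'$. Combined with continuity and the compact-to-Hausdorff argument, $i^\beth$ is a homeomorphism onto its image (in fact a closed embedding), concluding the proof.
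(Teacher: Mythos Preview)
Your proposal is correct and follows essentially the same approach as the paper: reduce to injectivity via the compact-to-Hausdorff argument, then use the surjectivity of $i^*\colon \OX\to\mathcal O_Y$ to lift any ideal $\iw\in\ideal_Y$ to its preimage $\iv=(i^*)^{-1}(\iw)\in\ideal_X$, which is locally of finite type because $\iu$ is. One minor remark: your discussion of gluing is unnecessary, since the preimage $(i^*)^{-1}(\iw)$ is already a globally well-defined subsheaf of $\OX$; only the local finite-type condition needs checking, and your generator argument handles that.
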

\begin{proof}
  Since $Y^\beth$ is compact it is enough to prove that $i^\beth$ is injective.

%Let $\iu$ be the ideal defining $Y$.
By definition, the morphism
\[i^*\colon \OX\twoheadrightarrow\mathcal O_Y\]
is surjective.

Therefore, if $v,u\in Y^\beth$, with $v\ne u$, then 
 there exists an ideal locally of finite type $\iv\in\ideal_Y$ such that $v(\iv)\neq u(\iv)$.
Moreover, $\iw\doteq (i^*)^{-1}(\iv)\subseteq \OX$ is an ideal locally of finite type, and  \[i^\beth v(\iw) = v(\iv)\neq u(\iv) = i^\beth u(\iw).\]
\end{proof}

\begin{remark}
We have just seen that the $\beth$ functor preserves embeddings, however it does not preserve open mappings.

In point of fact, if $U\subseteq X$ then $U^\beth$ will be a compact subset of $X^\beth$, even if $U$ is open on $X$.
\end{remark}

\subsubsection{Examples of semivaluations in $X^\beth$}

\begin{example}\label{ex;oxp}
Let $p\in X$, we denote by $X_p$ the affine scheme $\spec \OXp$.

We have a natural continuous map from the set of local semivaluations at $p$ to the set of global semivaluations on $X$, that to each $v\in (X_p)^\beth$  assigns the valuation:
 \[\ideal_X\ni\iu\mapsto v_p(\iu)\doteq v(\iu_p),\]
where $\iu_p$ denotes the stalk of $\iu$ at $p$.

\end{example}

\begin{remark}
If we suppose that the local ring $\OXp$ is noetherian (or at least its maximal ideal of finite type), the map \[v\mapsto v_p\] is injective on the set of semivaluations \emph{centered at $p$}, that is the set of semivaluations such that $v(m_p)>0$.

Indeed, if $v, v^\prime$, centered at $p$, are such that there exists an ideal $\iv\subseteq \OXp$ with \[v(\iv)\ne v^\prime(\iv),\]
then, for every $k\in \N$ consider the ideal 
\[\iv_k\doteq \iv+m_p^k\subseteq \OXp.\]
Since $\iv_k$ is primary at $p$ it extends to a global ideal, by triviality at any other point.
Hence 
\begin{align*}
v_p(\iv_k) &= v(\iv_k)=\min \{v(\iv), k\cdot v(m_p)\}\\
v_p^\prime(\iv_k) &= v^\prime(\iv_k)=\min \{v^\prime(\iv), k\cdot v^\prime(m_p)\},
\end{align*}
for every $k$.
 Letting $k\to +\infty$, it follows that
\[\lim_{k\to\infty}  v_p(\iv_k) = v(\iv), \quad \lim_{k\to\infty}  v_p^\prime(\iv_k) = v^\prime(\iv).\]
We thus have $v_p(\iv_k)\neq v_p^\prime(\iv_k)$ for $k\gg 1$, and hence  $v_p\neq v_p^\prime$.
\end{remark}

When $X$ is an analytic surface and $p\in X$ a regular point, $(X_p)^\beth$ was deeply studied in \cite{FJ04valuativetree}.

\subsection{PL functions}\label{section;pl}
Just like in the affine case, the space $X^\beth$ comes with a natural class of functions, which we will call the \emph{piecewise linear functions}.
The terminology will become clear in Section~\ref{sec;dual}.
\begin{defi}
Let $\iu\in\ideal_X$ be an ideal locally of finite type, we have a function $\log\lvert\iu\rvert\colon X^\beth \to \oRn$ that maps:
\[X^\beth\ni v\mapsto \log\lvert\iu\rvert\,(v) \doteq -v(\iu).\]

%The $\mathds Q$-vector space generated by such functions will be denoted $\PL_{\hom}$.

\end{defi}

Clearly, for every ideal $\iu$ the function $\log\lvert\iu\rvert$ is monotonic decreasing (with respect to the natural partial order on $X^\beth$).
Moreover:
\begin{equation}
\log\lvert\iu\mathbin{\cdot} \iv\rvert = \log\lvert\iu\rvert +\log\lvert\iv\rvert, \quad \log\lvert\iu+\iv\rvert = \max\{\log\lvert\iu\rvert, \log\lvert\iv\rvert\}.
\end{equation}

\begin{defi}
The set of functions $\psi\colon X^\beth \to \R$, of the form
\begin{equation}\label{eq;pl+}
v\mapsto \frac{1}{m}\max\left\{\log\lvert\iu_i\rvert (v) + k_i\right\}
\end{equation}
for  $\iu_1,\dotsc, \iu_N$ ideals locally of finite type, and integers $k_1,\dotsc, k_N\in\mathds Z$, $m\in \mathds N$,
is denoted $\PL^+\left(X^\beth\right)$, and the $\Q$-vector space it generates $\PL(X^\beth)\subseteq \Cz\left( X^\beth, \mathds R \right)$.

An element of $\PL(X^\beth)$ will be called a \emph{piecewise linear function}, we denote  $\PL_\R(X^\beth)\doteq \PL(X^\beth)\otimes_\Q \R$.
\end{defi}

%\begin{remark}
%Let $\varphi \in \PL^+$, there is a canonical way to represent $\varphi$ as in~\ref{eq;pl+}.
%Indeed, consider the following ideal:
%\begin{align*}
%\iv_\lambda \doteq \oplus_{\varphi_a + \lambda \leq \varphi} a
%\end{align*}
%\end{remark}

\begin{lemma}
$\PL^+(X^\beth)$ separates points.
\end{lemma}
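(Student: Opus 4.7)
The plan is to use the defining property of the tropical spectrum: two semivaluations in $X^\beth$ coincide if and only if they take the same value on every coherent ideal locally of finite type. So, given distinct $v,v'\in X^\beth$, I would first pick an ideal $\iu\in\ideal_X$ with $v(\iu)\neq v'(\iu)$. After swapping $v$ and $v'$ if necessary, I may assume
\[
v(\iu)<v'(\iu)\le +\infty,
\]
so in particular $v(\iu)$ is finite.

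Next, I would produce an explicit element of $\PL^+(X^\beth)$ separating the two points. Choose any integer $k>v(\iu)$ and set
\[
\psi\doteq \max\bigl\{\log|\iu|+k,\,0\bigr\}.
\]
Writing $0=\log|\OX|$, this is of the form \eqref{eq;pl+} with $m=1$, $N=2$, ideals $\iu_1=\iu$ and $\iu_2=\OX$, and integers $k_1=k$, $k_2=0$. The constant term $0$ inside the maximum guarantees that $\psi$ is everywhere finite, so it really defines a function $X^\beth\to \R$ and lies in $\PL^+(X^\beth)$.

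Finally, I would evaluate $\psi$ at $v$ and $v'$. At $v$ the choice $k>v(\iu)$ gives $\psi(v)=k-v(\iu)>0$. At $v'$ there are two cases. If $v'(\iu)=+\infty$, then $\log|\iu|(v')=-\infty$ and $\psi(v')=0\neq \psi(v)$. Otherwise $v'(\iu)$ is finite with $v'(\iu)>v(\iu)$; if $v'(\iu)\ge k$ then $\psi(v')=0\neq\psi(v)$, while if $v'(\iu)<k$ then $\psi(v')=k-v'(\iu)\neq k-v(\iu)=\psi(v)$. In every case $\psi(v)\neq \psi(v')$, which proves the lemma.

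The only mildly subtle point is the possibility that $v'(\iu)=+\infty$: this is precisely why one must clip by $0$ inside the maximum—i.e.\ incorporate the extra ideal $\OX$—rather than use $\log|\iu|$ alone, which would not be $\R$-valued. Once this is built in, the statement reduces to a direct unwinding of the definition of $X^\beth$ as $\tspec\ideal_X$.
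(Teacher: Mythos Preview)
Your proof is correct and follows essentially the same approach as the paper: pick an ideal $\iu$ distinguishing $v$ and $v'$, then truncate $\log|\iu|$ by a constant to obtain a finite-valued element of $\PL^+(X^\beth)$ that still separates the two points. The only cosmetic difference is the choice of cutoff---the paper picks a rational $\ell\in\left]w(\iu),v(\iu)\right[$ and uses $\max\{\log|\iu|,-\ell\}$, whereas you take an integer $k>v(\iu)$ and use $\max\{\log|\iu|+k,0\}$---which leads you to a short case split but avoids having to absorb a rational constant into the $\tfrac{1}{m}$ prefactor.
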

\begin{proof}
If $v,w\in X^\beth$ are distinct, then there exists $\iu\in \ideal_X$ such that $v(\iu)\neq w(\iu)$, with no loss of generality we can assume $w(\iu)<v(\iu)$, and thus taking \[\ell\in \left]\,w(\iu),\, v(\iu)\,\right[\,\bigcap\, \Q,\] and $\varphi \doteq \max\{\log\lvert \iu\rvert, -\ell\}\in\PL(X^\beth)$, we have  $\varphi(v) =-\ell<-w(\iu)=\varphi(w)$.
\end{proof}

\begin{prop}\label{prop:pldense}
$\PL(X^\beth)$ is dense in $\Cz(X^\beth,\R)$
\end{prop}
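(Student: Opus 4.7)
The plan is to apply the Kakutani--Stone theorem: a closed sublattice of $C^0(Y, \R)$ on a compact Hausdorff space $Y$ that contains the constants and separates points equals the whole space. I will apply it to the uniform closure $\overline{\PL}$ of $\PL(X^\beth)$ in $\Cz(X^\beth, \R)$. Points are already separated by $\PL^+$ by the preceding lemma, and taking $\iu = \OX$ and $k \in \mathds Z$ gives $\tfrac{1}{m}(\log|\OX| + mk) = k$, so $\PL$ contains $\Q$; its uniform closure then contains $\R$. The only real work is to verify $\PL$ is stable under $\max$ and $\min$.

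I would first show that $\PL^+(X^\beth)$ is stable under addition and maximum. Given $\varphi = \tfrac{1}{m}\max_i(\log|\iu_i| + k_i)$ and $\psi = \tfrac{1}{m'}\max_j(\log|\iv_j| + \ell_j)$, I bring both to the common denominator $mm'$. The maximum is then handled by taking the union of the indexing sets inside one big $\max$. For the sum, I use the distributive identity
\[
\max_i a_i + \max_j b_j = \max_{i,j}(a_i + b_j),
\]
combined with the ring-theoretic identities $\log|\iu| + \log|\iv| = \log|\iu\cdot\iv|$ and $c\log|\iu| = \log|\iu^c|$ for $c \in \N$, to rewrite $\varphi + \psi$ in the form~\eqref{eq;pl+}. (Closure of $\PL^+$ under positive rational scalars is similar, by absorbing numerators into powers of ideals and denominators into $m$.)

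For $\PL$, any element decomposes as $\varphi = \varphi_1 - \varphi_2$ with $\varphi_i \in \PL^+$ (since $\PL^+$ is closed under sums and positive rational multiples). For $\varphi, \psi \in \PL$, written as such differences, the elementary identity
\[
\max(\varphi_1 - \varphi_2, \psi_1 - \psi_2) = \max(\varphi_1 + \psi_2, \psi_1 + \varphi_2) - (\varphi_2 + \psi_2)
\]
expresses $\max(\varphi, \psi)$ as a difference of elements of $\PL^+$ (by the previous step), hence an element of $\PL$. Since $\PL$ is a $\Q$-vector space, $\min(\varphi, \psi) = -\max(-\varphi, -\psi)$ gives closure under $\min$ as well.

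With these steps, $\overline{\PL} \subseteq \Cz(X^\beth, \R)$ is a closed sublattice (the lattice operations are continuous for uniform convergence), contains all real constants, and separates points; Kakutani--Stone then yields $\overline{\PL} = \Cz(X^\beth, \R)$. The main obstacle is the algebraic manipulation in Step 1, which is the one place where the ring-theoretic nature of the construction is used; the rest is routine functional analysis.
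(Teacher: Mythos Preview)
Your proof is correct and follows the same approach as the paper: apply the lattice form of Stone--Weierstrass (which the paper cites in one line), using that $\PL$ is $\Q$-linear, contains the rational constants, separates points, and is stable under $\max$. The paper simply asserts the $\max$-stability, whereas you supply the details via the identity $\max(\varphi_1-\varphi_2,\psi_1-\psi_2)=\max(\varphi_1+\psi_2,\psi_1+\varphi_2)-(\varphi_2+\psi_2)$; this is a welcome expansion but not a different strategy.
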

\begin{proof}
Since PL is a $\Q$-linear subspace of $\Cz$ stable by $\max$, containing the ($\Q$-) constants, and separating points, the result follows from the lattice version of the Stone-Weierstrass Theorem.
\end{proof}

\section{Semivaluations on a complex space $X$}
\label{sec;Xbethanalytic}
In this section we study $X^\beth$ attached to a compact analytic variety $X$.
From now on $X$ will always denote such a space.

For a complete reference on the more standard algebraic setting see \cite{BJ22trivval}, most of the results proved here, are direct analogues of results found there.

We recall the defintion of $X^\beth$:
\[
X^\beth= \left\{
\begin{array}{ccc}
         & \vline &  v(\iu\mathbin{\cdot} \iv) = v(\iu)+v(\iv) \\
        v\colon \ideal_X\to {\oR} & \vline & v(\iu +\iv) = \min\{v(\iu), v(\iv)\} \\
        & \vline & v(\OX) =0 \enspace \& \enspace v(0_X) = +\infty 
\end{array}\right\},
\]
where $\ideal_X$ denotes the set of ideals locally of finite type, that for a complex space coincides with the set coherent ideals by Oka's theorem.

We call an element, $v\in X^\beth$, a \emph{semivaluation}.
For $D\subseteq X$ an effective divisor, and $v\in X^\beth$ we set: 
\[v(D) \doteq v\left(\OX(-D)\right).\]

\subsection{Support and center of a semivaluation}

\begin{lemma}\label{lem;center}
Let $v\in X^\beth$ be a semivaluation, then there exist unique coherent ideals $\iu_s(v),\iu_c(v)$ that satisfy
\begin{equation}
v(\iu_s) = \infty, \quad \text{ and }\quad v(\iu_c)>0, \label{eq:s1}
\end{equation}
and are maximal with this property.
Moreover,  $\iu_s(v)$ and $\iu_c(v)$ are prime ideals.
\end{lemma}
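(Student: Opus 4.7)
The plan is to produce $\iu_s(v)$ and $\iu_c(v)$ as the suprema of the two families
\[
\cF_\infty \doteq \{\iu \in \ideal_X : v(\iu) = +\infty\}, \qquad \cF_+ \doteq \{\iu \in \ideal_X : v(\iu) > 0\},
\]
and then read off primality from the multiplicativity of $v$.

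The key observation is that both families are closed under finite sums: the ultrametric identity $v(\iv+\iw)=\min\{v(\iv),v(\iw)\}$ shows that if $v(\iv), v(\iw)$ are both $+\infty$ (respectively both $>0$), then so is $v(\iv+\iw)$. To turn this into an actual maximum I would invoke the ascending chain condition for coherent ideals on the compact complex variety $X$: since each stalk $\OXp$ is Noetherian and the successive quotients $\iv_{n+1}/\iv_n$ of an increasing chain are coherent, stabilisation at each point extends to a neighbourhood by coherence, and then to all of $X$ by compactness. Granting ACC, the directed system of finite sums inside $\cF_\infty$ (respectively $\cF_+$) admits a largest element, which must then contain every member of the family and is therefore the unique supremum; I denote these by $\iu_s(v)$ and $\iu_c(v)$. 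Uniqueness as maximal elements also follows directly, since the sum of two distinct maximal elements would be a strictly larger element of the same family.

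Primality is then a short calculation. Suppose $\iu_s(v)\supseteq\iv\cdot\iw$. Then $+\infty = v(\iu_s(v)) \le v(\iv\cdot\iw) = v(\iv)+v(\iw)$, and since both summands take values in $\oR$, at least one of them must equal $+\infty$; maximality of $\iu_s(v)$ in $\cF_\infty$ then forces $\iv \subseteq \iu_s(v)$ or $\iw \subseteq \iu_s(v)$. For $\iu_c(v)$ the identical chain of inequalities yields $v(\iv)+v(\iw)>0$, and since $v$ is non-negative on coherent ideals, one of $v(\iv),v(\iw)$ must be strictly positive, whence the same maximality argument gives $\iv \subseteq \iu_c(v)$ or $\iw \subseteq \iu_c(v)$.

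The main technical ingredient is the ACC for coherent ideals on a compact analytic variety; once that is in hand, the rest is a formal manipulation of the valuation axioms. In particular, I do not expect to use either the irreducibility or normality of $X$ anywhere in the argument, and the whole statement should remain valid for any compact complex space with Noetherian stalks.
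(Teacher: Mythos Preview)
Your proof is correct and follows essentially the same route as the paper: both construct $\iu_s(v)$ and $\iu_c(v)$ as the suprema of the families $\cF_\infty$ and $\cF_+$, invoke the ascending chain condition for coherent ideals (which the paper calls the \emph{strong noetherian property}) to ensure these suprema are coherent, and then deduce primality from the multiplicativity of $v$ exactly as you do. Your justification of ACC via Noetherian stalks, coherence of the quotients, and compactness is slightly more explicit than the paper's, but the argument is the same.
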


\begin{proof}
Let $S\subseteq\ideal_X$ be the set of ideals on which $v$ is infinite, $ C$ the set on which $v$ is positive, and take:
\[\iu_s\doteq\sum_{\iv\in S} \iv,\quad \text{ and }\quad \iu_c\doteq \sum_{\iv\in C}\iv.\]
By the strong noertherian property, $\iu_s$ and $\iu_c$ are coherent, and by construction satisfy \eqref{eq:s1}.

For primality, if $\iv $ an $\iw$ are ideals on $X$, such that $\iv\mathbin{\cdot} \iw \subseteq \iu_c(v)$, then $0<v(\iu_c)\leq v(\iv\mathbin{\cdot} \iw)= v(\iv) + v(\iw)\implies $ either $0<v(\iv)$ or $0<v(\iw)$, and hence either $\iv\subseteq \iu_c$ or $\iw\subseteq \iu_c$, which implies that $\iu_c$ is prime. 
We proceed similarly for $\iu_s$.
\end{proof}

\begin{defi}
Let $v\in X^\beth$, we denote by $S_X(v)= S(v)$, the \emph{support of $v$}, the subvariety of $X$ given by $\iu_s$.
In the same fashion, we denote by $Z_X(v) = Z(v)$, the \emph{central variety of $v$}, the subvariety attached to $\iu_c$.
\end{defi}

\begin{remark}\label{rem;centerp}
Since $\iu_s\subseteq \iu_c$ it follows that $Z(v)\subseteq S(v)$.

Moreover, for any $p\in X$ the global ideal $m_p$ is maximal, and thus 
\[Z_X(v) = \{p\} \iff v(m_p)>0, \quad \text{ and } \quad S_X(v) =\{p\} \iff v(m_p)= \infty.
\]
\end{remark}
It is easy to see that the support and the center are well-behaved under inclusions.

If $v$ is finite valued on nonzero ideals, i.e. $S_X(v) = X$, then we will say that $v$ is \emph{valuation} on $X$.
We will denote the set of valuations by $\Xval$.
The support thus give us the following decomposition
\begin{equation}
X^\beth = \bigsqcup_{Y\subseteq X} Y^{\val} 
\end{equation}
where $Y$ ranges over all the subvarieties of $X$.

We can think the set of valuations centered at $\{p\}$ in terms of the Example~\ref{ex;oxp}:
\begin{example}\label{ex;centeredval}
The assignment \[(X_p)^\beth\ni v\mapsto v_p\in X^\beth\] induces a bijection from the set of local semivaluations centered at $p$ to the set of global semivaluations of $X$ centered at $p$.

Indeed, by Remark~\ref{rem;centerp} it is clear that the mapping sends semivaluations centered at $p$ to semivaluations centered at $p$. 

On the other hand, if $Z_X(v)=\{p\}$, $f\in \OXp$, and $\iu_f$ the local ideal generated by $f$.
We denote \[\iu_k \doteq \iu_f+m_p^k\] and  observe:
\begin{enumerate}
\item $\iu_k$ can be extended to a global ideal, just by triviality outside $p$.
\item $\iu_{k+1} = \iu_k +m_p^{k+1}\subseteq \iu_k$.
\end{enumerate}
We define $\nu(f)$ as the decreasing limit: \[\nu(f) \doteq \lim_{k\to\infty} v(\iu_k) =\lim_{k\to\infty} \min\{v(\iu_{k-1}), k\mathbin{\cdot} v(m_p)\}\in [0,+\infty].\]
It is easy to see that $\nu$ is a valuation and that  $\nu_p = v$.

\end{example}

%We would like to think of $X^\beth$ as the gluing all the $X_p^\beth$ together, along the the semivaluations that are not centered at $p$. 
%Up until now we don't have the proper setting to make this precise. 
In the next section we will use the above example to reconstruct $X^\beth$ from $(X_p)^\beth$ for every $p$, whenever $X$ a smooth analytic curve. 
%When, in that case, the previous lemma is enough  to give a complete description of $X^\beth$.

\begin{remark}
%This was remarked to me by Mingchen Xia.
%Up until now everything in this section could make sense in a more general setting.
A locally ringed space $(X, \OX)$ satisfies the \emph{strong noetherian property} if locally every increasing chain of $\OX$-ideals, locally of finite type, is locally stationary.

This is the property needed to defined the center and support of a semivaluation, since Lemma~\ref{lem;center} holds in this case.
%Besides complex analytic spaces, and noetherian schemes,  Berkovich spaces satisfy this property as well.%\marginpar{\textcolor{red}{add reference}}
\end{remark}

\begin{prop}\label{prop:bijval}
If $\pi\colon Y\to X$ is a bimeromorphic morphism,  the map
\[\pi^\beth\colon Y^\beth \to X^\beth\] induces a bijection $Y^{\val} \simeq X^{\val}$.
\end{prop}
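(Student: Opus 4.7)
The plan is to prove the bijection in three steps: well-definedness of $\pi^\beth|_{Y^{\val}}$ into $X^{\val}$, injectivity, and surjectivity. For the first step, let $v\in Y^{\val}$ and $\iu\subseteq\OX$ be a nonzero coherent ideal. Since $\pi$ is bimeromorphic, it restricts to an isomorphism over a dense open subset of $X$, so the pullback $\pi^*\iu$ is a nonzero coherent ideal of $\mathcal O_Y$; since $v$ has full support, $\pi^\beth(v)(\iu)=v(\pi^*\iu)<+\infty$, and hence $\pi^\beth(v)\in X^{\val}$.

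For injectivity, I would extend each $v\in Y^{\val}$ to a Krull valuation $\tilde v$ on the meromorphic function field $\C(Y)$, and observe that $v$ is recovered from $\tilde v$ via $v(\iv)=\min_i\tilde v(f_i)$ over local generators $f_i$ of a coherent ideal $\iv\subseteq\mathcal O_Y$. Since $\pi$ is bimeromorphic between irreducible analytic varieties, the pullback $\pi^*\colon\C(X)\xrightarrow{\sim}\C(Y)$ is a field isomorphism, and by construction $\pi^\beth(v)$ extends to the valuation $\tilde v\circ\pi^*$ on $\C(X)$. Hence $\pi^\beth(v_1)=\pi^\beth(v_2)$ forces $\tilde v_1=\tilde v_2$, and therefore $v_1=v_2$.

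For surjectivity, given $w\in X^{\val}$, I would construct its preimage $v\in Y^{\val}$ by first viewing $w$ as a Krull valuation on $\C(X)=\C(Y)$; the properness of $\pi$ should ensure the existence of a unique point $q\in Y$ whose local ring $\mathcal O_{Y,q}$ is dominated by the valuation ring of $w$. I would then define $v$ on coherent ideals $\iv\subseteq\mathcal O_Y$ by $v(\iv)=\min_i w(f_i)$ over local generators $f_i\in\mathcal O_{Y,q}$ of $\iv$, verify the semivaluation axioms and independence of the choice of generators, and check $\pi^\beth(v)=w$ on pullbacks $\pi^*\iu$ by matching generators directly.

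The main obstacle is the analytic version of the valuative criterion of properness, that is the existence of a center $q\in Y$ for the abstract valuation $w$. I expect to handle this either by passing to the local algebraic models $X_p=\spec\OXp$ already introduced in the paper, thereby reducing to the classical scheme-theoretic valuative criterion, or by dominating $\pi$ via a Hironaka resolution to reach a projective modification where the criterion applies directly.
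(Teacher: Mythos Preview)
Your approach via the meromorphic function field has a fundamental gap in the transcendental setting. For a general compact analytic variety $X$, the field $\C(X)$ of global meromorphic functions may be trivial (equal to $\C$); a generic complex torus, for instance, carries no nonconstant meromorphic function. In that case your extension $\tilde v$ of $v\in Y^{\val}$ to $\C(Y)$ carries no information whatsoever, and you cannot recover $v$ from $\tilde v$ by the formula $v(\iv)=\min_i\tilde v(f_i)$. The paper flags exactly this obstacle in the introduction (the footnote explaining why one cannot define $X^\beth$ through valuations on function fields of subvarieties). Your injectivity argument therefore collapses, and surjectivity inherits the same defect: viewing $w\in X^{\val}$ as a Krull valuation on $\C(X)\simeq\C(Y)$ discards the data needed to reconstruct it on $Y$.

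The paper avoids function fields entirely. It first treats the case where $Y$ carries an anti-effective $\pi$-ample divisor $E$: then every ideal $\iv$ on $Y$ satisfies $\iv\cdot\mathcal O_Y(E)^{m_\iv}=\pi^*\iu_\iv$ for some ideal $\iu_\iv$ on $X$ and $m_\iv\gg 0$, and likewise $\mathcal O_Y(E)=\pi^*\iw$. The inverse of $\pi^\beth$ on valuations is then written down explicitly as $v_X\mapsto\bigl(\iv\mapsto v_X(\iu_\iv)-m_\iv\,v_X(\iw)\bigr)$, and injectivity follows from the same identity. The general case is reduced to this one by dominating both $X$ and $Y$ by a common Hironaka resolution $Y'$, which does admit such an $E$ by the Negativity Lemma; a short diagram chase then finishes. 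Your idea of passing to local algebraic models $X_p$ is in the right spirit and is used elsewhere in the paper, but for this proposition the global ideal-theoretic route is both cleaner and necessary.
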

\begin{proof}
We first observe that $\pi^\beth$ maps valuations to valuations:
 if $v\in Y^\beth$ is finite valued on the set of non-zero ideals of $Y$, then we need to check that, for a non-zero ideal $\iu$ of $X$, $\pi^{-1}(\iu)$ is not zero.
Let $U\subseteq X$ be an open set such that $\iu$ is not zero, then $\pi^{-1}(U)$ is an open set bimeromorphic to $U$, and 
\[\pi^{-1}\iu\, (U) = \{0\} \iff \pi(U)\subseteq Z_\iu,\]
where $Z_\iu$ is the zero locus of $\iu$, that has strictly positive codimension on $U$, therefore $\pi^{-1}\iu\ne 0$.
Thus $\pi^\beth (v)(\iu)<+\infty$, for $\iu$ a non-zero ideal on $X$.

Let's prove the that $\pi^\beth$ induces the desired bijection.

First suppose that there exists an anti-effective divisor $E\subseteq Y$ that is $\pi$-ample.
Then for every ideal of $Y$, $\iv\in \ideal_Y$, choosing $m_\iv\in\mathds N$ sufficiently large, $\iv\mathbin{\cdot} \mathcal O_Y(E)^{m_\iv}$ is $\pi$-globally generated, that implies
\begin{equation}
	\pi^{-1}\iu_\iv\cdot \cO_Y = \iv\mathbin{\cdot} \mathcal O_Y(E)^{m_\iv},
\end{equation}
for some ideal of $X$, $\iu_\iv\in\ideal_X$.
Since $\mathcal O_Y(E)$ is also $\pi$-globally generated we can also find $\iw\in \ideal_X$ such that
\[\pi^{-1} \iw\cdot \cO_Y = \mathcal O_Y(E).\]

Hence, given $v_X\in\Xval$ the function: 
\begin{equation}
\ideal_Y\ni \iv\mapsto v_X\left (\iu_\iv\right ) - m_\iv v_X\left(\iw\right)\footnote{Observe that we can take the difference because $v_X$ is finite valued.}
\end{equation}
is a valuation on $Y$, which we will denote by $v_Y$, such that $\pi^\beth(v_Y) = v_X$.

If, for $v, w\in Y^{\val}$, $\pi^\beth(v) = \pi^\beth(w)$, then for every $\iv$ ideal on $Y$:
\begin{align*}
	v(\iv) &=v\left(\iv\mathbin{\cdot} \mathcal O_Y(E)^{m_\iv}\right) - m_\iv v\left(\mathcal O_Y(E)\right)\\
	&= v(\pi^{-1}\iu_\iv\cdot \cO_Y) - m_\iv v(\pi^{-1} \iw\cdot \cO_Y) \\
	&=  \pi^\beth(v)(\iu_\iv) - m_\iv\pi^\beth(v)(\iw)\\
	&= \pi^\beth(w)(\iu_\iv) - m_\iv\pi^\beth(w)(\iw)\\
	&= w(\pi^{-1}\iu_\iv\cdot \cO_Y) - m_\iv w(\pi^{-1} \iw\cdot \cO_Y) \\
	&=w\left(\iv\mathbin{\cdot} \mathcal O_Y(E)^{m_\iv}\right) - m_\iv w\left(\mathcal O_Y(E)\right)= w(\iv),
\end{align*}
which implies that $v=w$.

If $Y$ does not admit a divisor $E$ as above,  by Hironaka, we can find $Y^\prime$ a smooth complex analytic space:
\begin{center}
% https://tikzcd.yichuanshen.de/#N4Igdg9gJgpgziAXAbVABwnAlgFyxMJZABgBpiBdUkANwEMAbAVxiRAE0A9AHW7QCcsAW1YBfUuky58hFAEZScqrUYs27EOMnY8BImQBMy+s1aIQADU3KYUAObwioAGb8IQpGRA4ISA9RM1c14hJk0JEFd3JAVvX0R-FVM2XjQscJc3D0QvHxiA1TMQXjAw0QpRIA
\begin{tikzcd}
Y^\prime \arrow[dd, "\mu"] \arrow[rd, "\nu"] &                     \\
                                             & Y \arrow[ld, "\pi"] \\
X                                            &                    
\end{tikzcd}
\end{center}
such that $\mu$ is a sequence of blow-ups of smooth center, and $\nu$ is a bimeromorphic morphism.

Thus taking $E_\mu$ the exceptional divisor of $\mu$, we have --by the Negativity Lemma of \cite[Lemma~3.39]{KM98birational}-- that $E_\mu$ will be anti-effective, and relatively ample, giving  the bijection
\begin{equation}
% https://tikzcd.yichuanshen.de/#N4Igdg9gJgpgziAXAbVABwnAlgFyxMJZABgBpiBdUkANwEMAbAVxiRAAoBNAPQB1e0AJywBbGAEpuwfvQYBfEHNLpMufIRQBGUpqq1GLNj2m9ZCpSux4CRMgCY9sw4hD8AGrMV6YUAObwiUAAzQQgRJG0QHAgkO2onVhd+NCw+XgAjGBwACxBqHDosBjZIMFZqbJg6KCQwJgZ5ZRAQsKQyKJjESIS2fjq0zJy8qMLil2yICABrYcrqthwAdwg5qAQLZtDwxHbo2PiDRNdeESYBrNz80bYJ6dmqmpcllYf1ijkgA
\begin{tikzcd}
(Y^\prime)^{\val} \arrow[rd, "\nu^\beth"] \arrow[dd, "\mu^\beth", two heads, hook] &                                  \\
                                                                                                    & Y^{\val} \arrow[ld, "\pi^\beth"] \\
\Xval                                                                                               &                                 
\end{tikzcd}
\end{equation}
which implies that $\pi^\beth|_{Y^{\val}}$ is surjective on $X^{\val}$, and that $\nu^\beth|_{(Y^\prime)^{\val}}$ is injective.
In turn, the same argument gives the surjection \[\nu^\beth\colon (Y^\prime)^{\val}\twoheadrightarrow Y^{\val}.\] 
Since $\mu^\beth$ and $\nu^\beth$ induce bijections on the set of valuations, so does $\pi^\beth$.
\end{proof}

\subsection{Integral closure of an ideal and PL functions}
In this section we can be in a slightly more general setting of either complex analytic spaces or excellent schemes of equi-characteristic $0$.
The two examples, we have in mind are complex analytic spaces, and the scheme $\spec \mathcal O_{\C^n, 0}$.

For simplicity of the exposure from here on $X$ will be of normal singularities, see \cite{GR12coherent} for a complete reference on normal complex spaces.

\begin{defi}
Let $\iu\in\ideal_X$, we consider $\iuc$ the \emph{integral closure of $\iu$} to be the ideal given locally by all the elements $f\in \OX$ that satisfy a polynomial equation 
\begin{equation}
f^d = \sum_{i=0}^{d-1} a_i f^{i}
\end{equation}
for $a_i\in \iu$.
\end{defi}
It turns out that for complex analytic spaces, or excellent schemes as above, the ideal $\iuc$ is a coherent ideal, and hence $\iuc\in \ideal_X$.

This follows indeed from the following geometric description of the integral closure of an ideal, that will be useful later, given by the follwoing results:
\begin{prop}\label{blowup}
Let $\iu\in \ideal_X$, $\nu\colon Y \to X$ the normalized blow-up of $X$ along $\iu$, and $E\subseteq Y$ the exceptional divisor. 
We then have that $\nu_*(\mathcal O_{Y}(-E)) = \iuc$.
\end{prop}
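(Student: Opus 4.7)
The plan is to prove the two inclusions separately; both are local, and since $\iuc$ and $\nu_*\mathcal O_Y(-E)$ are coherent subsheaves of $\OX$, it suffices to check equality on sections on a small neighborhood of each point $p\in X$. The two standing inputs are: by construction of the normalized blow-up, the ideal $\iu\cdot\mathcal O_Y = \mathcal O_Y(-E)$ is invertible on the normal space $Y$; and the formation of the normalized blow-up is compatible with passing to the local algebraic model $X_p = \spec\OXp$ at $p$, so that the problem can ultimately be reduced to the classical noetherian setting of the excellent local ring $\OXp$.

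For the inclusion $\iuc\subseteq \nu_*\mathcal O_Y(-E)$, I would take a germ $f\in \iuc$ satisfying an integral equation $f^d + a_1 f^{d-1}+\dotsb +a_d=0$ with $a_i\in\iu^i$, and localize on $Y$ to an open $U$ where $\mathcal O_Y(-E)$ is trivialized by a local equation $t$ of $E$. Since $a_i\in \iu^i\cdot\mathcal O_Y(U)\subseteq (t^i)$, one may write $a_i = t^i b_i$ with $b_i\in \mathcal O_Y(U)$; dividing the pulled-back equation by $t^d$ yields
\[\bigl(\nu^* f/t\bigr)^d + b_1\bigl(\nu^*f/t\bigr)^{d-1}+\dotsb +b_d = 0,\]
so $\nu^* f / t$ is integral over $\mathcal O_Y(U)$. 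Normality of $Y$ then forces $\nu^* f/t\in \mathcal O_Y(U)$, i.e.\ $\nu^* f \in (t) = \mathcal O_Y(-E)(U)$, giving $f\in \nu_*\mathcal O_Y(-E)$.

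For the reverse inclusion I would invoke the valuative characterization of the integral closure: a germ $f\in\OXp$ lies in $\iuc$ if and only if $v(f)\ge v(\iu)$ for every discrete valuation $v$ of $\ff(\OXp)$ centered on the closed subscheme cut out by $\iu$. By the Rees valuation theorem, it is enough to test this against the valuations $\ord_{E_i}$ attached to the irreducible components $E_i$ of $E$; and the condition $\nu^*f\in \mathcal O_Y(-E)$ is precisely $\ord_{E_i}(\nu^*f)\ge \ord_{E_i}(E) = \ord_{E_i}(\iu)$ for every $i$, so $f\in\iuc$.

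The main obstacle lies in this last step, namely transferring the Rees-valuation criterion to the analytic setting. Concretely, one must check that $\iuc\cdot\OXp = \overline{\iu\cdot\OXp}$ for every $p$, which follows from the excellence of $\OXp$, and that the normalized blow-up of $X$ along $\iu$ pulls back to the normalized blow-up of $\spec\OXp$ along $\iu\cdot\OXp$, so that the components $E_i$ really do provide the Rees valuations of $\iu\cdot\OXp$. Granting these compatibilities, the classical noetherian statement transfers to our setting, and combined with the easy inclusion from the second paragraph this closes the proof.
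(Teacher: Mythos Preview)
The paper does not actually prove this statement; its proof consists solely of a citation to \cite[Proposition~9.6.6]{lazarsfeld2017positivity2}. Your two-inclusion argument is precisely the standard proof behind that reference, so in that sense you are following the same route: the inclusion $\iuc\subseteq\nu_*\mathcal O_Y(-E)$ via the integral equation and normality of $Y$, and the converse via the valuative criterion together with the fact that the components of $E$ give the Rees valuations of $\iu$.

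The point you flag as the main obstacle---that Lazarsfeld works in the algebraic category, so one must reduce to the excellent local rings $\OXp$ to transport the Rees-valuation criterion to the analytic setting---is well taken and is the only genuinely nontrivial step beyond the cited result; the paper leaves this transfer implicit. One small aside: the paper's displayed definition of $\iuc$ has $a_i\in\iu$, which appears to be a slip for the standard condition $a_i\in\iu^{d-i}$; you correctly work with the latter, without which the proposition would fail.
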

\begin{proof}
See \cite[Proposition~9.6.6]{lazarsfeld2017positivity2} 
\end{proof}

\begin{corollary}\label{uniqueness}
Let $\mu\colon Y\to X$ be a normal projective modification of $X$, $D\subset Y$ an effective divisor such that 
\begin{equation}\label{cor1}
\mathcal O_Y (-D) = \iu\mathbin{\cdot} \mathcal O_Y
\end{equation}
for some $\iu$  ideal of $\mathcal O_{X}$. Then  $\mu_*(\mathcal O_Y (-D)) = \iuc$.
\end{corollary}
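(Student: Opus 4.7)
The plan is to prove the two inclusions $\iuc \subseteq \mu_*\mathcal O_Y(-D)$ and $\mu_*\mathcal O_Y(-D) \subseteq \iuc$ separately. Since the claim is local on $X$ and both sides are coherent ideals of $\OX$, I may shrink $X$ freely. The first inclusion is a direct consequence of the integral-dependence relation defining $\iuc$; the second will be reduced to the normalized blow-up case of Proposition~\ref{blowup} by passing to a common modification.

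For $\iuc \subseteq \mu_*\mathcal O_Y(-D)$, I would pick $f \in \iuc$ satisfying an integral relation $f^d + a_1 f^{d-1} + \cdots + a_d = 0$ with $a_k \in \iu^k$. Pulling back to $Y$ gives the same identity for $\mu^* f$, now with $a_k \in \iu^k \mathcal O_Y = \mathcal O_Y(-kD)$. For each irreducible component $D_i$ of $D$ with multiplicity $c_i$, the assumption $\ord_{D_i}(\mu^* f) < c_i$ would force $(\mu^* f)^d$ to have strictly smaller order along $D_i$ than every other term $a_k(\mu^* f)^{d-k}$, contradicting the relation. Hence $\ord_{D_i}(\mu^* f) \geq c_i$ for every $i$, so $\mu^* f \in \mathcal O_Y(-D)$ and $f \in \mu_*\mathcal O_Y(-D)$.

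For the reverse inclusion, let $\nu\colon Y_0 \to X$ denote the normalized blow-up of $\iu$ with exceptional divisor $E_0$, so that $\iu \mathcal O_{Y_0} = \mathcal O_{Y_0}(-E_0)$ and $\nu_* \mathcal O_{Y_0}(-E_0) = \iuc$ by Proposition~\ref{blowup}. Replacing $Y$ by its normalization if necessary (which only enlarges the pushforward in question), I would invoke Hironaka's resolution to dominate both $Y$ and $Y_0$ by a common smooth modification $\pi\colon W \to X$ via proper bimeromorphic morphisms $g\colon W \to Y$ and $h\colon W \to Y_0$. On $W$ the ideal $\iu \mathcal O_W$ is locally principal and simultaneously equal to $\mathcal O_W(-g^* D)$ and $\mathcal O_W(-h^* E_0)$, so $g^* D = h^* E_0$. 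For any $f \in \mu_*\mathcal O_Y(-D)$ the pullback $\pi^* f = g^* \mu^* f$ lies in $\mathcal O_W(-g^*D) = h^* \mathcal O_{Y_0}(-E_0)$; applying $h_*$ together with the analytic Zariski main theorem $h_*\mathcal O_W = \mathcal O_{Y_0}$ (valid since $Y_0$ is normal and $h$ proper bimeromorphic) yields $\nu^* f \in \mathcal O_{Y_0}(-E_0)$, whence $f \in \iuc$. The only delicate point is the analytic packaging of two classical algebraic tools—existence of a common resolution of singularities and the identity $h_*\mathcal O_W = \mathcal O_{Y_0}$—both of which are standard for complex spaces and should be cited rather than reproved.
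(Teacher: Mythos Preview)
Your argument is correct but misses the shortcut the paper takes. The hypothesis $\mathcal O_Y(-D)=\iu\cdot\mathcal O_Y$ says exactly that $\iu$ becomes invertible on $Y$, so by the universal property of blowing up, $\mu$ factors through $\operatorname{Bl}_\iu X$, and then (as $Y$ is normal) through the normalized blow-up $\widetilde{\operatorname{Bl}_\iu X}$; the projection formula together with Proposition~\ref{blowup} then gives the equality $\mu_*\mathcal O_Y(-D)=\iuc$ in one stroke. You instead prove the two inclusions separately: the first by a direct valuative computation from the integral-dependence relation, the second by constructing via Hironaka a common resolution $W$ dominating both $Y$ and the normalized blow-up. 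The common resolution is unnecessary---$Y$ itself already sits over $\widetilde{\operatorname{Bl}_\iu X}$---so your appeals to Hironaka and to Zariski's main theorem for $h\colon W\to Y_0$ collapse to the single identity $h_*\mathcal O_Y=\mathcal O_{\widetilde{\operatorname{Bl}_\iu X}}$. Your route has the minor virtue that the inclusion $\iuc\subseteq\mu_*\mathcal O_Y(-D)$ is obtained without invoking Proposition~\ref{blowup}, but overall it brings in more machinery than the hypothesis warrants.
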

\begin{proof}
By Equation~\eqref{cor1}  $\mu$ factors through the normalized blow-up of $X$ along $\iu$, \(\widetilde{\operatorname{Bl}_\iu X}\to X\), and the result follows from Proposition~\ref{blowup}.
\end{proof}

\begin{lemma}\label{closure}
If $\iu$ is a coherent ideal then the associated function satisfies
\begin{equation}
\log\lvert \iu\rvert = \log\lvert \iuc\rvert
\end{equation}
on $X^{\val}$.
\end{lemma}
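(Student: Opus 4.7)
The inequality $\log|\iu| \le \log|\iuc|$ on $X^{\val}$ is immediate: since $\iu \subseteq \iuc$, every generator of $\iu$ lies in $\iuc$, so $v(\iuc) \le v(\iu)$ for any $v \in X^{\val}$. The real content is the reverse inequality $v(\iuc) \ge v(\iu)$, and my plan is to prove it geometrically by pulling back to the normalized blow-up, where $\iu$ becomes an invertible ideal and where $\iu$ and $\iuc$ will be shown to generate \emph{the same} ideal.

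Concretely, let $\nu\colon Y \to X$ be the normalized blow-up of $X$ along $\iu$, with exceptional Cartier divisor $E$, so that $\iu \cdot \mathcal{O}_Y = \mathcal{O}_Y(-E)$. Given $v \in X^{\val}$, Proposition~\ref{prop:bijval} yields a unique lift $\tilde v \in Y^{\val}$ with $\nu^\beth \tilde v = v$. Since $\nu^\beth$ is computed by pullback of ideals, we get
\[
v(\iu) = \tilde v\bigl(\iu \cdot \mathcal{O}_Y\bigr) = \tilde v\bigl(\mathcal{O}_Y(-E)\bigr), \qquad v(\iuc) = \tilde v\bigl(\iuc \cdot \mathcal{O}_Y\bigr).
\]
Thus it suffices to establish the equality of ideal sheaves $\iuc \cdot \mathcal{O}_Y = \mathcal{O}_Y(-E)$ on $Y$.

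The inclusion $\mathcal{O}_Y(-E) \subseteq \iuc\cdot\mathcal{O}_Y$ follows from $\iu \subseteq \iuc$. For the reverse inclusion, I would argue locally on $Y$: let $s$ be a local generator of the invertible ideal $\mathcal{O}_Y(-E)$, and let $f$ be a local section of $\iuc$. Then $f$ satisfies an integral relation $f^d + a_1 f^{d-1} + \cdots + a_d = 0$ with $a_j \in \iu^j$. Pulling back to $Y$, each $a_j$ lies in $(\iu\cdot\mathcal O_Y)^j = \mathcal{O}_Y(-jE)$, so we may write $a_j = s^j b_j$ with $b_j \in \mathcal{O}_Y$. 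Dividing the integral equation by $s^d$ gives a monic integral relation
\[
(f/s)^d + b_1 (f/s)^{d-1} + \cdots + b_d = 0
\]
for $f/s$ over $\mathcal{O}_Y$ in the function field. Since $Y$ is normal, this forces $f/s \in \mathcal{O}_Y$, hence $f \in s\mathcal{O}_Y = \mathcal{O}_Y(-E)$ locally. This yields $\iuc\cdot\mathcal{O}_Y \subseteq \mathcal{O}_Y(-E)$, completing the equality and the lemma.

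The one delicate point is the use of normality of $Y$ at the division step; without normalizing the blow-up, the local element $f/s$ need not lie in $\mathcal{O}_Y$, and indeed the discrepancy between the blow-up and its normalization is \emph{exactly} the contribution of the integral closure, which is the content of Proposition~\ref{blowup} and Corollary~\ref{uniqueness}. I also implicitly use that $v \in X^{\val}$ (rather than an arbitrary semivaluation), which is precisely the hypothesis that lets Proposition~\ref{prop:bijval} produce the lift $\tilde v$.
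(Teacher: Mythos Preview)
Your proof is correct and takes a genuinely different route from the paper. The paper's argument is purely algebraic and one line long: it invokes the existence of an integer $k$ with $\iu\cdot\overline{\iu^k}=\iuc\cdot\overline{\iu^k}$ (a reduction-type identity, cited from an external reference), then applies $\log|\cdot|$ and cancels the finite term $\log|\overline{\iu^k}|$ on $X^{\val}$. Your approach is instead geometric: you lift $v$ to the normalized blow-up via Proposition~\ref{prop:bijval} and prove directly that $\iuc\cdot\mathcal O_Y=\iu\cdot\mathcal O_Y=\mathcal O_Y(-E)$ using the integral equation and normality of $Y$. Your argument is more self-contained within the paper (it uses only results already proved there rather than an outside commutative-algebra fact), and it makes explicit both why normality of the blow-up is essential and why the hypothesis $v\in X^{\val}$ is needed (to invoke Proposition~\ref{prop:bijval}); the paper's proof is shorter but relies on the imported identity, and the finiteness of $\log|\overline{\iu^k}|$ on $X^{\val}$ plays the analogous role to your lifting step. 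Note incidentally that your key equality $\iuc\cdot\mathcal O_Y=\mathcal O_Y(-E)$ is essentially the content behind Proposition~\ref{blowup}, so your argument also clarifies how that proposition and Lemma~\ref{closure} are two faces of the same fact.
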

\begin{proof} 
Let $k\in\mathds N$ such that $\iu \mathbin{\cdot}\overline{\iu^k} = \overline \iu\mathbin{\cdot}\overline{\iu^k}$ \footnote{See \cite[Remark 8.7]{Bou18l2}},  
\begin{align*}
\log\lvert \iu\rvert + \log\lvert{\overline{\iu^k}}\rvert =\log\lvert\iu \mathbin{\cdot}\overline{\iu^k}\rvert&= \log\lvert\overline \iu\mathbin{\cdot}\overline{\iu^k}\rvert=\log\lvert \iuc\rvert +\log\lvert\overline{\iu^k}\rvert\\
\implies \log\lvert \iu\rvert &=\log\lvert\iuc\rvert.
\end{align*}
\end{proof}
The converse also holds, the valuative criterion of integral closedness gives us that if $\log\lvert \iu\rvert = \log\lvert\iv\rvert$, then $\iuc = \ivc$.
Later, in Section~\ref{section;pldiv}, we will see a more general statement that will imply it.

\subsection{Divisorial and monomial valuations}\label{sec;divisorialvaluations}
The trivial character described before, will be denoted $v_{\trivial}$ on $X^\beth$, and called the \emph{trivial valuation}: 
\[v_{\trivial} (\iu) =0, \quad \text{for every non-zero ideal } \iu.\]

More interestingly, to each irreducible divisor $F$, of a normal analytic variety $ Y\overset{\mu}{\to} X$ bimeromorphic to $X$, we can associate a valuation on $Y$ --and hence on $X$ by Proposition~\ref{prop:bijval}-- given by: 
\begin{align*}
\ideal_Y\ni \iu\mapsto \ord_{F}(\iu),
\end{align*}
where $\ord_F$ is given by the following procedure: choose any point $q\in F$, consider $\ord_{F_q}\in (Y_q)^\beth$ the order of vanishing along the germ of $F$ at $q$, and denote by $\ord_F\in Y^{\val}$ the induced global valuation of Example~\ref{ex;oxp}:
\[
\ord_F\doteq (\ord_{F_q})_q.
\]
It is classical that this construction does not depend on $q\in F$.
More details will be given in the discussion below on monomial valuations, see Proposition~\ref{prop;independencep}.
%Otherwise see Appendix~\ref{apx;monomial}.

Equivalently, $\ord_F(\iu)= k$ if and only if we can find a decomposition \[\iu = \mathcal O_Y(-kF) \mathbin{\cdot} \iv\] where $F\nsubseteq Z_\iv$, the zero set of $\iv$.

\begin{defi}
We denote by $\Xdiv$ the set of valuations of the form: \[r\mathbin{\cdot} \ord_F\colon \ideal_X\setminus\{0_X\}\to \Q,\]  for a rational number $r\in\mathds Q_{\ge 0}$ and $F$ a divisor as above. 

We say that an element of $\Xdiv$ is a \emph{divisorial valuation}.
In particular, the trivial valuation $v_{\trivial}$ is a divisorial valuation. 
\end{defi}

\begin{remark}\label{rem:divbirational}
  Divisorial valuations are a birational invariant: if $f\colon Y\to X$ is a bimeromorphic morphism, then $f^\beth$ maps divisorial valuations to divisorial valuations, moreover the restriction \[f^\beth|_{Y^{\divisorial}}\colon Y^{\divisorial}\to \Xdiv\] is bijective.
\end{remark}

The study of divisorial valuations will be of central importance for the following,
as they encode a lot of the geometry of $X$.

For an analytic curve, $X^\beth$ can be completely described in terms of divisorial valuations, as we can see in the next example:

\begin{example}
Let $X$ be a smooth analytic curve, and $v\in X^\beth$ a semivaluation on $X$.
The central variety of $v$, $Z_X(v)$, is an irreducible subvariety of $X$, therefore either $ Z(v) =X$ or $Z(v) =\{p\}$ a point on $X$. 
Let's study the two options:
\begin{enumerate}
\item If $Z(v) = X$ then $v = v_{\trivial}$.

\item If $Z(v) =\{p\}$, then \[v = t\mathbin{\cdot} \ord_p\] for $t\doteq v(m_p)$.

Indeed, $v$ being centered at $p$ implies, by Example~\ref{ex;centeredval}, that $v =  w_p$ for some local semivaluation, $w$, centered at $p$.
In addition, in dimension $1$, the ring $\OXp$ is a discrete valuation ring and thus  $w= t\mathbin{\cdot} \ord_p$.
Hence we get $v=  (t\mathbin{\cdot}\ord_p)_p = t\mathbin{\cdot} \ord_p$.
We observe that if $t = +\infty$, we denote by $t\cdot \ord_p$ the semivaluation given by 
\[\ideal_X\ni \iu \mapsto \lim_{s\to +\infty }s\cdot \ord_p(\iu),\]
that is, is the only semivaluation that maps ideals co-supported on $p$ to $+\infty$ and ideals not co-supported on $p$ to $0$.
\end{enumerate}
\end{example}

Before further studying the divisorial valuations, we will study a slightly more general class of examples of points on $X^\beth$, the so-called \emph{monomial valuations}.
\subsubsection{Monomial valuations and cone complexes}\label{sec:monomialcone}
Let $(Y,B)$ be a \emph{snc pair over $X$}, i.e. a smooth bimeromorphic model of $X$ together with $B=\sum_{i\in I} B_i$ a reduced simple normal crossing (snc) divisor.
Define, for each subset $J\subseteq I$, the intersection $B_J\doteq \bigcap_{j\in J}B_j$.  
The connected components of each $B_J$ are called the \emph{strata} of $B$, and we usually denote a stratum of $B$, by the letter $Z$.

We can associate to the pair $(Y,B)$ a \emph{cone complex}, $\hat\Delta(Y,B)$, given by the following rule: for each $J\subseteq I$ , and each connected component, $Z$, of $B_J$, we associate $ \hat\sigma_Z\subset \mathbb R^ I$, a cone identified with $(\mathbb R_{\ge 0})^ J$. Moreover, if $Z_1\subseteq Z_2$ are connected components of $B_{J_1}$ and $B_{J_2}$ respectively, we identify the cone $\hat\sigma_{Z_2}$ with the subset of $\hat\sigma_{Z_1}$ that corresponds to the subset $(\R_{\ge 0})^{J_2}\subseteq (\R_{\ge 0})^{J_1}$.

We give now a procedure for assigning to each point \[w\in \hat\sigma_Z\subseteq \hat\Delta(Y,B)\] 
a valuation on $X$:

Take any point $p\in Z$, and let's suppose for convenience that $J=\{1,\dotsc, k\}$, we can construct a monomial valuation on $\OXp$ with respect to the germs $B_1, \dotsc, B_k$ at $p$ and weights $w_1, \dotsc, w_k$ following \cite{JM12val}.
This gives us a valuation $\val_p(w)\in\Val(\OXp) = (X_p)^{\val}$.
If we are given a coordinate open set $z\colon U\to \D^n$ around $p$, such that in $U$ the divisor $B_j$ is given by the local equation $z_j = 0$, for $j\in J$, then the valuation $\val_p(w)$ is given by:
\[\val_p(w)(f) = \min_{c_\alpha\ne 0} \langle \alpha, w\rangle,\]
for $f\in \OXp$, and $f= \sum_{\alpha\in \N^n} c_\alpha z^\alpha$, in a possibly smaller open set.
By some standard algebraic machinery, like in \cite{MN15weight, JM12val},  $\val_p(w)$ does not depend on the coordinates chosen, and only on the divisors.
For an analytic proof see Appendix~\ref{apx;monomial}.

Just like in Example~\ref{ex;oxp}, consider the valuation $ (\val_p(w))_p\in X^\beth$.
We will now argue that $(\val_p(w))_p$ does not depend on the point chosen $p\in Z$\footnote{Again, the analytic point of view of the monomial valuations explored in Appendix~\ref{apx;monomial}, give us the local independance of $p$, but for simplicity we will give here a more elementary approach.}, and therefore for $w\in \hat\sigma_Z\subseteq \hat\Delta(Y, B)$ we denote:
\begin{equation}\label{eq;monomial}
\val(w)\doteq (\val_p(w))_p.
\end{equation}

\begin{prop}\label{prop;independencep}
For any $w\in \hat\sigma_Z$ and $p\in Z$, the image in $X^{\val}$ of $\val_{p}(w)\in (X_p)^{\val}$ is independent of the choice of $p$.
\end{prop}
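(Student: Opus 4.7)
The strategy is to reduce to the case of rational weights, where $\val_p(w)$ becomes a rescaling of a divisorial valuation (for which the analogous independence of the base point is the classical fact invoked in Section~\ref{sec;divisorialvaluations}), and then extend to arbitrary $w$ by continuity.

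First I fix a coherent ideal $\iv \in \ideal_X$ and show that, for each $p \in Z$, the function $w \mapsto (\val_p(w))_p(\iv) = \val_p(w)(\iv_p)$ is continuous on $\hat\sigma_Z$. Working in a coordinate chart $U \ni p$ in which $B_j = \{z_j = 0\}$ for $j \in J = \{1, \dots, k\}$, I take local generators $f_1, \dots, f_r$ of $\iv|_U$. For each $f_i$ with Taylor expansion $f_i = \sum_\alpha c_{i,\alpha}(p)\tilde z^\alpha$ at $p$, one has $\val_p(w)(f_i) = \min\{\langle \alpha, w\rangle : c_{i,\alpha}(p) \neq 0\}$, a minimum of finitely many linear functions of $w$, hence piecewise linear and continuous on $\hat\sigma_Z \simeq \R_+^J$. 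The same holds at any other $p' \in Z$, so the set of weights for which $(\val_p(w))_p(\iv) = (\val_{p'}(w))_{p'}(\iv)$ is closed.

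It therefore suffices to check equality on the dense subset $\hat\sigma_Z \cap \Q_+^J$. Given rational $w$, clearing denominators yields $Nw = (a_1, \dots, a_k) \in \N^k$ for some $N \in \N$. A weighted blow-up of $Y$ along $Z$ with weights $(a_1, \dots, a_k)$ (which is globally defined, since the snc divisor $B$ supplies a canonical splitting of the conormal bundle of $Z$ as $\bigoplus_{j\in J} \mathcal O_Y(-B_j)|_Z$), followed by a resolution of singularities, produces a bimeromorphic modification $\mu \colon Y' \to Y$ with a single prime exceptional divisor $E$ such that, simultaneously at each $p \in Z$, $\val_p(w) = N^{-1}\ord_{E,p}$ as an element of $(X_p)^{\val}$: indeed, in the local chart $U$ the weighted blow-up is described by $z_j = s^{a_j} u_j$ for $j \in J$, so $\ord_E(\tilde z^\alpha) = N\langle \alpha, w\rangle$. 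Since $(Y', E)$ depends only on $(Y, B, Z, w)$, combining with the classical fact that $\ord_E$ is a well-defined global divisorial valuation on $X$ (via Proposition~\ref{prop:bijval} and Remark~\ref{rem:divbirational}) yields $(\val_p(w))_p = N^{-1}\ord_E$ in $\Xval$ for every $p \in Z$. Closedness plus density then upgrades the equality to all $w \in \hat\sigma_Z$ and all $\iv \in \ideal_X$, proving the proposition.

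The main obstacle is the rational step: showing that the weighted blow-up along the snc stratum $Z$ can be performed globally and yields a single prime exceptional divisor whose order of vanishing recovers the prescribed monomial valuation at every point of $Z$ at once. This is standard for smooth complete-intersection centers inside an snc datum, but must be set up carefully; as an alternative, $\ord_E$ can be realized as a finite composition of ordinary blow-ups of smooth snc centers producing the same divisorial valuation.
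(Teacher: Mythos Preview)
Your approach is valid in outline but genuinely different from the paper's. The paper gives a direct, self-contained power-series computation: for $p,q\in Z$ lying in a common adapted chart, it expands an arbitrary $f$ at $p$, re-expands at $q$, and observes that because $q\in Z$ the re-centering only reshuffles the coordinates \emph{transverse} to $B_1,\dots,B_k$ while the $J$-exponents are unchanged; this yields $\val_p(w)(f)\le\val_q(w)(f)$, and swapping $p\leftrightarrow q$ plus connectedness of $Z$ finishes. No blow-ups, no rationality reduction, no appeal to the divisorial case. Your reduction to rational weights via a global weighted blow-up is the standard algebraic move and is more conceptual, at the cost of heavier machinery.

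Two points in your write-up need repair. First, within this paper your argument is circular: the independence of $\ord_F$ on the chosen point $q\in F$ is exactly what Section~\ref{sec;divisorialvaluations} defers to Proposition~\ref{prop;independencep} for justification, so you cannot invoke it as already known. You would have to insert the short separate proof (e.g.\ the image of $\iu$ in $\mathcal O_F$ is a coherent ideal on the irreducible $F$, and by the identity theorem it has zero stalk at one point iff it vanishes identically) before using it. Second, your claim that $\val_p(w)(f_i)$ is a minimum of \emph{finitely many} linear forms in $w$ is false as stated: the Newton support of a convergent series need not project to a finite set, so piecewise linearity is not immediate. On the interior of $\hat\sigma_Z$ one does reduce to finitely many $\alpha$ (since $\langle\alpha,w\rangle\ge(\min_j w_j)\lvert\alpha\rvert$), but on the boundary faces you either need a continuity argument for $\inf_{\alpha\in S}\langle\alpha,w\rangle$ with $S\subset\N^k$, or---cleaner---observe that a boundary point of $\hat\sigma_Z$ already lies in $\hat\sigma_{Z'}$ for a larger stratum $Z'\supset Z$ and reduce to the interior case there.
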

\begin{proof}
By connectedness of $Z$, it is enough to show that the statement holds locally near a given $p\in X$. 

Let $z\colon U\to \D^n$ be a local coordinate chart around $p$ such that:
\begin{itemize}
  \item $z(p)=0$;
  \item $B_J\cap U = Z\cap U$;
  \item \(B_j\cap U = (z_j =0),\) for every $j\in J$.
\end{itemize}
Let $q\in z^{-1}\left(\D(\frac{1}{3})\right)\cap Z$, and $f\in \OX(U)$, we'll prove that $\val_p(w)(f) = \val_q(w)(f)$.
Before going on, we introduce some notations that will be useful.
Denote by $z =(\uz_1,\uz_2)$ in such a way that $\uz_1\in \C^{k}$, and $\uz_2\in \C^{n-k}$. 

Using the coordinate system $z$ to identify $U$ and $\D^n$, writing $q = (\uq_1, \uq_2)$, we have that $\uq_1 =0$, and
if \[f(z) =\sum_{\alpha\in \N^n} c_\alpha z^\alpha\] is the expansion of $f$ at $0$, then: 
\begin{align*}
f(q +z^\prime) &= \sum c_\alpha (q + z^\prime)^\alpha =\sum c_{\alpha_1, \alpha_2}\, (\uq_1 +\uzp_1)^{\alpha_1}(\uq_2 +\uzp_2)^{\alpha_2}\\
&=\sum c_{\alpha_1, \alpha_2}\, \uzp_1^{\alpha_1}(\uq_2 +\uzp_2)^{\alpha_2}\\
&=\sum c_{\alpha_1, \alpha_2}\, \uzp_1^{\alpha_1}\left[\sum_{0\le j\le \alpha_2} \binom{\alpha_2}{j}\uq_2^j \uzp_2^{\alpha_2 -j}\right]\\
&=\sum\left[\sum_{j\in\mathds N^{n-k}} c_{\alpha_1, \alpha_2+j}\,\uq_2^j\binom{\alpha_2+j}{j} \right]\uzp_1^{\alpha_1}\uzp_2^{\alpha_2},
\end{align*}
for $\alpha = (\alpha_1, \alpha_2)\in \mathds N^{k}\times \mathds N^{n-k}$. 
Denoting $c^\prime_{\alpha_1, \alpha_2}\,\doteq \sum_j c_{\alpha_1, \alpha_2+j}\,\uq_2^j\binom{\alpha_2+j}{j}$, the exapansion of $f$ around $q$ becomes:
\[f(q+z^\prime) =\sum c^\prime_{\alpha_1, \alpha_2}\uzp_1^{\alpha^1}\uzp_2^{\alpha_2},\] 
hence we get:
$\val_q(w)(f) =\min_{c^\prime_{\alpha_1,\alpha_2}\ne 0}\langle w, \alpha_1\rangle,$
 and
\begin{equation}\label{eq;vwp}
\begin{aligned}
\val_p(w)(f) &=\min_{c_{\alpha_1,\alpha_2}\ne 0}\langle w, \alpha_1\rangle\\
&=\min\left\{ \langle w, \alpha_1\rangle \bigm\vert \forall\,\alpha_1\in \N^k \text{ such that } \exists\,\alpha_2\in \N^{n-k} \text{ with } c_{\alpha_1, \alpha_2}\ne 0 \right\}.
\end{aligned}
\end{equation}
Now, if $c^\prime_{\alpha_1, \beta}\ne 0$ for some $\alpha_1\in \N^k$ and $\beta\in\N^{n-k}$ , there exists $\alpha_2\in \N^{n-k}$ such that $c_{\alpha_1,\alpha_2}\ne 0$.
By Equation~\eqref{eq;vwp} we thus get that $\val_p(w)(f)\le \val_q(w)(f)$.

As long as $q$ is sufficiently close to $p$, close enough to find a coordinate chart around $q$ that contains $p$ with the property that the divisors $B_1, \dotsc, B_k$ are given by the equations $(z_1=0), \dotsc,(z_k=0)$,  we can exchange the role of $p$ and $q$, and get the equality:
\begin{equation}\label{eq;invarianceofpoint}
\val_p(w)(f)= \val_q(w)(f).
\end{equation}

Thus, if we let $\iu$ be a coherent ideal,  there exists an open set $U$ containing $p$, and local generators $f_1,\dotsc, f_\rho\in \iu(U)$ such that at any point $q\in U$, the germs $(f_1)_q, \dotsc, (f_\rho)_q$ generate $\iu_q$.
By Equation~\ref{eq;invarianceofpoint}, this implies that:
\[\val_p(w)(\iu_p) = \val_q(w)(\iu_q),\]
for $q\in U$.
Therefore, we have just proved that for every $\iu$ coherent ideal $(\val_p(w))_p(\iu)$ is locally independent of $p$, getting the desired result.

\end{proof}

The above construction of monomial valuations gives us an embedding:
\begin{align*}
\val\colon\hat\Delta(Y,B)&\hookrightarrow X^\beth.
\end{align*}
In fact, if $w\ne w^\prime$ both lie on $\hat\sigma_Z$, then there exists an irreducible component of $B$, $Z\subseteq B_i$, such that $\val(w)\left(B_i\right)\ne \val(w^\prime)\left(B_i\right)$.

An element of the image of the above mapping is called a \emph{monomial valuation}.

\begin{remark}
A monomial valuation associated to a rational point on $\hat\Delta(Y, B)$ is a divisorial valuation.
As in the algebraic setting, this can be seen using weighted blowups. 
\end{remark}

\subsubsection{Back to divisorial valuations}
As stated before, divisorial valuations will be of fundamental importance for our study of the geometry of $X$.
As an example, one is able to prove that $\Xdiv$ alone can tell homogeneous PL functions apart.
To see that, one proves that the homogeneous PL functions are essentially ($\Q$-Cartier) $b$-divisors over $X$, and the value on divisorial valuations will completely determine the associated $b$-divisor.
This idea will be further  developed in Section~\ref{section;pldiv}, when we'll prove that $\PL(X^\beth)$ is isomorphic to a set of $\C^*$-equivariant $\Q$-Cartier $b$-divisors over $X\times \Pro$.

Now we will focus our attention to obtain Theorem~\ref{thma;dense}.
\begin{theorem}[Theorem~\ref{thma;dense}]\label{teo;dense}
$\Xdiv$ is dense in $X^\beth$.
\end{theorem}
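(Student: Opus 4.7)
The plan is to prove density in a finite-coordinate way: for every $v\in X^\beth$, every finite collection of ideals $\iu_1,\dotsc,\iu_k\in\ideal_X$, and every $\epsilon>0$, produce a $w\in\Xdiv$ with $w(\iu_j)$ $\epsilon$-close to $v(\iu_j)$ in $[0,+\infty]$ for each $j$. This suffices because $X^\beth$ is compact Hausdorff and its topology is generated by the evaluation functions $v\mapsto v(\iu)$.

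The approach is monomial approximation along a common log resolution. I would pick a smooth bimeromorphic model $\pi\colon Z\to X$ on which $\iu_s(v),\iu_1,\dotsc,\iu_k$ all become principal: $\iu_s(v)\cdot\mathcal O_Z=\mathcal O_Z(-D_s)$ and $\iu_j\cdot\mathcal O_Z=\mathcal O_Z(-D_j)$ with $D_s,D_j$ effective divisors supported on an SNC divisor $B=\sum_i B_i$. Writing $D_j=\sum_i a_{ij}B_i$, for any stratum $\Sigma$ of $B$ and weight $w\in\hat\sigma_\Sigma$ the associated monomial valuation from Section~\ref{sec:monomialcone} satisfies
\[\val(w)(\iu_j)=\sum_{i\in I_\Sigma}a_{ij}\,w_i,\]
and $\val(w)\in\Xdiv$ as soon as $w$ has rational coordinates. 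When $v$ is a valuation on $X$ (i.e. $S(v)=X$), Proposition~\ref{prop:bijval} lifts $v$ to $v_Z\in Z^{\val}$, whose center sits in a unique minimal stratum $\Sigma$ of $B$ if it meets $B$ (otherwise $v(\iu_j)=0$ for every $j$ and the trivial valuation already approximates $v$); setting $w_{v,i}=v_Z(B_i)$ for $i\in I_\Sigma$ yields $\val(w_v)\in\hat\sigma_\Sigma$ with $\val(w_v)(\iu_j)=v(\iu_j)$ exactly, and rational approximation of $w_v$ in $\hat\sigma_\Sigma$ produces divisorial valuations converging to $v$ at the chosen coordinates.

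The main obstacle is the case $S(v)=Y\subsetneq X$, where $v$ is not a valuation on $X$ so the lift of Proposition~\ref{prop:bijval} is unavailable: the approximation must now drive $\val(w)(\iu_j)$ to $+\infty$ exactly when $\iu_j\subseteq\iu_s(v)$ while matching the finite values on the remaining ideals. I would handle this by passing to a closed point $p\in Z(v)$, where Example~\ref{ex;centeredval} identifies the semivaluations of $X$ centered at $p$ with the semivaluations on the local algebraic model $X_p=\spec\OXp$, an excellent Noetherian local scheme. On $X_p$ the classical algebraic density of divisorial valuations (log resolution in the algebraic category, monomial approximation with rational weights, and weights along the divisors over $p$ sent to $+\infty$ where needed) delivers a divisorial approximation; extending the underlying local blow-up to a global bimeromorphic modification of $X$, trivially outside a neighbourhood of $p$, yields an element of $\Xdiv$. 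This local-to-global extension is precisely the identification of $\Xdiv$ with $\C^*$-invariant divisorial valuations on $X\times\Pro$ announced in the introduction, which is what one invokes to handle the general case (including positive-dimensional centers) uniformly via the trivial test configuration.
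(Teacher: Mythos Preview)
Your Case~1 (for $v\in\Xval$) is correct: once $\iu_1,\dotsc,\iu_k$ are principalized on a log resolution $Z$, the lift $v_Z$ of Proposition~\ref{prop:bijval} exists, the monomial retraction $w_v=(v_Z(B_i))_{i\in I_\Sigma}$ matches $v$ exactly on the $\iu_j$, and rational approximation in $\hat\sigma_\Sigma$ finishes. This is more direct than the paper's route, which argues instead that any $\varphi\in\PL(X^\beth)$ vanishing on $\Xdiv$ must be zero.

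Case~2 has a genuine gap. Example~\ref{ex;centeredval} identifies only those semivaluations with $Z_X(v)=\{p\}$ (equivalently $v(m_p)>0$) with local semivaluations on $\spec\OXp$; when $Z(v)$ has positive dimension one has $v(m_p)=0$ for every closed $p$, the limit construction in Example~\ref{ex;centeredval} does not recover $v$, and there is no single local model on which to run the algebraic density argument. Your closing sentence invokes the identification $\Xdiv=r(\Xdivc)$ of Theorem~\ref{thm;divisorialpointscoincide}, but that is a statement about \emph{which} valuations are divisorial, not an approximation scheme; you have not said what object to approximate on the $\C^*$-equivariant side, or how.

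The paper sidesteps the valuation/semivaluation dichotomy by moving to flag ideals: the Gauss extension $\sigma(v)$ is \emph{always} finite on every nonzero flag ideal (since $\ia_\lambda=\OX$ for $\lambda\gg0$), so the obstruction you hit simply vanishes on $X\times\Pro$. Then Theorem~\ref{vcarpl} writes any $\varphi\in\PL(X^\beth)$ as $\varphi_D$ with $D=\sum_E b_E\,\varphi(v_E)\,E$, and since each $v_E\in\Xdivc$ restricts into $\Xdiv$ by Theorem~\ref{thm;divisorialpointscoincide}, $\varphi|_{\Xdiv}=0$ forces $D=0$. If you want to keep the direct-approximation flavour, the correct fix is to run your Case~1 argument on a test configuration $\cX$ resolving the flag ideals $\iu_j\cdot\mathcal O_{X\times\Pro}+(t^N)$ and approximate $p_\cX(v)$ by rational points of $\Delta_\cX$; but you still need Theorem~\ref{thm;divisorialpointscoincide} to know those rational points land in $\Xdiv$, so the $\C^*$-equivariant input is not optional.
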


Since $\PL(X^\beth)$ is dense in $\Cz(X^\beth,\R)$,  $\PL(X^\beth)$ separates points from closed sets, and hence to prove Theorem~\ref{teo;dense}, it is enough to show that $\varphi ( \Xdiv) = \{0\}\implies\varphi = 0$, for every $\varphi \in \PL\left(X^\beth\right)$.

Therefore we can restate Theorem~\ref{teo;dense}, in the following way:
\begin{statement}
If $\varphi\in\PL(X^\beth)$ is such that \[\varphi|_{\Xdiv} =0\] then $\varphi = 0$.
\end{statement}
%\begin{proof}[Proof of Theorem~\ref{teo;dense}]
%Since every function $\varphi \in \PL(X^\beth)$ is the difference of two functions in $\PL^+$, say $\varphi = \psi_1 - \psi_2$, we will prove that if we have two functions $\psi_1, \psi_2\in \PL^+$ that coincide on divisorial valuations, they coincide everywhere.
%Let $\psi_1 = \max \{\varphi_{\iu_i}+k_i\}$ and consider 
%4e\end{proof}

%We now focus our study of the relationship between PL functions and divisorial valuations.

\begin{proof}[General idea of the proof of Theorem~\ref{teo;dense}]\label{proof;generaldense}

%\textcolor{red}{
In order to prove Theorem~\ref{teo;dense}, it will be useful to write a PL function as the evaluation function of some ideal.
%If it was to be the case, the general sketch of the proof would along the lines of the following argument.}

Let's assume that it is the case, and $\varphi\in\PL$ is attached to an ideal $\iu$, i.e. $\varphi=\log\lvert \iu\rvert$, then if $\log\lvert \iu\rvert (v) = 0$ for every $v\in \Xdiv$, and if $\iuc\neq \mathcal O_X$, it would be enough to take an irreducible component of the exceptional divisor $F\subseteq\widetilde{\operatorname{Bl}_\iu X}$, and consider $\ord_F\in \Xdiv$.
This would give us $0=\log\lvert \iu\rvert(\ord_F) =-\ord_{F}( \iu)\ne 0$.
\end{proof}
Just like in the algebraic trivially valued case of \cite{BJ22trivval}, this lead us to consider $\C^*$-equivariant models of $X\times \Pro$, since we will be able to see $\PL^+\left(X^\beth\right)$ as the evaluation functions attached to $\C^*$-equivariant ideals.

\subsection{$\C^*$-equivariant non-Archimedean space}
Recall that $\ia$ is a flag ideal of $X\times\Pro$ if it is $\C^*$-equivariant coherent ideal of $X\times \Pro$ whose support is contained in $X\times\{0\}$. 
In the rest of the section, we will denote by $t$ both the pullback of the usual affine coordinate on $\Pro$ around $[0:1]$, and the coherent ideal it generates on $X\times \Pro$.

Let $\flag$ be the set of fractional flag ideals of $X\times \Pro$, and consider the following subset of $(X\times\Pro)^\beth$:
\begin{equation}
(X\times\Pro)_{\C^*}^\beth\doteq\left\{\begin{array}{ccc}
         & \vline &  v(\ia\mathbin{\cdot} \ib) = v(\ia)+v(\ib) \\
        v\colon \flag\to \mathds R & \vline & v(\ia +\ib) = \min\{v(\ia), v(\ib)\} \\
        & \vline & v(t) =1 \quad \&\quad v(\mathcal O_{X\times \Pro}) =0
    \end{array}\right\}
\end{equation}

together with a continuous map, $\sigma$, the \emph{Gauss extension map} 
\begin{align*}
\sigma\colon X^\beth\to (X\times\mathds P^1)_{\C^*}^\beth\subseteq (X\times \mathds P^1)^\beth
\end{align*}
where $\sigma(v)\colon \flag\to\R$, the \emph{Gauss extension} of $v$, is given by
\begin{equation}
\sigma(v)(\ia) =\sigma(v)(\sum_{\lambda\in\mathds Z} a_\lambda t^\lambda) \doteq \min_\lambda\{v(a_\lambda)+ \lambda\}\in\R.
\end{equation}

\begin{remark}
As said before, we can see $\Xbc$ as a subset of $(X\times\Pro)^\beth$.
Given $v\in(X\times\Pro)_{\C^*}^\beth$, we can extend it to the set of all ideals $\ideal_{X\times\Pro}$, by setting \[v(\iu)\doteq \lim_{k\to\infty}v\left((t^k)+\sum_{\lambda\in\C^*} \lambda^* \iu\right)\]
for $\iu$ an ideal in $X\times\Pro$.
\end{remark}

\begin{lemma}\label{lem:gauss}
The Gauss extension, \[\sigma\colon X^\beth \to \Xbc,\] is a bijection.
\end{lemma}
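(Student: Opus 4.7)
The plan is to construct an explicit inverse to $\sigma$. For each $\mu\in\Xbc$ and each $\iu\in\ideal_X$, the flag ideals $\iu+(t^k)\in\flag$ (whose weight-$\lambda$ component is $\iu$ for $0\le\lambda<k$ and $\OX$ for $\lambda\ge k$) decrease with $k$, so $\mu(\iu+(t^k))$ is nondecreasing in $\oR$. I would set
\[
v_\mu(\iu)\doteq\lim_{k\to\infty}\mu(\iu+(t^k)).
\]
Injectivity of $\sigma$ then follows immediately: the Gauss formula yields $\sigma(v)(\iu+(t^k))=\min\{v(\iu),k\}$, which recovers $v(\iu)$ as $k\to\infty$, so $\sigma(v)=\sigma(w)$ forces $v=w$.

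To see that $v_\mu$ is a semivaluation, the normalizations $v_\mu(\OX)=0$ and $v_\mu(0_X)=+\infty$ are direct, and tropical additivity follows from $(\iu+\iv)+(t^k)=(\iu+(t^k))+(\iv+(t^k))$ after applying $\mu$ and passing to the limit. Multiplicativity rests on the sandwich
\[
\iu\iv+(t^{2k})\;\subseteq\;(\iu+(t^k))(\iv+(t^k))\;\subseteq\;\iu\iv+(t^k),
\]
where the right inclusion uses $\iu,\iv\subseteq\OX$ to absorb the cross terms $\iu(t^k),\iv(t^k),(t^{2k})$ into $(t^k)$. Applying $\mu$ together with its multiplicativity on flag ideals gives
\[
\mu(\iu\iv+(t^{2k}))\;\ge\;\mu(\iu+(t^k))+\mu(\iv+(t^k))\;\ge\;\mu(\iu\iv+(t^k)),
\]
and since both outer terms converge to $v_\mu(\iu\iv)$, the middle converges to $v_\mu(\iu)+v_\mu(\iv)=v_\mu(\iu\iv)$.

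The heart of the argument is the identity $\sigma(v_\mu)=\mu$. For a flag ideal $\ia=\sum_\lambda\ia_\lambda t^\lambda$, choose $\lambda_0$ with $\ia_\lambda=0$ for $\lambda<\lambda_0$ and $N_0$ with $\ia_\lambda=\OX$ for $\lambda\ge N_0$. For any $M\ge N_0$ one has the ideal-sum decomposition
\[
\ia=\sum_{\lambda<M}\bigl(\ia_\lambda t^\lambda+(t^M)\bigr),
\]
in which each summand is the flag ideal $t^\lambda\bigl(\ia_\lambda+(t^{M-\lambda})\bigr)$. Tropical additivity and multiplicativity of $\mu$ then yield
\[
\mu(\ia)=\min\Bigl\{N_0,\ \min_{\lambda_0\le\lambda<N_0}\bigl(\lambda+\mu(\ia_\lambda+(t^{M-\lambda}))\bigr)\Bigr\}
\]
(the indices $\lambda\in[N_0,M)$ collapse to the $N_0$ term). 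Letting $M\to\infty$ and commuting the finite minimum with the limit gives $\mu(\ia)=\min_\lambda\{\lambda+v_\mu(\ia_\lambda)\}=\sigma(v_\mu)(\ia)$. Combined with the direct round-trip $v_{\sigma(v)}(\iu)=\lim_k\min\{v(\iu),k\}=v(\iu)$, this shows $\mu\mapsto v_\mu$ is a two-sided inverse of $\sigma$. The delicate point I expect is the decomposition of $\ia$ into clean flag-ideal summands and the controlled passage to the limit $M\to\infty$ inside the finite min; once that is handled, the other steps reduce to formal manipulations.
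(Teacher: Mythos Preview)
Your proof is correct and takes essentially the same approach as the paper: both construct the inverse of $\sigma$ as the restriction map sending $\mu\in\Xbc$ to the semivaluation $\iu\mapsto\mu(\iu\cdot\mathcal O_{X\times\Pro})$. The paper writes this directly (using the extension of $\mu$ to non-flag ideals set up in the Remark immediately preceding the Lemma) and then checks the two round-trips by short symbolic manipulations of the decomposition $\ia=\sum_\lambda \ia_\lambda\cdot(t^\lambda)$; your formula $v_\mu(\iu)=\lim_k\mu(\iu+(t^k))$ is precisely that extension made explicit, and your sandwich inequality and finite-sum decomposition carry out in detail the multiplicativity and surjectivity checks that the paper leaves implicit.
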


\begin{proof}
In fact, taking $r\colon \Xbc\to \Xb$ to be the \emph{restriction map} \[r(v)\colon \ideal_X\ni \iu\mapsto  v\left(\iu\coxp\right)\] we get the inverse of $\sigma$.
It is clear that, as defined, $r(v)$ is a semivaluation on $X$.
So we are left to checking that for $\sigma(r(v)) = v\in \Xbc$, and $r(\sigma(v)) = v\in X^\beth$, which follows from:
\begin{align*}
	\sigma(r(v))(\ia) &= \min\left\{r(v)(\ia_\lambda) +\lambda  \right\}\\
	&= \min\left\{ v\left(\ia_\lambda\coxp\right) +\lambda \right\}\\
	&= \min\left\{ v\left(\ia_\lambda\coxp \mathbin{\cdot} (t^\lambda) \right) \right\}\\
	&= v\left(\sum_\lambda \ia_\lambda\coxp\mathbin{\cdot}(t^\lambda)\right) =v(\ia)
\end{align*}
and
\begin{align*}
r(\sigma(v))(\iu)&=\sigma(v)(\iu\coxp)\\
 &= \sigma(v)\left(\sum_{\lambda\ge 0}  \iu\mathbin{\cdot} t^\lambda\right) =\min\left\{ v(\iu)+\lambda \right\} = v(\iu).
\end{align*}
\end{proof}

For simplicity sometimes we identify $X^\beth$ and $\Xbc$.

The set of PL functions has a nicer description in $\Xbc$.
If we denote \[\varphi_\ia\doteq  \log\lvert \ia\rvert \circ \sigma\]
we get the following result:
\begin{prop}
The set $\{\varphi_\ia\bigm\vert \ia\in\flag\}$ $\Q$-generates $\PL(\Xb)$, the space of PL functions of $X^\beth$.
Moreover \[\PL^+(\Xb) = \left\lbrace \frac{1}{m}\varphi_\ia\bigm\vert\ia\in\flag, m\in \mathds N\right\rbrace.\]
\end{prop}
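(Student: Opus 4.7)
The plan is to prove the identification $\PL^+(X^\beth) = \{\tfrac{1}{m}\varphi_\ia : \ia \in \flag,\, m \in \mathds Z_{>0}\}$ by matching combinatorial data on the two sides; the $\Q$-span assertion will then follow for free, because by definition $\PL(X^\beth)$ is the $\Q$-linear span of $\PL^+(X^\beth)$ and each $\tfrac{1}{m}\varphi_\ia$ is already a $\Q$-multiple of $\varphi_\ia$.

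For the inclusion $(\supseteq)$, I unwind the Gauss-extension formula. Given a flag ideal $\ia = \sum_\lambda \ia_\lambda t^\lambda$, with $(\ia_\lambda)$ increasing, $\ia_\lambda = 0$ for $\lambda \ll 0$ and $\ia_\lambda = \OX$ for $\lambda \ge N$, one has
\[\varphi_\ia(v) = -\sigma(v)(\ia) = \max_\lambda\{-v(\ia_\lambda) - \lambda\} = \max_\lambda\{\log\lvert\ia_\lambda\rvert(v) - \lambda\}.\]
Only finitely many indices contribute nontrivially (for $\lambda \ge N$ the summand is $-\lambda$, maximized at $\lambda = N$), so $\varphi_\ia$ is of the form in the definition of $\PL^+$; dividing by $m$ gives $\tfrac{1}{m}\varphi_\ia \in \PL^+$.

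For the reverse inclusion $(\subseteq)$ I start with $\psi = \tfrac{1}{m}\max_{i=1}^N\{\log\lvert\iu_i\rvert + k_i\} \in \PL^+$ and produce a flag ideal $\ia$ satisfying $\varphi_\ia = m\psi$. The natural candidate is the $\mathds Z$-indexed increasing family
\[\ia_\lambda \doteq \sum_{i\colon k_i \ge -\lambda} \iu_i,\]
and a direct calculation gives the desired identity: applying $\log\lvert\iu + \iv\rvert = \max\{\log\lvert\iu\rvert, \log\lvert\iv\rvert\}$ to each $\ia_\lambda$ and swapping the order of the two maxima yields $\max_i\max_{\lambda \ge -k_i}\{\log\lvert\iu_i\rvert - \lambda\} = \max_i\{\log\lvert\iu_i\rvert + k_i\}$. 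The main obstacle is the tail behaviour: this family only stabilizes to $\sum_i \iu_i$ as $\lambda \to +\infty$, so $\ia$ is a genuine flag ideal (with $\ia_\lambda = \OX$ eventually) only when $\sum_i \iu_i = \OX$. To reduce to this case, I use that $\psi\in C^0(X^\beth,\R)$ is continuous on the compact space $X^\beth$, hence bounded below; picking any integer $k_0 \le m\,\inf_{X^\beth}\psi$ and adjoining the dummy pair $(\iu_0, k_0) = (\OX, k_0)$ leaves the maximum pointwise unchanged (since $k_0 \le m\psi(v)$ for every $v$), yet forces $\ia_\lambda$ to contain $\OX$, hence equal $\OX$, for $\lambda \ge -k_0$.

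Once both inclusions are in place, the $\Q$-span statement is immediate: every $\varphi \in \PL(X^\beth)$ is a $\Q$-linear combination of elements of $\PL^+(X^\beth) = \{\tfrac{1}{m}\varphi_\ia\}$, hence a $\Q$-linear combination of the $\varphi_\ia$.
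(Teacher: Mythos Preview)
Your proof is correct and follows the same approach the paper has in mind: both rely on the decomposition $\ia = \sum_\lambda \ia_\lambda t^\lambda$ of a flag ideal to translate between the two descriptions. The paper's proof is a single sentence (``follows from the description of flag ideals''), whereas you carry out the computation in full and, in particular, handle the subtlety that the na\"ive construction $\ia_\lambda = \sum_{k_i \ge -\lambda} \iu_i$ need not satisfy $\ia_\lambda = \OX$ for $\lambda \gg 0$; your fix via the dummy pair $(\OX, k_0)$ using compactness of $X^\beth$ is exactly the right way to close this gap.
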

\begin{proof}
The proof follows form the description of flag ideals given in the equation~\eqref{eq;flagideal}.
\end{proof}

As we remarked at the end of Section~\ref{sec;divisorialvaluations}, this is a step in the right direction in order to prove Theorem~\ref{teo;dense}.
But then two problems rise up:
\begin{enumerate}
	\item The argument at the end of section~\ref{sec;divisorialvaluations} gives us a $\C^*$-equivariant divisorial valuation $v_E\in \Xbc$, but \emph{a priori} we don't know if $v_E$ comes from a divisorial valuation over $X$, i.e. the restriction of $v_E$, $r(v_E)$, lies in $\Xdiv$. 
  Hence, we don't get a contradiction.
	\item Even though the set  $\PL^+$ generates the PL functions, it is not enough to check that Theorem~\ref{teo;dense} is true for a $\PL^+$ function.
	What we need to check is that if $\varphi_1 (\ord_F) = \varphi_2 (\ord_F)$ for every $\ord_F\in \Xdiv$ then $\varphi_1 = \varphi_2$.  
\end{enumerate}

Section~\ref{sec;Cdivisorial} will deal with the first problem, and Section~\ref{section;pldiv} with the second one.

\subsection{$\C^*$-equivariant divisorial valuations}\label{sec;Cdivisorial}
\begin{defi}[Test configuration]
We define a \emph{test configuration for $X$} as the data of 
\begin{itemize}
  \item a normal compact Kähler space $\cX$;
  \item  a $\C^*$-action on $\cX$;
  \item a $\C^*$-equivariant flat morphism $\pi\colon \cX\to\Pro$;
  \item a $\C^*$-equivariant biholomorphism
    \[\cX\setminus \cX_0 \simeq X\times (\Pro\setminus\{0\}),\]
    where $\cX_0\doteq \pi^{-1}(0)$.
\end{itemize}

Moreover, if $\cX$, and $\cX^\prime$ are test configuration, $\cX$ \emph{dominates} $\cX^\prime$ if the bimeromorphic map
\[\cX\dashrightarrow X\times\Pro\dashrightarrow \cX^\prime\]
extends to a $\C^*$-equivariant holomorphic map, which we say is a \emph{morphism of test configurations}.
When $\cX$ dominates $X\times\Pro$, we say that $\cX$ is \emph{dominating}.
\end{defi}

\begin{remark}\label{rem:projcofinal}
Test configurations of a given normal analytic variety $X$ define a directed system. 
By the equivariant version of Hironaka's theorem, see for instance \cite[Section 13]{BM97canonical}, the set of test configurations that are projective over $X\times \Pro$, and of snc central fiber, is cofinal in all test configurations.

Unless otherwise stated, we will consider always such test configurations.
\end{remark}

An important class of examples of test configuration are given by flag ideals:
\begin{example}
Let $\ia$ be a flag ideal on $X\times \Pro$, then
\[\cX\doteq \widetilde{\bl_{\ia}(X\times \Pro})\] 
is a test configuration of $X$ that dominates $X\times \Pro$.

For more examples see \cite[Example 2.11]{DR17kstability}.
\end{example}

\begin{defi}[$\C^*$-invariant divisorial valuations]
Let $\cX$ be a test configuration of $X$ that dominates $X\times\Pro$, and $\cX_0 = \sum b_E E$ its decomposition into irreducible components.
We can associate to $E$ an element, $v_E$, of $\XPdiv\cap(X\times\Pro)^\beth_{\C^*}$ by the formula:
\begin{equation}
v_E(\ia) \doteq\frac{1}{b_E} \ord_E(\ia\mathbin{\cdot}\mathcal O_\cX).
\end{equation}
We denote by $\Xdivc\subseteq (X\times\Pro)^\beth_{\C^*}$ the set of such valuations given by $\E\subseteq \cX_0$, where $\cX$ is any test configuration, and  $E\subseteq \cX_0$ any irreducible component of the central fiber.
\end{defi}

Now, we focus our attention to $\Xdivc$, and prove the following theorem:
\begin{theorem}\label{thm;divisorialpointscoincide}
Let $r\colon (X\times\Pro)^\beth \to \Xb$ be the restriction map of Lemma~\ref{lem:gauss}, we then have \[r(\Xdivc) = \Xdiv.\]
\end{theorem}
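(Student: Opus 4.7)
The plan is to prove both inclusions $\Xdiv \subseteq r(\Xdivc)$ and $r(\Xdivc) \subseteq \Xdiv$, leveraging the bijection $\sigma \colon X^\beth \simeq \Xbc$ of Lemma~\ref{lem:gauss} together with a local analysis at each point of $X$. For $\Xdiv \subseteq r(\Xdivc)$, the idea is to construct an explicit test configuration extracting a given divisorial valuation $\ord_F \in \Xdiv$ (rational scalings follow by base change). After enlarging the model if needed, I may assume $F$ is extracted by the normalized blow-up of some coherent ideal $\iu \subseteq \OX$, and set $m \doteq \ord_F(\iu) > 0$. The flag ideal $\ia \doteq \iu \coxp + (t^m) \in \flag$ is cosupported on $V(\iu) \times \{0\} \subseteq X \times \{0\}$, so its normalized blow-up $\cX \doteq \widetilde{\bl_\ia(X \times \Pro)} \to X \times \Pro$ is a dominating test configuration of $X$. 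A local computation near a generic point of $F$ (in coordinates where $F = (x_1 = 0)$ and $\iu$ is generated by $x_1^m$, so that $\ia$ reads $(x_1^m, t^m)$, with integral closure $(x_1,t)^m$ by Lemma~\ref{closure}) shows that the normalized blow-up extracts a divisor $E \subseteq \cX_0$ with $\ord_E(x_1) = \ord_E(t) = 1$, hence $b_E = 1$. Evaluating $v_E$ on any flag ideal $\ib = \sum_\lambda b_\lambda t^\lambda$ then yields $v_E(\ib) = \min_\lambda \{\ord_F(b_\lambda) + \lambda\} = \sigma(\ord_F)(\ib)$, so that $v_E = \sigma(\ord_F)$ and $r(v_E) = \ord_F$.

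For $r(\Xdivc) \subseteq \Xdiv$, let $\cX \to X \times \Pro$ be a dominating test configuration and $E \subseteq \cX_0$ an irreducible component of multiplicity $b_E$. Let $p \in X$ be the image of the generic point $\eta_E$ of $E$ under $\cX \to X \times \Pro \to X$. By Example~\ref{ex;centeredval}, $r(v_E) \in X^\beth$ corresponds to the local semivaluation on $X_p = \spec \OXp$ obtained by restricting the DVR valuation $\ord_E/b_E$ along $\OXp \hookrightarrow \mathcal O_{\cX, \eta_E}$. Localizing $\cX$ near $(p,0)$ yields a dominating model of $X_p \times \spec \C[t]$ on which $v_E$ becomes a $\C^*$-invariant divisorial valuation; the standard theory for trivially valued noetherian local rings (cf.~\cite{BJ22trivval}) shows any such valuation is the Gauss extension of a divisorial valuation on $X_p$, which is of the form $r \cdot \ord_{F'}$ for a prime divisor $F'$ on an algebraic modification $Y_p \to X_p$. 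Extending the local ideal defining $Y_p$ to a global coherent ideal $\iv \subseteq \OX$ (trivially outside a neighborhood of $p$), the divisor $F'$ globalizes to a divisor on $\bl_\iv X \to X$, yielding $r(v_E) = r \cdot \ord_{F'} \in \Xdiv$.

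The main obstacle is the local-to-global lift in the forward inclusion: transferring a divisorial valuation on the algebraic local model $X_p$ to a bimeromorphic modification of the analytic space $X$, and verifying compatibility of the corresponding valuations. This hinges on the facts that coherent ideals on germs of complex spaces extend to global coherent ideals, and that divisorial valuations are preserved under bimeromorphism (Remark~\ref{rem:divbirational}). The study of $\C^*$-invariant divisorial valuations on the algebraic scheme $X_p \times \mathbb{A}^1$ is exactly the ``local algebraic models'' analysis announced in the introduction, and is the technical heart of the proof.
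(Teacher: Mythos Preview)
Your overall plan (localize at a point, invoke the algebraic theory on $X_p$, then globalize) is the right one and matches the paper's strategy, but both directions contain gaps precisely at the passage between the local and global pictures.

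For the inclusion $\Xdiv\subseteq r(\Xdivc)$, your local computation takes place in coordinates on $Y=\widetilde{\bl_\iu X}$ (where $F=(x_1=0)$ and $\iu\cdot\cO_Y=(x_1^m)$), but $\cX=\widetilde{\bl_\ia(X\times\Pro)}$ is a blow-up of $X\times\Pro$, not of $Y\times\Pro$. Since $\ia\cdot\cO_{Y\times\Pro}=(x_1^m,t^m)$ is not invertible, $Y\times\Pro$ does not dominate $\cX$, so you cannot read off components of $\cX_0$ from that chart. (Your preliminary claim that $F$ can always be realized as a Rees valuation of some $\iu$ on $X$ also needs justification in the analytic category.) The paper sidesteps this by taking the deformation to the normal cone of $F$ inside $Y\times\Pro$, which is a test configuration for $Y$; the passage to a test configuration for $X$ is then handled by the separately-proved Proposition~\ref{prop:bijdivc}, whose proof already contains the Zariski-lemma argument you allude to.

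For $r(\Xdivc)\subseteq\Xdiv$, the serious gap is the globalization. The modification $Y_p\to X_p$ need not be the blow-up of a single ideal, and even when it is, that ideal of $\OXp$ need not be $\mathfrak m_p$-primary (the center $Z_X(r(v_E))$ is typically positive-dimensional), so ``extending trivially outside a neighborhood of $p$'' does not produce a coherent ideal on $X$, nor does $F'$ automatically globalize to a prime divisor. The paper resolves this by running the construction \emph{globally} from the start: iteratively blow up the global center $Z_k\subseteq Y_k$ of $r(v_E)$, then localize and use the analytic-to-algebraic GAGA (Proposition~\ref{prop;dimensionp}) to identify $(Y_{k+1})_p\simeq\bl_{(Z_k)_p}(Y_k)_p$. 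Zariski's Lemma (Lemma~\ref{lem:zariskiartin}) on the excellent local scheme $X_p$ forces $(Z_k)_p$ to become a divisor for some $k$, and dimension compatibility then gives that $Z_k$ is a global prime divisor with $\ord_{Z_k}=r(v_E)$. The point is that blowing up a global center is always a global operation; transporting a local modification back to $X$ is not.
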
 
This is known in the algebraic case, see \cite[Theorem~4.6]{BHJ17uniform}. In that context, the proof relies on some valuative machinery that we don't have at our disposal here. 
More specifically, when $X$ is a proper algebraic variety over $\mathds C$, $X^\beth$ corresponds to the Berkovich analytification, whose points are (semi)valuations on the field of functions of (a subvariety of) $X$. 
Hence, we can associate to it invariants such as the rational rank, and the transcendental degree, that characterize the divisorial valuations completely.

To prove Theorem~\ref{thm;divisorialpointscoincide} in our transcendental setting we will reduce to the algebraic setting.
In order to do that we use Proposition~\ref{rem;divisorialvaluation}, which states that the order of vanishing along a smooth divisor can be computed locally. 
Hence it will be enough to consider ``germs" of manifolds and divisors, that is to consider at any $p\in X$, the scheme $X_p=\spec \cO_{X,p}$, and do local computations.

A useful tool in the following will be the following `GAGA'/base change theorem.

\begin{theorem}
Given $X$ an analytic variety, and $p\in X$ a point.
There is an equivalence between the category of projective $\OXp$--schemes, and that of analytic spaces which are projective over the germ of $X$ at $p$.
\end{theorem}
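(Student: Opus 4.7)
The plan is to establish the equivalence by constructing quasi-inverse functors in both directions, reducing the statement to a relative version of Serre's GAGA over a Stein base at $p$.

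First, I would describe the analytification functor. Any projective $\mathcal{O}_{X,p}$-scheme admits a closed embedding into $\mathds{P}^N_{\mathcal{O}_{X,p}}$, so it is cut out by a finitely generated homogeneous ideal $I \subseteq \mathcal{O}_{X,p}[x_0,\dotsc,x_N]$. A finite set of generators lifts to holomorphic functions on some Stein neighborhood $U$ of $p$, producing a closed projective subscheme $\mathcal{Z}_U \hookrightarrow \mathds{P}^N_U$ whose GAGA analytification $\mathcal{Z}_U^{\mathrm{an}}$ is a projective analytic space over $U$. Restricting to the germ at $p$ gives the desired analytic germ, and standard GAGA over $U$, together with the classical Serre--Swan correspondence between ideals and closed subschemes, implies independence of the choices of $U$ and of generators.

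Conversely, an analytic space projective over the germ of $X$ at $p$ is realized by a projective analytic morphism $Y \to U$ for some Stein neighborhood $U$ of $p$, embedded as a closed analytic subspace of $U \times \mathds{P}^N$ with coherent ideal sheaf $\mathcal{J}$. Applying Grauert's direct image theorem to the projection $\pi\colon U \times \mathds{P}^N \to U$, the sheaves $\pi_*\mathcal{J}(d)$ are coherent on $U$; by a relative Serre vanishing and ampleness argument on fibers, $\mathcal{J}(d)$ is generated by its global sections over $U$ for $d \gg 0$. Taking stalks at $p$ produces a finitely generated graded submodule of $\mathcal{O}_{X,p}[x_0,\dotsc,x_N]$, defining a homogeneous ideal and thus a projective $\mathcal{O}_{X,p}$-scheme.

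The main step is to verify that these two functors are mutually inverse, which boils down to the following analytic Serre correspondence: coherent ideal sheaves on the germ of $U \times \mathds{P}^N$ along $\{p\} \times \mathds{P}^N$ are in bijection with finitely generated saturated homogeneous ideals of $\mathcal{O}_{X,p}[x_0,\dotsc,x_N]$. I would prove this by passing to a cofinal system of Stein neighborhoods of $p$ and using flatness of $\mathcal{O}_{X,p}$ over $\mathcal{O}(U)$ to commute stalk formation with the relevant sheaf-theoretic operations, combined with the classical correspondence between coherent sheaves on $\mathrm{Proj}$ and finitely generated graded modules modulo torsion. The hardest part will be controlling the saturation/generation statements uniformly in the Stein neighborhood, which is precisely where Grauert's theorem and the relative ampleness of $\mathcal{O}(1)$ enter in an essential way, and which also yields the fully faithful part of the equivalence by translating morphisms into graded homomorphisms of saturated ideals.
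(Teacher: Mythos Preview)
Your approach is essentially the same as the paper's: both directions go through a closed embedding into $\mathds{P}^N$ over the base and translate between homogeneous ideals in $\mathcal{O}_{X,p}[x_0,\dotsc,x_N]$ and closed analytic subspaces of $U\times\mathds{P}^N$ for a small neighborhood $U$ of $p$. The paper's argument is only a sketch and largely defers to the literature (Bingener, Jonsson--Musta\c{t}\u{a}); in particular, it asserts without justification that a closed analytic subspace of $U\times\mathds{P}^N$ is cut out, after shrinking $U$, by finitely many homogeneous polynomials with coefficients in $\mathcal{O}_X(V)$. Your proposal is more explicit about precisely this point, supplying Grauert's direct image theorem and relative Serre vanishing to produce enough global sections of $\mathcal{J}(d)$, and then handling the correspondence via the graded module/Proj dictionary. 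So there is no genuine divergence in strategy---you have simply filled in the analytic input (Grauert, relative ampleness) that the paper outsources to its references.
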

 
\begin{proof}[Sketch of correspondence]
Let's first build the functor on the objects.

Given a projective morphism $Y\xrightarrow{\pi} U$, $U\subseteq X$ an open set containing $p$,  there exists an embedding $Y\xhookrightarrow{j} U\times \mathds P^N_{\mathds C}$ such that 
\begin{center}
% https://tikzcd.yichuanshen.de/#N4Igdg9gJgpgziAXAbVABwnAlgFyxMJZABgBpiBdUkANwEMAbAVxiRAE0QBfU9TXfIRQAmclVqMWbAKoAdWXgC28AATzFdHAAsocFQAUAegDkA+sHWadegMJduvEBmx4CRAIyl34+s1aIQaW5xGCgAc3giUAAzACcIRSQyEBwIJE8UuiwGNi0ICABrEGpfKQCAKwcY+MTEDNSkUQk-NnkcGAAPHGA0WK5TdyqQOISk6gbEJtL-EHk0LGCuIA
\begin{tikzcd}
Y \arrow[rr, "j", hook] \arrow[rd, "\pi"] &   & U\times \mathds P^N_{\mathds C} \arrow[ld, "\text{pr}_1"] \\
                                          & U &                                                          
\end{tikzcd}
\end{center}
commutes.
This means that we can find a finite number of homogeneous polynomials $f_1, \dotsc, f_k\in \mathcal O_X(V)[t_1, \dotsc, t_{N+1}]$ that cut-out $Y|_V\doteq \pi^{-1}(V)$, for some $V$ open neighborhood of $p$.
Taking the germ of the coefficient of $f_i$ at $p$ we get $f_1, \dotsc, f_k\in \OXp[t_1, \dotsc, t_{N+1}]$, which defines a subvariety $Y_p$ of $\mathds P^N_{\OXp}\cong \spec \OXp\times_{\spec \mathds C}\mathds P^{N}_{\mathds C}$ and hence we get a projective morphism $\pi_p\colon Y_p\to \spec\OXp$ given by the diagram 
\begin{center}
% https://tikzcd.yichuanshen.de/#N4Igdg9gJgpgziAXAbVABwnAlgFyxMJZABgBpiBdUkANwEMAbAVxiRAE0B9NEAX1PSZc+QigBM5KrUYs2AHTlw0MAMYACBQHkAGmgV4AtvE7AFS1RrkG6OABZQ4agMK8F1uw7UAFAHoA5Ezcbe0cXPgEQDGw8AiIARlI4qXpmVkQQM2V1LV0+KRgoAHN4IlAAMwAnCAMkMhAcCCQJaVT5OTQsbnDyqprEBPrGxGaU2XT9GAAPHGA0Ct5OOJBqBjoAIxgGLyEY0RAKrELbHG6QSura6gakAdG0kAArPN4gA
\begin{tikzcd}
Y_p \arrow[rd, "\pi_p"] \arrow[rr, "j"] &            & X_p\times_{\spec \mathds C}\mathds P^N_{\mathds C} \arrow[ld, "\text{pr}_1"'] \\
                                        & X_p &                                                                                     
\end{tikzcd}
\end{center}
where $j$ is the inclusion.

To analytify a projective morphism over $\spec \OXp$ the strategy is the same, see for instance \cite{jonsson_mustaţă_2014}.
It is clear how the correspondence of the objects induce a correspondence on the morphisms. 
For more details see \cite{bingener}.

%Now $\cX_p$ is defined by a finite number of homogeneous  polynomials, $f_{p,1},\dots, f_{p,n}$, with coefficients in $\OXp[\varpi]$, therefore after taking representatives we have an open set $(p, 0) \in U\subseteq X\times \Pro$ and functions $f_1,\dots, f_n$ defined in $U\times \mathds P^n_{\mathds C}$ such that $f_1, \dots f_n$ cuts out $\mu^{-1}(U) =\bl_\ia  U\cHookrightarrow U\times \mathds P^n_{\mathds C}$.
\end{proof}
Apart from the usual correspondence of sheaves, one important property of this  `GAGA' theorem is the following dimension compatibility result:
\begin{prop}\label{prop;dimensionp}
Let $S\subseteq X$ be a Stein open set, and let $p\in S\subseteq X$, $Z$ a complex analytic space that is projective over $S$. %, and $q\in Z$ in the fiber of $p$
Then, the dimension of the germ of $Z$ over $p$ seen as a $\OXp$-scheme, $Z_p$, is equal to the dimension of $Z$. %at $q$
\end{prop}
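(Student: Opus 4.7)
The plan is to reduce the claim to a local comparison between the algebraic and analytic local rings at $q$, and then to observe that both have the same formal completion.

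First, by the GAGA-type correspondence stated just above, the analytic space $Z\to U$ is the analytification of a projective morphism of schemes $Z_p\to\spec\OXp$. Since dimension is a local notion at $q$, after shrinking we may assume that $Z_p$ lies in a single affine chart of the projective bundle, so that it is cut out inside $\spec\OXp[t_1,\dotsc,t_N]$ by finitely many polynomials $f_1,\dotsc,f_k$. The point $q$ then corresponds to a maximal ideal $\mathfrak{m}_q$ of the quotient $A\doteq \OXp[t]/(f)$ lying over $\mathfrak{m}_p\subset\OXp$, and the analytic space $Z$ in a neighborhood of $q$ is cut out by the same polynomials, now viewed as convergent power series in the ring $\OXp\{t-q\}$.

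The two local rings to compare are thus the algebraic $A_{\mathfrak{m}_q}$ (the stalk of $Z_p$ at $q$) and the analytic $\mathcal O_{Z,q}=\OXp\{t-q\}/(f)$. Both are Noetherian, and a direct check shows that their $\mathfrak{m}_q$-adic completions coincide, both being equal to $\widehat{\OXp}[\![t-q]\!]/(f)$. Since completion is faithfully flat on a Noetherian local ring and therefore preserves Krull dimension, we obtain
\[
\dim A_{\mathfrak{m}_q} \;=\; \dim \widehat{\mathcal O_{Z,q}} \;=\; \dim \mathcal O_{Z,q}.
\]
The left hand side is the dimension at $q$ of the algebraic germ of $Z$ over $p$, while the right hand side agrees with the analytic dimension of $Z$ at $q$ via the standard identification of the Krull dimension of an analytic local ring with the topological dimension of the corresponding analytic germ.

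The main subtlety is that the base $\OXp$ is itself an analytic local ring rather than $\C$, so neither ring in sight is a finitely generated $\C$-algebra; however $\OXp$ is Noetherian (indeed excellent), and the usual dimension theory for finite-type extensions together with faithful flatness of completion apply without modification. One also has to match the algebraic maximal ideal $\mathfrak{m}_q$ with the analytic point $q$, which is automatic because $q$ lies in the fiber over the closed point $p$, so only closed points of the special fiber enter the picture.
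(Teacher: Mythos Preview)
The paper does not give a proof of this proposition; it simply cites Bingener \cite[Theorem~2.8]{bingener}. Your argument via comparison of formal completions is correct and is essentially the standard way such analytic/algebraic comparison statements are proved: both $A_{\mathfrak m_q}$ and $\mathcal O_{Z,q}$ are Noetherian local rings with the same $\mathfrak m_q$-adic completion $\widehat{\OXp}[\![t-q]\!]/(f)$, and $\dim R=\dim\widehat R$ for Noetherian local $R$. The only point worth flagging is that your conclusion is $\dim_q Z_p=\dim_q Z$, i.e.\ equality of \emph{local} dimensions, whereas the proposition as stated speaks of ``the dimension of $Z$''. In every place the paper invokes the result, $Z$ is irreducible (the central variety of a valuation), so equidimensionality bridges the gap; you may want to add a sentence making this explicit.
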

\begin{proof}
Apply \cite[Aussage~2.8]{bingener} for $K=p$.
\end{proof}

Given a projective morphism $\varphi\colon Y\to X$ we say the \emph{localization of $\varphi$ at $p$} is its isomorphism class on the category of the projective morphisms over the germ of $X$ at $p$.

\medskip

\subsubsection*{Birational Geometry intermezzo}
Before proving Theorem~\ref{thm;divisorialpointscoincide}, we recall some basic facts of Birational Geometry.
\begin{remark}\label{rem;centerdivisor}
Let $v\in \Xdiv$ be a divisorial valuation, $X^\prime$ a normal variety and $\mu\colon X^\prime\to X$ a bimeromorphic morphism such that $Z\doteq Z_{X^\prime}(v)\subseteq X^\prime$ is a (irreducible) divisor, then $\ord_Z = v$.

Indeed, if $F\subseteq Y\overset{\pi}{\to} X$ is chosen such that $v=\ord_F$, then it is enough to choose a normal bimeromorphic model $Y^\prime$ that dominates $Y$ and $X^\prime$, together with an irreducible divisor, $F^\prime \subseteq Y^\prime$, making the diagram,
\begin{equation}
% https://tikzcd.yichuanshen.de/#N4Igdg9gJgpgziAXAbVABwnAlgFyxMJZABgBpiBdUkANwEMAbAVxiRAFEAdTuJgIzgwcMAI4ACbgFs6OABYBjRmICaIAL6l0mXPkIoyARiq1GLNgC0A+sW68BQ0RM7S5ihmIAa19ZpAZseAREAEykRtT0zKyIIFYG3FAQwuLcEDQwAE4MWGAwwFJMlgYAFOwAlGq2-ILJUjIKSl4G6sYwUADm8ESgAGYZEJJIZCA4EEgGEabRIAXeGr39g4ihI2OIEyZRbHxFPgsDQ9SjSCvZuWxQdHCybSCTWzGzzWoUakA
\begin{tikzcd}
Z\subseteq  X^\prime \arrow[d, "\mu"] &  & F^\prime \subseteq Y^\prime \arrow[ll, "\nu"] \arrow[d,"\nu^\prime"] \\
X &  & F\subseteq Y, \arrow[ll, "\pi"]
\end{tikzcd}
\end{equation}
commute, where $F^\prime\doteq\nu^{-1}(Z)$ is the strict transform of $Z$ by $\nu$. 
%Clearly $c(v_Z, \tilde Y) = c(\ord_F, \tilde Y) = E$, and 
Then  $\ord_{F^\prime} = \ord_Z$, and $\ord_{F^\prime} = \ord_F$, in particular $\ord_F = \ord_Z$. 
\end{remark}

%Moreover, we quickly recall another result due to Zariski. 
The next Lemma is a version of Zariski's Lemma, found on \cite[Appendix: Prime Divisors, pg. 229]{Artin86}, that will be important for the following.
\begin{lemma}\label{lem:zariskiartin}
  Let $X$ be an integral scheme, and $v$ a divisorial valuation of $X$, then after blowing-up a finite number of times the center of $v$, $c(v)$, the latter will be the generic point of a divisor.\qed
\end{lemma}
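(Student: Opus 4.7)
My plan is to follow Zariski's classical local-uniformization argument, which is what Artin's appendix carries out. I would set $X_0 := X$ and $c_0 := c(v)$, and define inductively $X_{i+1} := \operatorname{Bl}_{c_i} X_i$, letting $c_{i+1}$ be the center of $v$ on $X_{i+1}$; this center exists and is unique because the valuation ring $R_v$ dominates every $A_i := \mathcal{O}_{X_i,c_i}$. We then obtain an ascending chain of local rings $A_0 \subseteq A_1 \subseteq \cdots \subseteq R_v$, with $c_{i+1}$ contained in the exceptional divisor $E_{i+1}$ of $X_{i+1} \to X_i$. The goal is to show that after finitely many steps either $c_N$ equals an irreducible component of $E_N$, in which case Remark~\ref{rem;centerdivisor} immediately yields $v = \ord_{E_N}$, or equivalently $\dim A_N = 1$, so that $c_N$ is the generic point of a codimension-one subvariety of $X_N$.

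The key invariant to track is $r_i := \operatorname{tr.deg}(\kappa(v)/\kappa(c_i))$, where $\kappa(v) = R_v/m_v$ and $\kappa(c_i) = A_i/m_{A_i}$. Since $v$ is divisorial, its rational rank is $1$ and Abhyankar's equality gives $\operatorname{tr.deg}(\kappa(v)/k) = \dim X - 1$; applied stage by stage, Abhyankar's inequality yields $r_i \le \codim c_i - 1$. A local computation on a standard affine chart of the blowup — with a regular system of parameters $x_1,\dots,x_r$ of $A_i$, the chart $A_i[x_2/x_1,\dots,x_r/x_1]$ realizes $E_{i+1}$ as the vanishing locus of $x_1$, with generic residue field $\kappa(c_i)(\bar y_2,\dots,\bar y_r)$ purely transcendental of degree $r-1$ over $\kappa(c_i)$ — shows that if $c_{i+1} \subsetneq E_{i+1}$, then the residue field $\kappa(c_{i+1})$ is obtained from this purely transcendental extension by imposing at least one algebraic relation, so $r_{i+1} < r_i$ strictly. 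As $r_i$ is a non-negative integer, the iteration terminates, and at termination either the strict containment fails (giving a divisorial center) or $r_N = 0$ forces $\codim c_N = 1$.

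The main obstacle will be ensuring that $A_i$ is regular along $c_i$ so that a regular system of parameters exists and the local coordinate computation above goes through; for a general integral $X$ this is not automatic. I would dispatch this by a preliminary reduction: replace $X$ by a birational model on which $v$ is realized as the order of vanishing along a prime divisor $F$ (such a model exists by the very definition of divisorial), and then use the universal property of blowups, together with principalization of the ideal of $c(v)$, to compare the iterative blowup chain built from $X$ with the one built from any chosen regular model. Once each $A_i$ is regular at $c_i$, the transcendence-degree drop argument applies at every step and the lemma follows.
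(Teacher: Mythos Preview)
The paper does not actually prove this lemma: it is stated with a \qed and the reader is referred to Artin's appendix in \cite{Artin86}. So there is no ``paper's own proof'' to compare against beyond that citation.

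Your sketch has the right architecture (iterated blowups of the center, an ascending chain $A_0\subset A_1\subset\cdots\subset R_v$), but the key step is wrong: the invariant $r_i=\operatorname{tr.deg}\bigl(\kappa(v)/\kappa(c_i)\bigr)$ need \emph{not} strictly decrease when $c_{i+1}\subsetneq E_{i+1}$. Your reasoning conflates ``$\kappa(c_{i+1})$ arises from $\kappa(c_i)(\bar y_2,\dots,\bar y_r)$ by imposing an algebraic relation'' with ``$\operatorname{tr.deg}\bigl(\kappa(c_{i+1})/\kappa(c_i)\bigr)>0$''; the former only bounds that transcendence degree \emph{above} by $r-2$, while you need it bounded \emph{below} by $1$ to force $r_{i+1}<r_i$. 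Concretely: take $X=\mathbb{A}^2_k$ and let $v$ be the divisorial valuation whose center on $\bl_0\mathbb{A}^2$ is a $k$-rational point $p$ of the exceptional $\mathbb{P}^1$ (e.g.\ the valuation attached to the second exceptional divisor after two successive point blowups). Then $c_0=0$, $c_1=p$, both with residue field $k$ and both of codimension $2$, so $r_0=r_1=1$. For a divisorial (Abhyankar) valuation one has $r_i=\dim A_i-1$, so tracking $r_i$ is equivalent to tracking $\operatorname{codim} c_i$, which plainly need not drop at each step.

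What Artin's argument actually uses is that $R_v$, being divisorial, is essentially of finite type: write $R_v$ as a localization of $A_0[f_1,\dots,f_m]$ and show that each $f_j$ (or its inverse) eventually lies in some $A_N$ by controlling $v$-values under the blowup charts; equivalently, prove $\bigcup_i A_i=R_v$ and conclude by Noetherianity. Your regularity reduction in the last paragraph is reasonable, but it does not rescue the transcendence-degree drop, which fails already on smooth surfaces.
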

For more details see \cite{Artin86}.

\medskip

\subsubsection*{Back at the discussion of Theorem~\ref{thm;divisorialpointscoincide}}

 Let's recollect the discussion on Section~\ref{sec;divisorialvaluations}, when $F\subseteq X$ is a prime smooth divisor on $X$, and $p$ a point in  $F$, then the valuation  $ \ord_F\in \Xdiv$ is given by the following procedure: 
\begin{enumerate}
\item Consider the valuation $\ord_{F_p}\in (X_p)^{\val}$ given by the germ of $F$ at $p$
\item Then define 
\[\ord_F(\iu)\doteq \ord_{F_q}(\iu_p)\] where $\iu_p$ denotes the germ of $\iu$ at $p$.
We saw that this definition does not depend on the point $p\in F$.
\end{enumerate}

More generally:
\begin{prop}
\label{rem;divisorialvaluation}
Let $G\subseteq Y\xrightarrow{\mu} X$ be a (prime smooth) divisor on a normal variety $Y$, and $\mu$ a projective bimeromorphic morphism,  consider $p\in Z_X(\ord_G) = \mu (G)$.
Then, localizing at $p$ we get a (prime smooth) divisor \[G_p\subseteq Y_p\xrightarrow{\mu_p} X_p\] and the associated divisorial valuation on $X_p^{\beth}$ satisfies 
\begin{equation}
\ord_G(\iu\mathbin{\cdot}\mathcal O_Y) = \ord_{G_p}(\iu_p\mathbin{\cdot} \mathcal O_{Y_p})
\end{equation}
for $\iu$ a coherent ideal of $X$, and $\iu_p$ the germ of $\iu$ at $p$.
\end{prop}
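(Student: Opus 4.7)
The approach is to reduce both sides to a computation in a single local ring at a point $q \in G \cap \mu^{-1}(p)$, and then match the two reductions via the GAGA-type correspondence recalled before the proposition.

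Since $p \in \mu(G) = Z_X(\ord_G)$, there exists a point $q \in G \cap \mu^{-1}(p)$; let $\tilde q$ denote the corresponding closed point of $Y_p$ lying above the closed point of $X_p$. By the construction of $\ord_G$ recalled in Section~\ref{sec;divisorialvaluations} (and Example~\ref{ex;centeredval}), the value $\ord_G(\iu\cdot\mathcal O_Y)$ is computed by passing to the germ at $q$: if $f \in \mathcal O_{Y,q}$ is a local equation of $G$ at $q$ and $g_1,\dots,g_k \in \mathcal O_{Y,q}$ generate the germ $(\iu\cdot\mathcal O_Y)_q$, then
\[\ord_G(\iu\cdot\mathcal O_Y) \;=\; \min_i v_f(g_i),\]
where $v_f$ denotes the $(f)$-adic valuation on $\mathcal O_{Y,q}$. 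Analogously, on the algebraic side, picking a local equation $\tilde f \in \mathcal O_{Y_p,\tilde q}$ for $G_p$ at $\tilde q$ and generators $\tilde g_1,\dots,\tilde g_k$ of the stalk $(\iu_p\cdot\mathcal O_{Y_p})_{\tilde q}$, the order of vanishing along $G_p$ is
\[\ord_{G_p}(\iu_p\cdot\mathcal O_{Y_p}) \;=\; \min_i v_{\tilde f}(\tilde g_i).\]

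To identify the two computations, invoke the GAGA-type correspondence: it produces a faithfully flat local homomorphism $\mathcal O_{Y_p,\tilde q}\hookrightarrow \mathcal O_{Y,q}$ under which the ideal of $G_p$ at $\tilde q$ extends to that of $G$ at $q$ (so $\tilde f$ and $f$ agree up to a unit), and the stalk $(\iu_p\cdot\mathcal O_{Y_p})_{\tilde q}$ extends to $(\iu\cdot\mathcal O_Y)_q$. Because $G$ is a smooth divisor, the $(f)$-adic valuation on $\mathcal O_{Y,q}$ restricts to the $(\tilde f)$-adic valuation on $\mathcal O_{Y_p,\tilde q}$: the condition $v_{\tilde f}(\tilde g) \ge k$ is equivalent to $\tilde g \in (\tilde f)^k$, which, by faithful flatness, is equivalent to the image of $\tilde g$ lying in $(f)^k\subseteq \mathcal O_{Y,q}$. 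Taking minima over corresponding generators then gives the claimed equality.

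The main obstacle is the third step, namely the correct matching of local data under the GAGA-type correspondence: that local equations of the smooth divisors on either side are identified, that finitely many generators of the coherent ideal transfer to generators of its analytification, and that the $(\tilde f)$-adic and $(f)$-adic valuations coincide on the smaller local ring. Once these compatibilities—which are built into the statement of the GAGA equivalence recalled in the excerpt—are in place, the proposition follows at once.
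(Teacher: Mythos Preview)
Your argument is correct, but it takes a noticeably different route from the paper's. You reduce everything to the $(f)$-adic valuation in a single local ring $\mathcal O_{Y,q}$ and then use a faithfully flat comparison map $\mathcal O_{Y_p,\tilde q}\hookrightarrow\mathcal O_{Y,q}$ coming from the GAGA equivalence to match the two computations. The paper instead uses the global characterisation of $\ord_G$ given just before the proposition: writing $\iu\cdot\mathcal O_Y=\mathcal O_Y(-kG)\cdot\iv$ with $G\nsubseteq Z_\iv$, it simply localises this factorisation at $p$ to get $\iu_p\cdot\mathcal O_{Y_p}=\mathcal O_{Y_p}(-kG_p)\cdot\iv_p$, and then checks that $G_p\nsubseteq Z_{\iv_p}$ (using that $G_p$ is prime and that $Z_\iv\cap G$ is a proper analytic subset of the irreducible $G$).

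The trade-off: the paper's argument is shorter and avoids any discussion of local rings or flatness, since the only thing to verify after localisation is the non-vanishing condition on $\iv_p$. Your approach is more explicit about where the GAGA correspondence actually enters, and makes transparent that the comparison is ultimately a statement about a single valuation ring; this is arguably more robust if one later wants to loosen the smoothness hypothesis on $G$. Either way, the content is the same and the proposition follows.
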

\begin{proof}
Let $\iu$ be an ideal on $X$ and  $k \doteq \ord_G(\iu)$, write:
\[\iu\mathbin{\cdot} \mathcal O_Y = \mathcal O_Y(-kG)\mathbin{\cdot} \iv,\] 
with $\iv$ an ideal such that $G\nsubseteq Z_{\iv}$.
Localizing at $p$ we get:
\[\iu_p\cdot\mathcal O_{Y_p} = \mathcal O_{Y_p}(-k G_p)\cdot \iv_p.\]
%By Zariski's main theorem, we have \[ \mu_* \left(\mathcal O_Y(-kG)\mathbin{\cdot} \iv \right)_p = \iu_p\quad \& \quad\left(\mu_* \mathcal O_Y(-kG)\right)_p\mathbin{\cdot} \mathcal O_{Y_p} = \mathcal O_{Y_p}(-kG_p)\]
 %Since $G\nsubseteq V(\iv)$ we get, b
By primality of $G$ we have that $G_p$ is prime and smooth, in particular $G_p\nsubseteq Z_{\iv_p}$.
Getting \[\ord_{G_p}(\iu_p\mathbin{\cdot} \mathcal O_{Y_p}) = k.\]
\end{proof}

In this $\C^*$-equivariant setting we also have an analogue statement as of Remark~\ref{rem:divbirational}, that is of key importance for Theorem~\ref{thm;divisorialpointscoincide}.

\begin{prop}\label{prop:bijdivc}
  Let $f\colon Y\to X$ be a bimeromorphic morphism, then the morphism $F \doteq (f,\, \id)\colon Y\times\Pro \to X\times\Pro$ induces a bijection:
  \[F^\beth|_{\Ydivc}\colon \Ydivc\to \Xdivc.\]
\end{prop}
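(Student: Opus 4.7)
The plan is to view this proposition as the $\C^*$-equivariant analogue of Remark~\ref{rem:divbirational}. Since $f\colon Y \to X$ is bimeromorphic, so is $F = (f, \id)\colon Y\times\Pro \to X\times\Pro$, and $F$ is $\C^*$-equivariant by construction. The first step is then free: Remark~\ref{rem:divbirational} applied to $F$ already yields a bijection $F^\beth\colon (Y\times\Pro)^{\divisorial} \to (X\times\Pro)^{\divisorial}$, so injectivity of $F^\beth|_{\Ydivc}$ is immediate, and the only remaining task is to match up $\Ydivc$ with $\Xdivc$ inside this bijection.

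For this matching I would establish an intrinsic characterization: a divisorial valuation $w$ on $X\times\Pro$ lies in $\Xdivc$ if and only if $w$ is $\C^*$-invariant and satisfies $w(t) = 1$. One direction is immediate from the definition of $v_E$, where $w(t)=\ord_E(t)/b_E = 1$. For the converse, given such a $w$, the center $c(w)$ on $X\times\Pro$ is $\C^*$-invariant and, because $w(t) > 0$, is contained in $X\times\{0\}$. Applying the equivariant version of Zariski's lemma (Lemma~\ref{lem:zariskiartin}) --- blowing up $c(w)$ equivariantly finitely many times inside $X\times\{0\}$ --- one obtains a $\C^*$-equivariant bimeromorphic model $\mathcal X \to X\times\Pro$ on which the center of $w$ becomes a prime divisor $E'$. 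Crucially, since all blow-ups are supported in $X\times\{0\}$, $\mathcal X$ restricts to an isomorphism over $\Pro\setminus\{0\}$, so $\mathcal X$ is a dominating test configuration of $X$ and $E' \subseteq \mathcal X_0$. By Remark~\ref{rem;centerdivisor}, $w = r\cdot\ord_{E'}$, and the normalization $w(t) = 1$ forces $r = 1/b_{E'}$, giving $w = v_{E'}\in\Xdivc$.

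With this characterization available on both $X\times\Pro$ and $Y\times\Pro$, the $\C^*$-equivariance of $F$ together with $F^*t = t$ ensures that $F^\beth$ and its inverse each preserve the pair of conditions ``$\C^*$-invariant, $v(t) = 1$''. Combined with the bijection on divisorial valuations from the first step, this yields $F^\beth(\Ydivc) = \Xdivc$. I expect the main technical obstacle to be the equivariant Zariski--Artin step in the analytic setting; the cleanest route would be to work locally at the generic point of $c(w)$ via the germ/GAGA formalism of Proposition~\ref{rem;divisorialvaluation}, apply the classical algebraic statement there, and then equivariantly patch the resulting modifications into a global $\C^*$-equivariant blow-up of $X\times\Pro$ centered inside $X\times\{0\}$.
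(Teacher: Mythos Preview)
Your proposal is correct and follows essentially the same route as the paper. The core technical step---iterated $\C^*$-equivariant blow-ups of the center inside $X\times\{0\}$, combined with localization at a point via the germ/GAGA formalism to invoke the algebraic Zariski--Artin lemma (Lemma~\ref{lem:zariskiartin}) and Proposition~\ref{prop;dimensionp} to globalize---is exactly what the paper does.

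The only difference is organizational: you factor the argument through an intrinsic characterization of $\Xdivc$ (as the $\C^*$-invariant divisorial valuations with $w(t)=1$), which then handles both inclusions at once via the equivariance of $F$ and $F^*t=t$, whereas the paper applies the blow-up argument directly to $F^\beth(v_E)$ and treats surjectivity separately as ``easy to see''. Your packaging is slightly cleaner, but the content is the same. One small wording issue: you speak of localizing ``at the generic point of $c(w)$'', but in the analytic category there is no generic point---the paper (and your intended meaning) localizes at a closed point $p$ of the center, which is what Proposition~\ref{rem;divisorialvaluation} and the GAGA dictionary are set up for.
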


By Remark~\ref{rem:divbirational}, $F^\beth$ is a bijection between $(Y\times\Pro)^{\divisorial}$ and $(X\times\Pro)^{\divisorial}$, but since the valuations on $\Xdivc$ (or $\Ydivc$) are attached to divisors corresponding to ireducible components of $\C^*$-equivariant degenerations of $X$ (or $Y$ resp.),  we need to check that, for $v_E\in \Ydivc$, the divisorial valuation $F^\beth (v_E)$ can be obtained from an irreducible component of the central fiber of a test configuration of $X$.
\begin{proof}[Proof of Proposition~\ref{prop:bijdivc}]
  Let's start proving that $F^\beth$ maps $\Ydivc$ to $\Xdivc$.

  If $\cY$ is a test configuration for $Y$, dominating $Y\times\Pro$, and $E$ is a prime smooth vertical divisor, then we'll show that $F^\beth(v_{E})\in \Xdivc$. 
  That is, that there exists a test configuration $\cX$ for $X$, together with an irreducible vertical divisor $D$ and a $\C^*$-equivariant birational map $\mu_T\colon \cY\to \cX$:
  \[% https://tikzcd.yichuanshen.de/#N4Igdg9gJgpgziAXAbVABwnAlgFyxMJZABgBpiBdUkANwEMAbAVxiRAE0AdTvAW3m4AFAE4QQAX1LpMufIRQBGclVqMWbbgGN2AAm5wmaODBwwAjjoCiEqSAzY8BImQUr6zVohAANbn3h6nCJiktIOckRKrtTu6l5avpwGRibmOgAiEiowUADm8ESgAGaivEhkIDhiiABMMWqeIAAURaSBpNxYUACUINQMdABGMAyCMo7yIMJYuQAWODbFpUhKldUVsZ5gTAwM-UMjY+FOXgwwRQuhICUQZYirVUh1qh4anLxMAPoKfSADw6NxhEvNM5pdbDc7gBmaiPWr1V6Iba7fYAo6yE5-c7gpa3Faw6owv5YMCNKB0OCzHK-TZvD6fAAqWXEQA
  \begin{tikzcd}
  Y\times\Pro \arrow[d, "{(f, \,\id)}"'] & \cY \supseteq E \arrow[l] \arrow[ld, "\mu_1"'] \arrow[d, "\mu_T", dashed] \\
  X\times \Pro                           & \cX\supseteq D, \arrow[l]                                                 
  \end{tikzcd}\]
  such that $D$ and $E$ generate the same divisorial valuation on $X\times\Pro$.

  To prove this we first observe that $v\doteq F^\beth(v_E)$ is a valuation on $X\times\Pro$, and thus, denoting $X\times\Pro$ by $\cX^1$, the central variety $Z_1\doteq Z(v, \cX^1)$ is well defined and a  $\mathds C^*$-invariant irreducible set supported on the central fiber $X\times\{0\}$, given by the zeroes of $(\mu_1)_*\mathcal O_{\cY}(-E)$.
  Therefore, the blow-up $\cX^2\doteq\bl_{Z_1}\cX^1$ is a test-configuration for $X$, and the central variety, $Z_2$, of $v$ on $\cX^2$ is $\C^*$-invariant, and supported on the central fiber.

  Inductively, the blow-up $b_{k+1}\colon \cX^{k+1}\to \cX^k$ of $\cX^k$ along $Z_k$, is a test configuration and the center, $Z_{k+1}$, of $v$ in $\cX^{k+1}$ is $\C^*$-invariant and supoorted on central fiber:
\begin{center}
% https://tikzcd.yichuanshen.de/#N4Igdg9gJgpgziAXAbVABwnAlgFyxMJZABgBpiBdUkANwEMAbAVxiRAFEAdTuJgIzgwcMAI4ACbgFs6OABYBjRmICaIAL6l0mXPkIoyARiq1GLNgC0A+sW68BQ0RM7S5ihmIAa19ZpAZseAREAEykRtT0zKyIIFYG3FAQwuLcEDQwAE4MWGAwwFJMlgYAFOwAlGq2-ILJUjIKSl4G6sYwUADm8ESgAGYZEJJIZCA4EEgGEabRIAXeGr39g4ihI2OIEyZRbHxFPgsDQ9SjSCvZuWxQdHCybSCTWzGzzWoUakA
\begin{tikzcd}
E\subseteq \cY \arrow[d, "\mu_k", dashed] \arrow[rrd, "\mu_{k+1}", dashed] &  &                                                                      \\
Z_k\subseteq \cX^k                                            &  & \cX^{k+1}\supseteq Z_{k+1}= \overline{\mu_{k+1}(E)},\arrow[ll, "b_{k+1}"]
\end{tikzcd}
\end{center}
where $\mu_{k+1}$ is the bimeromorphic map defined by $\mu_k$ and $b_{k+1}$.

In the algebraic case, by a Lemma of Zariski  after blowing up the center of the divisorial valuation a finite number of times we get that $Z(v, \cX^k)$ is a divisor.
But in our non-algebraic context, Zariski’s result does not a priori apply.
The strategy of our proof will be to localize at a point $p\in Z_1\subseteq X\times\Pro$, use the version of Zariski's lemma given in Lemma~\ref{lem:zariskiartin}, that applies in this local case, to get that, for some $k\gg 0$ sufficiently big, $(Z_k)_p$ is a divisor. 
By Proposition~\ref{prop;dimensionp} this implies that $Z_k$ is a divisor, and thus by Remark~\ref{rem;centerdivisor} we are done. 

Taking $p\in Z_1\subseteq X\times\Pro$, and localizing at $p$ we get:
\begin{equation}
% https://tikzcd.yichuanshen.de/#N4Igdg9gJgpgziAXAbVABwnAlgFyxMJZABgBpiBdUkANwEMAbAVxiRAFEB9NAHR7iYAjODBwwAjgAI+AWzo4AFgGNGkgJrcQAX1LpMufIRRkAjFVqMWbABQAtTsQCU3PgOGiJk67PnLVADQdnNG1dEAxsPAIiACZSM2p6ZlZEEDtOE2DXIRExcW8eOUUVBklAzM0dPUjDIgBmePMkq1S+JSgIHAQq8P0oo2QAFkbEyxS0+wBrLP4cjykC-xLOacrzGCgAc3giUAAzACcIGSQyEBwIJBNR5JtZJiDKsMPjpDjzy8Rri1vU60EMsFQvsjidEA0Pm8ei8wcNIeDoaCkHCLldqIIYGAoEg6mdmuNBNoKFogA
\begin{tikzcd}
E_p\subseteq \cY^p \arrow[d, "(\mu_1)_p"] &  &                                                        &                  &                                                                     \\
(Z_1)_p\subseteq (\cX^1)_p                &  & (Z_2)_p\subseteq(\cX^2)_p \arrow[ll, "(b_2)_p"] & \dotsb \arrow[l] & (Z_k)_p\subseteq (\cX^k)_p. \arrow[l] \arrow[llll, "b_p", bend left]
\end{tikzcd}
\end{equation}
The valuation $v_{E_p}$ is a divisorial valuation on the $\cO_{X,p}$-scheme $(\cX^i)_p$ whose (scheme theoretic) center is the generic point of $(Z_i)_p$. 
Since $(\cX^i)_p = \bl_{(Z_{i-1})_p}(\cX^{i-1})_p$, by Lemma~\ref{lem:zariskiartin} after a finite number of steps $(Z_k)_p$ becomes a divisor.
By irreducibility of $Z_k$ and Proposition~\ref{prop;dimensionp}, $Z_k$ is a --global-- divisor of $\cX^k$.

The map $F^\beth|_{\Ydivc}$ is injective by Proposition~\ref{prop:bijval}, and it is easy to see that it is surjective. 
\end{proof}

%\textcolor{red}{Before proving the main theorem of the section, }

\medskip
On to the proof of Theorem~\ref{thm;divisorialpointscoincide}:

%The general strategy now will be similar as in \cite{jonsson_mustaţă_2014}, that is to use this algebraic localization, $X_p$, at a given point $p\in X$ to get results on the analytic complex manifold $X$.
\begin{proof}[Proof of Theorem~\ref{thm;divisorialpointscoincide}]
  Let's start proving that $\Xdiv \subseteq r(\Xdivc)$, that is for every $F\subseteq Y\overset{\mu}{\to} X$, irreducible smooth divisor on a bimeromorphic model of $X$, 
  \[\ord_F \in r(\Xdivc).\] 
  
  Let $\cY$ be the deformation to the normal cone of $F\subseteq Y$, that is the blow-up of $F\times \{0\}$ in $Y\times \Pro$, with exceptional divisor $E\subseteq \cY\xrightarrow{\mu}Y\times \Pro$, the irreducible divisor corresponding to the blow-up of $Y$ along $F$, cf. \cite[Chapter 5]{Ful98intersection}.
  Localizing at $p\in F$:
  \[F_p\subseteq Y_p \quad \&\quad E_p\subseteq \cY_p =\bl_{F_p\times\{0\}}(Y_p\times_{\spec \mathds C} \Pro).\]% \xrightarrow{\mu_p} X_p\times_{\spec \mathds C} \Pro$.
  Now, applying Proposition~\ref{rem;divisorialvaluation} we get that for any ideal $\iu$:
  \begin{equation}
  \ord_F(\iu)= \ord_{F_p}(\iu_p) = r(v_{E_p})(\iu_p)= v_{E_p}\left(\iu_p\mathbin{\cdot} \mathcal O_{\cY_p}\right)  = v_E(\iu\cdot\mathcal O_{\cY}),
  \end{equation}
  where the second equality is given by \cite[Theorem~4.8]{BHJ17uniform}\footnote{The set up there is for a scheme of finite type over a field of characteristic zero, but the same arguments apply.}, and thus \[\ord_F= r(v_E)\in r(\cY_{\C^*}).\]
  By Proposition~\ref{prop:bijdivc} we have stablished that $\Xdiv\subseteq r(\Xdivc)$.
 
 To prove that $\Xdiv\supseteq r(\Xdivc)$, the strategy will be the same.
 
Let $\cX$ be a test configuration that dominates $X\times \Pro$, and $E\subseteq \cX_0$ an irreducible component. 
We then have:
\begin{equation}\nonumber
% https://tikzcd.yichuanshen.de/#N4Igdg9gJgpgziAXAbVABwnAlgFyxMJZABgBpiBdUkANwEMAbAVxiRAFEAdTuJgIzgwcMAI4ACbgA0AxoxABfUuky58hFGQCMVWoxZtJ3PAFt4EzgAUAThAVKQGbHgJEyAJh31mrRCABaAPrE3LwCQqJiknbKTmpEmqQe1F76vgCaAZrcUBDCItx8DAHAgcTykWLRDirO6sgJlMl6PhyZIfyCeebGdDgAFrIMYhmaCjowUADm8ESgAGY2xkhkIDi2iJqK84tICavrblsgCxBLiADM1GtIbk3ebHyZVSdnACxX6yspLXNPRy9Id77JCXXT3XzcKx9CB-CjyIA
\begin{tikzcd}
E\subseteq \cX \arrow[d] & E_1\subseteq \cY^1 \arrow[l, "f_1"] \arrow[dd, "\rho_1"] \\
X\times \Pro \arrow[d]     &                                                                 \\
Z_0\subseteq X             & Y_1\doteq\bl_{Z_0} X  \arrow[l, "b_1"]                         
\end{tikzcd}
\end{equation}
where $Z_0 \doteq Z(r(v_E), X)$ is the central variety of $r(v_E)$ on $X$, and $\cY^1$ is dominates $Y_1$ and $\cX$, and $f_1$ is a bimeromorphism such that the strict transform $E_1 = f_1^{-1} E$ is an irreducible smooth divisor.
We can define then $Z_1\doteq Z(r(v_E), Y_1)=\overline{\rho_1(E_1)}\subseteq Y_1$.

Localizing at a point $p\in Z_0$ we get:
\begin{equation}\nonumber
% https://tikzcd.yichuanshen.de/#N4Igdg9gJgpgziAXAbVABwnAlgFyxMJZABgBpiBdUkANwEMAbAVxiRAFEB9NAHR7iYAjODBwwAjgAI+ADQDGjbiAC+pdJlz5CKMgEYqtRizYAKGXzwBbeNJ4AFAE4QAlEtXrseAkTIAmA-TMrIggJgBanMSuvPxCImJSMm5qIBieWkS6pP7UgcYh4Zy60XwCwqISJgCaRdEqKWma3ihZlLlGwaFcxdylcRVSJnyWdDgAFgoMkjU9aCoGMFAA5vBEoABmTpZIZCA4EEi67iCbENuIWXsHiL7Hp+cAzNT7SL7tQWyCnMBZaMr1Gy2SAALM9rrs8p11t9fv87kDEKCrkgnoYPiE+A4xhAYaQ-vNlEA
\begin{tikzcd}
E_p\subseteq \cX_p \arrow[d] & (E_1)_p\subseteq (\cY^1)_p \arrow[l, "{f_{1,p}}"] \arrow[dd, "{\rho_{1,p}}"] \\
(X\times \Pro)_p \arrow[d]     &                                                                                     \\
(Z_0)_p\subseteq X_p           & (Z_1)_p\subseteq(Y_1)_p \arrow[l, "{b_{1,p}}"]                                     
\end{tikzcd}
\end{equation}
and, as before, $E_p$ defines a divisorial valuation on $X_p$ --again by the same arguments as in \cite[Theorem 4.8]{BHJ17uniform}-- and its schematic center in $X_p$ is the generic point of $(Z_0)_p$, similarly the center of $v_{E_p}$ on $(Y_1)_p$ is the generic point $(Z_1)_p$. 
Repeating the construction we get:
\begin{equation}
% https://tikzcd.yichuanshen.de/#N4Igdg9gJgpgziAXAbVABwnAlgFyxMJZABgBpiBdUkANwEMAbAVxiRAFEAdTuJgIzgwcMAI4ACbgA0AxoxABfUuky58hFGQCMVWoxZtJ3PAFt4EzgAUAThAVKQGbHgJEyAJh31mrRCABaAPrE3LwCQqJiknbKTmpEmqQe1F76voGaIfyCwuIAmgGa0Q4qzurICZTJej4cBZlhOebGdDgAFrIMYvmFijGqLihu5J7VbNzSUBA4CL3FsQPIQ0m63mOcE1Mz9o79ZQDMw1WrvuwB3DAMDPXZEdzNbR1dZ5wXDEU7pUQHyyk1geeXa7hcTcPgMALAQLAAEMAC0mnk8ie0Jel3h8neJTiKAALIcVqkQONJtNMfMynifqNfMTNgodDAoABzeBEUAAMxsxiQZBAOFsiAR9k5EG5guo-KQblmIrFBz5AqGBJqfAKRVlSDxCp5R0J7LVMq5molAvlvzWVlaEANwqNiAArCbNYbRUgAGxOxB7F1igDsnvtPqQAA5PW6g4h-drEMGIwBOT2+iOaXmSmPyCjyIA
\begin{tikzcd}
E\subseteq \cX \arrow[d] & E_1\subseteq \cY^1 \arrow[l, "f_1"] \arrow[dd, "\rho_1"] & \dotsb \arrow[l] & E_\ell\subseteq \cY^\ell \arrow[l] \arrow[dd]  & \dotsb \arrow[l] \\
X\times \Pro \arrow[d]     &                                                                 &                  &                                                       &                  \\
Z_0\subseteq X             & Z_1\subseteq Y_1 \arrow[l, "b_1"]                               & \mathbin{\cdot}s \arrow[l] & Z_\ell\subseteq \bl_{Z_{\ell-1}} Y_{\ell-1} \arrow[l] & \dotsb, \arrow[l]
\end{tikzcd}
\end{equation}
and 
\begin{equation}
% https://tikzcd.yichuanshen.de/#N4Igdg9gJgpgziAXAbVABwnAlgFyxMJZABgBpiBdUkANwEMAbAVxiRAFEB9NAHR7iYAjODBwwAjgAI+ADQDGjbiAC+pdJlz5CKMgEYqtRizYAKGXzwBbeNJ4AFAE4QAlEtXrseAkTIAmA-TMrIggJgBanMSuvPxCImJSMm5qIBieWkS6pP7UgcYh4Zy60XwCwqISJgCaRdEqKWma3ihZlLlGwaFcxdylcRVSJnyWdDgAFgoMkjU9aPUeTdrIvtkBHWx8clAQOAjuqRpeSytthkEbPFs7ew2HGSgAzORr5wVcfDAMDCWx5QmSQx4I3Gk2mnA+Xzq+0aRyITxyZ3yoQiEO+vV+8QkAJqqKht3SzWQABZnu1XiBNttdvMDgSliSEXlOpTrioDDAoABzeBEUAAMycliQZBAOAgSF0+wFECFiCyovFiF8UsFSCeCqQK0RnUEnGAWTQyhp0tlJI1iBFTLYfL1BqNKplSDNYrVZKRfAcYwgttIhuNqsQAFZqC7EA8HbKAGwhxVEiNIADsMaQkfjiAAHMmg2mk+b02mAJxZhNp3Qi0P5ijKIA
\begin{tikzcd}
E_p\subseteq \cX_p \arrow[d] & (E_1)_p\subseteq (\cY^1)_p \arrow[l, "{f_{1,p}}"] \arrow[dd, "{\rho_{1,p}}"] & \dotsb \arrow[l] & (E_\ell)_p\subseteq (\cY^\ell)_p \arrow[l] \arrow[dd] & \dotsb \arrow[l] \\
(X\times \Pro)_p \arrow[d]     &                                                                                     &                  &                                                              &                  \\
(Z_0)_p\subseteq X_p           & (Z_1)_p\subseteq(Y_1)_p \arrow[l, "{b_{1,p}}"]                                      & \dotsb \arrow[l] & (Z_\ell)_p\subseteq (Y_\ell)_p. \arrow[l]                     & \dotsb. \arrow[l]
\end{tikzcd}
\end{equation}
Again by Lemma~\ref{lem:zariskiartin}, it exists $k\in \mathds N$ such that $(Z_k)_p \subseteq (Y_k)_p = \bl_{(Z_{k-1})_p}(Y_{k-1})_p$ is a prime divisor.
 Proposition~\ref{prop;dimensionp}, together with the irreducibility of $Z_k$, gives us that $Z_k\subseteq Y_k$ is a prime divisor.
 Moreover,  by construction, $Z_k$ is the central variety of $v_E$ on $Y_k$, and therefore by Remark~\ref{rem;centerdivisor}:  \[v_{E_p}=\ord_{(Z_k)_p}.\] 
 Thus
\begin{equation}
	v_E(\iu\mathbin{\cdot} \mathcal O_{\cX}) = v_{E_p}(\iu_p\mathbin{\cdot} \mathcal O_{\cX_p}) = \ord_{(Z_k)_p} (\iu_p\mathbin{\cdot} \mathcal O_{(Y_k)_p}) = \ord_{Z_k}(\iu\mathbin{\cdot} \mathcal O_{Y_k}),
\end{equation}
getting \[r(v_E) = \ord_{Z_k}\in \Xdiv.\] 
This shows that $r(\Xdivc)\subseteq \Xdiv$, completing the proof.

\end{proof}

\subsection{PL functions as divisors}\label{section;pldiv}

\begin{defi}
Let $\cX$ be a test configuration of $X$, we denote by $\VCar(\cX)$ the finite dimensional $\Q$-vector space given by the $\mathds C^*$-invariant $\mathds Q$-Cartier divisors supported on $\cX_0$.
An element of $\VCar(\cX)$ is called a \emph{vertical $\Q$-Cartier divisor on $\cX$}, or simply, \emph{vertical divisor of $\cX$}.
\end{defi}

Now, since morphisms of test configurations induce linear mappings between the vertical $\Q$-Cartier divisors, we define \emph{vertical Cartier b-divisors.}

\begin{defi}
Consider the direct system given by $\langle\VCar(\cX), \mu^*_{\cX, \cX^\prime}\rangle$, we then say that the elements of the directed limit:
 \[ \varinjlim_\cX \VCar(\mathcal{X}),\] 
are called \emph{vertical Cartier $b$-divisors}.
\end{defi}

For each test configuration $\cX$ we can define a natural map:
\[\PL^+(X^\beth)\to\VCar(\cX),\]
that assigns to each $\varphi\in \PL^+$ the vertical divisor given by:
\begin{equation}\label{eq;defdiv}
\sum_{E\overset{\text{irred}}{\subseteq}\cX_0} b_E\,\varphi(v_E) E, 
\end{equation}
where $\cX_0 = \sum b_E\, E$ is the irrreducible decomposition.
We show now that these maps glue well to define a universal one to the direct limit $\VCarb$.
\begin{lemma}\label{lem:plvcarb}
  The collection of the above mentioned maps induces the mapping:
\[\PL^+(X^\beth)\to\VCarb.\]
\end{lemma}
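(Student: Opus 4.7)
The plan is to verify that the maps $f_\cX\colon \PL^+(X^\beth)\to \VCar(\cX)$, $\varphi\mapsto \sum_{E\subseteq \cX_0} b_E\varphi(v_E)E$, stabilize in the direct limit $\VCarb$, so that they induce a well-defined assignment $\PL^+(X^\beth)\to\VCarb$. By the previous proposition, any $\varphi\in \PL^+(X^\beth)$ is of the form $\tfrac{1}{m}\varphi_\ia$ with $\ia\in\flag$ and $m\in\mathds Z_{>0}$, and $f_\cX$ scales linearly with $\varphi$, so it is enough to treat $\varphi=\varphi_\ia$.

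The central computation is a direct unwinding of definitions: for any test configuration $\cX$ dominating $X\times\Pro$ and any irreducible component $E\subseteq \cX_0$, the $\C^*$-equivariant divisorial valuation $v_E$ satisfies $v_E(\ia) = \tfrac{1}{b_E}\ord_E(\ia\cdot\mathcal O_\cX)$, and hence
\[f_\cX(\varphi_\ia) \;=\; \sum_E b_E\varphi_\ia(v_E)\,E \;=\; -\sum_E \ord_E(\ia\cdot\mathcal O_\cX)\,E.\]
As soon as $\cX$ dominates the normalized blow-up $\cX_\ia\doteq \widetilde{\bl_\ia(X\times\Pro)}$, the pulled-back ideal $\ia\cdot\mathcal O_\cX$ becomes invertible and equals $\mathcal O_\cX(-D_\cX)$ for a unique vertical Cartier divisor $D_\cX$; then $f_\cX(\varphi_\ia) = -D_\cX\in\VCar(\cX)$.

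Compatibility under pullback is then standard: for a morphism $\mu\colon \cX\to\cX^\prime$ between two test configurations that both dominate $\cX_\ia$, the equality of fractional ideals $\mu^*(\ia\cdot\mathcal O_{\cX^\prime}) = \ia\cdot\mathcal O_\cX$ forces $\mu^* D_{\cX^\prime} = D_\cX$, hence $\mu^* f_{\cX^\prime}(\varphi_\ia) = f_\cX(\varphi_\ia)$. Since by Remark~\ref{rem:projcofinal} the test configurations dominating $\cX_\ia$ are cofinal among all snc test configurations, this exhibits a well-defined element of $\VCarb$ attached to $\varphi_\ia$. It depends only on the function $\varphi_\ia$: if $\varphi_\ia=\varphi_\ib$ on $X^\beth$, then evaluating at each $v_E$ yields $\ord_E(\ia\cdot\mathcal O_\cX) = \ord_E(\ib\cdot\mathcal O_\cX)$ on any common dominating $\cX$, so the associated Cartier divisors agree.

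The main subtlety — and the reason for stating the lemma in the direct limit rather than at finite stage — is that on a test configuration $\cX$ that does not dominate $\cX_\ia$, the ideal $\ia\cdot\mathcal O_\cX$ may fail to be invertible, and $f_\cX(\varphi_\ia)$ need not match the pullback of a Cartier divisor from any further blow-up; the cofinality of dominating models is precisely what absorbs this defect in $\VCarb$.
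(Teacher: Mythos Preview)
Your proof is correct and follows essentially the same approach as the paper's: both reduce to $\varphi=\varphi_\ia$, restrict to the cofinal family of test configurations dominating the normalized blow-up of $\ia$ so that $\ia\cdot\mathcal O_\cX$ becomes an invertible vertical divisor, and then read off pullback-compatibility from the identity $\mu^*(\ia\cdot\mathcal O_{\cX'})=\ia\cdot\mathcal O_\cX$. Your version is slightly more explicit in noting that the construction depends only on the function $\varphi_\ia$ and in flagging what goes wrong on non-dominating models, but the argument is the same.
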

\begin{proof}
  Let $\varphi\in \PL^+(\Xb)$, we show that for a cofinal set of test configurations, and $\mu\colon \cX^\prime\to \cX$ a morphism of such test configurations: 
\[\mu^*\left(\sum_{E\overset{\text{irred}}{\subseteq}\cX_0} b_E\,\varphi(v_E) E\right) = \sum_{E^\prime\overset{\text{irred}}{\subseteq}\cX^\prime_0} b_{E^\prime}\,\varphi(v_{E^\prime}) E^\prime.\]

After scaling, we may assume that $\varphi = \varphi_\ia$ for some flag ideal $\ia$. 
The set of test configurations that dominate the normalized blow-up of $X\times\Pro$ along $\ia$ is cofinal\footnote{Observe that more than being cofinal, this set is also upward closed, that is, every test configuration that dominates an element of this cofinal set is also in the cofinal set.}, and thus we will suppose that $\cX$ is in this set.
Let $G$ be the effective divisor induced by $\ia$ on $\cX$:
\begin{equation}
\mathcal O_{\cX}(-G) = \ia\mathbin{\cdot}\mathcal O_{\cX}.
\end{equation}
Therefore,
 \[\sum_{E\subseteq\cX_0} b_E\,\varphi(v_E) E =-G.\] 

Writing $\cX^\prime_0 =\sum_{E^\prime\subseteq \cX_0^\prime} b_{E^\prime}\, E^\prime$ as the irreducible decomposition, it follows that:
\begin{align*}
\mu^*\left(\sum_{E\subseteq\cX_0} b_E\,\varphi(v_E) E \right)=\mu^*{(-G)}&= \sum_{E^\prime\subseteq \cX_0^\prime} \ord_{E^\prime}(\mathcal O(\mu^* G)) E^\prime\\
 &= \sum_{E^\prime\subseteq\cX_0^\prime} -\ord_{E^\prime}(\ia\mathbin{\cdot}\mathcal O_{\cX^\prime}) E^\prime \\
 &=\sum_{E^\prime\subseteq \cX_0^\prime} -b_{E^\prime}v_{E^\prime}(\ia)E^\prime\\
 &= \sum_{E^\prime\subseteq\cX^\prime_0} b_{E^\prime}\,\varphi(v_{E^\prime}) E^\prime,
\end{align*}
concluding the proof.
\end{proof}

\begin{theorem}
\label{vcarpl}
The above map induces an isomorphism of linear spaces 
\[
\PL(X^\beth) \simeq \varinjlim_\cX \VCar(\mathcal{X}).
\]
\end{theorem}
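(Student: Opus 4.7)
The plan is to split the proof of $\Phi \colon \PL(X^\beth) \to \VCarb$ being an isomorphism into surjectivity and injectivity, both of which reduce to manipulations of flag ideals.

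For surjectivity, I would fix $D\in \VCar(\cX)$ with $\cX$ projective over $X\times\Pro$, and exhibit a preimage. Choosing an effective $\C^*$-invariant vertical Cartier divisor $A$ on $\cX$ whose negative is $\mu$-ample (with $\mu\colon \cX\to X\times\Pro$; such an $A$ exists because cofinally many test configurations are projective over $X\times\Pro$, cf.~Remark~\ref{rem:projcofinal}), I would write $D = D_+ - D_-$ with $D_\pm$ effective $\C^*$-invariant vertical $\Q$-Cartier divisors. For large enough integers $m, N$, the divisors $-(mD_\pm + NA)$ and $-NA$ become integral Cartier, $\mu$-ample and $\mu$-globally generated; their pushforwards yield flag ideals $\ia_\pm$ and $\ib$ satisfying $\ia_\pm\cdot\mathcal O_\cX = \mathcal O_\cX(-(mD_\pm + NA))$ and $\ib\cdot\mathcal O_\cX = \mathcal O_\cX(-NA)$. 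Combining the associated $\PL^+$ functions $\varphi_{\ia_\pm}, \varphi_\ib$ with suitable $\Q$-coefficients then produces an element of $\PL(X^\beth)$ mapping to $D$.

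For injectivity, suppose $\varphi\in\PL(X^\beth)$ satisfies $\Phi(\varphi)=0$; write $\varphi = \tfrac{1}{m}(\varphi_\ia - \varphi_\ib)$ for flag ideals $\ia, \ib$ and $m\in \N$. Passing to a common $\cX$ dominating the normalized blow-ups of both $\ia$ and $\ib$, so that $\ia\cdot\mathcal O_\cX = \mathcal O_\cX(-G_\ia)$ and $\ib\cdot\mathcal O_\cX = \mathcal O_\cX(-G_\ib)$ are principal, the vanishing of $\Phi(\varphi)$ translates, via the computation in Lemma~\ref{lem:plvcarb}, into $G_\ia = G_\ib$ on $\cX$; hence $\ia\cdot\mathcal O_\cX = \ib\cdot\mathcal O_\cX$. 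Pushing forward by $\mu$, Corollary~\ref{uniqueness} yields $\overline\ia = \overline\ib$. The decisive step is then to deduce $\varphi_\ia = \varphi_\ib$ from this equality of integral closures. Lemma~\ref{closure} supplies it on $(X\times\Pro)^{\val}$, but the PL functions live on the full $(X\times\Pro)^\beth_{\C^*}$. The extension is essentially automatic: for any $v\in X^\beth$ and any flag ideal $\ic$ with $\ic_{\lambda_0}=\mathcal O_{X\times\Pro}$ (for some integer $\lambda_0$), the Gauss-extension formula gives $\sigma(v)(\ic) \leq \lambda_0 < +\infty$, so in the proof of Lemma~\ref{closure} the term $\log\lvert\overline{\ic^k}\rvert$ stays finite on all of $(X\times\Pro)^\beth_{\C^*}$ and can be cancelled everywhere, yielding $\log\lvert\ia\rvert = \log\lvert\ib\rvert$ throughout. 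Via the Gauss bijection of Lemma~\ref{lem:gauss}, $\varphi_\ia = \varphi_\ib$ on all of $X^\beth$, whence $\varphi = 0$.

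The main obstacle I anticipate is precisely this extension of Lemma~\ref{closure} from valuations to arbitrary semivaluations, together with the need to verify that $\overline{\ic^k}$ remains a flag ideal (which it does, by $\C^*$-invariance and the fact that support is preserved under integral closure on a reduced scheme). The surjectivity half is conceptually routine, amounting to the familiar trick of writing a $\Q$-Cartier divisor as a $\Q$-combination of $\mu$-ample ones and converting $\mu$-globally generated anti-effective divisors into flag ideals.
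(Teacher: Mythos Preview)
Your proposal is correct and follows essentially the same approach as the paper: the paper constructs an explicit inverse $\VCarb \to \PL(X^\beth)$ by writing each $D$ as a $\Q$-difference of $\mu$-very-ample vertical divisors and invoking Corollary~\ref{uniqueness} together with Lemma~\ref{closure} for well-definedness, which is exactly your surjectivity argument, while your injectivity step is the same well-definedness check unwound. The paper handles the extension of Lemma~\ref{closure} to all of $(X\times\Pro)^\beth_{\C^*}$ with the same observation you make, namely that $\log\lvert\ia\rvert$ is finite-valued there for any flag ideal.
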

\begin{proof}
Since the map is additive, we have a unique linear extension:%\footnote{See Appendix~\ref{apx;linearextensions}.}:
\[\PL(X^\beth)\to\VCarb.\] 

We now construct its inverse.

Let $\cX$ be a test configuration of $X$. 
By Remark~\ref{rem:projcofinal} we can suppose that there exists a morphism of test configurations $\mu\colon\cX\to X\times \Pro$ that is projective, that is we can embbed $\cX$:
\[% https://tikzcd.yichuanshen.de/#N4Igdg9gJgpgziAXAbVABwnAlgFyxMJZABgBpiBdUkANwEMAbAVxiRAB12BbOnACwDGjAAQANEAF9S6TLnyEUAJnJVajFmwAUozni7xOPfgCNjwgAoA9AIwBKXVn1xhh3n1MXLnGAwaTpIBjYeARE1qTWqvTMrIggOux6BtxuHlbW-jLB8kTKkdTRGnGuJmZW3r6SqjBQAObwRKAAZgBOEFxIZCA4EEjh3XRYfnF8EBAA1pkgre2d1D1IymoxbIZMINQMdMY+5rIhCiAtWLV8OFMzHYj9C4hLhbGBAPoZUs1tVze9iADMBeqPNBPRRVCRAA
\begin{tikzcd}
\cX \arrow[rr, hook] \arrow[rd, "\mu"'] &                    & (X\times\mathbb P^1)\times \mathbb P^\ell \arrow[ld, "p_1"] \arrow[d, "p_2"] \\
                                               & X\times\mathbb P^1 & \mathbb P^\ell                                                              
\end{tikzcd}\]
making the diagram commute, where $p_i$ is the $i$-th coordinate projection.
Then the set:
\[\AVCar(\cX)\doteq\{ D\in \VCar(\cX) | -D\text{ is }\mu\text{-very ample}\}\] 
is non-empty, since $p_2^*(mL)|_{\cX}\in \AVCar(\cX)$, for $L$ an ample line bundle on $\mathds P^\ell$ and for $m\gg 0$.
Moreover, $\AVCar(\cX)$ is a semigroup that $\Q$-spans $\VCar(\cX)$.
 
For each $D\in \AVCar(\cX)$,  $-D$ is $\mu$-globally generated, which implies that there exists $\ib$, a fractional ideal sheaf of $\mathcal O_{X\times\Pro }$, such that 
\begin{equation}\label{gg}
\mathcal O_{\cX}(-D) = \ib\mathbin{\cdot}\mathcal O_{\cX}.
\end{equation}  
Since $D$ is a vertical divisor implies that we can suppose $\ib\in \flag$, and hence we define $\varphi_D\doteq -\varphi_\ib$.
To see that $\varphi_D$ is well defined, is enough to observe that if 
\begin{equation}
\mathcal O_{\cX}(-D) = \ib^\prime\mathbin{\cdot}\mathcal O_{\cX}
\end{equation}
then by Corollary~\ref{uniqueness}  $\overline\ib = \overline{\ib^\prime}$, which implies \[\varphi_\ib = \varphi_{\overline\ib} = \varphi_{\overline{\ib^\prime}} = \varphi_{\ib^\prime}\]
by Lemma~\ref{closure}\footnote{The proof of Lemma~\ref{closure} applies, since for every flag ideal $\ia$ the function $\log \lvert \ia\rvert $ is finite valued on $\Xbc$.}.

Let us now check that $\varphi\colon \AVCar(\cX)\to \PL(X^\beth)$ is additive.
Pick $D,D^\prime\in \AVCar(\cX)$ and write
\[
\mathcal O_{\cX}(-D) = \ib\mathbin{\cdot}\mathcal O_{\cX}\quad \& \quad \mathcal O_{\cX}(-D^\prime) = \ic\mathbin{\cdot}\mathcal O_{\cX},
\]
which implies
\begin{align*}
\mathcal O_{\cX}(-D - D^\prime)=\mathcal O_{\cX}(-D) \cdot \mathcal O_{\cX}(-D^\prime) = (\ib\mathbin{\cdot}\ic)\mathbin{\cdot}\mathcal O_{\cX}
\end{align*}
and thus 
\[ \varphi_{D+D^\prime} = -\varphi_{\ib\mathbin{\cdot}\ic}= -\varphi_\ib - \varphi_\ic = \varphi_D + \varphi_{D^\prime}.
\]
Again we can extend $\varphi$ uniquely to $\VCar(\cX)$ by linearity.

Observe that this definition does not depend on $\cX$, in the sense that if $\mu\colon\cX^\prime\to \cX$  is a morphism of test configurations, and $D^\prime\doteq\mu^*D\subseteq\cX^\prime$, then we have
\begin{equation}
\mathcal O_{\cX^\prime}(-D^\prime) = (\ib\mathbin{\cdot}\mathcal O_{\cX})\mathbin{\cdot}\mathcal O_{\cX^\prime} = \ib\mathbin{\cdot}\mathcal O_{\cX^\prime}.
\end{equation}
Hence $\varphi_{D^\prime} =\varphi_D$.

This defines \footnote{The set of test configurations obtained by a sequence of blow-ups is cofinal.} a linear map \[\VCarb\to \PL(X^\beth)\]
which is the inverse of \eqref{eq;defdiv}.
\end{proof}

We thus can conclude the proof of Theorem~\ref{teo;dense}. 
\begin{proof}[Proof of Theorem~\ref{teo;dense}]
If $\varphi\in \PL(X^\beth)$ is such that \[\varphi(v)=0, \quad \text{ for every } v\in \Xdiv,\] 
then, using Theorem~\ref{thm;divisorialpointscoincide} we have that for all test configurations $\cX^\prime$ and all prime vertical divisors $E^\prime\in\VCar(\cX^\prime)$ \[\varphi(v_{E^\prime}) = 0.\]
In particular, if $D\in \VCar(\cX)$ is a vertical divisor, such that $\varphi_D = \varphi$, writing \[D = \sum_{E\overset{\text{irred}}{\subseteq}\cX_0} \varphi_D(v_E) E,\] we will have $D=0$, and thus $\varphi =0$.

\end{proof}
\section{Dual complexes and log discrepancy}
\label{sec;duallog}
From this point on $X$ will be a compact complex manifold.
\subsection{Non-Archimedean as a limit of tropical}
\label{sec;dual}
In the algebraic setting it is known that the Berkovich analytification corresponds to taking a limit of tropical complexes, known as the \emph{Dual Complex}, associated to test configurations. 
See \cite[Appendix A]{BJ22trivval} for a version in the trivially valued case.
In this section we will show the analogous result in our transcendental setting.

%Since snc models for $X$ form a direct poset, which preserves the linear structure we can define the projective limit  $\varprojlim \hat\Delta(Y,B)$. 

\subsubsection{Contruction of the dual complex}\label{sec:dualcomplex}
Let $\cX$ be a smooth snc test configuration for $X$.  
Let $\cX_0=\sum_i b_i E_i$ be the decomposition of the central fiber in its irreducible components.

Then $(\cX,\cX_{0,\text{red}})$ is a snc reduced birational model of $\cX_{\text{triv}}\doteq X\times \Pro$. 
Recall from Section~\ref{sec:monomialcone} that we can then construct $\hat\Delta_{\cX}\doteq \hat \Delta(\cX, \cX_{0,\text{red}})$.

Now, we will construct a simplicial complex, $\Delta_\cX$, as a sort of compact representative of $\hat\Delta_{\cX}$.
For each ``cone" face, $\hat\sigma_Z\cong (\mathbb R_{\ge 0})^ J$, of $\hat\Delta_{\cX}$ we will associate a ``simplex" face, $\sigma_Z$, of $\Delta_\cX$ given by the equation $\sum b_i\,w_i =1$, that is:
\[\sigma_Z\doteq \left\{ w\in \hat\sigma_Z \cong (\mathbb R_{\ge 0})^ J \,\bigm\vert\, \sum_{i\in J} b_i w_i =1\right\}.\]

Given a test configuration $\cX$, we have a natural map:
\begin{equation}
p_\cX\colon X^\beth \to \Delta_\cX
\end{equation} 
defined by $p_\cX(v) = \left( v(E_i) \right)\in \R_{\ge 0}^J$, where the latter that corresponds to the stratum $Z$, the smallest one that contains $Z(v, \cX)$ the central variety of $v$ on $\cX$.

\subsubsection{Morphisms}\label{sec;morphismsofcomplex}
Let $\cX, \cX^\prime$ be test configurations of $X$, $\mu\colon \cX\to\cX^ \prime$  a test configuration morphism between them, and $\sum_{i\in I} b_i E_i$, $\sum_{j\in J} c_j E^\prime_j$ be the decomposition in irreducible components of $\cX_0, \cX^ \prime_0$ respectively. 
Then, clearly
\[\text{Supp}(\cX_0)\subseteq \text{Supp}(\mu^*\cX^\prime_0).\] 
In particular, we can write $\mu^* E^\prime_j = \sum_i d^i_j E_i$, for $D_j = (d_j^ 1,\dotsc, d_j^ M)\in \R^{M}$, and we define the map:
\begin{align*}
  r_{\cX, \cX^\prime}\colon \Delta_\cX&\longrightarrow\Delta_{\cX^\prime}\\
  (\mathbb R_{\ge 0})^J\cong\sigma_Z\ni w&\mapsto r_{\cX, \cX^\prime}(w)\in \sigma_{Z^\prime}\cong (\mathbb R_{\ge 0})^{J^\prime_w},
\end{align*}
for \(J^\prime_w \doteq \{j\in J\bigm\vert d_j^ i\neq 0 \text{ for some  }i\in I\},\) given by:
\[r_{\cX, \cX^\prime}(w)\doteq \left( \sum d^i_j\,w_i\right)_{j\in J^\prime_w}.\]

Since the snc test configurations form a directed poset we can take the projective limit of topological spaces,
\[ \Delta\doteq \varprojlim_{\cX \text{ snc}}\, \Delta_\cX,\]
and the family of maps $\left( p_\cX\right)_\cX$ induces an injective\footnote{this is equivalent to $\PL(X^\beth)$ separating points on $X^\beth$.} continuous map:
\[p\colon X^\beth \to \Delta.\]
\begin{theorem}\label{thm;dualcomplex}
The map $p\colon X^\beth\to \Delta$ is a homeomorphism.
\end{theorem}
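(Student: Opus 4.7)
The plan is to prove $p$ is a continuous bijection between compact Hausdorff spaces. Compactness of $X^\beth$ has been established, and $\Delta$ is Hausdorff as an inverse limit of Hausdorff spaces, so the homeomorphism conclusion follows automatically. Continuity is built into the definition coordinate-wise, so the content is bijectivity.

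For injectivity, the approach will be via monomial sections $\iota_\cX\colon \Delta_\cX \to X^\beth$ coming from the construction of Section~\ref{sec:monomialcone} combined with the Gauss bijection of Lemma~\ref{lem:gauss}; these are continuous right-inverses to $p_\cX$. I would then show that the monomial retractions $\tilde v_\cX := \iota_\cX(p_\cX(v))$ converge to $v$ in $X^\beth$ as $\cX$ ranges over the directed poset of snc test configurations, using the integral-closure dictionary of Proposition~\ref{blowup}, Corollary~\ref{uniqueness}, and Lemma~\ref{closure} together with density of $\PL$ in $C^0$ (Proposition~\ref{prop:pldense}) to verify convergence against a separating family of test functions. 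Given this, $p(v) = p(v')$ forces the common net $\tilde v_\cX = \tilde v'_\cX$ to converge to both $v$ and $v'$, yielding $v = v'$.

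For surjectivity, given a compatible family $\underline w = (w_\cX)_\cX \in \Delta$, the plan is to construct, for each snc $\cX$, the monomial valuation $v_\cX := \iota_\cX(w_\cX) \in X^\beth$ and then extract a limit. The affine normalization $\sum_i b_i w_{\cX, i} = 1$ cutting $\sigma_Z$ out of the cone $\hat\sigma_Z$ is exactly the Gauss condition $v(t) = 1$ (since $\cX_0 = \sum b_i E_i$ and $(t) = \mathcal O_\cX(-\cX_0)$), so the construction does land in $X^\beth$, with $p_\cX(v_\cX) = w_\cX$ tautologically. The key compatibility to check is that whenever $\cY \geq \cX$ via a morphism $\mu\colon \cY\to\cX$, one has $p_\cX(v_\cY) = w_\cX$: expanding $\mu^*E_i^\cX = \sum_j d_i^j E_j^\cY$ and using that $v_\cY$ is monomial with weights $w_\cY$,
\[
v_\cY(E_i^\cX) = \sum_j d_i^j\, w_{\cY, j} = \bigl(r_{\cY,\cX}(w_\cY)\bigr)_i = w_{\cX, i},
\]
the last equality being the compatibility of $\underline w$. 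Compactness of $X^\beth$ then furnishes a cluster point $v$ of the net $(v_\cX)$, and the cofinality of $\{\cY \geq \cX\}$ combined with continuity of $p_\cX$ gives $p_\cX(v) = w_\cX$ for every $\cX$, so $p(v) = \underline w$.

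The hard part will be the injectivity step, specifically the convergence $\tilde v_\cX \to v$: monomial approximations on increasingly refined snc models should saturate $X^\beth$, but pinning this down requires translating the horizontal (non–central-fiber) part of $\iu \cdot \mathcal O_\cX$ into central-fiber data via integral closure and invoking the integral-closure invariance of semivaluations provided by Lemma~\ref{closure}. The surjectivity step, by contrast, is a comparatively clean projective-limit and compactness argument once the monomial-valuation/Gauss dictionary is in place.
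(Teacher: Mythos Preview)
Your approach is correct and is the classical Berkovich-geometric argument (cf.\ \cite[Appendix~A]{BJ22trivval}, \cite{BFJ16semipositive}), but the paper takes a genuinely different route. Rather than constructing monomial sections and extracting limits, the paper invokes a \emph{tropical Gelfand duality} (Proposition~\ref{prop:tropicalhomeo}): a compact Hausdorff space $K$ equipped with a dense, $\max$-stable, constant-containing linear subspace $\mathcal A\subseteq \Cz(K,\R)$ is recovered as $(\tspec\mathcal A\setminus\{0\})/\R_{>0}$. Having established in the lemma immediately preceding the theorem (resting on Theorem~\ref{vcarpl} and the chain $\PL(\Delta)=\varinjlim_\cX\Aff_\Q\Delta_\cX\cong\varinjlim_\cX\VCar(\cX)\cong\PL(X^\beth)$) that $p$ is an isomorphism of PL structures, the paper simply reads off that $p$ agrees with the composite $X^\beth\xrightarrow{\delta}\tspec\PL(X^\beth)\xrightarrow{\eta^*}\tspec\PL(\Delta)\xrightarrow{\delta^{-1}}\Delta$ and is therefore a homeomorphism. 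This is shorter once the tropical formalism is in place and makes transparent that $X^\beth$ is entirely determined by its PL structure. Your argument is more constructive and yields the extra dividend that the retractions $\iota_\cX\circ p_\cX$ approximate every point of $X^\beth$ by quasi-monomial valuations.

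Two comments on your outline. First, injectivity of $p$ is already recorded before the theorem statement (the footnote observes it is equivalent to $\PL(X^\beth)$ separating points), so your retraction-convergence argument, while valid, is more than is needed there. Second, the step you flag as the ``hard part''---handling the horizontal support of $\iu\cdot\mathcal O_\cX$ for $\iu\in\ideal_X$ via integral closure---is unnecessary. Since $\sigma\colon X^\beth\to(X\times\Pro)^\beth_{\C^*}$ is a homeomorphism (Lemma~\ref{lem:gauss} plus compactness of the source and Hausdorffness of the target) and the functions $\varphi_\ia$ for $\ia\in\flag$ generate $\PL$, which is dense in $\Cz$, it suffices to verify $\varphi_\ia(\tilde v_\cX)\to\varphi_\ia(v)$ for each flag ideal $\ia$. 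Once $\cX$ dominates the normalized blow-up of $\ia$ one has $\ia\cdot\mathcal O_\cX=\mathcal O_\cX(-\sum_i d_iE_i)$ with purely vertical support, and then both $\sigma(v)(\ia)$ and $\sigma(\tilde v_\cX)(\ia)$ equal $\sum_i d_i\,v(E_i)$; the net is eventually constant on each $\varphi_\ia$, so no integral-closure manoeuvre is required.
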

To get this, we'll see that, just like $X^\beth$, $\Delta$ has a PL structure, that will be isomorphic to the PL structure on $X^\beth$.
%By an analogue of the Gelfand representation theorem it will be enough to get that $p$ is a homemomorphism.

\subsubsection{PL Functions}
There is a natural class of functions defined on $\Delta$,  the $\ind$-type set of \emph{piecewise linear functions}. 
That is, the set of real valued functions that, on a complex $\Delta_\cX$, are $\mathds Q$-piecewise linear:
\[\PL(\Delta)\doteq \bigcup_{\cX} (\pi_\cX)^*\PL(\Delta_\cX),\]
for $\pi_\cX\colon \Delta\to \Delta_\cX$ the canonical projection.

After going to a higher model, we can assume that the functions are rationally affine on each face of the associated dual complex $\Delta_{\cX}$, and
hence we have: 
\[\PL(\Delta)= \varinjlim_{\cX}\, \Aff_{\mathds Q}(\Delta_{\cX}).\]

Now, observe that $\text{VCar}(\cX)_\mathbb Q\cong \text{Aff}_\mathbb Q\,\Delta_\cX$, where the isomorphism is given by:
\begin{equation}\label{eq;affvcar}
\text{Aff}_\mathbb Q\,\Delta_\cX\ni f\mapsto \sum b_i\, f(e_i)\,  E_i,
\end{equation}
where $\cX_0 = \sum_{i\in I} b_i \, E_i$ is the irreducible decomposition, and $e_i$ corresponds to the standard generator of $\R^i\subseteq \R^I$.
Taking the limit we get:
\begin{equation}\label{eq;plisomorphism}
\PL(\Delta) = \varinjlim_\cX \,\text{Aff}_\mathbb Q\,\Delta_\cX\cong \varinjlim_\cX \, \VCar(\cX)\cong \PL(X^\beth).
\end{equation} 

\begin{lemma}
The map 
\[p\colon X^\beth\to \Delta\] is an ismorphism of PL structures\footnote{See Appendix~\ref{apx;semi-rings}.}.
\end{lemma}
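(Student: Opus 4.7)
The plan is to verify that the pullback $p^{\ast}\colon \PL(\Delta)\to \PL(X^{\beth})$ coincides with the composite isomorphism of~\eqref{eq;plisomorphism}, which identifies $\varinjlim_{\cX}\Aff_{\Q}\Delta_{\cX}$ with $\varinjlim_{\cX}\VCar(\cX)$ and then with $\PL(X^{\beth})$ via Theorem~\ref{vcarpl}. Since $\PL(\Delta) = \varinjlim_{\cX}(\pi_{\cX})^{\ast}\Aff_{\Q}\Delta_{\cX}$ is a directed union, it is enough to fix a smooth snc test configuration $\cX$ with central fiber $\cX_{0}=\sum_{i}b_{i}E_{i}$ and prove that for every $f\in \Aff_{\Q}\Delta_{\cX}$ the pullback $p_{\cX}^{\ast}f$ equals the PL function $\varphi_{D}$ associated by Theorem~\ref{vcarpl} to $D\doteq\sum_{i}b_{i}f(e_{i})E_{i}$, where $e_{i}$ denotes the vertex of $\sigma_{Z}$ dual to $E_{i}$. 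Compatibility with the transition maps $r_{\cX,\cX^{\prime}}$ is built into their construction in Section~\ref{sec;morphismsofcomplex}, which mirrors pullback of vertical divisors.

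The core of the argument is a direct computation on divisorial valuations. Take $v_{E}\in \Xdivc$ arising from a prime vertical divisor $E\subseteq \cX^{\prime}_{0}$ on a test configuration $\mu\colon\cX^{\prime}\to \cX$, and expand $\mu^{\ast}E_{i}=\sum_{j}d_{j}^{i}E_{j}^{\prime}$. Then $v_{E}(E_{i}) = b_{E}^{-1}\ord_{E}(\mu^{\ast}E_{i}) = b_{E}^{-1}d_{E}^{i}$, so $p_{\cX}(v_{E})$ lies in the simplex $\sigma_{Z}$ corresponding to the smallest stratum containing $Z(v_{E},\cX)$, with coordinates $(b_{E}^{-1}d_{E}^{i})_{i}$. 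Affinity of $f$ on $\sigma_{Z}$, together with $p_{\cX}(v_{E_{i}})=e_{i}$, yields
\[
f(p_{\cX}(v_{E})) = \sum_{i}b_{i}f(e_{i})\cdot b_{E}^{-1}d_{E}^{i} = b_{E}^{-1}\ord_{E}(\mu^{\ast}D) = \varphi_{D}(v_{E}),
\]
where the last equality is Lemma~\ref{lem:plvcarb}. So $p_{\cX}^{\ast}f$ and $\varphi_{D}$ agree on $\Xdivc$. Both functions are continuous, and $\Xdivc$ is dense in $X^{\beth}$ by Theorem~\ref{teo;dense} combined with Theorem~\ref{thm;divisorialpointscoincide}, so they coincide everywhere.

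Taking the direct limit over all snc test configurations, the squares produced above glue into the desired PL isomorphism: $p^{\ast}$ maps $\PL(\Delta)$ bijectively onto $\PL(X^{\beth})$, respecting the $\Q$-linear and $\max$-lattice structure. The main obstacle I anticipate is not the computation itself but the careful bookkeeping of conventions: matching the normalization $v_{E}=b_{E}^{-1}\ord_{E}$ with the affine equation $\sum b_{i}w_{i}=1$ defining $\sigma_{Z}$, and checking that both direct systems really use the same transition maps so that the pullback glues consistently. Once that is in place, density of $\Xdivc$ via Theorem~\ref{teo;dense} does the real work.
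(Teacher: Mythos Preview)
Your proposal is correct and follows essentially the same approach as the paper: both reduce to verifying $\eta(f)(v)=f(p(v))$ on divisorial valuations and invoke the density Theorem~\ref{teo;dense}. The only difference is that the paper streamlines the computation by choosing a test configuration $\cX$ on which $f$ is affine \emph{and} $v=v_{E_1}$ for an irreducible component of $\cX_0$, so that $p_\cX(v)$ is a vertex and the check becomes $f(e_1)=\eta(f)(v_{E_1})$ directly, whereas you carry out the more general (but equally valid) barycentric computation via a dominating $\cX'$.
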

\begin{proof}
 We need to check that if $\eta\colon \PL(\Delta)\to \PL(X^\beth)$ is the ismorphism of Equation~\eqref{eq;plisomorphism}, then, for $f\in \PL(\Delta)$ and $v\in \Xb$:
\begin{equation}\label{eq;plaffinecommuting}
\eta(f)(v) = f\left(p(v)\right).
\end{equation}
By Theorem~\ref{teo;dense} it is enough to check~\eqref{eq;plaffinecommuting} for $v\in \Xdiv$.

Now, given $f\in \PL(\Delta)$, and $v\in \Xdiv$, let $\cX$ be a smooth test configuration such that:
\begin{itemize}
  \item the function $f|_{\Delta_\cX}$ is rationally affine, for $\Delta_\cX$ the associated dual complex;
  \item decomposing the central fiber $\cX_0 \doteq \sum_i b_iE_i$, we have $v=v_{E_1}$.
\end{itemize} 
Then \[f(p(v)) = f\left(v(E_1), v(E_2), \dotsc, v(E_k)\right) =   v(E_1)f(e_1)= f(e_1) = \eta(f)(v_{E_1})\]
where the last equality is given by~\eqref{eq;affvcar} together with~\eqref{eq;defdiv}.
\end{proof}

Now, we will prove that the isomorphism~\eqref{eq;plisomorphism} induces a homeomorphism \[X^\beth\overset{p}{\cong} \Delta.\]
To do that, as mentioned before, we will use an analogue of the Gelfand transform, to show that $X^\beth$ and $\Delta$ can be seen as the ``tropical spectra" of $\PL(X^\beth)$ and $\PL(\Delta)$ respectively.
Then, since the map \[p\colon X^\beth\to \Delta\] is an isomorphism of PL structures, $p$ will be a homeomorphism.
\subsubsection{Tropical Gelfand transform }
Let's recall some definitions from Section~\ref{sec;tspec}, and from Appendix~\ref{apx;semi-rings}.

Let $\mathcal A$ be a tropical algebra, that is a vector space together with a semi-ring operation, which we will denote by $\{\cdot,\cdot\}$, that makes $(\mathcal A, \{\cdot, \cdot\}, +)$ a semi-ring.
The \emph{tropical spectrum of $\mathcal A$} is the topological space given by\footnote{cf. Lemma~\ref{lem;tspecalgebra}}
\begin{equation}
	\tspec \mathcal A = \left\{\varphi\in \mathcal A^* |\, \varphi\left(\{f,g\}\right) = \max\left\{\varphi\left(f\right), \varphi(g)\right\}\right\},
\end{equation}
where $\mathcal A^*$ denotes the algebraic dual.
We endow $\tspec \cA$ with pointwise convergence topology.

\begin{prop}\label{prop:tropicalhomeo}
Let $K$ be a compact Hausdorff topological space, and $\mathcal A\subseteq \Cz(K,\R)$ a dense linear subspace, containing all the constants, that is stable by $\max$.
Then $\cA$ is subtroipcal algebra of $\Cz(K, \R)$, and the map: 
\[\delta\colon K\to \tspec \mathcal A , \quad \delta_x(f) = f(x) \text{ for }x\in K \text{ and }f\in \cA,\]
induces the homeomorphism:
\[[\delta]\colon K\to \left(\tspec( \mathcal A) \setminus\{0\}\right)/\R_{>0}.\]
\end{prop}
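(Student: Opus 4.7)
My plan is to identify $(\tspec(\cA)\setminus\{0\})/\R_{>0}$ with the normalized slice $N\doteq \{\varphi\in\tspec\cA : \varphi(1)=1\}$ and to prove directly that $\delta\colon K\to N$, $x\mapsto (f\mapsto f(x))$, is a homeomorphism; composing with the quotient identification will then give the claim. First, $\cA$ is a subtropical subalgebra of $\Cz(K,\R)$ by hypothesis (linear subspace stable under $\max$ and containing the constants), and $\delta_x\in N$ by direct inspection. To identify the quotient with $N$, I would check that any nonzero $\varphi\in\tspec\cA$ is a positive linear functional on $\cA$: if $f\geq 0$ then $f=\max\{f,0\}$, so $\varphi(f)=\max\{\varphi(f),0\}\geq 0$. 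From the sandwich $-\|f\|_\infty\cdot 1\leq f\leq\|f\|_\infty\cdot 1$ this yields $|\varphi(f)|\leq \varphi(1)\|f\|_\infty$, so in particular $\varphi(1)>0$ whenever $\varphi\neq 0$. The normalization $\varphi\mapsto \varphi/\varphi(1)$ then provides a continuous section of the quotient, identifying $N$ with $(\tspec\cA\setminus\{0\})/\R_{>0}$.

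Continuity of $\delta$ is immediate from the pointwise-convergence topology on $N$, and injectivity follows from density of $\cA\subseteq\Cz(K,\R)$ together with Urysohn's lemma, which guarantees that $\Cz(K,\R)$ separates points on the compact Hausdorff space $K$. The main step is surjectivity. Given $\varphi\in N$, the discussion above shows $\varphi$ is a positive linear functional on the dense subspace $\cA\subseteq \Cz(K,\R)$ with operator norm at most $1$; hence it extends uniquely to a positive bounded linear functional $\tilde\varphi$ on $\Cz(K,\R)$, which still preserves $\max$ by the uniform $1$-Lipschitz continuity of $\max\colon \Cz\times\Cz\to\Cz$ and density. By the Riesz representation theorem, $\tilde\varphi(f)=\int_K f\,d\mu$ for a Radon probability measure $\mu$ on $K$.

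The core of the argument is then to show $\mu=\delta_x$ for some $x\in K$. If $\supp\mu$ contained two distinct points $x_1\neq x_2$, I would use Urysohn to produce nonnegative continuous $f_1, f_2$ with disjoint supports satisfying $\int f_i\,d\mu>0$ (possible because $x_i\in\supp\mu$). Then $\max\{f_1,f_2\}=f_1+f_2$, which forces $\int f_1\,d\mu+\int f_2\,d\mu=\max\{\int f_1\,d\mu,\int f_2\,d\mu\}$, a contradiction when both integrals are positive. Hence $\supp\mu$ is a single point $x$, and $\varphi=\delta_x$ on $\cA$. To conclude, $\tspec\cA$ is compact Hausdorff (a closed subset of $\prod_{f\in\cA}[-\|f\|_\infty,\|f\|_\infty]$ via the bound above), $N$ is closed in $\tspec\cA$, and the continuous bijection $\delta\colon K\to N$ from a compact space to a Hausdorff space is automatically a homeomorphism. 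The main obstacle is the Riesz--Dirac step; the remainder amounts to unpacking definitions.
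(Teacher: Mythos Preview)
Your argument is correct and follows the same overall route as the paper: deduce positivity from max-preservation, bound and extend to $\Cz(K)$, apply Riesz to get a probability measure, and then argue that this measure is a Dirac mass. One small slip: $\tspec\cA$ is a cone, hence not compact, so it does not sit inside $\prod_f[-\|f\|_\infty,\|f\|_\infty]$; but you only need that $N$ is Hausdorff for the final compact-to-Hausdorff step, and that is immediate. The only substantive difference is the Dirac step: you argue by contradiction using two Urysohn bumps with disjoint supports (so that $\max\{f_1,f_2\}=f_1+f_2$ forces $\int f_1+\int f_2=\max\{\int f_1,\int f_2\}$), whereas the paper picks any $x\in\supp\varphi$ and shows $\ker\varphi\subseteq\ker\delta_x$ directly from $\varphi(|f|)=|\varphi(f)|$ (if $\varphi(f)=0$ then $\int|f|\,d\varphi=0$, so $f$ vanishes on $\supp\varphi$), concluding by the codimension-one kernel argument. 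Both endgames are short; the paper's avoids having to check that the extension to $\Cz(K)$ still preserves $\max$.
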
 
\begin{proof}
It is clear that $\mathcal A$ is tropical algebra with the $\max$ (or $\min$ as it also preserves minima) and sum as operations.

Moreover, the map $\delta$ is clearly continous and injective, therefore it suffices to prove that $[\delta]$ is surjective, since $K$ is compact.

Let $\varphi\in \tspec \mathcal A\setminus\{0\}$, then
\begin{equation}\label{eq:positivephi}
\varphi(\lvert f\rvert ) = \varphi(\max\{f,-f\}) = \max \{\varphi(f), -\varphi(f)\} = \lvert \varphi(f)\rvert 
\end{equation}
hence $\varphi (1) >0$, thus we can suppose that $\varphi(1) =1$, and 
\begin{align*}
\lvert \varphi(f)\rvert = \varphi(\lvert f\rvert )&\leq \max\{\varphi(\lvert f\rvert), \varphi(\lVert f\rVert_\infty)\}\\
&=\varphi\left(\max\{\lvert f\rvert, \lVert f\rVert_\infty\}\right) = \varphi(\lVert f\rVert_\infty) = \lVert f\rVert_\infty\mathbin{\cdot} 1
\end{align*}
Hence $\varphi$ can be extended to a continuous linear functional on $\Cz(K)$. 
That is, $\varphi$ is a signed measure on $K$.
By~\eqref{eq:positivephi}, $\varphi$ is actually a positive measure.

Let $x\in \supp \varphi$, we will show that $\varphi = \delta_x$.
To do that we will just prove that $\ker \delta_x = \ker \varphi$, and the equality will follow since $\varphi(1) = 1=\delta_x(1) $.

If $f\in \ker \varphi$, then, by Equation~\eqref{eq:positivephi},  $\lvert f\rvert \in \ker \varphi $.
Since $\varphi$ is a positive measure, and $\lvert f\rvert \geq 0$, we get $f = 0$ $\varphi$-almost everywhere.
Therefore, since $f$ is continuous, the restriction:  \[f|_{\supp \varphi} = 0.\]
In particular, $f(x) =0$, and thus $f\in \ker \delta_x$.
Since $\codim \ker \varphi = \codim\ker \delta_x$, we conclude.
\end{proof}

\begin{proof}[Proof of Theorem~\ref{thm;dualcomplex}]
The map 
\[
p\colon X^\beth\to \dualc
\]
induces the isomorphism:
\[
\eta\colon \dualpl \to\PL(X^\beth),
\]
and therefore we get a homeomorphism:
\begin{align*}
\eta^*\colon \tspec \left(\dualpl\right)\to\tspec\left(\PL(X^\beth)\right),
\end{align*}
given by $\eta^*(\delta_x)(f)= \delta_x\left(\eta(f)\right)=\eta(f)(x) = f(p(x)) = \delta_{p(x)}(f)$
which means that the map
\begin{equation}
X^\beth\overset{\delta}{\to}\tspec \left(\PL(X^\beth)\right)\overset{\eta^*}{\to} \tspec \left(\dualpl\right)\overset{\delta^{-1}}{\to} \dualc
\end{equation}
is given by $x\mapsto p(x)$.
Hence, $p$ is a homeomorphism.
\end{proof}

\subsection{Log discrepancy on $X^\beth$}

\subsubsection*{Log discrepancy over $X$}
Let $F\subseteq Y\overset{\pi}{\longrightarrow} X$ be an irreducible divisor on a normal variety, and $v= r
\ord_F$ for some $r\in\Q_{>0}$
We define the \emph{log discrepancy of $v$} to be the quantity
\begin{equation}\label{eq;logdis1}
 A_X(v) \doteq r\mathbin{\cdot}\left(1+ \ord_F(K_{Y/X})\right).
\end{equation}
This gives us a function $A_X\colon \Xdiv\to \Q$, which will be called the \emph{log discrepancy over $X$}.

If $r=1$ we sometimes denote $A_X(F)\doteq A_X(\ord_F)$.

By some standard calculations, as in \cite[Section 3]{Kol97singularities}, the restriction of $A_X$ to (the rational points of) each face of the cone complex of a snc pair $(Y,B)$\footnote{See Section~\ref{sec:monomialcone}.}
\[\hat\Delta(Y,B)\overset{\val}{\hookrightarrow} X^\beth\] is linear. 
Hence, we can extend $A_X$ to $\hat\Delta(Y,B)$ by linearity.%\marginpar{\textcolor{red}{add precise ref}}

Again by \cite[Section 3]{Kol97singularities}  if $(Y^\prime, B^\prime)\overset{\mu}{\to} (Y, B)$ is a snc reduced birational projective morphism over $(Y,B)$, then the piecewise linear function induced by the pullback:
\[r_\mu\colon \hat\Delta(Y, B)\to \hat\Delta(Y^\prime, B^\prime)\] satisfies the inequality
\begin{equation}
A_X\circ r_\mu \le A_X.
\end{equation}

\subsubsection*{Log discrepancy over $X\times \Pro$}
The same log discrepancy defined on the previous section makes sense for $X\times \Pro$.
We study now the relationship between $A_X$, and $A_{X\times \Pro}\circ\sigma$, where $\sigma$ is the Gauss extension.

Let $F\subseteq Y\overset{\pi}{\longrightarrow} X$ be a prime divisor, and $\ord_F$ the associated divisorial valuation.
Then,  consider the divisors: 
\begin{center}
% https://tikzcd.yichuanshen.de/#N4Igdg9gJgpgziAXAbVABwnAlgFyxMJZABgBpiBdUkANwEMAbAVxiRADEAdTvAW3m4AFAE4RucJgCM4MHDACOAAgCa3PgM4iIIAL6l0mXPkIoyARiq1GLNgA01WfnEVDRu-SAzY8BIgCZyS3pmVkQQVR5HDWBibh1xKRk5JQj1Z1dtPQNvY39SC2pgmzD7SKcM92yjXxQzQMLrUJBuADJdSxgoAHN4IlAAM1FeJDIQHG1EMyyQQYhhxACxiYBmHQodIA
\begin{tikzcd}
F\times\Pro\subseteq Y\times\Pro \arrow[d] & \& & Y\times\{0\}\subseteq Y\times \Pro \arrow[d] \\
X\times \Pro                               &    & X\times\Pro.                               
\end{tikzcd}
\end{center}

A direct calculation gives us that $\sigma(\ord_F)$ is monomial with respect to $F\times \Pro$ and $Y\times \{0\}$, with associated weights $ (1,1)$.
Therefore, using linearity of $A_{X\times \Pro}$, we get:
\begin{equation}
\begin{aligned}
A_{X\times \Pro}\left(\sigma(\ord_F)\right) &= A_{X\times \Pro}(F\times \Pro) + A_{X\times \Pro}(Y\times\{0\})\\
&= A_X(F) + 1 = A_X(\ord_F) +1.
\end{aligned}
\end{equation}

\smallskip

Let $\cX$ be a smooth test configuration.
Like we did in Section~\ref{sec:dualcomplex}, one can associate $\C^*$-equivariant monomial valuations on $\Xbc$ to points $w\in \sigma_Z\subseteq \Delta_\cX$.
Again,  the rational points on $\Delta_\cX$ correspond to points in $\Xdivc$. 
By the same the reasoning as before we get:
\begin{enumerate}
\item Let $w\in (\sigma_Z)_\Q\subseteq (\Delta_\cX)_\Q$, and $\val(w)$ the associated valuation satisfies: 
\[A_{X\times \Pro}(\val(w)) = \sum w_i\, A_{X\times\Pro}(E_i).\]
\item For $\cX^\prime\overset{\mu}{\to}\cX$ a morphism of test configurations, we have 
\[A_{X\times \Pro}\circ p_{\cX} \le A_{X\times\Pro}\]
on $(\Delta_{\cX^\prime})_\Q$.
\end{enumerate}
Therefore, by (1) we can extend by linearity $A_{X\times\Pro}$ to $\Delta_\cX$, and, by (2), define the limit:
\begin{equation}\label{eq;log}
A_{X\times\Pro}\colon \Xb \to \R\cup \{+\infty\}
\end{equation}
as the sup $A_{X\times \Pro}(v) \doteq \sup_{\cX}A\left(p_{\cX}(v)\right)$.

The function on~\eqref{eq;log} will be called the \emph{log discrepancy}, and from here on will be denoted by $A\colon X^\beth\to \R\cup\{+\infty\}$.

\begin{remark}
It is clear to see that from the definition of the log discrepancy, if $\cX$ is a test configuration, and $p_\cX$ is the function defined in Section~\ref{sec;dual}, then $A\circ p_\cX$ is a PL function.
\end{remark}

\section{Non-Archimedean plurisubharmonic functions}
\label{sec;nonarchimedeanpluri}
From now on, $X$ will be a compact Kähler  manifold, with a fixed Kähler class $\alpha\in \Pos(X)$.

\subsection{Plurisubharmonic PL functions}
Let us denote by $\triv\doteq \trive$ the trivial configuration, and by $p_1\colon\triv\to X$ the first projection.
Given $\beta\in H^{1,1} (X)$, we then denote \[\beta_{\triv}\doteq p_1^*\beta\in H^{1,1} (\trive)\]
More generally, given any test configuration that $\mu$-dominates $\trive$, we denote \[\beta_\cX\doteq \mu^*\beta_{\triv}.\]

\begin{remark}
  In \cite{SD18,DR17kstability} the authors introduce, independently, the notion of \emph{cohomological test configurations}, which are generalizations --to the transcendental setting-- of the usual algebraic test configurations for a polarized manifold $(X,L)$.
  
  For them, a cohomological test configuration is a test configuration $\cX$ together with a $\C^*$-invariant Bott--Chern cohomology class  $\mathcal A\in H^{1,1}_{\BC}(\cX)$ such that away from the central fiber:
  \[\cA|_{\cX^*} = h^*\alpha_\cX,\]
  for $h\colon \cX^*\to X\times (\Pro\setminus\{0\})$ the $\C^*$-equivariant biholomorphism.
  
  By \cite[Proposition~3.10]{SD18}, the data of a cohomological test configuration is the same of a test configuration together with the choice of a vertical divisor $D$, i.e.
  \begin{equation}\label{eq;cohomologicaltest}
  \mathcal A = \alpha_\cX + D
  \end{equation}
  for $D\in \VCar{(\cX)}$.
  \end{remark}

\begin{defi}
Let $\varphi\in \PL(X^\beth)$ we say that $\varphi$ is $\alpha$-\emph{plurisubharmonic} if given a dominating test configuration $\cX$ with $D\in \VCar(\cX)$ such that $\varphi = \varphi_D$, we have 
\begin{equation}
\alpha_\cX +D \text{ is nef relatively to } \Pro.
\end{equation}
Since the pullback by an holomorphic map of a $(1,1)$-class is nef if and only if the $(1,1)$-class itself is nef, this definition does not depend on $\cX$.

We will denote the set of $\alpha$-psh functions by $\PL\cap\PSH(\alpha)$.
Moreover, we denote by $\Hdom(\alpha)$ the set of functions $\varphi\in \PL(X^\beth)$ such that there exists a dominating test configuration $\cX$ and a vertical divisor $D\in \VCar(\cX)$ satisfying:
\[\varphi = \varphi_D, \quad \alpha_\cX +D \text{ is Kähler relatively to } \Pro.\] % + (p_2\circ\mu)^*\left(\mathcal O(k)\right)\in \Pos(\cX)
%for $D\subseteq \cX\overset{\mu}{\to} X\times\Pro$, $p_2$ the second projection, and $k\gg0$.
\end{defi}
\begin{remark}
In the standard algebraic setting, the set of non-Archimedean Fubini--Study metrics is usually denoted by $\cH^{\NA}$.
Here $\cH(\alpha)$ will play the role of this set in our more general context. 
Note, however, that for an algebraic variety, $\cH(\alpha)$ is not the set Fubini--Study functions, the latter is only a subset: $\cH^{\NA}\subsetneq\cH(\alpha)$. 
\end{remark}

\begin{prop}
Let $\varphi, \psi\in \PL(X^\beth)\cap \PSH(\alpha)$, and $f\colon X\to Y$ be a finite holomorphic map, then the following properties hold:
\begin{enumerate}
\item $f^*\varphi \in \PL\cap\PSH(f^*\alpha);$
\item $\varphi + c$, and  $t\mathbin{\cdot} \varphi$ lie in $\PL\cap\PSH(\alpha)$ for $c\in \R$ and $t\in\Q_{>0};$
\item $\max\{\varphi, \psi\}\in \PL\cap\PSH(\alpha).$
\end{enumerate}
\end{prop}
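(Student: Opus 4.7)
The plan is to exploit throughout the representation of any PL function $\varphi$ as $\varphi_D$ for a vertical divisor $D\in \VCar(\cX)$ on a dominating snc test configuration $\cX$ (Theorem~\ref{vcarpl}), together with the fact, already noted after the definition of PSH, that relative nefness of $\alpha_\cX + D$ over $\Pro$ is invariant under pullback by holomorphic morphisms of test configurations.

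For (1), I would represent $\varphi = \varphi_D$ on a dominating test configuration of the codomain of $f$, construct the corresponding test configuration of the domain by normalizing the fiber product along $f\times\id$, and obtain a finite $\C^*$-equivariant morphism $F$ of test configurations; functoriality of the divisor--PL correspondence gives $f^*\varphi = \varphi_{F^*D}$, and rel-nefness of $F^*(\alpha_\cX + D)$ follows from $F$ being holomorphic. For (2), adding $c\in\R$ to $\varphi_D$ produces $\varphi_{D + c\cX_0}$, since $b_E^{-1}\ord_E(\cX_0) = 1$ on every irreducible component $E\subset \cX_0$; but $c\cX_0 = \pi^*(c\,[\{0\}])$ is pulled back from $\Pro$ and hence relatively numerically trivial, so rel nefness is unchanged. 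For the scaling action $(t\cdot\varphi)(v) = t\varphi(t^{-1}v)$ from Equation~\eqref{eq;actiononfunctions} with $t\in\Q_{>0}$, the analogue on test configurations is the normalized base change of $\cX$ along $\Pro\to\Pro$, $s\mapsto s^m$ (with $m$ clearing the denominator of $t$), which is a finite holomorphic morphism and hence preserves rel nefness.

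For (3), I would pass to a common snc dominating test configuration $\cX$ representing both $\varphi = \varphi_D$ and $\psi = \varphi_{D'}$, with $\alpha_\cX + D$ and $\alpha_\cX + D'$ both rel nef. The function $\max(\varphi,\psi)$ is only piecewise affine on $\Delta_\cX$, so one passes to a further snc blow-up $\pi\colon\cX'\to\cX$ refining the dual complex until $\max(\varphi,\psi) = \varphi_{D''}$ for a vertical divisor $D''\in \VCar(\cX')$ with $\ord_E(D'') = \max(\ord_E(\pi^*D), \ord_E(\pi^*D'))$ on every prime $E\subset \cX'_0$. The main obstacle is verifying rel nefness of $\alpha_{\cX'} + D''$: I would check nefness fiberwise over $\Pro$, where only the central fiber matters, and on each irreducible component $E$ of $\cX'_0$ the componentwise-max structure forces $D''|_E$ to coincide with the restriction of either $\pi^*D$ or $\pi^*D'$, reducing nefness on $E$ to that of $\alpha_{\cX'} + \pi^*D$ or $\alpha_{\cX'} + \pi^*D'$. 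The technical subtlety, analogous to the algebraic case of \cite{BJ22trivval}, is controlling the effective error divisors $D'' - \pi^*D$ and $D'' - \pi^*D'$, whose prime supports are disjoint by the componentwise-max formula, so that this case analysis succeeds.
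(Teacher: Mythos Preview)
Your treatment of (1) and (2) matches the paper's: both use the functoriality of the $\VCar$--PL correspondence (Theorem~\ref{vcarpl}) together with preservation of relative nefness under pullback, and handle the scaling $t\cdot\varphi$ via normalized base change along $s\mapsto s^m$.

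For (3), your route is genuinely different from the paper's. The paper first reduces to $\varphi,\psi\in\Hdom(\alpha)$ (relatively K\"ahler) by the closedness result Theorem~\ref{thm;plpsh}, shifts by $c\,\cX_0$ to make $D' = -D + c\cX_0$ and $E' = -E + c\cX_0$ effective, and then observes that on a higher model the divisor determining $\max\{\varphi,\psi\}-c$ is cut out by the ideal $\cO_\cX(-D')+\cO_\cX(-E')$. Nefness is then deduced from Lemma~\ref{lem;sumnef}, an \emph{archimedean} argument: one builds an honest $\omega$-psh function with singularity type $\cO(-D')+\cO(-E')$ as the pointwise max of logarithms of canonical sections, and invokes Siu decomposition plus Demailly's criterion (Proposition~\ref{prop;extension}). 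Your argument, by contrast, stays entirely on the non-archimedean side, refining the dual complex until $\max(\varphi,\psi)$ is affine on every face and then checking relative nefness componentwise. The paper's route makes explicit the link with complex psh functions that the later theory exploits; yours is more self-contained and avoids the detour through Theorem~\ref{thm;plpsh} and the auxiliary K\"ahler hypothesis.

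There is, however, an imprecision in your sketch that must be repaired. The claim that ``$D''|_E$ coincides with the restriction of either $\pi^*D$ or $\pi^*D'$'' is false as a statement about divisor classes on $E$: the restriction $D''|_E$ depends on the coefficients of $D''$ along \emph{all} components meeting $E$, and these need not all come from the same side of the max. What \emph{is} true, after refining so that $\max(\varphi,\psi)$ is affine on each face of $\Delta_{\cX'}$, is the following subvariety-level statement (which, via Demailly--P\u{a}un, is what you actually need): for any irreducible $Z\subset\cX'_0$ and K\"ahler class $\gamma$, the components containing $Z$ span a single face of $\Delta_{\cX'}$, on which either $\varphi\ge\psi$ or $\psi\ge\varphi$ throughout; writing, say, $\alpha_{\cX'}+D'' = (\alpha_{\cX'}+\pi^*D) + (D''-\pi^*D)$, the first term pairs nonnegatively with $[Z]\cdot\gamma^{\dim Z -1}$ by hypothesis, while in the effective error $D''-\pi^*D = \sum_F c_F F$ the only components $F$ with possibly negative $F\cdot[Z]\cdot\gamma^{\dim Z-1}$ are those with $Z\subset F$, and precisely there $c_F=0$. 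With this correction your argument goes through in the K\"ahler setting.
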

While proof of items $(1)$ and $(2)$ is essentially the same as in the algebraic trivially valued case, cf. \cite[Proposition~3.6]{BJ22trivval}, the proof of item $(3)$ is different and relies on the analysis of singularities of psh functions of Lemma~\ref{lem;sumnef}.

\begin{proof}
  
Let $\cX$ be a test configuration $\mu$-dominating the trivial one, such that 
\[\varphi = \varphi_D, \quad \text{and}  \quad \psi= \varphi_E,\] 
for some $D, E\in \VCar(\cX)$.

For $(1)$ it is enough to observe that the pull-back of a PL function is PL and that pull-back of a nef class is nef.

For the first part of $(2)$ it is enough to observe that:
\begin{itemize} 
  \item $\varphi + c = \varphi_{D + c\cX_0};$
  \item On the quotient \[H^{1,1}(\cX)/\mu^*(H^{1,1}(\Pro))\] we have $[\alpha+ D] = [\alpha + D + c\cX_0]$, and hence \[D+\alpha+c\cX_0\in \Nef(\cX/\Pro)\iff D+\alpha\in \Nef(\cX/\Pro).\]
\end{itemize}

If $t\in \mathds N$, then it is enough to observe that, like in the trivially valued case \cite[Proposition~3.6]{BJ22trivval},  \[\varphi_{D_t} = t\cdot \varphi_D\] where $D_t = \mu_t^*D$ is given by the base change
\begin{equation}
% https://tikzcd.yichuanshen.de/#N4Igdg9gJgpgziAXAbVABwnAlgFyxMJZARgBoAGAXVJADcBDAGwFcYkQAdDgDQGMmQAX1LpMufIRRli1Ok1bsuABQBOEISJAZseAkXKkZNBizaJOHVeuGidE-RVkmF5rnyYB9HENkwoAc3giUAAzNQBbJAMQHHVEMjlTRQ40LA1QiKQAJhpYpATnMxAALwA9bxsQMIhIxABmXLjowuTw5i90qsz6xuzjeSKuVI7BSkEgA
\begin{tikzcd}
\cX^t \arrow[r, "\mu_t"] \arrow[d, "\pi_t"] & \cX \arrow[d, "\pi"] \\
\Pro \arrow[r, "z^t"]                         & \Pro.                  
\end{tikzcd}
\end{equation}
The result follows from $(1)$, the general case $t\in \Q_{>0}$ follows from this one, since a cohomology class is nef iff its pullback by a finite branched covering is.

For item $(3)$, let's suppose $\varphi, \psi\in \Hdom(\alpha)$, the general case will follow from an approximation argument, cf. Theorem~\ref{thm;plpsh} below.

Now, let $c>0$ be large enough so that $D^\prime\doteq -D + c\cX_0$ and $E^\prime\doteq-E +c\cX_0$ are effective, we have:
\[\max\{\varphi_{-D^\prime}, \varphi_{-E^\prime}\} = \max\{\varphi, \psi\} -c,\]
and thus it is $\alpha$-psh iff $\max\{\varphi, \psi\}$ is $\alpha$-psh.

Moreover, let $\cX^\prime$ be a test configuration $\nu$-dominating $\cX$, such that 
\[\varphi_{G} = \max\{\varphi_{-D^\prime}, \varphi_{-E^\prime}\}\] 
for $G\in \VCar(\cX^\prime)$, then we observe that $\cO_{\cX^\prime}(-G)=\nu^*[\mathcal O_{\cX}(D^\prime) + \mathcal O_{\cX}(E^\prime)]$, and the result follows from Lemma~\ref{lem;sumnef}.
\end{proof}

%We will show that the set of $\alpha$-psh functions is stable under decreasing limits.

\begin{theorem}\label{thm;plpsh}
Let $\varphi\in\PL(X^\beth)$, and $(\varphi_{\lambda})_{\lambda\in\Lambda}$ be a net of PL $\alpha$-psh functions such that \[\varphi_{\lambda}(v)\to\varphi(v), \; \forall\,v\in \Xdiv\]
then $\varphi$ is a PL $\alpha$-psh function.
\end{theorem}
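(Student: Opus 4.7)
The strategy is to fix a test configuration $\cX$ realizing $\varphi$ as a vertical divisor, approximate $D$ in $\VCar(\cX)_\R$ by vertical divisors built from the values $\varphi_\lambda(v_E)$, and exploit closedness of the relative nef cone in finite dimensions.

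By Theorem~\ref{vcarpl}, pick a smooth snc test configuration $\cX$ dominating $\trive$ with $\varphi=\varphi_D$ for some $D\in\VCar(\cX)$; we must show that $\alpha_\cX+D$ is nef relative to $\pi\colon\cX\to\Pro$. Writing the central fiber as $\cX_0=\sum_E b_E\,E$ over its prime components, set
\[D_\lambda\;:=\;\sum_{E\subseteq\cX_0} b_E\,\varphi_\lambda(v_E)\,E\;\in\;\VCar(\cX)_\R.\]
Since $\varphi_\lambda(v_E)\to\varphi(v_E)$ for each $E$ and $D=\sum_E b_E\,\varphi(v_E)\,E$, we have $D_\lambda\to D$ in the finite-dimensional space $\VCar(\cX)_\R$. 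The set $\{D'\in\VCar(\cX)_\R : \alpha_\cX+D' \text{ is } \pi\text{-nef}\}$ is closed and convex, so it suffices to prove $\alpha_\cX+D_\lambda$ is $\pi$-nef for every $\lambda$.

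Fix $\lambda$, and choose $\cX'_\lambda$ dominating both $\cX$ (via $\mu_\lambda$) and a test configuration realizing the $\alpha$-psh condition of $\varphi_\lambda$, so that on $\cX'_\lambda$ we have $\varphi_\lambda=\varphi_{\tilde D_\lambda}$ with $\alpha_{\cX'_\lambda}+\tilde D_\lambda$ nef relative to $\Pro$. A direct computation on divisorial valuations (the strict transform of $E\subseteq\cX_0$ into $\cX'_\lambda$ has the same multiplicity $b_E$ in the central fiber and corresponds to the same divisorial valuation $v_E$ on $X$) gives $(\mu_\lambda)_*\tilde D_\lambda=D_\lambda$. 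Writing $\tilde D_\lambda=\mu_\lambda^*D_\lambda+N_\lambda$ with $N_\lambda$ being $\mu_\lambda$-exceptional, note that both $\mu_\lambda^*\alpha_\cX$ and $\mu_\lambda^*D_\lambda$ are $\mu_\lambda$-numerically trivial. Hence the relative nefness of $\alpha_{\cX'_\lambda}+\tilde D_\lambda$ forces $N_\lambda$ to be $\mu_\lambda$-nef, and the Negativity Lemma (\cite[Lemma~3.39]{KM98birational}, already invoked in Proposition~\ref{prop:bijval}) yields $N_\lambda\leq 0$.

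For an irreducible curve $C\subseteq\cX_0$ not contained in the $\mu_\lambda$-exceptional center, its strict transform $C'\subseteq\cX'_\lambda$ avoids $\supp(N_\lambda)$, giving $N_\lambda\cdot C'\leq 0$. The projection formula then yields
\[(\alpha_\cX+D_\lambda)\cdot C\;=\;(\alpha_{\cX'_\lambda}+\tilde D_\lambda)\cdot C'\;-\;N_\lambda\cdot C'\;\geq\;0,\]
while curves $C$ possibly lying inside the exceptional center are dealt with by passing to a further $\C^*$-equivariant snc blow-up on which the strict transform sits outside $\supp(N_\lambda)$. Passing to the limit in the closed relative nef cone then proves $\alpha_\cX+D$ is $\pi$-nef. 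The main technical obstacle is the Negativity-Lemma step: one must verify that the argument, standard in algebraic birational geometry, adapts cleanly to $\C^*$-equivariant vertical $\R$-divisors on complex-analytic snc test configurations, and that strict transforms of curves can always be brought into good position by blow-up without altering the validity of the dominating relations above.
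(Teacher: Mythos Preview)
Your argument has a genuine gap that is specific to the transcendental K\"ahler setting of the paper. You reduce the question to showing that $\alpha_\cX+D_\lambda$ is $\pi$-nef and then test this against irreducible curves $C\subseteq\cX_0$. In the projective world that would suffice, but on a compact K\"ahler space nefness is \emph{not} characterised by nonnegativity on curves; one must use the Demailly--Paun criterion, i.e.\ check $\int_Y\beta\wedge\gamma^{d-1}\ge 0$ for subvarieties $Y$ of every dimension $d$ and every K\"ahler class $\gamma$. Your curve computation, even if carried out flawlessly, does not establish relative nefness here. In fact the intermediate claim that $\alpha_\cX+D_\lambda$ is $\pi$-nef may simply be false: pushforward of a relatively nef class under a bimeromorphic morphism need not be nef, and the Negativity Lemma only tells you $-N_\lambda$ is effective, which controls intersections with curves not contained in $\supp(N_\lambda)$ but says nothing about higher-dimensional cycles. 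The vague ``further blow-up'' fix for curves in the exceptional centre is a symptom of the same problem and does not generalise.

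The paper's proof is organised precisely to avoid ever asserting that $\alpha_\cX+D_\lambda$ is nef. Instead, for a fixed subvariety $Y\subseteq E\subseteq\cX_0$ it blows up $Y$ to a divisor $F\subseteq\cX'$, uses a support-theorem argument to rewrite $[Y]\cdot(\alpha_\cX+D)\cdot\gamma^{d-1}$ as a positive multiple of $[F]\cdot[\omega]^{n-d}\cdot(\alpha_{\cX'}+D_{\cX'})\cdot\gamma^{d-1}$, and then pulls everything back to a common roof $\cX'_\lambda$. There the product $[F_\lambda]\cdot[\omega_\lambda]^{n-d}\cdot(\alpha+D_{\cX'_\lambda})\cdot\gamma^{d-1}$ is manifestly $\ge 0$ because it is an effective divisor against a product of nef classes; pushing forward and using $(\nu_\lambda)_*D_{\cX'_\lambda}\to D_{\cX'}$ (exactly your observation $\sum b_G\varphi_\lambda(v_G)G\to\sum b_G\varphi(v_G)G$) gives the Demailly--Paun inequality in the limit. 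The key difference is that the paper never needs nefness of a pushforward: it only needs positivity of a specific intersection number, obtained from an effective-times-nef product upstairs.
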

\begin{proof}
Let $\cX_\lambda$, and $\cX$ be snc test configurations together with morphisms of test configurations $\mu_\lambda\colon\cX_\lambda\to\cX$, and $\nu\colon\cX\to\trive$, such that there exist vertical divisors $D\in \VCar(\cX)$, and $D_\lambda\in \VCar(\cX_\lambda)$ satisfying:
\[\varphi=\varphi_D, \quad  \text{ and } \quad \varphi_\lambda = \varphi_{D_\lambda}.\]

To prove that $D+\alpha_\cX$ is nef relatively to $\Pro$ it is enough to show that for every irreducible --hence smooth-- component of the central fiber, $E\subseteq \cX_0$, the restriction \((D+\alpha_\cX)_{|_E}\) is in \(\Nef(E)\). 
This follows from the simple observation that for $\tau\neq 0$ we have:
\[D_{|_{\cX_\tau}} = 0 \quad \&\quad (\alpha_\cX)_{|_{\cX_\tau}} = h_\tau^*\alpha\in \Nef(\cX_\tau),\] where $h_t\colon \cX_\tau \to X$ is the biholomorphism provided by the $\C^*$-action.

Let then $Y^d\subseteq E$ be a $d$-dimensional subvariety of $E$, and $\gamma\in \Pos(\cX)$ a Kähler class, by Demailly-Paun numeric characterization of nefness, it sufices to show that:
\begin{equation}
\int_Y (D+\alpha_\cX)\wedge\gamma^{d-1} \ge 0.
\end{equation}
To simplify notation, we rewrite the left hand side:
\begin{equation}\label{intg=intsec}
\int_Y (D+\alpha_\cX)\wedge\gamma^{d-1}=[Y]\mathbin{\cdot}(D+\alpha_\cX)\mathbin{\cdot}\gamma^{d-1}.
\end{equation}

We can suppose that $Y$ is invariant by the $\C^*$-action. 
Indeed, since $E\subseteq \cX_0$ is irreducible, it is itself invariant.
Therefore, denoting $Y_\tau \doteq \tau \cdot Y$, we get  by compacity of each component of the space of effective cycles on $E$, cf. \cite{HS74characterization, Fuj78closedness}, that the limit $\lim_{\tau\to 0} Y_\tau$ exists as an effective cycle $\sum a_Z Z$, where the components $Z$ are $\C^*$-invariant.
Moreover, since $\C^*$ acts trivially on cohomology, we observe that $[Y_\tau] = [Y]$, and:
\[[Y]\cdot (D+\alpha_{\cX})\cdot \gamma^{d-1} =[Y_\tau]\cdot \rho(\tau)^*(D+\alpha_{\cX})\cdot \rho(\tau)^*\gamma ^{d-1} \to \sum a_Z [Z]\cdot (D+\alpha_\cX)\cdot \gamma^{d-1},\]
for $\rho$ the $\C^*$-action on $\cX$.
Replacing $Y$ for $Z$ we get the $\C^*$-invariance.

Let $b\colon\cX^\prime\to\cX$ be the normalized blow-up of $\cX$ along $Y$.
Since $Y$ is $\C^*$-invariant  $\cX^\prime$ is a test configuration. 
Let $\sum_{j}b_jF_j=F\subseteq \cX^\prime$ be the (effective) exceptional divisor, and consider the positive current, of bi-dimension $(d,d)$, given by:
\[T\doteq\delta_F\wedge \Omega^{n-d},\] 
where $\delta_F$ denotes $\sum_{j} b_j\delta_{F_j}$ the weighted sum current of the current of integration along $F_j$, and $\Omega\in\mathcal K(\cX^\prime)$ denotes a Kähler form.

Consider now $b_*(T)$, by definition this is again a positive current of bi-dimension $(d,d)$, with support $\supp b_*(T)\subseteq b(\supp(T))=b(F)= Y$.
By Demailly's support theorem  every current of bi-dimension $(d,d)$ supported on an irreducible cycle of dimension $d$ must be a multiple of the current of integration over that cycle, which implies that the cohomology classes $[b_*(T)]=a[Y]$, for $a\ge 0$.
Choosing $\eta$ to be a Kähler form on $\cX$ such that $\Omega-b^*\eta$ is positive on the fibers of $b$, we have:
\[b_*T\cdot \eta = T\cdot b^*\eta\ge \sum_j b_j\int_{F_j} (\Omega-b^*\eta)^{n-d}\wedge (b^*\eta)^d>0,\]
thus $[b_*T]\ne 0\implies a>0.$

Hence, Equation~\eqref{intg=intsec} becomes: 
\begin{align*}
\frac{1}{a}[b_*(T)]\mathbin{\cdot}(D+\alpha_\cX)\mathbin{\cdot}\gamma^{d-1} &= \frac{1}{a}[T]\mathbin{\cdot}b^*(D+\alpha_\cX)\mathbin{\cdot}b^*\gamma^{d-1}\\
&=\frac{1}{a}[F]\mathbin{\cdot}[\Omega]^{n-d}\mathbin{\cdot}(D_{\cX^\prime}+\alpha_{\cX^\prime})\mathbin{\cdot}\gamma_{\cX^\prime}^{d-1},
\end{align*}
where the first equality holds by the projection formula, and $D_{\cX^\prime}\doteq b^* D$.

Now, let $\cX^\prime_\lambda$ be a test configuration that dominates both $\cX_\lambda$ and $\cX^\prime$
\begin{center}
% https://tikzcd.yichuanshen.de/#N4Igdg9gJgpgziAXAbVABwnAlgFyxMJZABgBpiBdUkANwEMAbAVxiRAB12ANAY0YH1ODOgFsARlDogAvqXSZc+QijIBGKrUYs2nXoxlyQGbHgJFVpddXrNWiDtwBmAJzoBrA-JNLz5DTe17XRd3QXZhcUkZDRgoAHN4IlAXCBEkMhAcCCQLTVsddhEmMIiJKVlk51SkACZqLJzrLTsQMU8QFLTEAGZ67MQ6vMCHbDiROhLRMpBqBiwwFrgIOah2zqRezP6MgJaxScipWboxGAYABQVTZRBnLDiACxxo6SA
\begin{tikzcd}
\cX_\lambda \arrow[d, "\mu_\lambda"] & \cX^\prime_\lambda \arrow[d, "\nu_\lambda"] \arrow[l, "b_\lambda"'] \\
\cX                                  & \cX^\prime. \arrow[l, "b"]                                             
\end{tikzcd}
\end{center}
Denoting $F_\lambda\doteq \nu_\lambda^* F$, and $\omega_{\lambda}\doteq \nu_\lambda^*\omega$, we observe:
\begin{align*}
0\le\frac{1}{a}[F_\lambda]\mathbin{\cdot}[\omega_{\lambda}]^{n-d}\mathbin{\cdot}(D_{\cX^\prime_\lambda}+\alpha_{\cX^\prime_\lambda})\mathbin{\cdot}\gamma_{\cX^\prime_\lambda}^{d-1},
\end{align*}
since $F_\lambda$ is effective, and $D_\lambda +\alpha_{\cX_\lambda}$, $\omega$, and $\gamma$ are nef --which implies that $D_{\cX^\prime_\lambda} +\alpha_{\cX^\prime_\lambda}$, $[\omega_{\lambda}]$, and $\gamma_{\cX^\prime_\lambda}$ are nef as well.

Again by the projection formula, we have:
\begin{align*}
0&\le\frac{1}{a}[F_\lambda]\mathbin{\cdot}[\omega_{\lambda}]^{n-d}\mathbin{\cdot}(D_{\cX^\prime_\lambda}+\alpha_{\cX^\prime_\lambda})\mathbin{\cdot}(\gamma_{\cX^\prime_\lambda})^{d-1}\\
&=\frac{1}{a}[F]\mathbin{\cdot}\alpha^{n-d}\mathbin{\cdot}[{\nu_\lambda}_*D_{\cX^\prime_\lambda}+\alpha_{\cX^\prime}]\mathbin{\cdot}\gamma_{\cX^\prime}^{d-1}.
\end{align*}
Now, since:
\[
{\nu_\lambda}_*D_{\cX^\prime_\lambda}= \sum_{G\stackrel{\text{irred}}{\subseteq} \cX^\prime_0} b_G\, \varphi_{D_\lambda}(v_G) G\longrightarrow \sum_{G\stackrel{\text{irred}}{\subseteq}  \cX^\prime_0}  b_G\, \varphi_D(v_G) G=D_{\cX^\prime}
\]
it follows that:
\begin{align*}
\frac{1}{a}[F]\mathbin{\cdot}\alpha^{n-d}\mathbin{\cdot}[{\nu_\lambda}_*D_{\cX^\prime_\lambda}+\alpha_{\cX^\prime}]\mathbin{\cdot}\gamma_{\cX^\prime}^{d-1}\to \frac{1}{a}[F]\mathbin{\cdot}\alpha^{n-d}\mathbin{\cdot}(D_{\cX^\prime}+\alpha_{\cX^\prime})\mathbin{\cdot}\gamma_{\cX^\prime}^{d-1}\ge 0,
\end{align*}
concluding the proof.
\end{proof}

\subsection{PL Monge--Ampère operator and energy pairing}\label{sec;plenergy}
In this section we will give the definition of the PL Monge--Ampère operator and, more genrally, the PL energy pairing. We also state a few important properties and results.

The notions of pluripotential theory for $X^\beth$, introduced in this paper, are under the synthetic formalism developed in \cite{BJ23synthetic}.
In particular, every result from Section 1 to 3 of Boucksom-Jonsson's synthtetic approach holds in our case.

We will recall some of the results, for more details see \cite[Section 3.2]{BJ22trivval} and \cite[Section 1]{BJ23synthetic}.

\subsubsection{Monge--Ampère measure of a PL function}
\label{sec;mapl}
Let $\beta\in H^{1,1}(X)$ be a cohomology class of positive self intersection, i.e. $V_\beta\doteq [X]\cdot \beta^n>0$, and $\varphi\in\PL(X^\beth)$ a PL function, we can associate to the pair $(\beta, \varphi)$ a signed measure on $X^\beth$, called \emph{Monge--Ampère measure}, given by the construction:
\begin{itemize}
  \item Let $\cX$ be a snc test configuration dominating $X\times \Pro$ such that there exists a vertical divisor $D\in \VCar(\cX)$ satisfying $\varphi_D = \varphi$.
  \item Denote $\cX_0 = \sum b_E\, E$ the decomposition in irreducible components of the central fiber, and let \(c_E\) be the constant given by \[\frac{b_E}{V_\beta} \left((\beta_\cX + D)|_E\right)^n.\]
  \item Define the signed measure as:
    \[\MA_\beta (\varphi) \doteq \sum_{E\overset{\text{irred}}{\subseteq} \cX_0}c_E\,\delta_{v_E}.\] 
  \end{itemize}

With the projection formula one checks that this definition does not depend on the choice of test configuration.
Moreover, if $\beta\in \Pos(X)$ is a positive class and $\varphi\in\cH(\beta)$ the above construction get us a probability measure.
\begin{defi}\label{def;mapl}
  Let $\alpha\in \Pos(X)$ be Kähler class, and $\mathcal P(X^\beth)$ the set of Radon probability measures on $X^\beth$. 
  We call the operator:  
  \[\Hdom(\alpha)\ni \varphi\overset{\MA}{\mapsto}\MA_\alpha(\varphi)\in \mathcal P(X^\beth), \]
  the \emph{Monge--Ampère operator}.
  Whenever the choice $\alpha$ is clear by context, we write $\MA(\varphi)$ for $\MA_\alpha(\varphi)$.
%re $\cX$ is a test configuration dominating $X\times \Pro$, with $D\in \VCar(\cX)$ such that $\varphi_D = \varphi$, and $c_E$ is given by:
%\[c_E\doteq \frac{b_E}{V_\beta} (\beta_\cX + D)|_E^n,\]
\end{defi}
Using the identification of $X^\beth$ with the limit of dual complexes, 
we can observe that if $\varphi\in \Aff_{\Q}(\Delta_\cX)$ is rationally affine on each face of the dual complex $\Delta_{\cX}$, then $(p_{\cX})^*\varphi \in \PL(\Delta)$ is such that 
$\MA((p_{\cX})^*\varphi)$ is supported on $\Delta_{\cX}\subseteq X^\beth$. 
Where $p_{\cX}$ is the retraction $X^\beth \to \Delta_{\cX}$.

\begin{remark}
  A borelian measure on a compact Hausdorff topological space $K$ is completely determined by its values on a dense subset of $\Cz(K)$, therefore the probability measure $\MA(\varphi)$ is completely determined by its values on the set $\PL(X^\beth)$.
  We will use this approach to generalize the Monge--Ampère measure (see Remark~\ref{rem:mongeamperee1}), and construct the \emph{mixed Monge--Ampère energy} (see next section).
\end{remark}

\subsubsection{Energy pairing for PL functions}

Let $\varphi_0,\dotsc, \varphi_n\in \PL_\R$, and $\beta_0,\dotsc, \beta_n\in H^{1,1}(X)$.
\begin{defi}
Let $\cX$ be a test configuration dominating $X\times \Pro$, such that there exist $D_0,\dotsc, D_n\in\VCar(\cX)$, with the property that for every $i=0,\dotsc, n$ we have \[\varphi_i =\varphi_{D_i}\]
then we define the \emph{energy pairing}
\begin{equation}
(\beta_0,\varphi_0)\mathbin{\cdot} (\beta_1, \varphi_1)\dotsb(\beta_n,\varphi_n)\doteq (\beta_{0,\cX} + D_0)\dotsb(\beta_{n,\cX} +D_n)\in\R,
\end{equation}
where, in the right hand side of the inequality, the intersection product is against the fundamental class of $\cX$.
As before, the projection formula guarantees that the energy pairing is well defined.

We also refer to the energy pairing as the \emph{energy coupling}, or as \emph{mixed Monge--Ampère energy}, see item $(iv)$ of Proposition~\ref{prop:basicenergypairing} below.
\end{defi}

Clearly the pairing is a symmetric multi-linear form, which further satisfies the following properties:
\begin{prop}
  \label{prop:basicenergypairing}
Let $\varphi_0,\dotsc, \varphi_n\in \PL_\R$, and $\cX$ a test configuration such that there exist $D_0,\dotsc, D_n\in \VCar_\R(\cX)$ with \[\varphi_i = \varphi_{D_i},\] and let $\beta_0,\dotsc, \beta_n\in H^{1,1}(X)$, $t\in\Q_{>0}$, and $\alpha\in\Pos(X)$ then we have:
\begin{enumerate}[(i)]
\item $(0,1)\mathbin{\cdot} (\beta_1, \varphi_1)\dotsb (\beta_n, \varphi_n) = \beta_1\dotsb \beta_n$
\item $(\beta_0, 0)\dotsb(\beta_n, 0) =0$
\item $(\beta_0, t\mathbin{\cdot} \varphi_0)\dotsb (\beta_n, t\mathbin{\cdot}\varphi_n) = t(\beta_0, \varphi_0)\dotsb (\beta_n, \varphi_n)$
\item For $\psi\in\PL_\R$, and $V_\alpha\doteq [X]\cdot\alpha^n$, we have \[\frac{1}{V_\alpha}(0,\psi)\mathbin{\cdot}(\\\alpha,\varphi_0)^n = \int_{X^\beth} \psi\MA_\alpha(\varphi_0) \]
\item More generally, for $\psi\in\PL_\R$ we have \[(0,\psi)\mathbin{\cdot}(\beta_1,\varphi_1)\dotsb (\beta_n,\varphi_n) = \sum_{E\subseteq \cX_0}b_E\,\psi(v_E)(\beta_1+D_1)|_E\dotsb(\beta_n+D_n)|_E \] for $E$ irreducible component and $b_E = v_E(\cX_0)$. 
\end{enumerate}
\end{prop}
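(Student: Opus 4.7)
The plan is to handle the five items in turn, using a single dominating test configuration $\cX$ and divisors $D_i \in \VCar_\R(\cX)$ with $\varphi_{D_i} = \varphi_i$, and reducing each identity to an intersection-theoretic computation on $\cX$. The underlying principle is that both sides of every identity are independent of the choice of $\cX$, by the projection formula together with Theorem~\ref{vcarpl}, so any convenient representative will do.

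For (i), I would first identify the constant function $1$ with $\varphi_{\cX_0} \in \PL(X^\beth)$: since $\mathcal O_\cX(-\cX_0) = (t)\cdot \mathcal O_\cX$, the recipe from the proof of Theorem~\ref{vcarpl} gives $\varphi_{\cX_0}(v) = v(t) = 1$ on $\Xbc$. Taking $D_0 = \cX_0$, the pairing becomes $\cX_0 \cdot \prod_{i=1}^n (\beta_{i,\cX}+D_i)$. Because any two fibers of $\pi\colon \cX\to\Pro$ are cohomologous, and fibers over $z\ne 0$ are disjoint from the support of each vertical $D_i$, every term involving a $D_i$ drops out, leaving $\pi^{-1}(z)\cdot \beta_{1,\cX}\cdots\beta_{n,\cX} = \beta_1\cdots\beta_n$ via the biholomorphism $\pi^{-1}(z)\simeq X$. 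Item (ii) is then immediate: with all $D_i = 0$ the pairing equals the pullback to $\cX$ of $\beta_0\cdots \beta_n \in H^{2(n+1)}(X)$, which vanishes by dimension.

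For (iii), I would use the base change $\mu_t\colon \cX^t \to \cX$ induced by $z\mapsto z^t$, treating first $t\in \mathds Z_{>0}$ and then extending to $t\in\Q_{>0}$ by clearing denominators. This $\mu_t$ is of degree $t$ and, by the same computation as for property (2) of the preceding proposition, $\varphi_{\mu_t^*D_i} = t\,\varphi_i$; the projection formula then yields
\[(\beta_0, t\varphi_0)\cdots (\beta_n, t\varphi_n) = \int_{\cX^t}\prod_i \mu_t^*(\beta_{i,\cX}+D_i) = t\int_{\cX}\prod_i (\beta_{i,\cX}+D_i),\]
which is the stated scaling.

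The remaining two items are essentially computational. For (v), I would write $\psi = \varphi_{D'}$ with $D' = \sum_E b_E \psi(v_E) E$ by Equation~\eqref{eq;defdiv}, and expand $D'\cdot \prod_i(\beta_{i,\cX}+D_i)$ by linearity, using that intersection with a prime vertical divisor $E$ amounts to restriction to $E$. Item (iv) is then the specialization $\beta_i = \alpha$, $\varphi_i = \varphi_0$ of (v), divided by $V_\alpha$: the resulting sum matches term-by-term the definition of $\MA_\alpha(\varphi_0)$ in Section~\ref{sec;mapl} via the coefficients $c_E$. The only genuine subtlety throughout is the invariance of each identity under a change of model $\cX$, which as noted is already absorbed into the basic setup of the energy pairing and the well-definedness of $\varphi_D$.
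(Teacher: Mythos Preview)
Your treatment of (i)--(iii) and the reduction of (iv) to (v) match the paper's argument closely and are fine.

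The gap is in (v). You write ``$\psi = \varphi_{D'}$ with $D' = \sum_E b_E\,\psi(v_E)\,E$ by Equation~\eqref{eq;defdiv}'', but this is false in general: the assignment $\psi \mapsto \sum_E b_E\,\psi(v_E)\,E$ is the projection $\PL(X^\beth)\to \VCar_\R(\cX)$, not an inverse to $D\mapsto \varphi_D$. For a PL function $\psi$ that is only determined on a strictly higher model, the divisor $D'$ on $\cX$ satisfies $\varphi_{D'}(v_E)=\psi(v_E)$ at the finitely many points $v_E$, but $\varphi_{D'}\neq\psi$ globally, so you cannot compute the energy pairing $(0,\psi)\cdot\Gamma$ as $D'\cdot(\beta_{1,\cX}+D_1)\cdots(\beta_{n,\cX}+D_n)$ directly.

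What the paper does (and what your argument is missing) is precisely the step that bridges this: pass to $\cX'$ $\mu$-dominating $\cX$ on which $\psi=\varphi_G$ for some $G\in\VCar_\R(\cX')$, compute the pairing there as $G\cdot\mu^*(\beta_{1,\cX}+D_1)\cdots\mu^*(\beta_{n,\cX}+D_n)$, and then apply the projection formula to replace $G$ by $\mu_*G$. The key identity is $\mu_*G = \sum_{E\subseteq\cX_0} \ord_E(G)\,E = \sum_E b_E\,\psi(v_E)\,E$, which is exactly your $D'$; but it enters as a pushforward, not as a representative of $\psi$. Once you insert this projection-formula step, your expansion by linearity and restriction to $E$ goes through unchanged.
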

\begin{proof}
($i$) This follows from the remark that, as cohomology classes, $[\cX_0] = [\cX_1]$. 
Indeed, both of them can be written as $\pi^*([0])$ and $\pi^*([1])$ respectively, where $[0]$ and $[1]$ represent the cohomology classes of $0,1\in \Pro$ in $H^2(\Pro, \C)\cong \C$, but since $[0]=[1]$, the result follows from the flatness of $\pi\colon \cX\to \Pro$. 

($ii$) Observe that: \[[\cX_0]\mathbin{\cdot} \beta_{0,\cX}\dotsb \mathbin{\cdot} \beta_{n,\cX} = [X]\mathbin{\cdot}\beta_0\dotsb\dotsb \beta_n =0.\]

($iii$) It is enough to check that for each $d\in\mathds Z_{>0}$ we have $(\beta_0, d\mathbin{\cdot} \varphi_0)\dotsb (\beta_n, d\mathbin{\cdot}\varphi_n) = d(\beta_0, \varphi_0)\dotsb (\beta_n, \varphi_n)$, but then again $d\mathbin{\cdot}\varphi_D = \varphi_{D_{d}}$, where $D_d$ is the pullback of $D$ under the normalized base change:
\begin{equation}
% https://tikzcd.yichuanshen.de/#N4Igdg9gJgpgziAXAbVABwnAlgFyxMJZABgBoAmAXVJADcBDAGwFcYkQAdDgBQCcIQAX1LpMufIRQBGCtTpNW7LnwHDR2PASIypchizaIQAUS5xmAIzgwcMAI5cAGgGMmQkSAwaJRMrpr6ikYABE6ujAD6UO7q4looZMR6CoYmUWaW1rZ2oRwA7liweIywwGFMUYJCcjBQAObwRKAAZvwAtkjkNDgCiFJqIK0QHYhkID1IMvIG7DgAetEDQyMAzN295EvtSGvjvcRbw0gALOs7gpSCQA
\begin{tikzcd}
E_d\subseteq \widetilde{\cX_d} \arrow[d] &                           \\
 \cX_d \arrow[r] \arrow[d]               & E\subseteq\cX \arrow[d] \\
\Pro \arrow[r, "t^d"]                      & \Pro,                   
\end{tikzcd}
\end{equation}
in addition  if $\alpha_0, \dotsc, \alpha_{n+1}\in H^{1,1}(\cX)$, then $(\alpha_0)_{\cX_d}\dotsb(\alpha_{n+1})_{\cX_d} = d\, \alpha_0 \dotsb \alpha_{n+1}$
and the result follows.

($iv$) Follows from ($v$).

($v$) Let $\cX^\prime$ be a test configuration $\mu$-dominating $\cX$ such that there exists $G\in\VCar_\R(\cX^\prime)$ with \[\psi = \varphi_G,\]  then we have:
\begin{equation}\label{eq;energypairing}
\begin{aligned}
(0,\psi)\mathbin{\cdot}(\beta_1,\varphi_1)\dotsb (\beta_n,\varphi_n) &= G\mathbin{\cdot} \left(\beta_{1,\cX^\prime} + \mu^*D_1\right)\dotsb \left(\beta_{n,\cX^\prime} +\mu^*D_n\right)\\
&=\mu_* G\cdot \left(\beta_{1,\cX} + D_1\right)\dotsb \left(\beta_{n,\cX} +D_n\right)\\
&= \sum_{E\subseteq \cX_0} \ord_E(G)\, E\mathbin{\cdot} (\beta_1 + D_1)\dotsb (\beta_n +D_n)\\
&=\sum_{E\subseteq\cX_0}b_E\,\varphi_G(v_E) (\beta_1 + D_1)|_E\dotsb (\beta_n +D_n)|_E\\
&= \sum_{E\subseteq \cX_0}b_E\,\psi(v_E)(\beta_1+D_1)|_E\dotsb(\beta_n+D_n)|_E,
\end{aligned}
\end{equation}
where the second equality is given by the projection formula.
\end{proof}

\begin{corollary}
Let $\beta_1,\dots \beta_n\in \Pos(X) $ and for each $i$ a $\beta_i$-psh function $\varphi_i\in\PL\cap\PSH(\beta_i)$. If $\gamma\in H^{1,1}(X)$, and $\psi,\psi^\prime\in \PL$ are such that $\psi\leq \psi^\prime$, then 
\[(\gamma,\psi)\cdot(\beta_1, \varphi_1)\dotsb(\beta_n,\varphi_n) \leq (\gamma,\psi^\prime)\cdot(\beta_1, \varphi_1)\dotsb(\beta_n,\varphi_n)\]
\end{corollary}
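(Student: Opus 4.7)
The plan is to use multilinearity of the energy pairing to reduce the inequality to a positivity statement for a single non-negative PL function paired against $n$ relatively nef classes. First I would fix a snc test configuration $\cX$ dominating $X\times\Pro$ together with vertical divisors $G, G^\prime, D_1, \dots, D_n \in \VCar(\cX)$ realising all PL functions in play: $\psi = \varphi_G$, $\psi^\prime = \varphi_{G^\prime}$ and $\varphi_i = \varphi_{D_i}$. Such a $\cX$ exists because snc test configurations dominating $X\times\Pro$ form a cofinal directed system, so any finite collection of PL functions can be simultaneously represented on a sufficiently high model.

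Since the energy pairing is multilinear in each slot (once the PL functions are replaced by their divisors it is just the intersection product on $\cX$), one obtains
\[(\gamma, \psi^\prime)\cdot \prod_{i=1}^n (\beta_i, \varphi_i) - (\gamma, \psi)\cdot \prod_{i=1}^n (\beta_i, \varphi_i) = (0, \phi)\cdot \prod_{i=1}^n (\beta_i, \varphi_i),\]
where $\phi \doteq \psi^\prime - \psi\ge 0$. Applying property $(v)$ of the preceding proposition, the right hand side equals
\[\sum_{E\overset{\text{irred}}{\subseteq}\cX_0} b_E\, \phi(v_E)\, (\beta_{1,\cX}+D_1)|_E\cdots (\beta_{n,\cX}+D_n)|_E,\]
so it suffices to show that each summand is non-negative.

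The multiplicity $b_E$ is positive by definition, and $\phi(v_E)\ge 0$ by hypothesis. The substantive point is thus the non-negativity of the intersection number $(\beta_{1,\cX}+D_1)|_E \cdots (\beta_{n,\cX}+D_n)|_E$. Because $\varphi_i\in \PL\cap\PSH(\beta_i)$, each class $\beta_{i,\cX}+D_i$ is nef relative to $\Pro$, so its restriction to any irreducible component $E\subseteq \cX_0$ is nef on $E$. Since $\cX$ is snc of dimension $n+1$, every such $E$ is a smooth compact Kähler $n$-fold, and on such a manifold the intersection of $n$ nef classes is non-negative: one approximates each $(\beta_{i,\cX}+D_i)|_E$ by Kähler classes $(\beta_{i,\cX}+D_i)|_E + \varepsilon \omega|_E$, where $\omega$ is a relatively Kähler form on $\cX$ (whose restriction is Kähler on $E$), and then uses that the top wedge of Kähler forms is a strictly positive volume form; letting $\varepsilon\to 0$ gives the desired inequality. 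The only possible obstacle is the bookkeeping of the common model and the standard approximation of nef classes by Kähler ones on $E$, both of which are routine.
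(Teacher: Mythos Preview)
Your proof is correct and follows exactly the approach the paper intends: the paper's proof is the single line ``Follows directly from equation~\eqref{eq;energypairing}'', and what you have written is precisely the unpacking of that reference --- reduce by multilinearity to $(0,\psi'-\psi)\cdot\prod_i(\beta_i,\varphi_i)$, apply item~$(v)$ to obtain the sum over irreducible components, and observe each summand is non-negative because $b_E>0$, $\phi(v_E)\ge 0$, and the restrictions $(\beta_{i,\cX}+D_i)|_E$ are nef.
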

\begin{proof}
Follows directly from equation~\ref{eq;energypairing}.
\end{proof}

\begin{lemma}[Zariski's Lemma]\label{lem:zariskilemma}
Let $\psi$ be a PL function, and, for $i = 2, \dotsc, n$, let $\varphi_i\in \PL\cap\PSH(\beta_i)$. 
Then 
\begin{equation}
(0,\psi)^2\mathbin{\cdot}(\beta_2, \varphi_2)\dotsb (\beta_n, \varphi_n)\leq 0.
\end{equation}
\end{lemma}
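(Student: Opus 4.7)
The plan is to imitate the classical Zariski lemma for fibered surfaces---suitably upgraded to the higher-dimensional relatively-nef setting---on a model where everything can be computed as an honest intersection product.

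First I would use Remark~\ref{rem:projcofinal} to pass to a smooth test configuration $\cX$ dominating $X\times\Pro$ with SNC central fiber $\cX_0 = \sum_E b_E E$, on which $\psi = \varphi_D$ and $\varphi_i = \varphi_{D_i}$ for vertical $\R$-Cartier divisors. Setting $N_i := \beta_{i,\cX} + D_i$, the hypothesis $\varphi_i \in \PL\cap\PSH(\beta_i)$ gives $N_i \in \Nef(\cX/\Pro)$, and the definition of the energy pairing rewrites the quantity of interest as the intersection number $D^2 \cdot N_2 \dotsb N_n$ on $\cX$.

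Next, writing $D = \sum_E d_E E$ and setting $c_E := d_E/b_E$, I would introduce the coefficients $a_{E,E'} := E\cdot E'\cdot N_2\dotsb N_n$. The two structural inputs I need are: (i) $a_{E,E'}\geq 0$ whenever $E\neq E'$, which should follow from SNC-ness (so that $E\cap E'$ is a smooth effective codimension-$2$ Kähler cycle) together with nefness of each $N_i|_{E\cap E'}$, inherited from $\Pro$-relative nefness; and (ii) the relation $\sum_{E'} b_{E'} a_{E,E'} = 0$, which should come from $[\cX_0] = \pi^*[0]$ and the projection formula applied to the $1$-cycle $E\cdot N_2\dotsb N_n$, supported in the central fiber. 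Combined, (i) and (ii) force $a_{E,E} \leq 0$ and lead, after expanding $D^2 = \sum_{E,E'} d_E d_{E'} E\cdot E'$ and symmetrizing, to the familiar Zariski identity
\[
D^2\cdot N_2\dotsb N_n \;=\; -\tfrac{1}{2}\sum_{E\neq E'}(c_E-c_{E'})^2\, b_E b_{E'}\, a_{E,E'},
\]
whose right-hand side is manifestly $\leq 0$.

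The main obstacle will be the transcendental input rather than the algebra: whereas in \cite{BHJ17uniform, BJ22trivval} positivity of mixed intersection numbers against effective cycles is provided by algebraic positivity theory, here I must invoke Demailly's intersection calculus on compact Kähler manifolds---specifically, that restrictions of nef $(1,1)$-classes to smooth subvarieties of a smooth compact Kähler space remain nef, and that intersections of nef classes with effective cycles are non-negative. Once these standard Kähler-geometric facts are in hand, the rest of the argument is purely formal.
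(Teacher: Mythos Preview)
Your proposal is correct and follows essentially the same route as the paper: reduce to a single SNC test configuration, use that $E\cdot E'\cdot N_2\dotsb N_n\ge 0$ for distinct components together with $\cX_0\cdot E\cdot N_2\dotsb N_n=0$, and deduce negative semidefiniteness of the induced bilinear form on $\VCar_\R(\cX)$. The only cosmetic difference is packaging---the paper abstracts your symmetrization computation into a standalone linear-algebra lemma (Lemma~\ref{lem;bilinear}) rather than writing out the Zariski identity explicitly.
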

\begin{proof}
Let $\cX$ be a test configuration that dominates $X\times\Pro$ with $D, D_2, \dotsc, D_n\in \VCar(\cX)$ such that $\psi = \varphi_D$ and $\varphi_i = \varphi_{D_i}$.

The energy pairing induces a bilinear form on the finite dimension vector space $\VCar_\R(\cX)$ as the map:
\[(G_1, G_2)\mapsto (0,\varphi_{G_1})\mathbin{\cdot}(0, \varphi_{G_2})\mathbin{\cdot}(\beta_2, \varphi_2)\dotsb (\beta_n, \varphi_n).\]
Therefore, we must prove that this bilinear form is negative semidefinite.

The strategy will be to apply  Lemma~\ref{lem;bilinear}. 
Let $(E_i)$ be the irreducible components of $\cX_0$, and note that they form a basis of $\VCar$. 
Then:   
\begin{enumerate}
\item For every $i, j$ different to each other, $(E_i, E_j)=E_i\mathbin{\cdot} E_j \mathbin{\cdot}(\beta_2 +D_2)\dotsb (\beta_n+ D_n)$ is non-negative, since $ \beta_K +D_k$ is nef, for $k=2, \dots n$.
\item Consider $\cX_0$ as an element of $ \VCar(\cX)$, then  $(\cX_0, E_i) = \cX_0\mathbin{\cdot} E_i\mathbin{\cdot} (\beta_2 +D_2)\dotsb (\beta_n+ D_n)=0$, since every component is supported on $\cX_0$. 
\end{enumerate}
Therefore, the bilinear form is negative semi-definite, and the result follows.
\end{proof}
This version of the Zariski Lemma that we have just proved is what allow us to use all the synthetic pluripotential theory of \cite{BJ23synthetic}, that will provide us with important a priori estimates for the mixed energy.
For more details see Appendix~\ref{apx:synthetic}.

Now we will recall some notions from --the above mentioned-- synthetic pluripotential theory restricting it to our case.
\subsubsection*{Recall of some synthetic facts of Boucksom--Jonsson}
If we let $\varphi_k\in \PL\cap\PSH(\beta_k)$ for $k=1, \dotsc, n-1$,
and denote the symbol $(\beta_1,\varphi_1)\dotsb (\beta_{n-1},\varphi_{n-1})$ by $\Gamma$, we can associate a semi-norm:
\[
	\lVert \psi\rVert_{\Gamma}\doteq \sqrt{-(0,\psi)^2\mathbin{\cdot}\Gamma},
\]
for $\psi\in\PL_\R$.

\begin{remark}
Since the (positive semi-definite) quadratic form $-(0,\psi)^2\mathbin{\cdot}\Gamma$ comes from a bilinear form we have an associated Cauchy-Schwarz inequality:
\[
\lvert (0,\psi_1)\mathbin{\cdot}(0,\psi_2)\mathbin{\cdot}\Gamma\rvert\leq \lVert \psi_1\rVert_{\Gamma}\lVert\psi_2\rVert_{\Gamma},
\]
that is the base of the synthetic estimates of \cite{BJ23synthetic}.
\end{remark}

\begin{defi}
Let $\varphi, \psi\in \PL\cap\PSH(\alpha)$, and denote:
\begin{align}
\mathrm J_\alpha(\varphi, \psi)&\doteq \frac{1}{V_\alpha}\sum_{j=1}^{n}\frac{j}{n+1}\lVert \varphi - \psi\rVert_{(\alpha, \varphi)^{j-1}\cdot (\alpha,\psi)^{n-j}};\label{eq;Jenergy}\\
\mathrm I_\alpha(\varphi, \psi)&\doteq \frac{1}{V_\alpha}\sum_{j=1}^{n}\lVert \varphi - \psi\rVert_{(\alpha, \varphi)^{j-1}\cdot (\alpha,\psi)^{n-j}}\label{eq;Ienergy}.
\end{align}
\end{defi}
Theorem~1.33 of \cite{BJ23synthetic} gives that these functionals define equivalent quasi-metrics.

We also recall the following defintion:
\begin{defi}
We define the \emph{Monge--Ampère energy} as the functional \(\mathrm E_\alpha\colon \PL(X^\beth)\to \R\)
given by the expression
\[\PL\ni \varphi\mapsto \frac{1}{(n+1)V_\alpha} (\alpha,\varphi)^{n+1}.\]
\end{defi}
If $\varphi, \psi\in \PL(X^\beth)$, the variation of $\mathrm E_\alpha$ is given by
\begin{equation}\label{eq;derivativeenergy}
\begin{aligned}
\dtz\mathrm E_\alpha\left(t\psi + (1-t)\varphi\right) &= \dtz \frac{1}{(n+1)V_\alpha} (\alpha,t\psi+(1-t)\varphi)^{n+1}\\
&=\frac{1}{V_\alpha}(\alpha, \varphi)^n \mathbin{\cdot} (0,\psi - \varphi)\\
&=\int_{X^\beth}(\psi - \varphi) \MA_\alpha(\varphi),
\end{aligned}
\end{equation}
justifying its name.

The energy $\mathrm E_\alpha$ restricts to a concave functional on $\PL\cap\PSH(\alpha)$, and thus for $\varphi, \psi\in \PL\cap\PSH(\alpha)$ we get that
\begin{equation}\label{eq;Econcave}
\mathrm E_\alpha(\psi)\le \mathrm E_\alpha(\varphi) + \int_{X^\beth}(\psi-\varphi) \MA_\alpha(\varphi),
\end{equation}
and the difference
\[ \mathrm E_\alpha(\varphi) - \mathrm E_\alpha(\psi) + \int_{X^\beth}(\psi-\varphi) \MA_\alpha(\varphi)\]
 coincides with $\mathrm J_\alpha(\varphi, \psi)$.

\subsection{Non-Archimedean psh functions}
We wil now define one of the most important objects of study of this paper the: \emph{non-Archimedean psh functions}.
\begin{defi}
A function \[\psi\colon X^\beth\to \left[ -\infty, +\infty\right[ \] is $\alpha$-\emph{psh} if $\psi\not\equiv -\infty$, and there exists a decreasing net $(\varphi_\lambda)_{\lambda\in\Lambda}\in \PL\cap\PSH(\alpha)$ such that 
\begin{equation}
\varphi_\lambda (v)\searrow \psi(v), \text{ for every } v\in X^\beth.
\end{equation}
The set of all $\alpha$-psh functions will be denoted $\PSH(\alpha)$.
\end{defi}
Like for the PL functions we have the following properties:
for $\varphi, \psi\in \PSH(\alpha)$, and $f\colon X\to Y$ a finite holomorphic map:
\begin{enumerate}
\item $f^*\varphi \in \PSH(f^*\alpha)$, 
\item $\varphi + c$, and  $t\mathbin{\cdot} \varphi$  are $\alpha$-psh for $c\in \R$ and $t\in\Q_{>0}$,
\item $\max\{\varphi, \psi\}\in \PSH(\alpha).$
\end{enumerate}

Next, an immediate consequence of the definition:
\begin{lemma}\label{lem:intersectionpsh}
  The intersection \[\bigcap_{\lambda>1} \PSH(\lambda\cdot \alpha) = \PSH(\alpha).\]
\end{lemma} 
\begin{proof}
  It is clear that $\bigcap_{\lambda>1} \PSH(\lambda\cdot \alpha) \supseteq \PSH(\alpha)$, we will prove now the other inclusion.
  Let $\varphi\in \PSH(\lambda\cdot \alpha)$ for every $\lambda>1$, but subtracting a constant we can suppose that $\varphi\le 0$.
  Now, it is enough to observe that the net $\varphi_\lambda\doteq\frac{1}{\lambda}\varphi\in \PSH(\alpha)$ decreases to $\varphi$ as $\lambda\searrow 1$, which then implies that $\varphi\in \PSH(\alpha)$.
\end{proof}

\medskip

The next result is well known in the algebraic case, see for instance \cite[Corollary~4.17]{BJ22trivval} or \cite{BFJ16semipositive}, and the proof in our transcendental setting goes without a change.
We follow the proof of \cite[Section 6.1]{BFJ16semipositive}, which is added here for completeness.
\begin{theorem}
If $\psi\in \PSH(\alpha)$, then \[\psi|_{\Xdiv} > -\infty.\]
\end{theorem}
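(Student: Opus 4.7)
This is the non-archimedean analog of the fact that a complex psh function is locally integrable, hence finite outside a pluripolar set; it says divisorial valuations are ``non-polar'' for any $\alpha$-psh function, in line with the heuristic that they carry positive Monge--Amp\`ere mass. My plan is to follow the BFJ16 / BJ22trivval strategy, whose crux is an a priori uniform lower bound on the value at a fixed divisorial valuation of a PL $\alpha$-psh function normalized by its supremum.

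First I would fix $v\in\Xdiv$ and, via Theorem~\ref{thm;divisorialpointscoincide}, realize $\sigma(v)=v_E\in\Xdivc$ for some prime component $E$ of the central fiber $\cX_0=\sum b_iE_i$ of a smooth snc test configuration $\cX$. Pick a decreasing net $(\varphi_\lambda)\subset\PL\cap\PSH(\alpha)$ with $\varphi_\lambda\searrow\psi$ and a reference $\varphi_*\in\Hdom(\alpha)$ on $\cX$. After a single additive normalization we may assume $\sup_{X^\beth}\varphi_\lambda\le 0$ for $\lambda$ sufficiently large; and passing to a cofinal subsystem of dominating test configurations $\mu_\lambda\colon\cX_\lambda\to\cX$, we may write $\varphi_\lambda=\varphi_{D_\lambda}$ with $\alpha_{\cX_\lambda}+D_\lambda\in\Nef(\cX_\lambda/\Pro)$, and $\varphi_\lambda(v_E)=b_{E_\lambda}^{-1}\,\mathrm{coef}_{E_\lambda}(D_\lambda)$ where $E_\lambda$ denotes the strict transform of $E$.

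The main step is then to prove a uniform estimate $\varphi(v_E)\ge -C_E$ for every $\varphi\in\PL\cap\PSH(\alpha)$ with $\sup\varphi\le 0$, with $C_E$ depending only on $E$, $\alpha$ and the chosen $\varphi_*$. The tool is the Zariski Lemma (Lemma~\ref{lem:zariskilemma}), which promotes the quadratic form $\chi\mapsto -(0,\chi)^2\cdot(\alpha,\varphi_*)^{n-1}$ to a positive semi-definite semi-norm $\lVert\,\cdot\,\rVert^2$. I would pair this, via Cauchy--Schwarz, with a PL ``spike'' function $\chi_E$ centered at $v_E$ (i.e.\ $\chi_E(v_{E_j})=\delta_{E_jE}/b_E$), and use Proposition~\ref{prop:basicenergypairing}(v) to translate
\[
\bigl\lvert(0,\chi_E)\cdot(0,\varphi)\cdot(\alpha,\varphi_*)^{n-1}\bigr\rvert\le\lVert\chi_E\rVert\,\lVert\varphi\rVert
\]
into the desired estimate for $\varphi(v_E)$: the left-hand side recovers $\varphi(v_E)$ up to a controlled error involving the fixed $E$, $\alpha$, $\varphi_*$, and the right-hand side is bounded by $\sup\varphi\le 0$ together with the fixed reference data. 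Taking the infimum over $\lambda$ yields $\psi(v)\ge -C_E>-\infty$.

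The main obstacle is arranging that $C_E$ be \emph{independent} of the varying test configurations $\cX_\lambda$ on which the $\varphi_\lambda$ live. For this I would exploit the projection-formula functoriality of the energy pairing (Proposition~\ref{prop:basicenergypairing}) together with the cofinality of dominating snc test configurations: after pushing $D_\lambda$ forward to the fixed $\cX$, the estimate is a finite set of intersection-theoretic inequalities on $\cX$ alone, so the constant depends only on $E$, $\alpha_\cX$, and $D_*$. Subject to making this pushforward manipulation precise, the theorem follows.
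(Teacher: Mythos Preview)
Your outline has a genuine gap at the Cauchy--Schwarz step. You assert that the right-hand side $\lVert\chi_E\rVert_\Gamma\,\lVert\varphi\rVert_\Gamma$ is ``bounded by $\sup\varphi\le 0$ together with the fixed reference data'', but $\lVert\varphi\rVert_\Gamma^2=-(0,\varphi)^2\cdot(\alpha,\varphi_*)^{n-1}$ is an $\mathrm I$/$\mathrm J$--type energy that is \emph{not} controlled by $\sup\varphi$ alone: it involves $D_\lambda^2$ on the high model $\cX_\lambda$, and the projection formula only pushes down products that are \emph{linear} in $D_\lambda$, not quadratic. Bounding this energy a priori is essentially equivalent to bounding $\varphi$ at the divisorial points of $\cX$, which is the statement you are trying to prove; so the argument is circular as written. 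There is a second issue on the left: pairing against a single spike $\chi_E$ yields $E\cdot(\mu_\lambda)_*D_\lambda\cdot(\alpha_\cX+D_*)^{n-1}=\sum_k b_k\,\varphi(v_{E_k})\,E\!\cdot\! E_k\!\cdot\!(\alpha_\cX+D_*)^{n-1}$, a linear combination of \emph{all} the unknowns $\varphi(v_{E_k})$, not $\varphi(v_E)$ up to a fixed error.

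The paper's proof avoids both problems by replacing Cauchy--Schwarz with the elementary positivity ``effective $\cdot$ nef$^{\,n}\ge 0$'', and by replacing $(\alpha,\varphi_*)^{n-1}$ with powers of a genuine K\"ahler class $\gamma\in\Pos(\cX)$. Concretely one writes, for each component $E_j$,
\[
0\le (\mu_\lambda)^*E_j\cdot(\alpha_{\cX_\lambda}+D_\lambda)\cdot(\mu_\lambda^*\gamma)^{n-1}
= E_j\cdot\bigl(\alpha_\cX+(\mu_\lambda)_*D_\lambda\bigr)\cdot\gamma^{n-1},
\]
which is linear in $D_\lambda$ and therefore \emph{does} descend to a system of inequalities on the fixed $\cX$ with coefficients $b_k\,E_j\!\cdot\! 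E_k\!\cdot\!\gamma^{n-1}$. The K\"ahler condition on $\gamma$ makes the off-diagonal entries strictly positive whenever $E_j\cap E_k\ne\emptyset$, while $E_j\cdot\cX_0\cdot\gamma^{n-1}=0$ fixes the diagonal; then connectedness of $\cX_0$ together with $\max_k\varphi_\lambda(v_{E_k})=0$ lets one chain these inequalities to obtain a uniform bound $\lvert\varphi_\lambda(v_{E_j})\rvert\le C(\cX,\alpha,\gamma)$, exactly as in \cite[Section~6.1]{BFJ16semipositive}. If you want to salvage your approach, you should replace the Zariski/Cauchy--Schwarz step by this direct nef pairing against $\gamma^{n-1}$.
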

\begin{proof}
Let $v$ be a divisorial valuation, and $\cX$ be a test configuration such that, decomposing the central fiber in irreducible components  \[\cX_0 =\sum_{j=0}^k b_j\, E_j, \] we have $ v =v_{E_0}$.

Consider now, $\gamma\in\mathcal \Pos(\cX)$, and $(\varphi_\lambda)_\lambda\in \PL\cap\PSH(\alpha)$ such that:
\[\varphi_{\lambda} (v)\searrow \psi(v), \text{ for every } v\in X^\beth.\]
We may assume  $\sup \psi = 0$, and hence that $\sup \varphi_{\lambda} = 0 = \max \{\varphi_{\lambda}(v_{E_j})\}$.

If $\cX_\lambda$ is a test configuration that $\mu_\lambda$-dominates $\cX$, with $D_\lambda\in\VCar( \cX_\lambda)$ such that \(\varphi_\lambda = \varphi_{D_\lambda}\), then, since $(\mu_\lambda)^* E_j$ is an effective divisor, and the classes 
\[\alpha_{\cX_\lambda} + D_\lambda,\, (\mu_\lambda)^*\gamma\text{ are relatively nef w.r.t. }\Pro,\]
we have:
\begin{equation}\label{eq:inequality1}
0\le (\mu_\lambda)^*E_j\mathbin{\cdot}(\alpha_{\cX_i} + D_\lambda)\mathbin{\cdot}\mathcal (\mu_\lambda)^*\gamma^{n-1}=E_j\mathbin{\cdot}(\alpha_{\cX} + (\mu_\lambda)_* D_\lambda)\mathbin{\cdot}\gamma^{n-1},
\end{equation}
where the equality is given by the projection formula.

Since $(\mu_\lambda)_* D_\lambda = \sum_k b_k\,\varphi_{D_\lambda}(v_{E_k})\, E_k$,  rewriting the inequality~\eqref{eq:inequality1}, it follows:
\[
\sum_k b_k\,\varphi_{D_\lambda}(v_{E_k})\, (E_j\mathbin{\cdot} E_k\mathbin{\cdot} \gamma^{n-1})\geq - E_j\mathbin{\cdot} \alpha_\cX\mathbin{\cdot} \gamma^{n-1}.
\]
Now, if $E_j\cap E_k\neq \empty$, then $E_j\mathbin{\cdot} E_k\mathbin{\cdot} \gamma^{n-1}> 0$, and thus for all $j$:
\begin{align*}
b_j (E_j\mathbin{\cdot} E_j\mathbin{\cdot} \gamma^{n-1})&= E_j\mathbin{\cdot} (b_j E_j - \cX_0)\mathbin{\cdot} \gamma^{n-1}\\
 &= -\sum_{k\neq j}b_k(E_j\mathbin{\cdot} E_k\mathbin{\cdot} \gamma^{n-1})\leq -1
\end{align*}
where the first equality comes from flatness of $\pi\colon \cX\to\Pro$, and the last inequality comes from $\cX_0$ being connected with at least two irreducible components.
Exactly like \cite[Section 6.1]{BFJ16semipositive}, we get:
\begin{equation}
\lvert \varphi_{D_\lambda}(v_{E_j})\rvert \leq C(\cX,\alpha,\gamma),
\end{equation}
for some constant $C$ depending only on $\cX,\alpha$ and $\gamma$.
Hence: \[ C\ge \lim_i \lvert \varphi_{D_\lambda}(v_{E_j}) \rvert = \lvert \psi(v_{E_j})\rvert,\]
concluding the proof. 
\end{proof}

Thanks to the above result, a natural topology to endow $\PSH(\alpha)$ will be the topology of pointwise convergence on divisorial valuations.

Bellow, we will also prove that:
\[\varphi\le \psi \text{ on } \Xdiv \implies \varphi \le \psi \text{ on } X^\beth\] 
for $\varphi\in \PSH(\alpha)$ and $\psi\colon X^\beth\to\left[-\infty, +\infty\right[$ a usc function.
To get this result we need the following description of divisorial valuations:
\begin{defi}
Let $\ia\in \flag$ be a flag ideal, the set $\Sigma_\ia\subseteq \Xdivc= \sigma(\Xdiv)$ of divisorial valuations given by the irreducible components of the exceptional divisor of the normalized blow-up $\widetilde{\bl_{\ia}X\times\Pro}$ are called the \emph{Rees valuations associated to $\ia$.}
\end{defi}
It is clear from definition that:
\begin{enumerate}
\item $\Sigma_\ia = \Sigma_{\iac}$
\item $\Sigma_{\ia^m} = \Sigma_\ia$
\item $\bigcup_{\ia\in \ideal_X}\Sigma_\ia = \Xdivc$
\end{enumerate}

\medskip

The next result is a generalization of \cite[Lemma~2.13]{BJ22trivval}, and the proof follows the same general lines.
\begin{lemma}\label{lem;sup}
Let $\ia, \ib \in \flag$, and $m\in\mathds N$, then 
\[\sup_{X^\beth} \left\{\frac{1}{m}\varphi_\ib - \varphi_\ia\right\} = \max_{\Sigma_\ia} \left\{\frac{1}{m}\varphi_\ib - \varphi_\ia\right\} \] 
\end{lemma}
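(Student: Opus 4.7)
Writing $\psi := \tfrac{1}{m}\varphi_\ib - \varphi_\ia$, the inequality $\max_{\Sigma_\ia}\psi \leq \sup_{X^\beth}\psi$ is immediate, so the content is in the reverse direction. My plan is to pass to (a snc model dominating) the normalized blow-up $\nu \colon Y \to X\times\Pro$ along $\ia$, on which $\ia\cdot\mathcal{O}_Y = \mathcal{O}_Y(-E)$ becomes locally principal with exceptional divisor $E = \sum_i a_i E_i$ and Rees valuations $\Sigma_\ia = \{v_{E_1},\dotsc,v_{E_N}\}$. Since $\psi$ is a continuous PL function on the compact $X^\beth$ and $\Xdivc$ is dense in $X^\beth$ by Theorem~\ref{teo;dense} combined with Lemma~\ref{lem:gauss}, it suffices to prove the bound for each $v \in \Xdivc$; by Proposition~\ref{prop:bijdivc}, such a $v$ lifts uniquely to a divisorial valuation $\tilde v$ on $Y$ centered at an irreducible subvariety $Z\subseteq Y$.

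Assuming $Z\subseteq E$, I would work in an affine neighborhood $U$ of the generic point of $Z$ where $\ia\cdot\mathcal{O}_Y|_U = (f)$ and $\ib\cdot\mathcal{O}_Y|_U = (g_1,\dotsc,g_r)$, so that $\psi(v)=\tilde v(f)-\tfrac{1}{m}\min_k\tilde v(g_k)$. I would then compare $\tilde v$ to the quasi-monomial valuation $v^{\mathrm{qm}}$ sharing its values with $\tilde v$ on the local equations of the components $E_j$ of $E$ meeting $Z$: the minimum formula defining quasi-monomial valuations (cf.\ Section~\ref{sec:monomialcone}) gives $v^{\mathrm{qm}}(g_k)\leq\tilde v(g_k)$ for every $k$, while $v^{\mathrm{qm}}(f)=\tilde v(f)$ since $f$ is, up to unit, a monomial in the $E_j$-equations. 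Hence $\psi(v)\leq\psi(v^{\mathrm{qm}})$. The $\C^*$-invariant quasi-monomial valuations through $Z$ form a simplex $\{w\in\R_+^J : \sum_{j\in J} b_jw_j = 1\}$ inside the face of the dual cone complex of $(Y, E_{\mathrm{red}})$ attached to $Z$, whose vertices are precisely the Rees valuations $v_{E_j}$ with $E_j\ni Z$. On this simplex, $w\mapsto v^{\mathrm{qm}}(\ia)$ is linear and $w\mapsto -\tfrac{1}{m}\min_k v^{\mathrm{qm}}(g_k)$ is concave, so $\psi\circ v^{\mathrm{qm}}$ is concave piecewise linear and must attain its maximum at a vertex $v_{E_{j_0}}\in\Sigma_\ia$, yielding $\psi(v)\leq\psi(v_{E_{j_0}})$.

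The main obstacle I expect lies in the residual case $Z\not\subseteq E$: then $\ia\cdot\mathcal{O}_Y$ is the unit ideal near the generic point of $Z$, forcing $\tilde v(\ia)=0$ and collapsing the quasi-monomial comparison above. In that case one is reduced to showing $-\tfrac{1}{m}\tilde v(\ib)\leq \max_{\Sigma_\ia}\psi$ directly; this should follow from a careful study of how $\ib$ meets the strict transforms of the components of $X\times\{0\}$ on $Y$, using the flag structure imposed on $\ib$ together with the fact that $\tilde v$ is $\C^*$-invariant hence centered over $X\times\{0\}$. Once this case is dealt with, the concavity argument of the previous paragraph completes the proof.
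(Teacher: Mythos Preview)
Your route through quasi-monomial retraction and convexity is quite different from the paper's, and it has a real gap.

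The paper's proof is a short normality argument and never touches dual complexes. After replacing $\ia$ by $\ia^m$ to reduce to $m=1$, set $C=\max_{\Sigma_\ia}\psi$ and work on the normalized blow-up $\cX=\widetilde{\bl_\ia(X\times\Pro)}$, where $\ia\cdot\cO_\cX$ is invertible. The paper identifies $\Sigma_\ia$ with the set of irreducible components $E_i$ of $\cX_0$ and rewrites $\psi(v_{E_i})\le C$ as $\ord_{E_i}\bigl(\ib\cdot\cO_\cX(D)\bigr)\ge 0$ for a suitable $D\in\VCar(\cX)$. Since the fractional ideal $\ib\cdot\cO_\cX(D)$ can only have poles along $\cX_0$, these inequalities force its polar locus to have codimension $\ge 2$; normality of $\cX$ then gives $\ib\cdot\cO_\cX(D)\subseteq\cO_\cX$, which unwinds to $\psi(v)\le C$ for every $v$. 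In particular your ``residual case'' does not arise: every $v\in\Xbc$ has $v(t)=1>0$, so its center on $\cX$ already lies in $\cX_0$.

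The genuine gap in your argument is the vertex identification. The comparison $v^{\mathrm{qm}}\le\tilde v$ and the simplex picture require an snc pair, so you must pass to an snc model $Y'\to\cX$. But then the components of $Y'_0$---the vertices of your simplex---are the strict transforms of the Rees divisors \emph{together with} the exceptional divisors of $Y'\to\cX$, and the latter need not lie in $\Sigma_\ia$. Your convexity step therefore only yields $\psi(v)\le\max_{E'\subset Y'_0}\psi(v_{E'})$, which is a priori larger than $\max_{\Sigma_\ia}\psi$. Staying on the merely normal $\cX$ would get the vertices right but loses the monomial construction of Section~\ref{sec:monomialcone}. A minor correction: $w\mapsto -\tfrac{1}{m}\min_k v^{\mathrm{qm}}_w(g_k)$ is \emph{convex} (it is minus a minimum of linear functions), not concave; hence $\psi\circ v^{\mathrm{qm}}$ is convex on the simplex, which is what actually forces the maximum to a vertex, so your conclusion there survives but the word should be flipped.
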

\begin{proof}
After replacing $\ib$ for $\ib^m$, we can suppose that $m=1$. 
Set
\begin{equation}\label{eq;max}
C\doteq \max_{\Sigma_\ia} \left\{\varphi_\ib - \varphi_\ia\right\}
\end{equation}

Let $\cX = \widetilde{\bl_{\ia}X\times\Pro}$, and let $\cX_0 = \sum b_i E_i$ be the decomposition into irreducible components of the central fiber, so that $\Sigma_\ia =\{v_{E_1}, \dotsc, v_{E_k}\}$.
Then, we observe that:
\begin{enumerate}
\item The ideal $\ia$ becomes invertible on $\cX$.
\item We can then ``subtract", and Equation~\ref{eq;max} reads
\begin{equation}\label{eq;positiveord}
\ord_{E_i}\left(\ib\cdot \mathcal O_{\cX}(D)\right)\ge 0, \quad \text{ for every } i = 1, \dotsc, k 
\end{equation}
for some divisor $D\in \VCar(\cX)$, that depends on the constant $C$.
\item The polar variety of $\ib\cdot \mathcal O_{\cX}(D)$ is contained in the central fiber $\cX_0$, hence Equation~\ref{eq;positiveord} implies that the polar variety is of codimension at least $2$.
\item Since $\cX$ is normal,  $\ib\cdot \mathcal O_{\cX}(D)\subseteq \mathcal O_{\cX}$ hence proving the result.
\end{enumerate}
\end{proof}

As a consequence of the previous lemma, like in \cite[Lemma~4.26]{BJ22trivval}, we conclude that:

\begin{prop}\label{lem:sigmapsi}
Let $\psi\in \PL(X^\beth)$, then there exists a finite subset $\Sigma(\psi)= \Sigma\subseteq \Xdiv$ such that for every $\varphi\in \PSH(\alpha)$ we have:
\[\sup_{X^\beth} (\varphi - \psi) = \max_{\Sigma} (\varphi-\psi). \]\qed
\end{prop}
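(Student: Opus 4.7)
Given $\psi\in\PL(X^\beth)$, the plan is to choose a single snc test configuration that witnesses $\psi$ and to take $\Sigma$ to be the finite set of divisorial valuations attached to the irreducible components of its central fiber. Concretely, I would pick a smooth snc test configuration $\cY$ dominating $X\times\Pro$ together with a vertical divisor $F\in\VCar(\cY)$ such that $\psi=\varphi_F$, and set
\[\Sigma\doteq\{v_E\mid E\subseteq\cY_0\text{ irreducible component}\}\subseteq\Xdiv,\]
a finite subset in view of Theorem~\ref{thm;divisorialpointscoincide}. To every $\varphi\in\PSH(\alpha)$ I would then attach its \emph{$\cY$-envelope} $\varphi^\cY\doteq\varphi_{D^\cY}\in\PL_\R(X^\beth)$, where $D^\cY\doteq\sum_{E\subseteq\cY_0} b_E\,\varphi(v_E)\,E\in\VCar_\R(\cY)$; by construction $\varphi^\cY|_\Sigma=\varphi|_\Sigma$, $\varphi^\cY$ is $\R$-affine on each face of $\Delta_\cY$ (via the isomorphism \eqref{eq;affvcar}), and $\varphi^\cY=\varphi^\cY\circ p_\cY$.

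The heart of the argument is the \emph{envelope inequality} $\varphi\le\varphi^\cY$ on $X^\beth$. For $\varphi\in\PL\cap\PSH(\alpha)$, I would select a test configuration $\cX\xrightarrow{\mu}\cY$ on which $\varphi=\varphi_D$ with $\alpha_\cX+D$ relatively nef over $\Pro$, and verify that $\mu_*D=D^\cY$, so that $D-\mu^*D^\cY$ is $\mu$-exceptional, and that (using $\alpha_\cX=\mu^*\alpha_\cY$)
\[D-\mu^*D^\cY=(\alpha_\cX+D)-\mu^*(\alpha_\cY+D^\cY)\]
coincides with the $\pi$-nef class $\alpha_\cX+D$ modulo a $\mu$-trivial pullback, and is therefore $\mu$-nef. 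The Negativity Lemma \cite[Lemma~3.39]{KM98birational}, already exploited in the proof of Proposition~\ref{prop:bijval}, then forces $D-\mu^*D^\cY$ to be anti-effective as a vertical divisor on $\cX$, and unwinding the definitions this translates into $\varphi\le\varphi^\cY$ on $\Xdiv$, hence on $X^\beth$ by Theorem~\ref{teo;dense} and continuity. The general case $\varphi\in\PSH(\alpha)$ follows by decreasing approximation by PL $\alpha$-psh functions, since $\varphi^\cY$ depends linearly and continuously on the finite collection of vertex values $\{\varphi(v_E)\}_{E\subseteq\cY_0}$.

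Granted the envelope inequality, I conclude as follows: from $\psi=\psi\circ p_\cY$ and $\varphi\le\varphi^\cY$ one obtains $(\varphi-\psi)(v)\le(\varphi^\cY-\psi)(p_\cY(v))$ for every $v\in X^\beth$, whence $\sup_{X^\beth}(\varphi-\psi)\le\sup_{\Delta_\cY}(\varphi^\cY-\psi)$. Since both $\varphi^\cY$ and $\psi$ are $\R$-affine on each face of the simplicial complex $\Delta_\cY$, the difference $\varphi^\cY-\psi$ attains its maximum on each face at a vertex, so
\[\sup_{\Delta_\cY}(\varphi^\cY-\psi)=\max_\Sigma(\varphi^\cY-\psi)=\max_\Sigma(\varphi-\psi),\]
the last equality using $\varphi^\cY|_\Sigma=\varphi|_\Sigma$; the reverse inequality $\sup\ge\max_\Sigma$ is trivial. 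The sole substantive obstacle is therefore the envelope inequality, reduced in turn to one instance of the Negativity Lemma in the PL case and a continuity argument under decreasing limits for general $\varphi$.
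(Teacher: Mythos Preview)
Your argument is correct. The envelope inequality $\varphi\le\varphi^\cY$ via the Negativity Lemma is clean: $\mu_*D=D^\cY$ gives $\mu$-exceptionality, $\pi_\cX$-nefness of $\alpha_\cX+D$ restricts to $\mu$-nefness since $\mu$-contracted curves are $\pi_\cX$-contracted, and the pullback $\mu^*(\alpha_\cY+D^\cY)$ is $\mu$-trivial; the cited form of the Negativity Lemma then forces $D\le\mu^*D^\cY$. Passing to decreasing limits is unproblematic because $\varphi^\cY$ depends only on the finitely many values $\varphi(v_E)$, $E\subseteq\cY_0$. The final step, reducing the sup over $\Delta_\cY$ to its vertex set by face-wise affinity, is immediate.

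This is a genuinely different route from the paper's. The paper deduces the proposition from Lemma~\ref{lem;sup}, whose proof is an integrality/normality argument: on the normalized blow-up along $\ia$ one shows that the fractional ideal $\ib\cdot\cO_\cX(D)$ has polar locus of codimension $\ge2$, hence is integral by normality. The finite set produced there is the set of Rees valuations $\Sigma_\ia$. Your approach instead exploits the dual-complex retraction $p_\cY$ and the birational Negativity Lemma, and your $\Sigma$ is the vertex set of $\Delta_\cY$ for any snc test configuration $\cY$ determining $\psi$. What your argument buys is transparency: it makes explicit the retraction inequality $\varphi\le\varphi\circ p_\cY$ (in the guise $\varphi\le\varphi^\cY$) for $\alpha$-psh functions, a fact of independent interest that underlies much of the dual-complex picture in Section~\ref{sec;duallog}. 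The paper's ideal-theoretic approach, on the other hand, stays closer to the valuative language and avoids invoking relative positivity of $\alpha_\cX+D$ beyond what is encoded in Lemma~\ref{lem;sup}; in particular it does not need to pass through a resolution $\cX\to\cY$ for each $\varphi$.
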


\begin{corollary}\label{cor;contsup}
Let $\psi\in \Cz\left(X^\beth, \R\right)$, then the function
\begin{align*}
\PSH(\alpha)\to \R, \quad \varphi\mapsto \sup_{X^\beth} (\varphi - \psi)
\end{align*}
is continuous.
\end{corollary}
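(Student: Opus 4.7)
The plan is to reduce the continuous case to the PL case via uniform approximation, and then invoke Proposition~\ref{lem:sigmapsi}.

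First I would establish the obvious Lipschitz estimate: for any two functions $\psi_1,\psi_2\colon X^\beth\to\R$ and any $\varphi\in\PSH(\alpha)$,
\[
\Bigl\lvert\sup_{X^\beth}(\varphi-\psi_1)-\sup_{X^\beth}(\varphi-\psi_2)\Bigr\rvert\le\lVert\psi_1-\psi_2\rVert_\infty.
\]
This follows from the pointwise inequality $(\varphi-\psi_1)\le (\varphi-\psi_2)+\lVert\psi_1-\psi_2\rVert_\infty$ and its symmetric analogue. In particular, if $(\psi_k)\subseteq\PL(X^\beth)$ is a sequence with $\lVert\psi_k-\psi\rVert_\infty\to 0$, then the functionals $\varphi\mapsto\sup_{X^\beth}(\varphi-\psi_k)$ converge uniformly on $\PSH(\alpha)$ to $\varphi\mapsto\sup_{X^\beth}(\varphi-\psi)$. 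Since a uniform limit of continuous functions is continuous, it suffices to prove the statement for PL $\psi$. Such a sequence $(\psi_k)$ exists by Proposition~\ref{prop:pldense}.

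For $\psi\in\PL(X^\beth)$, Proposition~\ref{lem:sigmapsi} produces a finite set $\Sigma(\psi)=\{v_1,\dotsc,v_N\}\subseteq\Xdiv$ such that
\[
\sup_{X^\beth}(\varphi-\psi)=\max_{1\le i\le N}\bigl(\varphi(v_i)-\psi(v_i)\bigr)
\]
for every $\varphi\in\PSH(\alpha)$. The right-hand side is a maximum of finitely many evaluation functionals at divisorial valuations, each of which is continuous by the very definition of the topology on $\PSH(\alpha)$ (pointwise convergence on $\Xdiv$). A finite maximum of continuous functions is continuous, which yields the claim in the PL case and, via the approximation argument above, completes the proof.

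The only subtle point is ensuring the PL approximants of a continuous $\psi$ are uniformly close; this is exactly the content of the Stone--Weierstrass argument already invoked in Proposition~\ref{prop:pldense}, so nothing new is required. No further obstacle arises.
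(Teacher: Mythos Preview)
The proposal is correct and follows essentially the same approach as the paper's proof, which simply says the result ``follows from the previous lemma, together with the density of $\PL(X^\beth)$ in $\Cz(X^\beth)$.'' Your write-up just unpacks this terse statement: the Lipschitz estimate reduces to the PL case via Proposition~\ref{prop:pldense}, and Proposition~\ref{lem:sigmapsi} handles the PL case since the topology on $\PSH(\alpha)$ is pointwise convergence on $\Xdiv$.
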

\begin{proof}
Follows from the previous lemma, together with the density of $\PL(X^\beth)$ in $\Cz(X^\beth)$.
\end{proof}

\begin{theorem}\label{cor:uniqueextension}
  Let $\varphi\in\PSH(\alpha)$, and $\psi\colon X^\beth\to\R\cup\{-\infty\}$ a usc function, we then have:
  \[\varphi\leq \psi \text{ on } \Xdiv \iff \varphi\leq\psi \text{ on } X^\beth.\]
  \end{theorem}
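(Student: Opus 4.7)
The backward implication is immediate since $\Xdiv \subseteq X^\beth$. For the forward direction, my plan is to approximate the usc function $\psi$ from above by $\PL$ functions, so that Proposition~\ref{lem:sigmapsi} can be applied to transfer the inequality from $\Xdiv$ to all of $X^\beth$.

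Assume $\varphi \le \psi$ on $\Xdiv$. Since $X^\beth$ is compact Hausdorff and $\psi$ is usc, $\psi$ is bounded above; after introducing the truncations $\psi_M \doteq \max\{\psi, -M\}$, which are real-valued, usc, and decrease to $\psi$ as $M \to \infty$, it suffices to show $\varphi \le \psi_M$ on $X^\beth$ for every $M$. By a standard consequence of Urysohn's lemma on compact Hausdorff spaces, the bounded usc function $\psi_M$ is the pointwise infimum of the continuous functions $g \colon X^\beth \to \R$ that dominate it. Hence it is enough to prove $\varphi \le g$ on $X^\beth$ for every such $g$.

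Fix such a $g$ and $\epsilon > 0$. By Proposition~\ref{prop:pldense}, $\PL(X^\beth)$ is uniformly dense in $\Cz(X^\beth, \R)$, so there exists $\tau \in \PL(X^\beth)$ with $\lVert \tau - g\rVert_\infty \le \epsilon$, and in particular $\tau + \epsilon \ge g \ge \psi$ on $X^\beth$. Thus $\varphi \le \psi \le \tau + \epsilon$ on $\Xdiv$, and Proposition~\ref{lem:sigmapsi} applied to the PL function $\tau + \epsilon$ and to $\varphi \in \PSH(\alpha)$ supplies a finite subset $\Sigma \subseteq \Xdiv$ with
\[
\sup_{X^\beth}\bigl(\varphi - (\tau + \epsilon)\bigr) = \max_{\Sigma}\bigl(\varphi - (\tau + \epsilon)\bigr) \le 0,
\]
so $\varphi \le \tau + \epsilon \le g + 2\epsilon$ on all of $X^\beth$. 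Sending $\epsilon \to 0$, then taking the infimum over $g$, and finally letting $M \to \infty$, yields $\varphi \le \psi$ on $X^\beth$.

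The only delicate point is the reduction from a usc $\psi$ (possibly taking the value $-\infty$) to a continuous $g$: this rests on the standard fact that, on a compact Hausdorff space, every bounded-above usc function is the pointwise infimum of the continuous functions above it, used in combination with the truncation $\psi_M$ to tame the $-\infty$ values. Once this is in hand, the rest is a direct packaging of Proposition~\ref{lem:sigmapsi}, which is itself the substantive input and which relied on the Rees-valuation analysis of Lemma~\ref{lem;sup}.
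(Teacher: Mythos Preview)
Your proof is correct and follows essentially the same three-step reduction as the paper: first handle $\psi\in\PL$ via Proposition~\ref{lem:sigmapsi}, then pass to continuous $\psi$ by uniform PL approximation, and finally to usc $\psi$ by writing it as a decreasing limit of continuous functions. Your truncation $\psi_M$ is a clean way to make the last step precise when $\psi$ may take the value $-\infty$; the only cosmetic point is that $\tau+\epsilon$ need not lie in $\PL(X^\beth)$ for irrational $\epsilon$, but this is harmless since you can take $\epsilon\in\Q$ or apply Proposition~\ref{lem:sigmapsi} to $\tau$ itself and shift by $\epsilon$ afterward.
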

  \begin{proof}
  If $\psi\in\PL$ this is an easy consequence of Proposition~\ref{lem:sigmapsi}.
  
  Since every continuous function is a uniform limit of PL functions,  the same result holds if $\psi\in \Cz(X^\beth)$.
  
  Lastly, if $\psi$ is a decreasing limit of the net $(\psi_\lambda)_\lambda \in \Cz(X^\beth)$, we have that $\varphi\leq\psi\leq\psi_\lambda$ on $\Xdiv$, and hence by the previous case $\varphi\leq \psi_\lambda $, and finally this implies that $\varphi\leq \psi$.
  \end{proof}

\begin{remark}
  Darvas, Xia, and Zhang developed on \cite{DXZ23transcendental} a notion of transcendental non-Archimedean psh metrics.
  They use the formalism of Ross--Witt Nystrum of \emph{test curves} on the complex manifold $X$, and call a \emph{non-Archimedean psh metric}, a \emph{maximal} test curve.
  
  Their approach is more general to define non-Archimedean $\beta$-psh metrics for a transcendetal big class $\beta$.
  Bellow, in Section~\ref{rem:DXZ}, we compare the present approach with theirs.
  As we will see they coincide when $\beta$ is Kähler, and the metric is of finite energy.
\end{remark}

\subsection{Extending the energy pairing}
In this section we will extend the energy pairing to general psh functions.
Unlike section~\ref{sec;plenergy}, the synthetic approach of Boucksom-Jonsson does not cover this singular case, even though similar generalizations can be done.

Here we will follow closely Section 7 of  \cite{BJ22trivval}.

Let $\alpha_0, \dotsc, \alpha_n\in \Pos(X)$, and $\varphi_i\in \PSH(\alpha_i)$ for $i=0,\dotsc, n $, we define:
\begin{equation}
(\alpha_0, \varphi_0)\dotsb (\alpha_n, \varphi_n)\doteq \inf\left\{(\alpha_0, \psi_0)\dotsb(\alpha_n, \psi_n): \psi_i\in \Hdom(\alpha_i), \psi_i\ge \varphi_i\right\}.
\end{equation}

\begin{lemma}\label{lem;usc}
The energy pairing,
\begin{align*}
\prod_{i=0}^n \PSH(\alpha_i)&\to \R\cup\{-\infty\}\\
(\varphi_0, \dotsc, \varphi_n)&\mapsto (\alpha_0, \varphi_0)\dotsb(\alpha_n, \varphi_n),
\end{align*} 
is upper semi-continuous.
\end{lemma}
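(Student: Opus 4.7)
The plan is to exploit directly the definition of the pairing as an infimum over $\Hdom$-tuples dominating $(\varphi_0,\dots,\varphi_n)$. Since an infimum of continuous (here, polynomially varying) functionals is automatically upper semi-continuous, the real content is to control how the ``admissible'' dominating tuples behave under convergence. Concretely, given a net $\varphi_i^{(k)}\to\varphi_i$ in $\PSH(\alpha_i)$ (pointwise on $\Xdiv$), I will show
\[
\limsup_k\,(\alpha_0,\varphi_0^{(k)})\dotsb(\alpha_n,\varphi_n^{(k)})\,\le\,(\alpha_0,\varphi_0)\dotsb(\alpha_n,\varphi_n),
\]
and by the defining infimum it suffices to fix, for $\epsilon>0$, a tuple $\psi_i\in\Hdom(\alpha_i)$ with $\psi_i\ge\varphi_i$ and
\[
(\alpha_0,\psi_0)\dotsb(\alpha_n,\psi_n)\le(\alpha_0,\varphi_0)\dotsb(\alpha_n,\varphi_n)+\epsilon,
\]
and bound $\limsup_k$ by the left-hand side.

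The key step is to translate $\psi_i\ge\varphi_i$ into a uniform statement against $\varphi_i^{(k)}$ modulo small constants. Set
\[
\delta_i^{(k)}\doteq\max\!\left\{0,\ \sup\nolimits_{X^\beth}(\varphi_i^{(k)}-\psi_i)\right\}.
\]
Since $\psi_i$ is continuous (being PL), Corollary~\ref{cor;contsup} shows that $\varphi\mapsto\sup_{X^\beth}(\varphi-\psi_i)$ is continuous on $\PSH(\alpha_i)$; and Theorem~\ref{cor:uniqueextension} applied to $\varphi_i\in\PSH(\alpha_i)$ and $\psi_i$ continuous yields $\sup_{X^\beth}(\varphi_i-\psi_i)\le 0$, so $\delta_i^{(k)}\to 0$. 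By construction $\psi_i+\delta_i^{(k)}\ge\varphi_i^{(k)}$ on $X^\beth$; moreover $\psi_i+\delta_i^{(k)}\in\Hdom(\alpha_i)$ because adding a constant corresponds to adding a multiple of $\cX_0$ to the representing vertical divisor, which does not affect relative Kählerness.

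Hence $(\psi_0+\delta_0^{(k)},\dots,\psi_n+\delta_n^{(k)})$ lies in the family defining $(\alpha_0,\varphi_0^{(k)})\dotsb(\alpha_n,\varphi_n^{(k)})$, so
\[
(\alpha_0,\varphi_0^{(k)})\dotsb(\alpha_n,\varphi_n^{(k)})\le(\alpha_0,\psi_0+\delta_0^{(k)})\dotsb(\alpha_n,\psi_n+\delta_n^{(k)}).
\]
The right-hand side is the intersection $\prod_i(\alpha_{i,\cX}+D_i+\delta_i^{(k)}\cX_0)$ on a common snc test configuration $\cX$, i.e.\ a polynomial in $(\delta_0^{(k)},\dots,\delta_n^{(k)})$ that converges as $k\to\infty$ to $(\alpha_0,\psi_0)\dotsb(\alpha_n,\psi_n)$. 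Taking $\limsup_k$ and then letting $\epsilon\to 0$ after infimizing over the $\psi_i$ yields the desired USC. The only delicate point is the constant-shift trick in the second paragraph, relying on Theorem~\ref{cor:uniqueextension} together with Corollary~\ref{cor;contsup}; everything else is a formal manipulation of the defining infimum.
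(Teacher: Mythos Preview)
Your argument is correct and is essentially the same as the paper's: the paper simply says the result follows from Corollary~\ref{cor;contsup} and refers to \cite[Theorem~7.1]{BJ22trivval}, and what you have written is precisely the argument behind that reference—use continuity of $\varphi\mapsto\sup_{X^\beth}(\varphi-\psi_i)$ to perturb a near-optimal dominating $\Hdom$-tuple by small constants so that it also dominates the approximating net, then pass to the limit in the resulting polynomial expression. One minor remark: your appeal to Theorem~\ref{cor:uniqueextension} for $\sup_{X^\beth}(\varphi_i-\psi_i)\le 0$ is redundant, since $\psi_i\ge\varphi_i$ is already part of your choice of $\psi_i$.
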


It is clear that the energy pairing is increasing in each variable.
Hence, together with the previous lemma, we conclude that the energy pairing is continuous along decreasing nets.%, that is,  if $(\psi_\lambda)_{\lambda \in \Lambda}\in \PSH(\alpha_0)$ is a decreasing net converging to $\psi\in \PSH(\alpha_0)$, and $\varphi_i\in \PSH(\alpha_i)$ for $i=1, \dotsc, n$, it follows
%\[(\alpha_0, \psi_\lambda)\dotsb (\alpha_n, \varphi_n)\overset{\lambda}{\to} (\alpha_0, \psi)\dotsb (\alpha_n, \varphi_n)\]

\begin{proof}[Proof of Lemma~\ref{lem;usc}]
The proof follows from Corollary~\ref{cor;contsup}. 
For more details see \cite[Theorem~7.1]{BJ22trivval}.

\end{proof}

Just like in the algebraic setting, Corollary 7.11 of \cite{BJ22trivval}, we have the following result: 

\begin{prop}\label{prop;lowerbound}
Let  $\alpha_0, \dotsc, \alpha_n \in \Pos(X)$, and for $i=0, \dotsc, n$ $\varphi_i\in \PSH(\alpha_i)$, with $\varphi_i\le 0$, then 
\begin{equation}
(\alpha_0, \varphi_0)\dotsb(\alpha_n, \varphi_n)\gtrsim t^{n^2}\min_i\left\{(\alpha_i, \varphi_i)^{n+1}\right\}
\end{equation}
for $t\in \R$ sufficiently large in order to satisfy $\alpha_i\le t\alpha_j$ for every $i, j\in \{0, \dotsc, n\}$.
\end{prop}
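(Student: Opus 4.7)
The plan is to reduce to PL potentials and then carry out an intersection-theoretic calculation on a common dominating test configuration, using the multilinearity of the energy pairing together with the comparability $\alpha_i \le t\alpha_j$ to reduce the mixed pairing to self-pairings.

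First, by the definition of the extended energy pairing as an infimum over PL majorants, together with its upper semicontinuity (Lemma~\ref{lem;usc}) and monotonicity in each slot, both sides of the inequality are continuous along decreasing nets of potentials. Approximating each $\varphi_i$ from above by a decreasing net of $\varphi_{i,\lambda} \in \Hdom(\alpha_i)$ with $\varphi_{i,\lambda} \le 0$, it suffices to prove the inequality for $\varphi_i \in \Hdom(\alpha_i)$. In that case, pick a common dominating test configuration $\cX \to \Pro$ and vertical divisors $D_i \in \VCar(\cX)$ with $\varphi_i = \varphi_{D_i}$ and $A_i := \alpha_{i,\cX} + D_i$ relatively Kähler, so that both sides of the estimate become honest intersection numbers on $\cX$.

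Next, let $j_0$ be an index realising $\min_i (\alpha_i,\varphi_i)^{n+1}$. Using the multilinearity of the pairing on formal pairs $(\beta,\varphi)$, I would write
\[
(\alpha_i, \varphi_i) = (t\alpha_{j_0}, \varphi_i) - (t\alpha_{j_0} - \alpha_i,\,0),
\]
noting that $t\alpha_{j_0} - \alpha_i \in \Nef(X)$. Expanding the product $\prod_i (\alpha_i,\varphi_i)$ along this decomposition and controlling the cross terms via the Zariski-type negativity of Lemma~\ref{lem:zariskilemma}---which gives pairings of pulled-back nef classes with $\alpha$-psh factors a prescribed sign---reduces the estimate to the single-class mixed pairing $(t\alpha_{j_0}, \varphi_0)\cdots(t\alpha_{j_0}, \varphi_n)$. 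Rewriting this as $t^{n+1}$ times a mixed pairing of non-positive $\alpha_{j_0}$-psh potentials $\varphi_i/t$, a reverse Khovanskii--Teissier inequality for a single Kähler class---obtained by iterated Cauchy--Schwarz from Lemma~\ref{lem:zariskilemma} in the synthetic framework of \cite[Section~1]{BJ23synthetic}---bounds it from below by a dimensional constant times $(\alpha_{j_0}, \varphi_{j_0}/t)^{n+1}$. Compiling the powers of $t$ accumulated at the successive replacements yields the factor $t^{n^2}$.

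The main obstacle is the third step: the cross terms arising in the multilinear expansion alternate in sign, so a naive term-by-term bound fails, and one must group them to apply the Zariski-type negativity in the correct direction. This combinatorial argument, together with the precise accounting of the powers of $t$, follows the lines of \cite[Corollary~7.11]{BJ22trivval}; since the intersection-theoretic formalism on the complex test configuration $\cX$ is formally identical to its algebraic counterpart, the argument transfers \emph{mutatis mutandis}.
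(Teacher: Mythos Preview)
Your proposal is correct and follows essentially the same route as the paper: reduce to PL potentials by taking decreasing nets (using monotonicity and upper semicontinuity of the pairing), and then invoke the synthetic estimate. The only difference is packaging: the paper dispatches the PL case in one line by citing \cite[Theorem~1.18]{BJ23synthetic}, whereas you partially unpack that argument (multilinear expansion, Zariski--Cauchy--Schwarz, reverse Khovanskii--Teissier) before ultimately deferring to \cite[Corollary~7.11]{BJ22trivval} for the combinatorics. Since the paper has already set up the synthetic framework in Section~\ref{sec;plenergy} and Appendix~\ref{apx:synthetic}, the direct citation is cleaner; your sketch of the internals is not wrong, but the scaling step (``$\varphi_i/t$ as $\alpha_{j_0}$-psh potentials'') and the sign control on the cross terms would need more care to stand on their own, which is precisely why both you and the paper end up outsourcing this to the cited references.
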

\begin{proof}
Theorem~1.18 of \cite{BJ23synthetic} gives the inequality for $\varphi_i\in \PL\cap\PSH(\alpha_i)$, by taking decreasing sequences we conclude.
\end{proof}

We now extend the Monge--Ampère energy functional to the class of $\alpha$-psh functions.
\begin{defi}
Let $\alpha\in \Pos(X)$, and $V_\alpha=\int_X \alpha^n$, we define the \emph{Monge--Ampère energy functional} to be 
\begin{align*}
\mathrm E_\alpha\colon \PSH(\alpha)&\to \R\cup \{-\infty\}\\
\varphi&\mapsto \frac{V^{-1}_\alpha}{n+1} (\alpha, \varphi)^{n+1}.
\end{align*}
We define the set of \emph{finite energy non-Archimedean potentials} to be the set 
\[\cE^1(\alpha)\doteq \left\{\varphi\in \PSH(\alpha): \mathrm E_\alpha(\varphi)>-\infty\right\}.\]
When is clear by context we may omit $\alpha$.

Moreover, 
\[\Eabs \doteq \bigcup_{\alpha\in \Pos(X)}\cE^1(\alpha).\]
\end{defi}

As a direct consequence of Proposition~\ref{prop;lowerbound}, we have:
\begin{prop}\label{prop;energyE1}
Let $\alpha_0, \dotsc, \alpha_n\in \Pos(X)$, and $\varphi_i\in \cE^1(\alpha_i)$, then 
\begin{equation}
(\alpha_0, \varphi_0)\dotsb (\alpha_n, \varphi_n)\in \R
\end{equation}
is finite.\qed
\end{prop}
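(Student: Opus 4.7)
The plan is to reduce to Proposition~\ref{prop;lowerbound} after normalizing each $\varphi_i$ to be nonpositive, with a matching upper bound coming directly from the defining infimum of the extended pairing.

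First I would observe that $c_i\doteq \sup_{X^\beth}\varphi_i$ is a real number for each $i$: the upper bound is inherited from any function in the decreasing net of PL $\alpha_i$-psh functions defining $\varphi_i$ (these being continuous on the compact space $X^\beth$), and $c_i>-\infty$ since $\varphi_i$ is finite on $\Xdiv$, in particular at $v_{\trivial}$. Setting $\tilde\varphi_i\doteq \varphi_i-c_i\le 0$, one still has $\tilde\varphi_i\in\cE^1(\alpha_i)$ thanks to the identity $\mathrm E_{\alpha_i}(\varphi-c)=\mathrm E_{\alpha_i}(\varphi)-c$, itself a direct consequence of the multilinearity of the energy pairing combined with the vanishing $[\cX_0]\cdot[\cX_0]=0$ on any test configuration $\cX$ (as $[\cX_0]$ pulls back from the one-dimensional base $\Pro$).

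For the normalized tuple $(\tilde\varphi_0,\dotsc,\tilde\varphi_n)$, the upper bound
\[
(\alpha_0,\tilde\varphi_0)\dotsb(\alpha_n,\tilde\varphi_n)\le (\alpha_0,0)\dotsb(\alpha_n,0)=0
\]
is obtained by substituting $\psi_i=0$ into the infimum defining the pairing (admissible because $\alpha_i\in\Pos(X)$ implies $0\in\Hdom(\alpha_i)$ via the trivial test configuration, and $0\ge\tilde\varphi_i$), together with item~$(ii)$ of Proposition~\ref{prop:basicenergypairing}. The matching lower bound is exactly Proposition~\ref{prop;lowerbound}:
\[
(\alpha_0,\tilde\varphi_0)\dotsb(\alpha_n,\tilde\varphi_n)\gtrsim t^{n^2}\min_i (n+1)V_{\alpha_i}\,\mathrm E_{\alpha_i}(\tilde\varphi_i),
\]
which is finite since each $\tilde\varphi_i\in\cE^1(\alpha_i)$. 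Thus the normalized pairing is finite.

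To recover the unnormalized statement, I would invoke the translation identity
\[
(\alpha_0,\varphi_0)\dotsb(\alpha_n,\varphi_n)=(\alpha_0,\tilde\varphi_0)\dotsb(\alpha_n,\tilde\varphi_n)+\sum_{i=0}^n c_i\prod_{j\ne i}\alpha_j,
\]
obtained on PL data by expanding each pair as $(\alpha_i,\varphi_i)=(\alpha_i,\tilde\varphi_i)+(0,c_i)$, using multilinearity together with items $(i)$ and $(ii)$ of Proposition~\ref{prop:basicenergypairing}, and noting that all cross-terms involving $(0,1)^k$ with $k\ge 2$ vanish by $[\cX_0]\cdot[\cX_0]=0$. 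The identity extends to $\PSH$ since translation by a constant is an automorphism of $\Hdom(\alpha_i)$ and therefore commutes with the infimum defining the extended pairing. As the right-hand correction is a finite sum of top intersection numbers on $X$, finiteness of the normalized pairing forces finiteness of the unnormalized one. The only mild point of care is the translation identity in the $\PSH$ setting, but as just indicated it reduces cleanly to the PL case and presents no substantive obstacle.
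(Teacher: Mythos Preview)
Your proof is correct and follows exactly the route the paper intends: the paper simply declares the result a ``direct consequence of Proposition~\ref{prop;lowerbound}'' and leaves the normalization $\tilde\varphi_i=\varphi_i-\sup\varphi_i\le 0$, the trivial upper bound from the defining infimum, and the translation back implicit. You have spelled out precisely those steps, so there is nothing to add.
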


\begin{remark}
This allow us to extend the $\mathrm J_\alpha$ and the $\mathrm I_\alpha$ functionals, defined in Section~\ref{sec;plenergy}, to $\cE^1(\alpha)$ by the formulas of Equations~\eqref{eq;Jenergy} and \eqref{eq;Ienergy} respectively.

Moreover, the quasi-triangular inequality, and quasi-symmetry of $\mathrm J_\alpha$ for $\PL\cap\PSH(\alpha)$ functions, pass through, taking decreasing limits, to $\cE^1(\alpha)$.
\end{remark}

We can also see that the pairing of Proposition~\ref{prop;energyE1} is additive on the forms, and hence can be extended by linearity to $H^{1,1}(X)$.

Indeed, fix $\alpha_1, \dotsc, \alpha_n\in \Pos(X)$ and $\varphi_i\in \cE^1(\alpha_i)$, for $i = 1, \dotsc, n$, finally denote $\Gamma\doteq (\alpha_1, \varphi_1)\dotsb (\alpha_n, \varphi_n)$, we then define:
\begin{defi}
Let $\beta\in H^{1,1}(X)$, $\varphi\in \Eabs$, and $\alpha_0, \tilde \alpha_0\in \Pos(X)$, such that \[\beta = \alpha_0 - \tilde\alpha_0.\] 
We define the energy pairing $(\beta, \varphi)\mathbin{\cdot}(\alpha_1, \varphi_1)\dotsb (\alpha_n, \varphi_n)$ by the formula:
\begin{equation}
(\beta, \varphi)\mathbin{\cdot}(\alpha_1, \varphi_1)\dotsb (\alpha_n, \varphi_n)\doteq (\alpha_0 + \alpha, \varphi)\mathbin{\cdot}\Gamma - (\tilde\alpha_0 +\alpha, 0)\mathbin{\cdot}\Gamma,
\end{equation}
where $\varphi\in \cE^1(\alpha)\subseteq \cE^1(\alpha+\alpha_0)\subseteq \Eabs$, for some $\alpha\in \Pos(X)$.

Similarly, we get the energy pairing defined on $\prod_{k=0}^n\left(H^{1,1}(X)\times \Eabs\right)$.
\end{defi}

In particular, we define another functional whose importance will become apparent in Section~\ref{sec;csck}, which will be a twisted version of the Monge--Ampère energy.

\begin{defi}
Let $\alpha\in \Pos(X)$, $V_\alpha = \int_X \alpha^n$, and $\beta\in H^{1,1}(X)$, we define the \emph{Monge--Ampère twisted energy} to be the functional
\begin{align*}
\mathrm E^\beta_\alpha\colon \cE^1(\alpha)&\to \R\\
\varphi&\mapsto V^{-1}_\alpha (0,\beta)\cdot(\alpha, \varphi)^{n}.
\end{align*}
\end{defi}

\begin{remark}\label{rem:mongeamperee1}
  We can also extend the Monge--Ampère operator to the set $\cE^1(\alpha)$.
   We associate to $\varphi \in \cE^1(\alpha)$ the probability measure $\MA_\alpha(\varphi)$ satisfying:
  \[\PL(X^\beth)\ni\psi\mapsto \int_{X^\beth}\psi\MA_\alpha(\varphi)\doteq \frac{1}{V_\alpha}(0, \psi)\cdot (\alpha, \varphi)^n.\]
\end{remark}

\section{From Complex to non-Archimedean Geometry}
\label{sec;complexandnon}
In this section $X$ will be a compact Kähler manifold, and we will fix a Kähler metric $\omega\in \K(X)$, and $\alpha\in H^{1,1}(X)$ its cohomology class. 
We also will only consider smooth test configurations dominating $X\times \Pro$, with snc central fiber.

\addtocontents{toc}{\SkipTocEntry}
\subsection*{Basic Kähler Geometry tools}

Let $\iu$ be a coherent ideal of $X$, and $\beta\in H^{1,1}(X)$, we say that $\iu\otimes\beta$ is \emph{nef}, if $-F+\beta$ is nef, for \(F\subseteq Y\) a log resolution of $\iu$, and $F$ the effective divisor induced by $\iu$.

\begin{prop}\label{prop;extension} 
Let $\iu$ be an ideal such that there exists $U\colon X\to\R\cup\{-\infty\}$ a $\omega$-psh function of analytic singularity type along $\iu$, then $\alpha\otimes \iu$ is nef. 
\end{prop}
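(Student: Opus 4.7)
The plan is to pull $U$ back to a log resolution of $\iu$, extract a positive current with bounded potentials in the class $\pi^*\alpha - F$, and then conclude nefness by Demailly's regularization theorem.

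First, take a log resolution $\pi\colon Y \to X$ of $\iu$, so that $\iu\cdot\mathcal O_Y = \mathcal O_Y(-F)$ for an effective snc divisor $F$. Fix a smooth Hermitian metric $h$ on the line bundle $\mathcal O_Y(F)$, write $\Theta_h$ for its Chern curvature (a smooth representative of the class of $F$), and let $s_F$ be the canonical section cutting out $F$. Since $U$ has singularity type $\iu$, i.e.\ $U = \log\sum_i|f_i| + O(1)$ for local generators $f_i$ of $\iu$, and since the pulled-back generators $\pi^* f_i$ locally generate $\mathcal O_Y(-F)$, one of them is locally a unit times $s_F$ and the function
\[ \psi \doteq \pi^*U - \log|s_F|_h \]
is globally bounded on $Y$. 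Using the Poincaré--Lelong formula $\ddc \log|s_F|_h = [F] - \Theta_h$, the $\pi^*\omega$-plurisubharmonicity of $\pi^*U$ translates into the current inequality
\[ (\pi^*\omega - \Theta_h) + \ddc\psi + [F] \geq 0 \quad \text{on } Y, \]
where $[F]$ denotes the current of integration along $F$.

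The crucial step is to eliminate the divisorial term. The current $S \doteq (\pi^*\omega - \Theta_h) + \ddc\psi$ is a sum of a smooth form and the $\ddc$ of a bounded function; by Bedford--Taylor theory, $S$ carries no mass on the proper analytic subset $F$. On $Y \setminus F$ the above inequality reduces to $S \geq 0$, and combining these two facts shows $S \geq 0$ on all of $Y$. Since $\pi^*\omega - \Theta_h$ is a smooth representative of $\pi^*\alpha - F$, we have exhibited a closed positive $(1,1)$-current with bounded local potentials in that class.

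To conclude, apply Demailly's regularization theorem: a closed positive $(1,1)$-current with locally bounded potentials has identically vanishing Lelong numbers, so for every $\varepsilon > 0$ and any fixed Kähler form $\omega_Y$ on $Y$ the class $\pi^*\alpha - F$ admits a smooth representative $\theta_\varepsilon$ with $\theta_\varepsilon \geq -\varepsilon \omega_Y$. This places $\pi^*\alpha - F$ in the closure of the Kähler cone of $Y$, which is precisely the statement that $\alpha \otimes \iu$ is nef. The most delicate step is the removal of the divisorial contribution $[F]$ in the middle paragraph, where the global boundedness of $\psi$ is essential.
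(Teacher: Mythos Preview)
Your proof is correct and follows essentially the same route as the paper: pull back to a log resolution, extract a closed positive $(1,1)$-current with bounded local potentials in the class $\pi^*\alpha-F$, and conclude nefness via Demailly. The only cosmetic difference is that the paper packages the middle step as an appeal to Siu's decomposition theorem (writing $\mu^*\omega+\ddc(U\circ\mu)=\delta_F+T$ with $T$ of bounded potential), whereas you construct the bounded potential $\psi=\pi^*U-\log|s_F|_h$ explicitly from the singularity-type hypothesis and then remove the divisorial term by the no-mass argument; the content is the same.
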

\begin{proof}

Let $\mu\colon Y\to X$ be a log resolution of $\iu$, and $F\subseteq Y$ the effective divisor induced by $\iu$.
We have, by Siu's decomposition theorem, that:
\begin{equation}
	0\le \mu^*\omega + \mathrm{dd^c} (U\circ\mu) = \delta_F + T
\end{equation}
for $T$ a positive current of bounded potential, that is, \[T = -\eta_F + \mu^*\omega + \mathrm{dd^c} \psi\] for $\psi\in L^\infty$, and $\eta_F$ a smooth representative of $\chern_1(\mathcal O(F))$.
Hence \[\psi\in \PSH(-\eta_F + \mu^*\omega)\cap L^\infty.\]
By a classical result due to Demailly  $[-\eta_F +\mu^*\omega ]$ is nef.
\end{proof}

\begin{lemma}\label{lem;sumnef}
Let $D, E\subseteq X$ effective irreducible divisors and $\beta\in H^{1,1}(X)$, such that $\beta- D$ and $\beta - E$ admit a smooth representatives which are semi-positive.
Then $\beta\otimes \left\{\mathcal O(-D)+\mathcal O(-E)\right\}$ is nef.
\end{lemma}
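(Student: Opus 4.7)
The plan is to adapt the proof of Proposition~\ref{prop;extension}. On a log resolution $\mu \colon Y \to X$ of the ideal $\iu := \mathcal{O}(-D) + \mathcal{O}(-E)$, writing $\iu\cdot\mathcal{O}_Y = \mathcal{O}_Y(-F)$, I will exhibit a positive current of bounded potential in the class $\mu^*\beta - F$; Demailly's regularization will then give the nefness of $\mu^*\beta - F$, which is by definition the nefness of $\beta \otimes \iu$.

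I would start on $X$ by fixing any smooth representative $\omega \in \beta$ together with, by the hypothesis, smooth semi-positive representatives $\omega_D \in \beta - D$ and $\omega_E \in \beta - E$. Equipping $\mathcal{O}(D), \mathcal{O}(E)$ with Hermitian metrics $h_D, h_E$ of Chern curvature $\omega - \omega_D$ and $\omega - \omega_E$ respectively, the functions $u_D := \log|s_D|^2_{h_D}$ and $u_E := \log|s_E|^2_{h_E}$ attached to the canonical sections satisfy, as currents,
\[
\omega + \ddc u_D = \omega_D + [D] \geq 0, \qquad \omega + \ddc u_E = \omega_E + [E] \geq 0,
\]
so both are $\omega$-psh with logarithmic poles along $D$ and $E$ respectively.

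Taking a log resolution $\mu \colon Y \to X$ of $\iu$ with snc support, write $\mu^*D = \widetilde D + F$ and $\mu^*E = \widetilde E + F$; the invertibility of $\iu\cdot\mathcal{O}_Y$ on $Y$ forces the residual divisors $\widetilde D$ and $\widetilde E$ to have disjoint supports. I would then consider on $Y$ the function $\widetilde U := \max(u_D\circ\mu,\, u_E\circ\mu)$, which is $\mu^*\omega$-psh as a maximum of two $\mu^*\omega$-psh functions. The key claim is that $\widetilde U - \log|s_F|^2_{h_F}$ is bounded on $Y$: factoring $\mu^*s_D = s_F \cdot s_{\widetilde D}$ and $\mu^*s_E = s_F \cdot s_{\widetilde E}$ up to units yields $\widetilde U = \log|s_F|^2_{h_F} + \max\bigl(\log|s_{\widetilde D}|^2, \log|s_{\widetilde E}|^2\bigr) + O(1)$, whose last term is globally bounded on the compact manifold $Y$ precisely because $\widetilde D \cap \widetilde E = \emptyset$.

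With this singularity estimate at hand, Siu's decomposition applied exactly as in the proof of Proposition~\ref{prop;extension} extracts from the positive current $\mu^*\omega + \ddc \widetilde U$ the divisorial part $[F]$ together with a positive residual whose potential is bounded with respect to any smooth representative of $\mu^*\beta - F$; Demailly's regularization then yields the nefness of this class. The delicate step, and the one I expect to require the most care, is the singularity estimate for $\widetilde U$: its essential ingredient is the disjointness of the residual divisors $\widetilde D, \widetilde E$, which is precisely what taking a log resolution of $\iu$ provides.
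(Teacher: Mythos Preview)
Your proof is correct and follows essentially the same route as the paper. The paper constructs the $\eta$-psh function $\psi=\max\{\log|s_D|_{h_D},\log|s_E|_{h_E}\}$ on $X$ itself, notes that it has singularity type $\mathcal O(-D)+\mathcal O(-E)$, and then simply invokes Proposition~\ref{prop;extension}; you instead pull back to the log resolution first and reprove the content of that proposition inline (your disjointness argument for $\widetilde D,\widetilde E$ is exactly what guarantees the ``singularity type $\iu$'' claim). So your version is a bit more hands-on but the idea is identical.
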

\begin{proof}
Let $h_D$ ($h_E$ resp.) be a smooth metric on $\OX(D)$ ($\OX(E)$ resp.) such that the associated curvature $\theta_D$ ($\theta_E$ resp.) is a smooth form with $\eta - \theta_D$ ($\eta-\theta_E$ resp.) semi-positive, for $\eta$ a smooth representative of $\beta$.

Let $s_D$ be the canonic section of $\OX(D)$, and $s_E$ of $\OX(E)$, then $\psi_D\doteq\log \lvert s_D\rvert_{h_d}$ and $\psi_E\doteq \log\lvert s_E\rvert_{h_E}$ are such that: 
\begin{align*}
	\mathrm{dd^c}\psi_D + \theta_D = [D]\ge 0,\quad \text{ and } \quad \mathrm{dd^c} \psi_E +\theta_E=[E]\ge 0,
\end{align*} 
and thus $\theta_D$ ($\theta_E$ resp.)-psh functions. 

Now, since $\eta - \theta_D$ ($\eta-\theta_E$ resp.) is semi-positive,  both $\psi_D$ and $\psi_E$ are $\eta$-psh.
In particular, $\psi\doteq \max\{\psi_D, \psi_E\}$ is $\eta$-psh, and has the singularity type of ${\mathcal O(-D) + \mathcal O(-E)}$, which by Proposition~\ref{prop;extension} implies that $\beta\otimes \left\{\mathcal O(-D) +\mathcal O(-E)\right\}$ is nef.
\end{proof}

\subsection{Geodesic rays and non-Archimedean psh functions}
The goal of this section is to get the analogues of Theorem~6.2  and Theorem~6.6 from \cite{BBJ21YTD} in our transcendental setting.
These results are essential for the non-Archimedean approach for the YTD conjecture developed by Berman--Boucksom--Jonsson, of which \cite{Li22geodesic} and the present paper rely on.

Remember that we have fixed a Kähler form $\omega\in \mathcal K(X)$, and its cohomology class $\alpha = [\omega]$.

\subsubsection{Quick recall on geodesic rays}
In this section we will use the conventions of \cite{BBJ21YTD}.

We define a \emph{psh ray} as a map $U\colon \R_{\ge 0}\to \PSH(\omega)$ such that the associated $S^1$-invariant function,
\begin{equation}\label{eq;s1invariant}
U\colon X\times \mathds D^*\to \left[-\infty, +\infty\right[, \qquad U(x, \tau) \doteq U_{-\log \lvert \tau\rvert},
\end{equation}
is $p_1^*\omega$-psh, where $\mathds D$ denotes the open unit disk on $\C$.

Whenever a psh ray has image in $\cE^1(\omega)$, is continuous for the strong topology as a map \[U\colon \R_{\ge 0}\to \cE^1(\omega),\] and $t\mapsto\mathrm E_\omega(U_t)$ is affine, we say that $U$ is a \emph{psh geodesic ray}.

Moreover, a psh ray $U$ has \emph{linear growth}, if there exist $C, D>0$ such that:
\[ U_t\le C\, t +D.\]
Every psh geodesic has linear growth, cf. \cite[Proposition~4.1]{BBJ21YTD}.

\begin{remark}
  Darvas proves in \cite[Theorem~2]{Dar17weak} that psh geodesic rays are --a distinguished class of-- actual geodesic rays for the Darvas metric $d_1$, and in \cite{Dar15mabuchi} that for $U_0$ and $U_1$ finite energy potentials, there always exists a psh geodesic joining them.
  %\item A strongly continuous\footnote{The strong topology of $\cE^1$ is the fittest topology that makes $\mathrm E_\omega$ continuous.} psh ray $U_t\in \cE^1$ is a geodeisc ray iff $\mathrm E_\omega(U_t)$ is affine.
  %\item Every psh geodesic has linear growth.
\end{remark}

\medskip

We will study now the relationship between --Archimedean-- rays of functions on $X$, with non-Archimedean functions on $X^\beth$.
\begin{defi}
  A $S^1$-invariant function, $U\colon X\times\D^*\to \R\cup\{-\infty\}$, is $C^\infty$ (resp. $L^\infty$)-compatible with $D\in \VCar(\cX)$, for $\cX$ a $\mu$-dominating test configuration, if:
  \[U\circ\mu + \log\lvert f_D\rvert \text{ locally extends to a smooth (resp. bounded) function (across } \cX_0\text{),}\]
  for $f_D$ a local equation of $D$.
\end{defi}

Furthermore, if $U$ is a compatible (either smoothly, or boundedly) with the vertical divisor $D$ we write:
\begin{equation}\label{eq;unasmooth}
U^{\beth}\doteq\varphi_{D}.
\end{equation}

Using this new terminology, we adapt Proposition~\ref{prop;extension} to this language.

\begin{lemma}\label{lem:unapsh}
  Let $U$ be a $\omega$-psh ray $L^\infty$-compatible with a vertical divisor $D\in \VCar(\cX)$, then $U^\beth = \varphi_D$ is $\alpha$-psh.
\end{lemma}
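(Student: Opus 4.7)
The plan is to adapt the argument of Proposition~\ref{prop;extension} to the relative setting over $\Pro$: I will produce a closed positive $(1,1)$-current in the class $\alpha_\cX+D$ whose potential is locally bounded near $\cX_0$, and then deduce relative nefness fiberwise.

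Concretely, I would fix a smooth Hermitian metric $h$ on $\mathcal{O}_\cX(D)$, let $\theta$ denote its Chern curvature (so $[\theta]=[D]$ in $H^{1,1}(\cX)$), and let $s_D$ be the canonical (possibly meromorphic) section with $\mathrm{div}(s_D)=D$. Setting $v \doteq U\circ\mu + \log|s_D|_h$, the discrepancy $\log|s_D|_h-\log|f_D|$ is smooth near $\cX_0$, so the $L^\infty$-compatibility of $U$ with $D$ implies that $v$ is locally bounded across $\cX_0$. By Poincaré--Lelong, $\ddc\log|s_D|_h=[D]-\theta$, and hence as currents on $\cX$
\[
(\mu^*p_1^*\omega + \theta) + \ddc v \;=\; \mu^*p_1^*\omega + \ddc(U\circ\mu) + [D].
\]
On $\cX\setminus\cX_0$ the right-hand side equals $\mu^*p_1^*\omega+\ddc(U\circ\mu)\geq 0$, since $U$ is $p_1^*\omega$-psh on $X\times\D^*$ and $[D]$ is supported in $\cX_0$. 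Thus $v$ is $(\mu^*p_1^*\omega+\theta)$-psh on $\cX\setminus\cX_0$, and by the standard extension theorem for locally bounded psh functions it extends to such a function on all of $\cX$, whose cohomology class is precisely $\alpha_\cX+D$.

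It remains to verify that $\alpha_\cX+D$ is nef relatively to $\Pro$, which I would check on each fiber of $\cX\to\Pro$. For $\tau\in\Pro\setminus\{0\}$, $D|_{\cX_\tau}=0$ and $\alpha_\cX|_{\cX_\tau}\cong\alpha$ is Kähler, so the restriction is already Kähler, a fortiori nef. For the snc central fiber $\cX_0=\sum b_iE_i$, the extended $v$ is bounded on a neighborhood of $\cX_0$ (by compactness together with local boundedness), so for each irreducible component $E_i$, $v|_{E_i}$ is a bounded $(\alpha_\cX+D)|_{E_i}$-psh function on the smooth compact Kähler manifold $E_i$, and the theorem of Demailly invoked in Proposition~\ref{prop;extension} yields that $(\alpha_\cX+D)|_{E_i}$ is nef. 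Since every irreducible curve in $\cX_0$ lies in some $E_i$, this gives relative nefness, and therefore $\varphi_D=U^\beth$ is $\alpha$-psh.

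In my view the main technical subtlety is the passage from the purely \emph{local} boundedness furnished by the compatibility assumption to a bound on $v$ that is uniform on a neighborhood of each $E_i$: this is required both to justify the extension of $v$ as a psh function across $\cX_0$ and, crucially, to ensure that $v|_{E_i}\not\equiv -\infty$ so that Demailly's nefness criterion genuinely applies. A compactness/covering argument should suffice, but it is the pivotal step where the analytic $L^\infty$-bound translates into the algebraic relative nef statement.
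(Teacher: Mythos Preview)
Your proof is correct and follows essentially the same route as the paper's. Both arguments produce a closed positive current with bounded local potential in the class $\alpha_\cX+D$, then restrict it to each irreducible component $E_i\subset\cX_0$ and invoke Demailly's criterion to conclude that $(\alpha_\cX+D)|_{E_i}$ is nef. The paper phrases step one via Siu's decomposition after first reducing to $D$ effective, writing $\omega_\cX+\ddc(U\circ\mu)=-\delta_D+T$ with $T$ positive of bounded potential; your explicit potential $v=U\circ\mu+\log|s_D|_h$ is precisely a local potential for that same $T$, so you are really writing out the same current by hand. Your version has the minor advantage of handling general (not necessarily effective) $D$ directly via the meromorphic section, and of making clear that Siu's theorem is not genuinely needed here since the divisorial part is already prescribed by the $L^\infty$-compatibility hypothesis. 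The ``subtlety'' you flag about uniform boundedness near $\cX_0$ is indeed handled by the compactness of $\cX_0$, as you indicate.
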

\begin{proof}
  We first observe that we can suppose $D$ effective, otherwise consider \[\varphi_{D+c\cX_0} = \varphi_D +c\] for $c\gg0$, that is $\alpha$-psh iff $\varphi_D$ is.
  Let $\mu\colon \cX\to X\times\Pro$ be a morphism of test configurations, then by Siu's decomposition formula we have:
  \[0\le \omega_\cX + \ddc (U\circ \mu) = -\delta_D + T,\]
  for $T$ a positive current of bounded potential, that is 
  \[0\le T = \eta_D + \omega_\cX + \ddc \psi\]
  with $\psi\in L^\infty$, and $\eta$ a smooth representative of $c_1(\cO_\cX(D))$.
  
  Consider the irreducible decomposition $\cX_0 = \sum b_k E_k$, since $T$ is of bounded potential, we can restrict $T$ to a bounded positive current supported on $E_k$:
  \[0\le T|_{E_k}\] 
  And hence by a result of Demailly we have:
  \[[T|_{E_k}] = [T]|_{E_k} = (D +[\omega_{\cX}])|_{E_k} \text{ is nef.}\]
  Therefore $U^\beth$ is $\alpha$-psh.
\end{proof}

More generally, for any $U\colon \R_{\ge 0}\to \PSH(\omega)$ psh ray of linear growth there is an induced ``non-Archimedean'' map:
\[U^{\NA}\colon \Xdiv\to \R\]
given by the following procedure:
\begin{enumerate}
\item Let $E\subseteq \cX\xrightarrow{\mu}X\times \Pro$ be a prime vertical divisor.
\item Consider the function $V \doteq U\circ\mu\colon \mu^{-1}(X\times \mathds D^*)\to \left[-\infty, +\infty\right[$, where $U$ is like in Equation~\ref{eq;s1invariant}.
\item Define \[U^{\NA}(v_E) \doteq -\nu(V, E)\]
where $\nu$ denotes the generic Lelong number along $E$.%\footnote{Observe that this procedure is compatible with the notation of Equation~\ref{eq;unasmooth}.}.
\end{enumerate}

The goal of the next result is to extend the above construction of $U^{\NA}$ to an $\alpha$-psh function on $X^\beth$, generalizing Lemma~\ref{lem:unapsh} for a more general singularity type.
This result is an analogue of Theorem~6.2 of \cite{BBJ21YTD}, the proof here follows the same strategy as in \cite{BBJ21YTD} but we use directly a regularization result of Demailly, \cite[Proposition~3.7]{Dem92regularization}, without passing by the Castelnuovo-Mumford criterion of global generation (remember that in the projective case $\alpha = c_1(L)$).

\begin{theorem}\label{thm:extension}
Let $U\colon\mathds R_{>0}\to \PSH(\omega)$ be a psh ray of linear growth, then 
\[U^{\NA}\colon \Xdiv \to\mathds R\]
extends to a $\alpha$-psh function
\[U^{\beth}\colon X^\beth\to \mathds R\cup\{-\infty\}.\]
\end{theorem}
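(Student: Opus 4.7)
The plan is to construct $U^\beth$ as the limit of a decreasing net of PL $\alpha$-psh functions obtained from Demailly's regularization of $U$, viewed as an $S^1$-invariant psh function on the total space $X\times\D$.

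First, I would pass from $U_t$ to the associated $S^1$-invariant function $U\colon X\times \D^*\to \R\cup\{-\infty\}$ defined by~\eqref{eq;s1invariant}. The linear growth hypothesis gives an estimate $U(x,\tau)\le -C\log|\tau|+D$ near $\tau=0$, so $U$ extends across $X\times\{0\}$ to a $p_1^*\omega$-psh function on $X\times\D$ with a log pole of order at most $C$ along the central fiber (in particular, $U+C\log|\tau|^{-1}$ is locally bounded above near $X\times\{0\}$).

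Second, I would apply Demailly's regularization theorem \cite[Proposition~3.7]{Dem92regularization} to $U$ on $X\times\D$: this provides a decreasing sequence $U_m\searrow U$ of $(p_1^*\omega+\varepsilon_m\eta)$-psh functions with analytic singularities of type $\iu_m^{1/m}$, where $\iu_m=\cJ(mU)$ is the multiplier ideal, and $\varepsilon_m\to 0$, $\eta$ a Kähler form on $X\times\D$. By averaging over the $S^1$-action on $\tau$ (which preserves $U$), each $U_m$ can be chosen $S^1$-invariant; the resulting ideal $\iu_m$ is then $S^1$-invariant, and by the Rees construction together with the bound $U\le D$ away from $X\times\{0\}$, it extends to a flag ideal $\ia_m\in\flag$ on $X\times\Pro$. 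Passing to a smooth $\C^*$-equivariant log resolution $\mu\colon \cX_m\to X\times\Pro$ of $\ia_m$ with snc central fiber, we obtain $F_m\in\VCar(\cX_m)$ with $\cO_{\cX_m}(-F_m)=\ia_m\cdot\cO_{\cX_m}$. Setting $D_m\doteq\frac{1}{m}F_m$, the ray $(U_m)_t$ becomes $L^\infty$-compatible with $D_m$, and Lemma~\ref{lem:unapsh} (applied after absorbing the small error $\varepsilon_m\eta$, which is possible since $\alpha$ is Kähler and the $\varepsilon_m$ can be chosen arbitrarily small) gives that $U_m^\beth\doteq\varphi_{D_m}\in \PL\cap\PSH(\alpha)$.

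Third, a routine subadditivity/rescaling argument on the multiplier ideals $\iu_m$ turns the family $U_m^\beth$ into a genuine decreasing net of PL $\alpha$-psh functions. Its pointwise limit $U^\beth\colon X^\beth\to[-\infty,+\infty[$ is then $\alpha$-psh (provided $U^\beth\not\equiv-\infty$), by Theorem~\ref{thm;plpsh} together with the stability of $\PSH(\alpha)$ under decreasing limits. Finally, on a divisorial valuation $v_E\in\Xdiv$ one has
\[
U_m^\beth(v_E)=-\tfrac{1}{m}\ord_E(\ia_m\cdot\cO_{\cX_m})\xrightarrow[m\to\infty]{}-\nu(U\circ\mu,E)=U^{\NA}(v_E),
\]
where the convergence of $\tfrac{1}{m}\ord_E(\cJ(mU))$ to the generic Lelong number $\nu(U\circ\mu,E)$ is the classical identity for asymptotic multiplier ideals (and also falls out of Demailly's approximation). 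In particular $U^\beth|_{\Xdiv}=U^{\NA}$ is finite-valued by the linear growth of $U$, so $U^\beth\not\equiv -\infty$.

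The main obstacle will be the $\C^*$-equivariance: one must ensure that the multiplier ideals $\iu_m=\cJ(mU)$ (or a suitable variant) can be arranged to be $S^1$-invariant and supported on $X\times\{0\}$, so that they extend to flag ideals. This requires $S^1$-averaging Demailly's construction while preserving the Ohsawa--Takegoshi-type $L^2$ estimates that underlie the regularization and the convergence of Lelong numbers. A secondary subtlety is turning the raw approximating sequence $(U_m^\beth)$ into a strictly decreasing net of PL $\alpha$-psh functions: Demailly's $U_m$ decreases only after rescaling by $m/(m+1)$, so one must absorb the error using the Kählerness of $\alpha$ and a standard $\varepsilon_m\to 0$ adjustment, analogous to the projective construction in \cite[Section~6]{BBJ21YTD}.
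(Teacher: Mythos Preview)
Your approach is essentially the same as the paper's: Demailly regularization of the $S^1$-invariant potential on $X\times\D$, multiplier ideals giving flag ideals, Lemma~\ref{lem:unapsh} to get $\alpha$-pshness of the PL approximants, subadditivity for monotonicity, and convergence of Lelong numbers on divisorial points.

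Two small simplifications in the paper are worth noting. First, the flag-ideal issue you flag as the ``main obstacle'' is handled very cleanly: since $U$ is already $S^1$-invariant, so is $\cJ(mU)$, and no averaging is needed; to force support on $X\times\{0\}$ and extend to $X\times\Pro$, the paper simply replaces $V_m$ by $U_m\doteq\max\{V_m,\log|\tau|\}$, whose singularity type is exactly $\ia_m\doteq\cJ(mU)\cdot\cO_{X\times\Pro}+(t^m)$, a bona fide flag ideal. Second, for monotonicity the paper just passes to the dyadic subsequence $(\varphi_{2^m})$ via subadditivity of multiplier ideals, rather than setting up a general net with $\varepsilon_m$-adjustments. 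Your $\varepsilon_m\eta$ absorption is a legitimate concern, but the paper simply asserts one may take the approximants $\omega$-psh; either way this is a routine perturbation since $\alpha$ is K\"ahler.
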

\begin{remark}
By Theorem~\ref{cor:uniqueextension}  if such an extension exists it is unique.
\end{remark}

\begin{proof}[Proof of Theorem~\ref{thm:extension}]
We will show that there exists a sequence $\varphi_m\in \cH(\alpha)$ such that:
%\marginpar{\textcolor{red}{I think that $\omega_m$, is necessary, there is a loss of positivity when gluing functions on Demailly}}
\begin{enumerate}
\item $(\varphi_m)_m$ is decreasing
\item $\varphi_m (v_E)\searrow U^{\NA}(v_E) $ for every $v_E\in \Xdivc$
\end{enumerate}
Hence $U^\beth(v) \doteq \lim \varphi_m(v)$ will be the desired function.

By \cite[Proposition~3.7]{Dem92regularization}, for every $\lambda>1$, there is a sequence of $S^1$-invariant functions:
\[V_m\colon X\times \D_{1-\delta}\subseteq X\times \mathds D\to\R\cup \{-\infty\},\]
that are $(\lambda\cdot\omega)$-psh and of analytic singularities of type $\J(mU)^{\frac{1}{m}}$, where $\D_{1-\delta}$ is the disk of radius $1-\delta$, and $\J(m U)$ denotes the multiplier ideal sheaf of $m U$.

Therefore, $U_m\doteq \max\{V_m, \log\lvert \tau\rvert\}$ is $(\lambda\cdot\omega)$-psh, and has analytic singularities of type $(\ia_m)^{\frac{1}{m}}$ for $\ia_m\doteq\cJ(mU)\cdot\cO_{X\times\Pro} + (t^m)$, a flag ideal.

Moreover, if we let $\mu_m\colon \cX_m\to\trive$ be the test configuration given by the normalized blow-up of $\trive$ along $\ia_m$, and $E_m\subseteq\cX_m$ be the exceptional divisor, 
then the function $U_m\circ\mu_m$:
\begin{enumerate}
  \item  is $\mu_m^*(\lambda\cdot\omega)$-psh;
  \item has divisorial singularities along $\frac{1}{m}E_m$.
\end{enumerate}
Hence, $U_m$ is a psh ray $L^\infty$-compatible with $ \frac{1}{m}E_m$, and by Lemma~\ref{lem:unapsh} \[\varphi_m \doteq U_m^\beth = \varphi_{\ia_m}\] is $(\lambda\cdot\alpha)$-psh. 
%We thus have that the non-Archimedean function $\varphi_m\doteq\varphi_{\ia_m}\colon X^\beth\to \mathds R$ is $\alpha$-psh, and coincides with $U_m^{\NA}$, on divisorial valuations.

The item (ii) of Proposition~3.7 of \cite{Dem92regularization} gives us that the Lelong numbers of $V_m$ along divisors over the central fiber $X\times\{0\}$ approach the Lelong numbers of $U$ over the same divisors, in particular, the Lelong numbers of $U_m$ have the same property. 
Thus in non-Archimedean terms:  \[\varphi_m|_{\Xdiv}\to U^{\NA}.\] 
Moreover, by the subadditivity of multiplier ideals --like in \cite[Lemma~5.7]{BBJ21YTD}-- the sequence $(\varphi_{2^m})_m$ is decreasing.
Applying Lemma~\ref{lem:intersectionpsh} we conclude the proof.
\end{proof}

%We recall the definition of a \emph{psh geodesic},

Now, we will prove a result in the converse direction of the above theorem.
For that we remember the following definition:
\begin{defi}
  A psh geodesic ray $U$ in $PSH(\omega)$ is \emph{maximal} if for every other geodesic ray $V$ with $U_0\ge V_0 $ and $U^{\beth}\ge V^{\beth}$ we have $U_s\ge V_s$ for every $s\in\R_{\ge0}$.
\end{defi}

Maximal geodesic rays are in correspondence with the non-Archimedean potentials of finite energy, as we will see in the next theorem, an analogue of \cite[Theorem~6.6]{BBJ21YTD} in the Kähler setting.

\begin{theorem}\label{thm:correspond}
  Let $\varphi\in \cE^1(\alpha)$ be a non-Archimedean potential, and $u\in\cE^1(\omega)$ a reference metric, then there exists a unique maximal geodesic ray $U\colon \left[0,+\infty\right[\to \cE^1(\omega)$ starting at $u$, such that: \[U^\beth = \varphi.\]
\end{theorem}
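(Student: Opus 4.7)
The plan is to adapt the proof of Theorem~6.6 in \cite{BBJ21YTD} to the Kähler setting. The strategy is to approximate $\varphi\in\cE^1(\alpha)$ from above by a decreasing sequence $\varphi_m\in\Hdom(\alpha)$ of Kähler PL $\alpha$-psh potentials with $\mathrm E_\alpha(\varphi_m)\to \mathrm E_\alpha(\varphi)$, construct for each $\varphi_m$ a maximal psh geodesic ray $U^m$ starting at $u$ with $(U^m)^\beth=\varphi_m$, and obtain $U$ as the regularized decreasing limit of the $U^m$. The monotone density of $\Hdom(\alpha)$ in $\cE^1(\alpha)$ needed in the first step follows from the decreasing-net definition of $\PSH(\alpha)$ together with the continuity of $\mathrm E_\alpha$ along such nets (Lemma~\ref{lem;usc}).

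For each $m$, I would write $\varphi_m=\varphi_{D_m}$ for some $D_m\in\VCar(\cX_m)$ with $\alpha_{\cX_m}+D_m$ relatively Kähler, and invoke the construction of maximal psh geodesic rays associated to a relatively Kähler cohomological test configuration---available transcendentally through the work of Sjöström Dyrefelt and Dervan--Ross, built on Darvas's theory of weak geodesics---to get a maximal psh geodesic ray $U^m\colon [0,+\infty[\to \cE^1(\omega)$ starting at $u$ which is $L^\infty$-compatible with $D_m$ on $\cX_m$. Lemma~\ref{lem:unapsh} then identifies $(U^m)^\beth=\varphi_m$. By the maximum principle applied to the Dirichlet problems defining the $U^m$, together with $\varphi_m$ being decreasing, the sequence $(U^m)_m$ is itself decreasing.

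I would then set $U\doteq\bigl(\lim_m U^m\bigr)^*$, the usc regularization of the pointwise limit. Stability of psh functions under decreasing limits shows $U$ is a psh ray with $U_0=u$; affineness of $t\mapsto \mathrm E_\omega(U_t)$ follows from concavity together with the slope equality $\lim_{s\to\infty}s^{-1}\mathrm E_\omega(U^m_s)=\mathrm E_\alpha(\varphi_m)$ of~\eqref{eq;introconvergence1} and $\mathrm E_\alpha(\varphi_m)\to \mathrm E_\alpha(\varphi)>-\infty$, which simultaneously forces $U_t\in\cE^1(\omega)$ and gives the linear growth. On a divisorial valuation $v_E$ the generic Lelong numbers along $E$ satisfy $U^\beth(v_E)=\lim_m\varphi_m(v_E)=\varphi(v_E)$; the $\alpha$-psh function $U^\beth$ produced by Theorem~\ref{thm:extension} therefore agrees with $\varphi$ on $\Xdiv$, and Theorem~\ref{cor:uniqueextension} upgrades this to equality on all of $X^\beth$. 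Maximality of $U$ is inherited from the $U^m$ (any competitor is dominated by every $U^m$), and uniqueness is then automatic by applying maximality symmetrically.

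The principal obstacle is constructing each $U^m$ and identifying its non-archimedean limit. In the polarized case of \cite{BBJ21YTD,Li22geodesic}, one solves a non-archimedean Monge--Ampère equation on the model using the $\Q$-line-bundle structure of $\alpha_{\cX_m}+D_m$; here one must instead work directly with the Bott--Chern class through a transcendental relative Dirichlet problem on $\cX_m$, and check that the Lelong numbers of the resulting solution along the components of $(\cX_m)_0$ reproduce the coefficients of $D_m$. A secondary technical point is ensuring that the decreasing limit procedure preserves finite energy, which is precisely what the slope formula~\eqref{eq;introconvergence1}, applied in the reverse direction, buys for us.
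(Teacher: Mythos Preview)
Your overall strategy matches the paper's: approximate $\varphi$ by a decreasing sequence $\varphi_m\in\Hdom(\alpha)$, build maximal geodesic rays $U^m$ via Lemma~\ref{lem:maximalhna} (this is exactly the transcendental construction you attribute to Sj\"ostr\"om Dyrefelt, Dervan--Ross, and Berman), and pass to the decreasing limit. Two points deserve correction.

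First, a minor one: Lemma~\ref{lem:maximalhna} is stated for a \emph{smooth} starting point $u\in\cH(\omega)$. The paper therefore also approximates $u$ by a decreasing sequence $u_m\in\cH(\omega)$ and runs the construction with both approximations simultaneously; you should do the same rather than starting every $U^m$ at the possibly singular $u$.

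Second, and more substantively, your identification $U^\beth=\varphi$ has a gap. You claim that on a divisorial valuation $v_E$ one has $U^\beth(v_E)=\lim_m\varphi_m(v_E)$, i.e.\ that the generic Lelong number of the decreasing limit $U=\lim_m U^m$ equals the limit of the generic Lelong numbers. But Lelong numbers are only upper semicontinuous under decreasing limits: from $U\le U^m$ you get $\nu(U,E)\ge\nu(U^m,E)$, hence $U^\beth(v_E)\le\varphi_m(v_E)$ for all $m$, and thus only the inequality $U^\beth\le\varphi$. The reverse inequality does not follow from Lelong number considerations alone. The paper closes this gap differently: from Remark~\ref{rem:affine} and continuity of the energy along decreasing sequences one gets that $t\mapsto\mathrm E_\omega(U_t)$ is affine with slope exactly $\mathrm E_\alpha(\varphi)$; combined with $U^\beth\le\varphi$ and the general inequality between the energy slope of a psh ray and $\mathrm E_\alpha$ of its non-archimedean limit, this forces $\mathrm E_\alpha(U^\beth)=\mathrm E_\alpha(\varphi)$, and since $\mathrm E_\alpha$ is strictly increasing on $\cE^1(\alpha)$ one concludes $U^\beth=\varphi$. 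You should replace your Lelong-number argument by this energy argument.
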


\begin{lemma}\label{lem:maximalhna}
  Let $\varphi \in \cH(\alpha)$, and $\cX$ a smooth dominating test configuration with $D\in \VCar(\cX)$ such that $\varphi = \varphi_D$, and 
  \[D+\alpha_\cX \text{ is Kähler relatively to } \Pro.\]
  Then,
  \begin{enumerate}[(i)]
    \item there exixts a psh ray, starting from $u\in \cH(\omega)$, which is $C^\infty$-compatible with $(\cX, D)$.
    \item The enevlope usc of rays like in ($i$), is a maximal psh geodesic and is $L^\infty$-compatible with $(\cX, D)$.
  \end{enumerate}
\end{lemma}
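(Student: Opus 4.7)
The idea is to produce $U$ from a Kähler form on $\cX$ via the $\ddc$-lemma. Since $D+\alpha_\cX$ is Kähler relatively to $\Pro$, for $c\gg 0$ the class $D+\alpha_\cX+c\,\pi^*[\omega_{\Pro}]$ contains an absolute Kähler form $\Omega$ on $\cX$. Fix a smooth hermitian metric $h$ on $\cO_\cX(D)$ with curvature $\theta_D$, and write
\[
\Omega=\mu^*\omega+\theta_D+c\,\pi^*\omega_{\Pro}+\ddc\psi,\quad \psi\in C^\infty(\cX).
\]
Restricting to $\cX\setminus\cX_0\simeq X\times(\Pro\setminus\{0\})$ through the $\C^*$-equivariant biholomorphism, the Poincaré–Lelong formula gives $\theta_D=\ddc(-\log|s_D|_h)$ on that open set ($s_D$ being the canonical section of $\cO_\cX(D)$), so
\[
\Omega=p_1^*\omega+\ddc\bigl(-\log|s_D|_h+c\log(1+|\tau|^2)+\psi\bigr)
\]
there. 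Setting $U_t$ to be the resulting potential, evaluated at $|\tau|=e^{-t}$, gives a smooth $\omega$-psh ray; by construction $U\circ\mu+\log|s_D|_h$ extends smoothly across $\cX_0$, which is exactly $C^\infty$-compatibility with $(\cX,D)$. A correction of $\Omega$ by an exact term supported in $\{1/2<|\tau|<2\}$ arranges $U_0=u$ without destroying compatibility.

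\textbf{Plan for (ii).} Let $\mathcal G$ be the nonempty family produced by (i). For any $U\in\mathcal G$ the $C^\infty$-compatibility yields a local upper bound $U\circ\mu+\log|s_D|_h\le C_{\mathrm{loc}}$ near $\cX_0$. Define $V$ as the usc upper envelope of $\mathcal G$. Standard envelope theory yields that $V$ is $\omega$-psh of linear growth, and the upper bound above is inherited by $V$. For the lower bound, fix any $U^0\in\mathcal G$: then $V\ge U^0$ pointwise, and $U^0\circ\mu+\log|s_D|_h$ is smooth, hence locally bounded below. Combining the two gives the $L^\infty$-compatibility of $V$ with $(\cX,D)$; in particular $V^\beth=\varphi_D=\varphi$.

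\textbf{Geodesic property, maximality, and main obstacle.} That $V$ is a subgeodesic on $X\times\D^*$ is immediate from the envelope construction. To upgrade $V$ to a genuine psh geodesic I would follow the argument of \cite[Theorem~6.6]{BBJ21YTD}: the family $\mathcal G$ is stable under local Perron--Demailly envelopes on $\tau$-annuli, so $V$ solves the homogeneous complex Monge--Ampère equation on $X\times \D^*$ in the pluripotential sense; equivalently, $t\mapsto \mathrm E_\omega(V_t)$ is affine. Maximality is then tautological: any psh geodesic $W$ starting at $u$ with $W^\beth\le\varphi$ can be approximated by members of $\mathcal G$ via the Demailly-type regularisation already used in the proof of Theorem~\ref{thm:extension}, and is hence dominated by $V$. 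The main obstacle I anticipate is precisely this subgeodesic-to-geodesic upgrade: in the transcendental setting the required $L^\infty$ a priori estimates and envelope MA-solvability must come from the pluripotential theory of \cite{DXZ23transcendental} rather than from the projective tools used in \cite{BBJ21YTD}.
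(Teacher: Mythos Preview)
Your sketch for (i) is essentially the construction in \cite[Lemma~4.4]{SD18}, which is what the paper cites. For (ii), however, there are genuine gaps. The family $\mathcal G$ of $C^\infty$-compatible rays is \emph{not} stable under local Perron envelopes on $\tau$-annuli, since a local envelope of smooth psh functions is in general only bounded; the balayage argument must instead be run on the larger family of $L^\infty$-compatible psh rays with boundary $\le u$, which \emph{is} stable, and this is precisely what Berman does in \cite[Proposition~2.7]{Ber16kpoly}, the reference the paper invokes. Relatedly, your upper bound is not justified: the constant $C_{\mathrm{loc}}$ coming from $C^\infty$-compatibility of a single $U$ depends on $U$ and does not survive the supremum; the uniform bound is obtained by fixing one smooth barrier $U^+$ from (i) and applying the maximum principle on $\pi^{-1}(\overline\D)$ to $U\circ\mu-U^+\circ\mu$. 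Your maximality argument has the same defect: the Demailly-regularised rays of a competitor $W$ have singularities along $\cJ(mW)^{1/m}$, which are not divisorial along $D$, so they do not land in $\mathcal G$ and the domination $W\le V$ does not follow that way.

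Second, your anticipated obstacle is illusory. Berman's argument is pure pluripotential theory---Bedford--Taylor products, Perron envelopes, and the balayage characterisation of solutions to the homogeneous Monge--Amp\`ere equation on a compact manifold with boundary---and uses no projectivity whatsoever. It applies verbatim to a compact K\"ahler test configuration, which is why the paper simply writes ``their proof follows with no change''; the machinery of \cite{DXZ23transcendental} is not needed for this lemma.
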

\begin{proof}
  For ($i$) see \cite[Lemma~4.4]{SD18}.
  For ($ii$) we observe that in the terminology of \cite[Proposition~2.7]{Ber16kpoly}, a positively curved metric $\phi$ on a test configuration $(\cX, \cL)$, $\cL = L_\cX + D$, it is a psh ray, being locally bounded it is equivalent to ours $L^\infty$-compatibility with $(\cX, D)$, and \[(\ddc\varphi)^{n+1} = 0\] is equivalent --under the positivity condition-- of being a psh geodesic.
  Translating to our language their proof follows with no change.
\end{proof}

\begin{remark}\label{rem:affine}
  Let $\varphi\in \cH(\alpha)$, and $U$ a maximal psh geodesic s.t. $U^\beth = \varphi$, like in the previous lemma.
  Then by \cite[Remark 4.11]{SD18} we have that:
  \[\mathrm E_\omega(U_t) = \mathrm E_\omega(U_0) + t\cdot \mathrm E_\alpha(\varphi).\]
\end{remark}

\begin{proof}[Proof of Theorem~\ref{thm:correspond}]
The proof is like the one \cite[Theorem~6.6]{BBJ21YTD}. 
It relies on the following observations:

\begin{itemize}
\item If $u\in \cH(\omega)$, and $\varphi\in \Hdom(\alpha)$, we apply Lemma~\ref{lem:maximalhna}, and get a maximal geodeisc ray connecting $u$ and $\varphi$.
 %by \cite{SD18},  there exists a psh ray connecting $U_0\in \cH(\omega)$ and $\varphi\in \Hdom(\alpha)$, and hence the usc envelope connecting such functions is maximal.

\item Now, if $u\in \cE^1(\omega)$, and $\varphi \in \cE^1(\alpha)$, we can take ``smoothing" decreasing sequences $\varphi_m\in\Hdom$, and $u_m\in \cH(\omega)$.

\item Using the first point there exists maximal geodesic ray, $U_{m, s}$, uniting $u_m$ and $\varphi_m$. 

\item By maximality  $U_{m+1}\le U_m$, and hence there exists the limit $\lim U_m(x)$, which we will denote by $U(x)$.

\item Since the energy is affine on these maximal geodesic rays, by Remark~\ref{rem:affine}, and decreasing on decreasing sequences, we have a uniform lower bound on the energy of $U_{m, s}$. 
This implies that $U_s$ is of finite energy, and a psh geodesic ray.

\item Moreover, from $U\le U_m$ we get $U^\beth\le \varphi_m$, and thus $U^\beth\le \varphi$, but also the previous estimate gives us that $\mathrm E_\alpha(U^\beth) = \mathrm E_\alpha(\varphi)$, and therefore $U^\beth = \varphi$. 

\item Finally, if $V$ is a psh ray of linear growth, such that $V_0\le u\le u_m$, and $V^\beth\le U^\beth = \varphi\le \varphi_m$, by maximality of $U_m$ we have $V\le U_m$, and thus $V\le U$, getting maximality of $U$ and concluding the proof.
\end{itemize}
\end{proof}

\subsection{Comparison with Darvas--Xia--Zhang non-Archimedean metrics}\label{rem:DXZ}
On \cite{DXZ23transcendental} and \cite{xia24operations} the authors develop a theory of non-Archimedean plurisubharmonic functions attached to a compact Kähler manifold.

Their approach is to define a non-Archimedean $\alpha$-psh function as a \emph{test curve} on the manifold $X$.
Test curves are the Ross--Witt Nyström transforms of maximal geodesic rays.

Following the strategy of \cite[Theorem~3.17]{DXZ23transcendental}, together with Theorem~\ref{thm:correspond}, for $\beta\in \Pos(X)$ a Kähler class, one can gets a correspondence from their $\beta$-psh functions of finite energy to ours associating to every $\cI$-maximal test curve, $\psi_\tau$, the ``beth" of its Ross--Witt Nyström transform $(\check \psi_t)$:
\[\cR^1_{\cI}(\theta)\ni({\psi}_\tau) \mapsto (\check{\psi})^{\beth}\in \cE^1(\beta)\]
for a smooth Kähler representative $\theta$ of $\beta$.

\begin{remark}
As mentioned before, their theory remains more general since they can consider the case when $\beta$ is big.
On the other hand, our theory is a direct analogue of the algebraic setting, which for instance enables us to associate a Monge--Ampère measure to a $\beta$-psh function.
\end{remark}

The comparison of $\beta$-psh functions --without the energy assumption-- is more delicate even in the algebraic case, and we refer to \cite[Theorem~3.14]{DXZ23transcendental} for more details.

\subsection{Asymptotics for the mixed energy}
%Let $\beta_0,\dotsc,\beta_n$ smooth closed $(1,1)$-forms on $X$.

The goal of this section is to prove Theorem~\ref{thm;kempf-ness}, which will be of central importance to relate the variational cscK problem with non-Archimedean geometry.
We begin with a useful lemma.
\begin{lemma}\label{lem:babyslope}
  Let $\varphi \in \cE^1(\alpha)$ be a non-Archimedean potential, and $U_s$ the associated maximal geodesic ray.
  Then \[\frac{V^{-1}_\omega}{s}\int_X U_s\, \omega^n\overset{s\to \infty}\longrightarrow \varphi(v_{\trivial}) = \int_{X^\beth}\varphi \MA_\alpha(0).\]
\end{lemma}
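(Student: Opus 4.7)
The plan is to establish the limit via dominated convergence after identifying the pointwise asymptotic slope of the maximal geodesic with $\varphi(v_{\trivial})$.

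First, the second equality follows from the identity $\MA_\alpha(0) = \delta_{v_{\trivial}}$: applying the Monge--Amp\`ere construction of Section~\ref{sec;mapl} to the trivial test configuration $X\times\Pro$ with $D = 0$, the central fiber $X\times\{0\}$ is its unique irreducible component, with multiplicity $b = 1$ and coefficient $c = V_\alpha^{-1}\alpha^n = 1$, while the restriction map of Lemma~\ref{lem:gauss} sends its divisorial valuation to $v_{\trivial}$. After normalizing so that $\sup_{X^\beth}\varphi = 0$, only the convergence of $\frac{1}{sV_\omega}\int_X U_s\,\omega^n$ to $\varphi(v_{\trivial})$ remains.

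Next, I observe that $s\mapsto U_s(x)$ is convex for each $x\in X$: since $U$ is a psh geodesic ray, the $S^1$-invariant function $U(x,\tau)$ is $p_1^*\omega$-psh on $X\times\mathds D^*$, so for fixed $x$ the map $\tau\mapsto U(x,\tau)$ is subharmonic and $S^1$-invariant, giving convexity of $s = -\log|\tau|\mapsto U_s(x)$. Together with the linear growth of $U$ and with $U_0,U_1\in\cE^1(\omega)\subseteq L^1(\omega^n)$, the pointwise limit $\ell(x)\doteq \lim_{s\to\infty}U_s(x)/s$ exists for a.e.\ $x$, and convexity yields the two-sided bounds $U_1(x) - U_0(x) + U_0(x)/s \le U_s(x)/s \le \ell(x) + U_0(x)/s$ for $s\ge 1$, providing $L^1(\omega^n)$-domination uniform in $s$.

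The crux is the identification $\ell(x) = \varphi(v_{\trivial})$ for a.e.\ $x$. By Theorem~\ref{thm:correspond}, $U^\beth = \varphi$; unfolding the definition of $U^\beth$ on divisorial valuations applied to the trivial test configuration $X\times\Pro$---where, by the Gauss extension of Lemma~\ref{lem:gauss}, $v_{\trivial}\in X^\beth$ corresponds to the divisorial valuation $\ord_{X\times\{0\}}$---yields
\[\varphi(v_{\trivial}) = -\nu(U, X\times\{0\}),\]
where $\nu$ denotes the generic Lelong number of $U$ along $X\times\{0\}$. Using the $S^1$-invariance of $U$, a direct computation of the Lelong number at a generic $(x_0,0)$ shows that the radial behavior $\tau\mapsto U(x_0,\tau) = U_{-\log|\tau|}(x_0)$ dominates, giving $\nu(U, (x_0,0)) = -\ell(x_0)$, and the upper semi-continuity of Lelong numbers ensures this matches the generic value along $X\times\{0\}$.

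Combining the above, dominated convergence yields
\[\lim_{s\to\infty}\frac{1}{sV_\omega}\int_X U_s\,\omega^n = \frac{1}{V_\omega}\int_X \ell\,\omega^n = \varphi(v_{\trivial}),\]
completing the proof. The main obstacle is the Lelong-number identification in the third paragraph: relating the single-fiber asymptotic $\lim_s U_s(x_0)/s$ to the generic Lelong number of $U$ along the divisor $X\times\{0\}$ requires a careful use of the $S^1$-invariance of $U$ together with upper semi-continuity properties of Lelong numbers, and is the technical heart of the argument.
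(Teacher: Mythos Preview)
Your approach differs substantially from the paper's. The paper reduces to the slope of $\sup_X U_s$ via the uniform sup--mean bound $0\le \sup_X U_s - V_\omega^{-1}\int_X U_s\,\omega^n\le T_\omega$, then approximates $\varphi$ by a decreasing sequence $\varphi_m\in\cH(\alpha)$, applies the known slope formula from \cite{SD18} to the associated maximal rays $U^m$ to get $\lim_s\frac{1}{s}\sup_X U^m_s=\varphi_m(v_{\trivial})$, and passes to the limit in $m$ using that $U^m_s\searrow U_s$. No pointwise analysis of $\ell(x)$ is needed.

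Your route via the pointwise slope $\ell(x)=\lim_s U_s(x)/s$ reaches a correct conclusion ($\ell(x)=\varphi(v_{\trivial})$ a.e.), but the justification you outline has a gap. Restriction gives $-\ell(x_0)\ge\nu(U,(x_0,0))$, and Siu gives $\nu(U,(x_0,0))=\nu_{\mathrm{gen}}$ off a proper analytic set; the problem is the \emph{reverse} inequality $-\ell(x_0)\le\nu_{\mathrm{gen}}$ for a.e.\ $x_0$. Neither $S^1$-invariance nor upper semi-continuity of Lelong numbers yields this: $S^1$-invariance only rewrites $\nu(U,(x_0,0))$ as $-\lim_s s^{-1}\sup_{|x-x_0|<e^{-s}}U_s(x)$, which still involves a shrinking ball rather than the single value $U_s(x_0)$, and upper semi-continuity points the wrong way. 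One correct argument (after reducing to $\nu_{\mathrm{gen}}=0$): for each $\gamma>0$, Skoda's theorem gives $e^{-2\gamma U}\in L^1_{\mathrm{loc}}$ near a generic point of $X\times\{0\}$, and Fubini forces $\int_{\D_r}e^{-2\gamma U(x,\cdot)}<\infty$ for a.e.\ $x$, whence $-\ell(x)\le 1/\gamma$; let $\gamma\to\infty$. Alternatively, the Tian bound $T_\omega$ gives $V_\omega^{-1}\int\ell\,\omega^n=\lim_s s^{-1}\sup_X U_s=-\nu_{\mathrm{gen}}$ directly, which together with $\ell\le -\nu_{\mathrm{gen}}$ forces $\ell=-\nu_{\mathrm{gen}}$ a.e.---but that is precisely the paper's reduction.
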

\begin{proof}
  Observe that, since $0\le \sup U_s - V^{-1}_\omega \int_X U_s\, \omega^n\le T_\omega$, see Lemma~\ref{lem;energyestimate}, we are left to prove that $\frac{\sup U_s}{s} \to \varphi(v_{\trivial}) = \sup \varphi$. 
  Let $\varphi_m\in \cH(\alpha)$ be a decreasing sequence converging to $\varphi$, and $U_s^m$ the associated maximal geodesic rays.
  Let's assume for simlplicity that $U_0 = 0 = U^m_0$, by \cite[Proposition 1.10]{BBJ21YTD} we have that: 
  \[\sup U_s = \ell \cdot s, \quad \sup U_s^m = \ell_m \cdot s,\] 
  for some real number $\ell\in \R$.
  
  By Theorem B of \cite{SD18}, it follows that $\ell_m = \sup \varphi_m = \varphi_m(v_{\trivial})$, hence \[\ell =\frac{\sup U_s}{s} \swarrow \frac{\sup U_s^m}{s} = \varphi_m(v_{\trivial})\searrow \varphi(v_{\trivial}),\]
  concluding the proof.
\end{proof}

We will now recall Theorem~3.6 from \cite{BJ23synthetic} that will be useful later.
Here we will state (and use) only the complex analytic version of the theorem.
%Keep in mind that the same result --in its appropriate formulation-- holds in our non-Archimedean case, as all the synthetic theory of Boucksom--Jonsson.

\begin{lemma}
\label{lem;energyestimate}
Let $\eta_0, \dotsc, \eta_n$ be smooth closed (1,1)-forms, and for $i = 0, \dotsc n$ consider $U_i, V_i\in \cE^1(\omega)$ normalized for $\int U_i\omega^n =0 = \int V_i\omega^n$, then
\[
\lvert (\eta_0, U_0)\dotsb (\eta_n, U_n) - (\eta_0, V_0)\dotsb(\eta_n, V_n) \rvert \lesssim A\left(\max_{i}\mathrm J_\omega(U_i, V_i)^q\mathbin{\cdot} \max_i \left\{\mathrm J_\omega(U_i) + T_\omega\right\}^{1-q}\right),
\]
for 
\begin{align*}
q\doteq 2^{-n}, \quad A\doteq V_\alpha\prod_i(1+2\lVert \eta_i\rVert_{\omega}), \quad \text{and}\quad T_\omega \doteq\sup_{f\in\PSH(\omega)\cap C^\infty}\left\{\sup f - V_\alpha^{-1}\int f\,\omega^n\right\}.
\end{align*}
\end{lemma}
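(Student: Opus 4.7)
The plan is to prove the estimate by reducing to the synthetic Cauchy--Schwarz machinery of \cite{BJ23synthetic}, which applies thanks to the Hodge-index type inequality of Lemma~\ref{lem:zariskilemma} (or rather its complex analytic counterpart, which follows from the Khovanskii--Teissier inequalities for the pairing on $H^{1,1}(X)$ against smooth forms and Kähler classes).

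First, I would reduce to the case where the $U_i, V_i$ are smooth $\omega$-psh potentials. Both sides of the inequality are monotone/continuous along decreasing sequences of $\omega$-psh potentials in $\cE^1(\omega)$ (the energy pairing by construction, and $\mathrm J_\omega$ by its explicit formula in terms of the pairing), so standard regularization reduces to this case. Next, I would reduce from general smooth closed $(1,1)$-forms $\eta_i$ to Kähler forms: writing $\eta_i = \tfrac{1}{2}\bigl((1+2\lVert\eta_i\rVert_\omega)\omega + \eta_i\bigr) - \tfrac{1}{2}\bigl((1+2\lVert\eta_i\rVert_\omega)\omega - \eta_i\bigr)$ decomposes each $\eta_i$ as a difference of two Kähler forms of mass controlled by $(1+2\lVert\eta_i\rVert_\omega)$, and multilinearity of the energy pairing turns the final estimate into a sum of $2^{n+1}$ estimates with Kähler weights; this is the origin of the geometric constant $A$.

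With Kähler weights fixed, I would then telescope:
\[
(\eta_0,U_0)\dotsb(\eta_n,U_n) - (\eta_0,V_0)\dotsb(\eta_n,V_n) = \sum_{i=0}^n \Gamma_i \mathbin{\cdot} (0, U_i - V_i),
\]
where $\Gamma_i \doteq (\eta_0,V_0)\dotsb(\eta_{i-1},V_{i-1})(\eta_{i+1},U_{i+1})\dotsb(\eta_n,U_n)$. For each term, the Cauchy--Schwarz inequality associated to Zariski's Lemma~\ref{lem:zariskilemma} yields
\[
\bigl| \Gamma_i \mathbin{\cdot} (0, U_i - V_i) \bigr|^2 \le \bigl(-(0,U_i-V_i)^2 \mathbin{\cdot} \Gamma_i'\bigr)\mathbin{\cdot} \bigl(\Gamma_i''\bigr)
\]
for suitable ``squared'' products $\Gamma_i', \Gamma_i''$ involving the remaining entries, and iterating this Cauchy--Schwarz step $n$ times (the classical ``Hodge-index bootstrap'' producing the exponent $q = 2^{-n}$ as claimed) reduces the right-hand side to an expression of the form $\lVert U_i - V_i \rVert^{2q}$ times pure energy factors in the other entries.

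The remaining work is bookkeeping: one identifies $\lVert U_i - V_i\rVert^{2}_{(\omega,\cdot)^{j-1}(\omega,\cdot)^{n-j}}$, summed as in~\eqref{eq;Jenergy}, with $V_\omega \mathrm J_\omega(U_i, V_i)$ (up to constants), and bounds the leftover pure-energy factors by $\max_i\{\mathrm J_\omega(U_i) + T_\omega\}^{1-q}$ using that $\sup U_i - V_\omega^{-1}\!\int U_i\,\omega^n$ is controlled by $T_\omega$ and that the normalization $\int U_i\,\omega^n = 0$ together with $\mathrm J_\omega(U_i) + T_\omega$ dominates the $L^1$-energy. The main obstacle I anticipate is the iterated Cauchy--Schwarz step itself: bookkeeping the exponents at each iteration to land exactly at $q = 2^{-n}$ and $1-q$ requires setting up an induction on $n$ where at each step one re-interprets the ``squared'' pairing $(0,\psi)^2\cdot\Gamma$ as a pairing with strictly fewer ``active'' entries, and this is the combinatorial heart of Boucksom--Jonsson's argument. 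Once that is in place, the multilinear bookkeeping and the final conversion to $\mathrm J_\omega$ and $T_\omega$ are routine.
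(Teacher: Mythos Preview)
Your proposal is correct and follows the same approach as the paper: reduce to smooth potentials by regularization (using continuity of the energy pairing along decreasing sequences) and then invoke the synthetic estimate of \cite[Theorem~3.6]{BJ23synthetic}. The only difference is that the paper simply cites that theorem as a black box, whereas you have sketched its proof (the decomposition of the $\eta_i$ into differences of K\"ahler forms, the telescoping, and the iterated Cauchy--Schwarz producing the exponent $q=2^{-n}$); this is extra detail, not a different route.
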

\begin{remark}

  The quantity $T_\omega$ is well known to be finite, see for instance \cite[Theorem~1.26]{BJ23synthetic}.

\end{remark}
\begin{proof}[Proof of Lemma~\ref{lem;energyestimate}]
Whenever $U^i, V^i$ are smooth functions the result follows from \cite[Theorem~3.6]{BJ23synthetic}.

In the general case, it suffices to take decreasing sequences of smooth potentials converging to $U^i$ and $V^i$ respectively, and to observe that the bound proved for smooth functions is uniform.
Thus, since the energy pairing is continuous along decreasing sequences, taking limits on both sides of the inequality we conclude.

%In \cite{BJ23synthetic} they only consider smooth test functions, but since the above estimate is uniform, we can consider decreasing sequences $\varphi_i(n)$ and $\psi_i(n)$ of smooth functions converging to $\varphi_i$ and $\psi_i$ $\cE^1_\C$ potentials, and thus getting convergence on the both sides of the inequality
\end{proof}

The following statement is a generalization to singular metrics of \cite[Theorem~B]{SD18}, and of \cite[Theorem 4.15]{DR17kstability} -that after a small modification can be adapted to general pairings-, and to more general functionals of \cite[Theorem~4.1]{Li22geodesic} --where they only consider the twisted Monge--Ampère energy estimate. 
It is the key ingredient to relate the non-Archimedean pluripotential theory with the complex analytic one, which will be essential to prove Theorem~\ref{thm;ytd}.

For the next theorem, let $\eta_0, \dotsc, \eta_k$ be real smooth closed $(1,1)$-forms.

\begin{theorem}[Slope Formula]\label{thm;kempf-ness}%\marginpar{\textcolor{red}{Do we need maximal? Lemma~4.8 of SD18 indicates that we do not.}}
Let $\varphi_0, \dotsc, \varphi_k\in \PL$, and $\varphi_{k+1}, \dotsc, \varphi_n\in \cE^1(\alpha)$.
Denoting by $U_i$ a smooth ray $C^\infty$-compatible with $\varphi_i$ if $i\le k$, and a  maximal geodesic ray compatible with $\varphi_i$ if $i>k$, we have:
\[\frac{1}{s}(\eta_0, U_{0,s})\cdots (\eta_n, U_{n,s})\overset{s\to\infty}{\longrightarrow} ([\eta_0], \varphi_0)\cdots([\eta_n], \varphi_n).\] 
\end{theorem}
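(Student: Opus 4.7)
The overall strategy is two-step. First, prove the formula in the fully PL case $k=n$, where one can work on a single common dominating smooth snc test configuration. Second, extend to the general finite-energy case by approximating each $\varphi_i\in \cE^1(\alpha)$ for $i>k$ by a decreasing sequence of PL potentials $\varphi_i^{(m)}\in \Hdom(\alpha)$ (produced by regularization, as in the proof of Theorem~\ref{thm:extension}) with associated maximal geodesic rays $U_i^{(m)}\searrow U_i$. By multilinearity of the pairing in the cohomology classes, and by decomposing each $[\eta_i]=\alpha_i-\tilde\alpha_i$ as a difference of Kähler classes, one may further assume that all $[\eta_i]$ are Kähler, which is the setting needed to apply the pluripotential estimates.

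For the PL case, pick a common snc dominating test configuration $\cX$ and divisors $D_i\in \VCar(\cX)$ with $\varphi_i=\varphi_{D_i}$, together with smooth rays $U_i$ that are $C^\infty$-compatible with $(\cX,D_i)$. Compatibility means that $\eta_i+\ddc U_i$ extends across the central fiber to a smooth closed $(1,1)$-form on $\cX$ in the class $[\eta_i]_\cX+[D_i]$. A Stokes / integration-by-parts argument, going back to Sjöström Dyrefelt and Dervan--Ross, then identifies the rescaled archimedean intersection with the ambient intersection on $\cX$:
\[
\frac{1}{s}(\eta_0,U_{0,s})\cdots(\eta_n,U_{n,s})\xrightarrow[s\to\infty]{}\prod_{i=0}^{n}\bigl([\eta_i]_\cX+[D_i]\bigr)=([\eta_0],\varphi_0)\cdots([\eta_n],\varphi_n).
\]
This adapts the cohomological slope formulas of \cite[Theorem~B]{SD18} and \cite[Theorem~4.15]{DR17kstability}, whose arguments yield the multilinear statement after obvious bookkeeping.

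For the extension step, upper-semicontinuity of the NA energy pairing along decreasing nets (Lemma~\ref{lem;usc}) handles the right-hand side. On the complex side, combine the PL slope formula with Lemma~\ref{lem;energyestimate} to estimate
\[
\Bigl|\tfrac{1}{s}(\eta_0,U_{0,s})\cdots(\eta_n,U_{n,s})-\tfrac{1}{s}(\eta_0,U^{(m)}_{0,s})\cdots(\eta_n,U^{(m)}_{n,s})\Bigr|\lesssim \tfrac{1}{s}\max_i \mathrm J_\omega(U_{i,s},U^{(m)}_{i,s})^{q}\cdot \max_i(\mathrm J_\omega(U_{i,s})+T_\omega)^{1-q},
\]
with $q=2^{-n}$. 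The slopes $s\mapsto \mathrm J_\omega(U_{i,s})$ and $s\mapsto \mathrm J_\omega(U_{i,s},U^{(m)}_{i,s})$ grow at most linearly in $s$, because of the geodesic nature of maximal rays together with the affinity of $\mathrm E_\omega$ on them (Remark~\ref{rem:affine}) and the quasi-triangular inequality for $\mathrm J_\omega$. Dividing the right-hand side by $s$ therefore produces a quantity controlled by $(\mathrm J_\omega(U_{i,s},U^{(m)}_{i,s})/s)^q$ times a constant, which can be driven to $0$ by choosing $m$ large, uniformly in $s$.

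The main obstacle is precisely this last uniformity: we need $\tfrac{1}{s}\mathrm J_\omega(U_{i,s},U^{(m)}_{i,s})\to 0$ as $m\to\infty$ uniformly in $s$. This is where maximality of the geodesic rays is essential. Applying the already-established PL slope formula to the functional $\mathrm J$ itself one gets $\lim_{s\to\infty}\mathrm J_\omega(U_{i,s},U^{(m)}_{i,s})/s=\mathrm J_\alpha(\varphi_i,\varphi_i^{(m)})$, and the latter can be made arbitrarily small by choosing the approximating PL potentials $\varphi_i^{(m)}$ close to $\varphi_i$ in $\mathrm J_\alpha$-quasi-metric, using the correspondence Theorem~\ref{thm:correspond} between $\cE^1(\alpha)$ and maximal geodesic rays in $\cE^1(\omega)$ together with the quasi-triangular estimates coming from the synthetic framework of \cite{BJ23synthetic}.
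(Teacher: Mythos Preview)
Your overall two-step strategy matches the paper's exactly: reduce to the PL case via \cite[Theorem~B]{SD18}, then approximate each $\varphi_i\in\cE^1(\alpha)$ by a decreasing sequence $\varphi_i^{(m)}\in\Hdom(\alpha)$ and control the discrepancy using Lemma~\ref{lem;energyestimate}. However, the final paragraph contains a genuine circularity, and one technical step is missing.

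\textbf{Circularity in the key uniformity step.} You write that ``applying the already-established PL slope formula to the functional $\mathrm J$ itself one gets $\lim_{s\to\infty}\mathrm J_\omega(U_{i,s},U^{(m)}_{i,s})/s=\mathrm J_\alpha(\varphi_i,\varphi_i^{(m)})$''. But $\mathrm J_\omega(U_{i,s},U^{(m)}_{i,s})$ is a combination of mixed pairings of the form $(\omega,U_{i,s})^j\cdot(\omega,U^{(m)}_{i,s})^{n+1-j}$, and these involve the \emph{finite-energy} ray $U_i$ in one or several slots. The PL slope formula does not cover this; it is precisely the statement you are trying to prove. The paper sidesteps this by replacing $\mathrm J_\omega$ with the Darvas distance: since $U_{i,s}\le U^{(m)}_{i,s}$ by maximality, one has
\[
\mathrm J_\omega(U_{i,s},U^{(m)}_{i,s})\lesssim d_1(U_{i,s},U^{(m)}_{i,s})=\mathrm E_\omega(U^{(m)}_{i,s})-\mathrm E_\omega(U_{i,s}),
\]
and $\mathrm E_\omega$ is \emph{exactly affine} along both geodesic rays, with slopes $\mathrm E_\alpha(\varphi_i^{(m)})$ and $\mathrm E_\alpha(\varphi_i)$ respectively (Remark~\ref{rem:affine} and the construction in Theorem~\ref{thm:correspond}). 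Hence $d_1(U_{i,s},U^{(m)}_{i,s})/s\to \mathrm E_\alpha(\varphi_i^{(m)})-\mathrm E_\alpha(\varphi_i)$, which tends to $0$ as $m\to\infty$ by continuity of $\mathrm E_\alpha$ along decreasing sequences. No slope formula for mixed pairings is needed here.

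\textbf{Missing normalization.} Lemma~\ref{lem;energyestimate} is stated for potentials normalized by $\int U_i\,\omega^n=0$; you apply it to the unnormalized $U_{i,s}$. This is not innocuous, since the energy pairing is not invariant under translation. The paper subtracts the averages $a_{i,s}=\int U_{i,s}\,\omega^n$, applies the lemma to $V_{i,s}=U_{i,s}-a_{i,s}$, and controls the leftover term $(a^{(m)}_{n,s}-a_{n,s})/s$ separately via Lemma~\ref{lem:babyslope}, which shows it tends to $\varphi_n^{(m)}(v_{\trivial})-\varphi_n(v_{\trivial})\to 0$ as $m\to\infty$.
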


\begin{proof}
When $k =n$ the result corresponds to \cite[Theorem B]{SD18}.
  We restrict ourselves to the case when $k = n-1$ the general case, when $k\le n-1$, will be similar.

Moreover, we observe that we can suppose that $\eta_n $ is a Kähler form,  otherwise
\[(\eta_n , U_{n,s})\cdot \Gamma_s = (k\omega , U_{n,s})\cdot \Gamma_s - (k\omega - \eta_n , 0)\cdot \Gamma_s\]
where $\Gamma_s \doteq (\omega_0, U_{0,s})\cdots (\omega_{n-1}, U_{n-1,s})$ and $k\gg 0$. 
Therefore, denoting $[\Gamma]\doteq ([\eta_0], \varphi_0)\cdots ([\eta_{n-1}], \varphi_{n-1})$, and applying the result for $\eta_n = \omega$,  we have:
\begin{align*}
\frac{1}{s}(\eta_n, U_{n,s})\cdot\Gamma_s &\to (\alpha, \varphi_n)\cdot[\Gamma] - ([\omega - \eta], 0)\cdot [\Gamma] \\
&= ([\eta_n], \varphi_n)\cdot[\Gamma],
\end{align*}
which, by symmetry, implies the result.

Now, let's prove the result.
Let $\varphi_n^m\searrow \varphi_n$ be a decreasing sequence of functions in $\Hdom(\alpha)$, and $U_{n}^m$ the associated maximal geodesic ray.

By \cite[Theorem~B]{SD18}:   
\begin{equation}
\frac{1}{s}(\eta_0, U_{0,s})\dotsb (\eta_{n-1}, U_{n-1,s})\cdot(\omega, U_{n,s}^m)\overset{s\to \infty}{\longrightarrow}([\eta_0], \varphi_0)\dotsb ([\eta_{n-1}], \varphi_{n-1}) (\alpha, \varphi_n^m),
\end{equation}
and as $m\to\infty$ the right hand side converges to $([\eta_0], \varphi_0)\dotsb (\alpha, \varphi_n)$.
Thus, to complete the proof we need to check that
\[\lim_{m\to\infty}\lim_{s\to\infty} \frac{1}{s}(\eta_0, U_{0, s})\dotsb (\omega, U_{n,s}^m) = \lim_{s\to\infty} \frac{1}{s}(\eta_0, U_{0,s})\dotsb  (\omega, U_{n,s}).\]

What we will do next is then to study the difference:
\begin{align*}
  (\star)_s&\doteq   \lvert \left(\eta_0,U_{0,s} \right)\dotsb (\omega, U_{n,s} ) - \left(\eta_0,U_{0, s}\right)\dotsb \left(\omega, U^m_{n,s}\right)\rvert\\
 &=   \lvert \left(\eta_0,U_{0,s} \right)\dotsb (0, U_{n,s} - U^m_{n,s} ) \rvert.
\end{align*}
Denoting by $a_{i,s}$ and $a^m_{n,s}$ the averages \(\int_X U_{i,s}\,\omega^n\) and \(\int_X U^m_{i,s}\,\omega^n\) respectively, and by $V_{i,s}$ and $V_{n,s}^m$ the normalized potentials  \[U_{i, s}-a_{i,s}, \quad \quad U^m_{n,s}-a_{n,s}^m,\]
we observe that:
\[(\star)_s = \lvert \left(\eta_0,V_{0,s} \right)\dotsb (\eta_{n-1}, V_{n-1, s})\cdot(0, U_{n,s} - U^m_{n,s} ) \rvert\]
and by the triangle inequality it follows that:
\begin{align*}
  (\star)_s&\leq  \lvert \left(\eta_0,V_{0,s} \right)\dotsb (0, V_{n,s} - V^m_{n,s} ) \rvert + \lvert \left(\eta_0,V_{0,s} \right)\dotsb (0, a^m_{n,s} - a_{n,s} ) \rvert\\
  &= \lvert \left(\eta_0,V_{0,s} \right)\dotsb (0, V_{n,s} - V^m_{n,s} ) \rvert + a^m_{n,s} - a_{n,s}.
\end{align*}
By the Lemma~\ref{lem:babyslope} we have that taking the slope at infinity and letting $m$ tend to infinity the second term vanishes.
Next we will focus our attention on the first term.

Note that we can suppose that, for $i\le n-1$, $\varphi_i$ is in $\Hdom(\alpha)$ and $U_{i,s}$ is a smooth psh ray $C^\infty$-compatible with $\varphi_i$. 
If it is not the case we write $\varphi_i = \psi_i^\prime - \psi_i^\dprime$ the difference of $\Hdom(\alpha)$ functions, then we consider $U_i^\prime$ and $U_i^\dprime$ psh-rays that are smoothly compatible with $\psi_i^\prime$ and $\psi_i^\dprime$ respectively, and the difference
\[U_{i,s}\doteq U^\prime_s-U^\dprime_s\]
will be smoothly compatible with $\varphi_i$, and the result follows from linearity of the pairing. 

Consequently,  it follows, by Lemma~\ref{lem;energyestimate}, that, for $q = 2^{-n}$:
\begin{align*}
  \lvert \left(\eta_0,V_{0,s} \right)\dotsb \left(\omega, V_{n,s} \right) - \left(\eta_0,V_{0, s}\right)\dotsb \left(\omega, V^m_{n,s}\right)\rvert &\lesssim \mathrm J_\omega (V_{n,s}, V_{n,s}^m)^q\cdot \max_i \left\{\mathrm J_\omega(V_{i,s}) + T_\omega\right\}^{1-q}\\
  &= \mathrm J_\omega (U_{n,s}, U_{n,s}^m)^{q}\cdot\max_i \left\{\mathrm J_\omega(U_{i,s}) + T_\omega\right\}^{1-q}\\
  &\lesssim d_1(U_{n,s}, U_{n,s}^m)^q \cdot (s+T_\omega)^{1-q},
\end{align*}
where the equality follows from the constant invariance of the J functional, and the last inequality by linear growth of $U_{i}$, since it implies that $\mathrm J(U_{i,s})\lesssim s$.

Moreover, we observe that:

By maximality $U_{n,s}\le U^m_{n,s}$, and
\begin{align*}
  d_1(U_{n,s}, U^m_{n,s}) &= \mathrm E_\omega(U^m_{n,s}) - \mathrm E_\omega(U_{n,s})\\
  &=\left(\mathrm E_\omega(U^m_{n,1}) - \mathrm E_\omega(U^m_{n,0})\right)s - \left(\mathrm E_\omega(U_{n,1}) - \mathrm E_\omega(U_{n,0})\right)s + C_m\\
  &=\left(\mathrm E_\omega(U^m_{n,1}) - \mathrm E_\omega(U_{n,1})\right)s - \left(\mathrm E_\omega(U^m_{n,0}) - \mathrm E_\omega(U_{n,0})\right)s + C_m,
\end{align*}
for $C_m\doteq \mathrm E_\omega(U^m_{n,0}) - \mathrm E_\omega(U_{n,0})$.

Therefore, taking the slope at infinity, and letting $m$ tend to infinity we have the desired result.
\end{proof}

We have already seen the non-Archimedean version of some classical functionals arising from pluripotential theory.
We recall their archimedean --original-- version.  
If $\omega\in \K(X)$ is a Kähler form, and $\eta$ is any closed $(1,1)$-form we have for $u\in \cE^1(\omega)$:
\begin{align*}
\mathrm E_\omega(u) &\doteq \frac{1}{n+1} V_\alpha^{-1}(\omega,u)^{n+1}\\
\mathrm E_\omega^\eta(u) &\doteq V_\alpha^{-1}(\eta, 0)\mathbin{\cdot} (\omega, u)^n\\
\mathrm J_\omega(u) &\doteq V_\alpha^{-1}(\omega, u)\mathbin{\cdot} (\omega, 0)^n - \mathrm E_\omega(u).
\end{align*}

By Theorem~\ref{thm;kempf-ness}, we can relate the above functionals with their non-Archimedean counterpart. 
If $\varphi\in \cE^1(\alpha)$, and $U$ the associated maximal geodesic ray, we have:
\[
\lim_{s\to\infty}\frac{\mathrm E_\omega(U_s)}{s} = \mathrm E_{\alpha}(\varphi), \quad \lim_{s\to\infty}\frac{\mathrm E^\eta_\omega(U_s)}{s} = \mathrm E_{\alpha}^{\beta}(\varphi), \text{ and } \quad \lim_{s\to\infty}\frac{\mathrm J_\omega(U_s)}{s} = \mathrm J_{\alpha}(\varphi),
\]
for $\beta=[\eta]$.

\section{CscK metrics and the Yau--Tian--Donaldson conjecture}\label{sec;csck}
In this section we will generalize a result by Chi Li, on the existence of cscK metrics.
Let $(X,\omega)$ again be a compact Kähler manifold, and $\alpha = [\omega]$ the cohomology class of $\omega$, $\eta\doteq -\Ric(\omega)$ minus the Ricci form of $\omega$, $\zeta$ its cohomology class, and $\underline s$ the cohomological constant $n\cdot\frac{[\Ric(\omega)]\cdot[\omega]^{n-1}}{[\omega]^{n}}$.

\subsection{The variational approach to the cscK problem}

The cscK equation is the Euler--Langrange equation for the \emph{Mabuchi functional}:
\begin{equation}\label{eq:chentian}
  \mathrm M_\omega = \underline{s}\,\mathrm E_\omega + \mathrm E_\omega^{\eta} + \mathrm H_\mu,
\end{equation}
where $\mu$ is the probability measure associated to $\omega^n$, and $\mathrm H_\mu(u)$ is the entropy of the Monge--Ampère measure of $u$ with respect to $\mu$, see \eqref{eq:entropydef}.

By the work of Chen--Cheng, \cite{CC21csck1, CC21csck2}, there exist a unique cscK metric in $\alpha$ if, and only if, the Mabuchi functional is coercive, that is:
\[\mathrm M_\omega \ge \delta \,\mathrm J_\omega - C\]
for some $\delta, C>0$.

What we do next is to define the non-Archimedean counterpart of the Mabuchi energy, $\mathrm M_\alpha$, and prove that the coercivity of $\mathrm M_\omega$ follows from the --non-Archimedean-- \emph{coercivity over $\cE^1$} of $\mathrm M_\alpha$.

Before studying the non-Archimedean version of the entropy functional, we recall a Legendre transform formula for the Archimedean entropy, if $u\in \cE^1(\omega)$ we have
\begin{equation}\label{eq:entropydef}
\mathrm H_{\mu}(u)= \sup_{f\in \Cz(X)}\left\{\int_X f\,\MA_\omega(u) - \log \int_X \exp (f) \,\mathrm{d}\mu \right\}.
\end{equation}

\subsection{Non-Archimedean entropy and the non-Archimedean Mabuchi functional}
\begin{defi}
Let $\mathrm H_\alpha\colon \cE^1(\alpha)\to \R$ be defined as follows
\begin{equation}
\mathrm H_\alpha(\varphi)\doteq  \int_{X^\beth} A_X \MA_\alpha(\varphi)
\end{equation}
where $A_X\colon X^\beth\to [0,+\infty] $ is the log discrepancy function on $X^\beth$.
We call $\, \mathrm H_\alpha$ the \emph{non-Archimedean entropy functional}.

Moreover, in analogy to the Chen--Tian formula of Equation~\eqref{eq:chentian}, we define the \emph{non-Archimedean Mabuchi functional}, $\mathrm M_\alpha\colon \cE^1(\alpha)\to \R$, as:
\[\mathrm M_\alpha \doteq \underline{s}\,\mathrm E_\alpha + \mathrm E_\alpha^{\zeta} + \mathrm H_\alpha.\]
\end{defi}
Let $\cX$ be a snc test configuration and $\varphi\in\cE^1(\alpha)$,  we denote by $H_{\alpha}^\cX(\varphi)$ the  integral:
 \[\int_{X^\beth} (A_X\circ p_\cX) \MA_\alpha(\varphi),\] with $p_\cX$ just like in section~\ref{sec;dual}.

\begin{prop}\label{prop;entropyineq}
Let $\psi\in \cE^1(\alpha)$, consider $V_s\in \cE^1(\omega)$ the maximal geodesic ray associated, then we have 
\begin{equation}
  \mathrm H_{\alpha}(\psi)\le\lim_{s\to+\infty} \frac{\mathrm H_{\mu}(V_s)}{s}.
\end{equation}

\end{prop}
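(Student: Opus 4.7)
The strategy is a transcendental adaptation of the entropy slope estimate from the non-archimedean YTD arguments of \cite{BBJ21YTD, Li22geodesic}: the variational (Jensen) form of the entropy combined with the slope formula of Theorem~\ref{thm;kempf-ness}.

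\textbf{Reduction to a fixed snc test configuration.} By~\eqref{eq;log}, $A = \sup_\cX (A\circ p_\cX)$ as $\cX$ ranges over smooth snc test configurations dominating $X\times\Pro$, each $A\circ p_\cX$ being PL and the family directed by refinement. Monotone convergence gives $\mathrm H_\alpha(\psi) = \sup_\cX \int_{X^\beth}(A\circ p_\cX)\MA_\alpha(\psi)$, reducing the problem to showing
\[\int_{X^\beth}(A\circ p_\cX)\MA_\alpha(\psi)\leq \liminf_{s\to\infty}\frac{\mathrm H_\mu(V_s)}{s}\]
for each fixed such $\cX$ (with $\cX_0=\sum b_i E_i$).

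\textbf{A compatible smooth ray, and Jensen.} Under the isomorphism $\PL\cong\VCarb$ of Theorem~\ref{vcarpl}, I would identify $A\circ p_\cX = \varphi_{D_\cX}$ with $D_\cX\doteq K_{\cX/X\times\Pro}+\cX_{0,\mathrm{red}}$; indeed, the coefficient of $E_i$ in $D_\cX$ is $(b_i A(v_{E_i})-1)+1 = b_i A(v_{E_i})$, so $\varphi_{D_\cX}(v_{E_i})=A(v_{E_i})$. Fixing a smooth positive volume form $\Omega_\cX$ on $\cX$, a standard Jacobian computation produces a smooth $S^1$-invariant function $G$ on $X\times\D^*\simeq\cX\setminus\cX_0$---essentially $G=\log(\Omega_\cX/\Omega_{\trivial})$ with $\Omega_{\trivial}=\mu\wedge\tfrac{i}{2\pi}d\tau\wedge d\bar\tau$---which is $C^\infty$-compatible with $\varphi_{D_\cX}$. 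Setting $G_s(x)\doteq G(x,e^{-s})$ and $\mu_s\doteq e^{G_s}\mu$, the identity $\mathrm H_\mu(V_s)=\int G_s\,\MA_\omega(V_s)+\mathrm H_{\mu_s}(V_s)$ together with Jensen's inequality $\mathrm H_{\mu_s}(V_s)\ge -\log\mu_s(X)$ yields
\[\mathrm H_\mu(V_s)\ge \int_X G_s\,\MA_\omega(V_s)-\log\int_X e^{G_s}\,d\mu.\]
Theorem~\ref{thm;kempf-ness}, applied with $k=0$, $\varphi_0=A\circ p_\cX\in\PL$, $\varphi_i=\psi$ and $\eta_0=0$, $\eta_i=\omega$ for $i\ge 1$, gives $\lim_s s^{-1}\int G_s\,\MA_\omega(V_s)=\int(A\circ p_\cX)\,\MA_\alpha(\psi)$; the conclusion thus follows provided $s^{-1}\log\int_X e^{G_s}\,d\mu \to 0$.

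\textbf{Main obstacle.} Establishing this last limit is the delicate analytic step. The integral $\int_X e^{G_s}\,d\mu$ is comparable to a fiber integral of the smooth form $\Omega_\cX$ over $\cX_{e^{-s}}$. The addition of $\cX_{0,\mathrm{red}}$ to $K_{\cX/X\times\Pro}$ inside $D_\cX$ is precisely the shift that turns a computation controlled by the discrepancies alone into one controlled by the log discrepancies of the pair $(\cX,\cX_{0,\mathrm{red}})$, which are all non-negative since $\cX_0$ is snc reduced (automatic log canonicity). Standard period-integral asymptotics on snc degenerations then produce polynomial-in-$s$ growth $\int_X e^{G_s}\,d\mu = O(s^N)$, whence $s^{-1}\log\int e^{G_s}\,d\mu \to 0$. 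This is the transcendental analogue of the adapted-measure estimates of \cite{BBJ21YTD, Li22geodesic}; the essential novelty here is that we work without a polarization, using only a smooth volume form on the total space $\cX$.
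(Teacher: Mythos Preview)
Your proposal is correct and follows essentially the same route as the paper: reduce to a fixed snc test configuration, test the Legendre/Jensen lower bound for $\mathrm H_\mu(V_s)$ against a smooth ray $C^\infty$-compatible with the PL function $A\circ p_\cX$, apply the slope formula (Theorem~\ref{thm;kempf-ness}) to the Monge--Amp\`ere term, and show the residual log-integral has slope zero. The only difference is in packaging: the paper simply invokes \cite[Lemma~3.11]{BHJ19} for the bound $\log\int_X e^{U_s}\,d\mu = O(\log s)$, whereas you unpack its mechanism---identifying $A\circ p_\cX$ with the divisor $K_{\cX/X\times\Pro}+\cX_{0,\mathrm{red}}$, building the compatible ray from a ratio of volume forms, and reading off the polynomial growth from log canonicity of the snc pair via period-integral asymptotics. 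Your added detail is accurate and makes the role of the $\cX_{0,\mathrm{red}}$ shift transparent, but it is not a genuinely different argument.
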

\begin{proof}
Let $\psi\in \cE^1(\alpha)$, and consider $\cX$ a snc test configuration.
As seen before  $A_X\circ p_{\cX}$ is a PL function, let's denote it $\varphi$. 
We can write $A_\alpha^\cX$ in terms of $\varphi$:
\begin{align*}
H_\alpha^{\cX}(\psi) =\int_{X^\beth} (A\circ p_\cX) \MA_\alpha(\psi) &= \int_{X^\beth} \varphi \MA_\alpha(\psi)\\
&=(0,\varphi)\mathbin{\cdot} (\alpha, \psi)^n.
\end{align*}
Then, by 
%a variation\footnote{Here $\varphi_D$ is not in $\cE^1$, but one can write $\varphi_D$ as the difference of functions in $\Hdom$, and the Corollary applies.} of 
Theorem~\ref{thm;kempf-ness}:
\begin{equation} 
  (0,\varphi)\mathbin{\cdot} (\alpha, \psi)^n = \lim_{s\to+\infty} \frac{1}{s}(0,U_s)\mathbin{\cdot}(\alpha, V_s)^n,
\end{equation}
for $U$ a smoothly compatible ray with $\varphi$.

On the other hand, for $f = U_s$:
\begin{align*}
\frac{1}{s}H_{\mu}(V_s)&\ge\frac{1}{s} \left\{\int_X f\omega_{V_s}^n - \log \int_X \exp (f) \,\mathrm{d}\mu \right\}\\
&= \frac{1}{s}(0,U_s)\mathbin{\cdot} (\omega, V_s)^n -\frac{1}{s}\log\int_X\exp(U_s)\,\mathrm{d}\mu \longrightarrow (0,\varphi)\mathbin{\cdot} (\alpha, \psi)^n - 0= H_\alpha^\cX(\psi)
\end{align*} 
 where in the limit we make use of Lemma~3.11 of \cite{BHJ19}, to get:
  \[\log\int_X \exp(U_s)\,\mathrm{d}\mu = O\left(\log (s)\right).\]

Therefore, 
 \[\mathrm H_\alpha(\psi)=\sup_{\cX}H_\alpha^{\cX}(\psi)\leq \lim_{s\to+\infty} \frac{1}{s} \mathrm H_{\mu}(V_s),\] 
concluding the proof.
\end{proof}

\begin{corollary}\label{cor;inequality}
Let $\varphi\in \cE^1(\alpha)$, and $U_s\in \cE^1(\omega)$ the maximal geodesic ray associated.
Then,
\begin{equation}
\mathrm M_{\alpha}(\varphi)\leq \lim_{s\to +\infty} \frac{\mathrm M_\omega(U_s)}{s}.
\end{equation}
\end{corollary}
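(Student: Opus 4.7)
The plan is to decompose the non-archimedean Mabuchi functional according to its Chen--Tian-type definition and handle each term separately, using that two of the three terms satisfy exact slope formulas and the third obeys a one-sided inequality.

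First I would recall the two decompositions
\[
\mathrm M_\omega(U_s)=\underline{s}\,\mathrm E_\omega(U_s)+\mathrm E_\omega^{\eta}(U_s)+\mathrm H_\mu(U_s),\qquad
\mathrm M_\alpha(\varphi)=\underline{s}\,\mathrm E_\alpha(\varphi)+\mathrm E_\alpha^{\zeta}(\varphi)+\mathrm H_\alpha(\varphi),
\]
with $\zeta=[\eta]$. For the two energy terms, the slope formulas stated just after Theorem~\ref{thm;kempf-ness} give the identities
\[
\lim_{s\to\infty}\frac{\mathrm E_\omega(U_s)}{s}=\mathrm E_\alpha(\varphi),\qquad
\lim_{s\to\infty}\frac{\mathrm E_\omega^{\eta}(U_s)}{s}=\mathrm E_\alpha^{\zeta}(\varphi).
\]
These are actual limits (not merely $\liminf$'s), so the weighted sum $\underline{s}\,\mathrm E_\omega(U_s)/s+\mathrm E_\omega^{\eta}(U_s)/s$ converges to $\underline{s}\,\mathrm E_\alpha(\varphi)+\mathrm E_\alpha^{\zeta}(\varphi)$ as $s\to\infty$.

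For the entropy term, Proposition~\ref{prop;entropyineq} supplies exactly the missing one-sided bound
\[
\mathrm H_\alpha(\varphi)\le \lim_{s\to\infty}\frac{\mathrm H_\mu(U_s)}{s},
\]
where, as noted there, the right-hand side limit exists (or should be read as a $\liminf$; the full $\lim$ is justified by convexity of $s\mapsto \mathrm H_\mu(U_s)$ along the psh geodesic ray, giving monotone difference quotients). Adding the two equalities and this inequality yields
\[
\mathrm M_\alpha(\varphi)\le \lim_{s\to\infty}\frac{\underline{s}\,\mathrm E_\omega(U_s)+\mathrm E_\omega^{\eta}(U_s)+\mathrm H_\mu(U_s)}{s}=\lim_{s\to\infty}\frac{\mathrm M_\omega(U_s)}{s},
\]
which is the claim.

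The only non-routine point is the existence of the limit $\lim_{s\to\infty}\mathrm M_\omega(U_s)/s$ on the right-hand side (as opposed to a $\liminf$). This is the main obstacle, but it is standard: along a maximal psh geodesic ray, $s\mapsto\mathrm E_\omega(U_s)$ is affine (Remark~\ref{rem:affine}), $s\mapsto\mathrm E^{\eta}_\omega(U_s)$ is the difference of two affine functions hence affine, and $s\mapsto \mathrm H_\mu(U_s)$ is convex, so the difference quotient at infinity exists in $\R\cup\{+\infty\}$. Hence the limit on the right is well-defined and the inequality follows.
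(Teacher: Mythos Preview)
Your proof is correct and follows exactly the approach of the paper, which simply invokes Theorem~\ref{thm;kempf-ness} for the energy terms and Proposition~\ref{prop;entropyineq} for the entropy term. One minor comment: your side remark that $s\mapsto \mathrm E^\eta_\omega(U_s)$ is affine is not obviously justified, but this is harmless since the existence of that limit is already part of the slope formula from Theorem~\ref{thm;kempf-ness}.
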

\begin{proof}
%Let's denote $\eta \doteq \Ric(\omega)$ the Ricci curvature form of $\omega$, and $\mu$ the measure associated to the volume form $\omega^n$.
%Recall that the complex Mabuchi functional is given by $\mathrm M_\omega = \mathrm E_\omega + \mathrm E_\omega^{\eta} + \mathrm H_{\mu}$, and the non-Archimedean counterpart by
%$\mathrm M_{\alpha}  =\mathrm E_{\alpha} + \mathrm E_{\alpha}^{[\eta]} + \mathrm H_{\alpha}$.
Follows from Theorem~\ref{thm;kempf-ness} together with Proposition~\ref{prop;entropyineq}.
\end{proof}

\subsection{Main theorem}

\begin{prop}[Theorem~1.2 from \cite{Li22geodesic}]\label{prop;maximal}
Let $U_s\in \cE^1$ be a geodesic ray such that the slope 
\[ \lim_{s\to +\infty} \frac{\mathrm M_\omega(U_s)}{s}<+\infty,\] then $U$ is maximal.
\end{prop}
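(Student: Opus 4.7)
The plan is to compare $U$ with the maximal geodesic ray $V$ sharing the same non-archimedean potential and starting point, and to show that a finite Mabuchi slope forces $U = V$. Since $U$ is a psh geodesic in $\cE^1(\omega)$, it has linear growth, so $U^\beth \in \PSH(\alpha)$ is well defined by Theorem~\ref{thm:extension}. Once we check that $U^\beth \in \cE^1(\alpha)$, Theorem~\ref{thm:correspond} will produce a unique maximal geodesic ray $V_s$ with $V_0 = U_0$ and $V^\beth = U^\beth$, and one has $U_s \le V_s$ for all $s$ by maximality; the conclusion is then the reverse inequality.

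To establish $U^\beth \in \cE^1(\alpha)$, I would upgrade Corollary~\ref{cor;inequality} to this non-maximal setting: combine Theorem~\ref{thm;kempf-ness} for the $\mathrm E_\omega$ and $\mathrm E_\omega^{\eta}$ contributions (via a decreasing Fubini--Study approximation of $U^\beth$) with the entropy inequality of Proposition~\ref{prop;entropyineq} to get $\mathrm M_\alpha(U^\beth) \le \lim_{s\to\infty}\mathrm M_\omega(U_s)/s < +\infty$. Together with the elementary bound $\mathrm M_\alpha \ge \underline s\,\mathrm E_\alpha + \mathrm E_\alpha^{\zeta}$, valid on $\PSH(\alpha)$ since $A_X \ge 0$, this forces $\mathrm E_\alpha(U^\beth) > -\infty$.

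Next, I would deduce $U = V$ by comparing $\mathrm E_\omega$-slopes. Both $s \mapsto \mathrm E_\omega(U_s)$ and $s \mapsto \mathrm E_\omega(V_s)$ are affine by the psh geodesic hypothesis, with common initial value; denote their slopes by $e_U \le e_V$. Remark~\ref{rem:affine}, extended from $\Hdom(\alpha)$ to $\cE^1(\alpha)$ by decreasing approximation, identifies $e_V = \mathrm E_\alpha(U^\beth)$. Assuming $e_U < e_V$, decomposing $\mathrm M_\omega = \underline s\,\mathrm E_\omega + \mathrm E_\omega^{\eta} + \mathrm H_\mu$ and running the slope formulas along both $U$ and $V$ should produce a strict gap of the form $\lim_s \mathrm M_\omega(U_s)/s \ge \lim_s \mathrm M_\omega(V_s)/s + \underline s(e_V - e_U)$; since $\lim_s \mathrm M_\omega(V_s)/s \ge \mathrm M_\alpha(U^\beth)$ by Corollary~\ref{cor;inequality}, this would contradict the finiteness of the Mabuchi slope of $U$. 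Hence $e_U = e_V$, so $\mathrm E_\omega(U_s) = \mathrm E_\omega(V_s)$ for all $s$, and the strict monotonicity of $\mathrm E_\omega$ on $\cE^1(\omega)$ combined with $U_s \le V_s$ yields $U_s = V_s$.

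The hard part will be carrying out this term-by-term slope comparison along the \emph{non-maximal} ray $U$. Theorem~\ref{thm;kempf-ness} gives equalities only when all input rays are maximal, and Proposition~\ref{prop;entropyineq} is one-sided; one must therefore regularize $U^\beth$ by PL $\alpha$-psh approximants, compare the smoothly compatible psh rays of those approximants both with $U$ and with $V$, and track the error terms precisely enough to convert the slope inequalities into a genuine sign for $e_V - e_U$. This is the technical core of Chi Li's argument in the projective setting of \cite{Li22geodesic}, and adapting it to the transcendental category is where the present construction of $X^\beth$ and the slope formula Theorem~\ref{thm;kempf-ness} are most crucially deployed.
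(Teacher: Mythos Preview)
Your argument has a genuine gap in the crucial step. Assuming $e_U<e_V$, you claim to derive
\[
\lim_{s\to\infty}\frac{\mathrm M_\omega(U_s)}{s}\;\ge\;\lim_{s\to\infty}\frac{\mathrm M_\omega(V_s)}{s}+\underline s\,(e_V-e_U),
\]
and then say this ``contradicts the finiteness of the Mabuchi slope of $U$.'' It does not: the right-hand side is a finite number, so this is merely a finite lower bound, compatible with the hypothesis. More seriously, the displayed inequality itself is unjustified. You would need, for the \emph{non-maximal} ray $U$, that $\lim_s \mathrm E_\omega^\eta(U_s)/s$ equals $\mathrm E_\alpha^\zeta(U^\beth)$ and that $\lim_s \mathrm H_\mu(U_s)/s\ge \mathrm H_\alpha(U^\beth)$; but Theorem~\ref{thm;kempf-ness} and Proposition~\ref{prop;entropyineq} are proved only for maximal rays, and this is not an incidental restriction --- the proofs go through decreasing $\Hdom(\alpha)$-approximations of $\varphi$ and the associated maximal rays, so ``upgrading'' them to $U$ is precisely the statement you are trying to prove. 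Your outline is therefore circular at its core.

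The paper (following \cite{Li22geodesic}) takes a completely different, purely Archimedean route that bypasses the non-archimedean slope formulas entirely. One sets $V$ to be the maximal ray with $V_0=U_0$ and $V^\beth=U^\beth$, so that $U_s\le V_s$; the point is then to exploit directly the entropy term in $\mathrm M_\omega$. By Jensen's inequality applied to the probability measure $\MA_\omega(U_s)$,
\[
\mathrm H_\mu(U_s)\;\ge\;\int_X (V_s-U_s)\,\MA_\omega(U_s)\;-\;\log\int_X e^{\,V_s-U_s}\,\mathrm d\mu\;+\;\text{(energy-type terms)},
\]
and the second integral is controlled by a local integrability result for $e^{\,c(\phi-\psi)}$ when $\phi,\psi$ are quasi-psh with the same singularity type (this is the analytic input the paper alludes to). Dividing by $s$ and using that the energy-type terms have finite slope, a finite Mabuchi slope forces $\lim_s s^{-1}\int_X(V_s-U_s)\,\MA_\omega(U_s)=0$, which combined with the concavity of $\mathrm E_\omega$ and $U_s\le V_s$ yields $e_U=e_V$ and hence $U_s=V_s$. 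The key ideas you are missing are exactly Jensen's inequality and this integrability lemma; the non-archimedean machinery plays no role in this step.
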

\begin{proof}
The proof goes without change as in the projective setting.

It is based on a local integrability result for the exponential of a difference of psh functions in the same singularity class, and a clever use of Jensen's inequality.
For more details see \cite[Theorem 1.2]{Li22geodesic}.
\end{proof}

\begin{defi}
Let $X$ be a compact Kähler manifold, and $\alpha\in \Pos(X)$ a Kähler class, then  $(X,\alpha)$ is \emph{uniformly K-stable over $\cE^1$}  if there exists $\delta>0$ such that:
\begin{equation}
\mathrm M_\alpha(\varphi)\ge \delta \mathrm J_\alpha(\varphi),\quad \text{ for every }\varphi\in \cE^1(\alpha).
\end{equation}
%That is, $(X, \alpha)$ is strongly K-stable, if $\mathrm M_\alpha$ is coercive over $\cE^1(\alpha)$.
\end{defi}

Now, we will prove Theorem~\ref{thma;ytd}, the main theorem of this paper.

\begin{theorem}[Theorem~\ref{thma;ytd}]\label{thm;ytd}
  Let $(X,\alpha)$ be a compact Kähler manifold that is uniformly $K$-stable over $\cE^1$. 
  Then, $\alpha$ contains a unique cscK metric.  
\end{theorem}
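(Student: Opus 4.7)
The plan is to argue by contradiction, in two complementary steps: translate non-coercivity of $\mathrm M_\omega$ into a destabilizing geodesic ray, then transfer that ray, through the correspondence developed in Section~\ref{sec;complexandnon}, to a non-archimedean potential in $\cE^1(\alpha)$ that violates uniform K-stability. Uniqueness is handled separately: by Berman--Berndtsson, $\mathrm M_\omega$ is convex along weak $d_1$-geodesics and strictly convex modulo the action of the automorphism group, so under uniform K-stability any two cscK metrics in $\alpha$ must coincide. For existence, the Chen--Cheng theorem reduces the problem to proving coercivity of $\mathrm M_\omega$ on $\cE^1(\omega)$.

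Assume for contradiction that $\mathrm M_\omega$ is not coercive. Then, following the Kähler adaptation (due to Darvas--Rubinstein, Sjöström Dyrefelt and Li) of the variational scheme of \cite{BBJ21YTD}, a sequence $u_j \in \cE^1(\omega)$ with $\mathrm M_\omega(u_j) - \delta_j\,\mathrm J_\omega(u_j) \to -\infty$ and $\delta_j \to 0$ can be joined to a fixed reference by unit-speed psh geodesic segments in $(\cE^1(\omega), d_1)$; extracting a $d_1$-limit and using completeness of $\cE^1(\omega)$, one obtains a psh geodesic ray $U_s \in \cE^1(\omega)$, normalized so that $\sup_X U_s = 0$ and $\mathrm J_\omega(U_s)/s \to 1$, whose Mabuchi slope satisfies
\[
\ell \doteq \lim_{s \to \infty} \frac{\mathrm M_\omega(U_s)}{s} \le 0.
\]
In particular, $U$ is a \emph{destabilizing} geodesic ray.

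Since $\ell<+\infty$, Proposition~\ref{prop;maximal} (Li's criterion, whose proof transfers verbatim to the Kähler setting) forces $U$ to be maximal. By Theorem~\ref{thm:correspond}, $U$ corresponds to a unique $\varphi \in \cE^1(\alpha)$ with $U^\beth = \varphi$. The slope formulas of Theorem~\ref{thm;kempf-ness}, applied to $\mathrm E_\omega$, $\mathrm E_\omega^\eta$ and $\mathrm J_\omega$, together with the entropy inequality of Proposition~\ref{prop;entropyineq}, yield Corollary~\ref{cor;inequality}, whence
\[
\mathrm M_\alpha(\varphi) \le \ell \le 0, \qquad \mathrm J_\alpha(\varphi) = \lim_{s \to \infty} \frac{\mathrm J_\omega(U_s)}{s} = 1 > 0,
\]
contradicting uniform K-stability over $\cE^1$, which requires $\mathrm M_\alpha(\varphi) \ge \delta\, \mathrm J_\alpha(\varphi)$.

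The main obstacle is the first step: producing a destabilizing geodesic ray in $\cE^1(\omega)$ with a finite, nonpositive Mabuchi slope \emph{and} the nondegenerate linear $\mathrm J$-growth $\mathrm J_\omega(U_s)/s \to 1$. This rests on $d_1$-completeness of $\cE^1(\omega)$, on $d_1$-lower semicontinuity and geodesic convexity of $\mathrm M_\omega$ (needed to pass the slope estimate from the approximating segments to the limit ray), and on the stability of linear $\mathrm J$-growth under the extraction. Once such a ray is in hand, the remainder of the proof is supplied by the machinery developed earlier in the paper: Proposition~\ref{prop;maximal}, Theorem~\ref{thm:correspond}, Theorem~\ref{thm;kempf-ness} and Corollary~\ref{cor;inequality}.
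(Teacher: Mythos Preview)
Your proposal is correct and follows essentially the same contradiction argument as the paper: non-coercivity produces a destabilizing geodesic ray, Proposition~\ref{prop;maximal} gives maximality, Theorem~\ref{thm:correspond} yields $\varphi\in\cE^1(\alpha)$, and Corollary~\ref{cor;inequality} together with the slope formula for $\mathrm J$ contradict uniform K-stability over $\cE^1$. The paper is terser on two points you spell out---it packages existence and uniqueness together via Chen--Cheng's coercivity criterion rather than invoking Berman--Berndtsson separately, and it asserts $\mathrm J_\alpha(\varphi)>0$ without the explicit unit-speed normalization---but the logic is identical.
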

\begin{proof}
By \cite{CC21csck2}  the existence, and uniqueness of a cscK metric is equivalent to the coercivity of the Mabuchi functional $\mathrm M_\omega$, i.e. the existence of $C,\delta>0$ such that \[\mathrm M_\omega \geq \delta \mathrm J_\omega - C.\]
We will proceed by contradiction.

Suppose that $\mathrm M_\omega$ is not coercive, then, by \cite{BBJ21YTD, Li22geodesic, CC21csck2}, we can find a geodesic ray emanating from $0$, $U_s\in \cE^1(\omega)$, normalized so that $\sup U_s = 0$, such that:
\[\lim_{s\to +\infty} \frac{1}{s}\mathrm M_\omega(U_s)\le 0.\]
 
 By Proposition~\ref{prop;maximal},  $U$ is maximal, therefore it is associated to a non-Archimedean potential  $\varphi\in \cE^1(\alpha)$.
 
 Corollary~\ref{cor;inequality} gives:  \[0\ge\lim_{s\to +\infty} \frac{1}{s}\mathrm M_\omega(U_s)\ge \mathrm M_\alpha(\varphi),\]
 but since $(X,\alpha)$ is uniformly $K$-stable over $\cE^1$,  there exists a $\delta>0$ such  that:
 \[\mathrm M_\alpha(\varphi)\ge \delta \mathrm J_\alpha(\varphi) >0,\]
 yielding a contradiction. 
\end{proof}

\medskip

%\newpage
\appendix
\section{Semi-rings and tropical algebras}
\label{apx;semi-rings}

\addtocontents{toc}{\SkipTocEntry}
\subsection{Semi-rings}
\begin{defi}
A triple $(S, +, \cdot)$ is a commutative \emph{semi-ring} if the following conditions hold:
\begin{itemize}
\item $(S, +)$ is a commutative monoid, with identity element denoted by $0_S$\footnote{A monoid is a semi-group with an identity element.};
\item $(S, \cdot)$ is a commutative semi-group;
\item For every $a, b, c\in S$  \[a\cdot (b+c) = (a\cdot b) + (a\cdot c), \quad \text{ and }\quad 0_S\cdot a =0. \]
\end{itemize}

A \emph{morphism of semi-rings} is a function \[\phi\colon (S, +, \cdot)\to (R, +, \cdot)\] mapping $0_S$ to $0_R$ that satisfies  
\[\phi(a+b) = \phi (a) + \phi(b), \quad \text{and} \quad \phi(a\cdot b) = \phi(a)\cdot\phi(b)\]
Whenever $S$ and $R$ have multiplicative identities, $1_S$ and $1_R$ respectively, we ask
\[\quad \phi(1_S) = 1_R.\]
We denote the set of morphisms from $(S, +, \cdot)$ to $(R, +, \cdot)$ by \(\hom(S, R)\)
\end{defi}
Now some examples
\begin{example}\label{ex;semiring}
\begin{enumerate}
%\item Any ring is a semi-ring.
\item Let $S\doteq \R\cup\{+\infty\}$, considered with $\min$ as the sum, and the usual sum, $+$, as the semi-ring multiplication is a semi-ring.
Here $S$ has a multiplicative identity given by: \[0_S = +\infty, \quad \text{and} \quad 1_S= 0 \] 
Equivalently, $S = (\R\cup\{-\infty\}, \max, +)$ is  isomorphic to $(\R\cup\{+\infty\}, \min, +)$.
\item The subset $(\oR, \min, +)$ is also a semi-ring. 
\item Let $X$ be a topological space,  $(\Cz(X, \R)\cup\{-\infty\}, \max, +)$ is a semi-ring.
\item Let $A$ be a commutative ring, and denote by $\ideal(A)$ the set of ideals of finite type of $A$, then together with the usual sum and multiplication of ideals,  $\ideal(A)$ is a semi-ring with neutral elements given by:
\[0_S = \{0\}, \quad \text{and} \quad 1_S= A\]
\end{enumerate}
\end{example}

A semi-ring $S$ comes equipped with a natural order relation, we say 
\[a\le b, \quad \text{ if }\quad a=b+a. \] 
Whenever $S$ is unital we denote by $S_+$ the set:
\[S_+\doteq\left\{a\in S \mid a\ge 1_S\right\}\]
\begin{lemma}
Let $S$ be a semi-ring with multiplicative unit, then $S_+$ inherits a semi-ring structure restricting the  operations.
\end{lemma}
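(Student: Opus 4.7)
The plan is to verify that $S_+$ is stable under the two operations of $S$ and contains both neutral elements; the associativity, commutativity and distributivity axioms for $S_+$ will then be inherited immediately from those of $S$, so no further work is needed for them. Throughout I will work purely from the definition $a\le b\iff a=b+a$, so that $a\in S_+$ is the statement $1_S = a + 1_S$.

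First I would check closure under $+$. Given $a,b\in S_+$, we have $1_S=a+1_S$ and $1_S=b+1_S$; substituting the second into the first yields
\[
1_S \;=\; a+1_S \;=\; a+(b+1_S) \;=\; (a+b)+1_S,
\]
so $a+b\in S_+$. Closure under $\cdot$ is then a one-line application of distributivity: multiplying $1_S=b+1_S$ on the left by $a$ gives $a=ab+a$, and therefore
\[
1_S \;=\; a+1_S \;=\; (ab+a)+1_S \;=\; ab+(a+1_S) \;=\; ab+1_S,
\]
so $ab\in S_+$.

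Next I would check the neutral elements. The additive neutral $0_S$ lies in $S_+$ because $0_S$ absorbs the addition in $S$, so in particular $0_S+1_S=1_S$, i.e.\ $0_S\ge 1_S$. For the multiplicative neutral, we need $1_S\ge 1_S$, which is just reflexivity of the order and amounts to $1_S=1_S+1_S$. This is where one should be careful: the relation $\le$ defined by $a=b+a$ is a partial order exactly on the additively idempotent part of $S$, and is a genuine order only when $S$ is additively idempotent. In all the cases relevant to the paper (Example \ref{ex;semiring}, and in particular $\ideal_X$) the semi-rings are additively idempotent, so $1_S\in S_+$ automatically. I would therefore either state the lemma under this standing hypothesis, or simply redefine $S_+=\{a\in S\mid a=a+1_S\}$, which is equivalent in the idempotent setting and neatly packages the reflexivity issue.

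The only potential obstacle is this subtlety about reflexivity of $\le$; once it is handled, the proof reduces to the two short displayed computations above together with the observation that all the semi-ring identities in $S_+$ are restrictions of those in $S$, giving the desired sub-semi-ring structure.
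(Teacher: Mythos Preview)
Your closure arguments are correct and essentially identical to the paper's; the only difference is a harmless rearrangement of the distributivity step for the product (the paper writes $a\cdot b+1_S=a\cdot b+b+1_S=(a+1_S)\cdot b+1_S=1_S$, while you first deduce $a=ab+a$ and then substitute).

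One remark on your discussion of neutral elements: your worry about $1_S\in S_+$ is unnecessary here. Reread the paper's definition of a semi-ring---only an additive identity is required, and $(S,\cdot)$ is merely asked to be a commutative semi-group. So the lemma as stated only claims that $S_+$ is a semi-ring, not a \emph{unital} one, and for that you only need $0_S\in S_+$, which you correctly verify (though ``$0_S$ absorbs the addition'' should read ``$0_S$ is the additive identity''). Your observation that $1_S\in S_+$ would force $1_S+1_S=1_S$, and hence that $S_+$ is unital precisely in the idempotent case, is correct and worth keeping in mind for the applications, but it is not a gap in the lemma.
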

\begin{proof}
Let $a, b\in S_+$, then $ 1_S=a+1_S$ and $1_S = b+1_S$, hence $a+b+1_S = a+ (b+1_S) = a+1_S = 1_S$.
Moreover
\[a\cdot b +1_S=a\cdot b + 1_S+b = (a+1_S)\cdot b +1_S=1_S\cdot b + 1_S = b+1_S = 1_S \]
\end{proof}

\begin{example}
Using the notation of Example~\ref{ex;semiring} we have:
\begin{enumerate}
\item $\ideal(A)_+ = \ideal(A)$, since for every $\iu\in \ideal(A)$ we have \[\iu + A = A\] 
\item If $S= \Cz(X, \R)\cup\{+\infty\}$, then \[S_+ = \Cz(X, \R_{\ge 0})\cup\{+\infty\}\]
\end{enumerate}
\end{example}

\begin{defi}
A semi-ring $(S, +, \cdot)$ is \emph{idempotent} if for every $a\in S$  \[a+a =a\]
\end{defi}

\begin{remark}
Every semi-ring of Example~\ref{ex;semiring} is idempotent.
\end{remark}

The order relation for idempotent semi-rings reads slightly more general,  $a\le b$ if, and only if, we can decompose 
\[a = b+c\] for some $c\in S$.

\addtocontents{toc}{\SkipTocEntry}
\subsection{Tropical spectrum and restrictions}
Let $S$ be a semi-ring, we recall that the \emph{tropical spectrum of $S$} is the set \[\tspec S\doteq\hom(S, \R\cup\{+\infty\})\] with the pointwise convergence topology.

\begin{lemma}
Let $S$ be a unital semi-ring,  the restriction induces a map
\[\tspec S\to \hom(S_+, \oR)\]
\end{lemma}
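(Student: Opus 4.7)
The plan is to verify two things for an arbitrary tropical character $\chi \in \tspec S$: first that $\chi\vert_{S_+}$ is still a semi-ring morphism (with source $S_+$, which is a semi-ring by the lemma just above), and second, and more substantively, that its values actually lie in $\oR = [0,+\infty]$ rather than in the a priori larger target $\R\cup\{+\infty\}$.

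The main observation will be the following positivity trick. By definition, $a \in S_+$ means $a \ge 1_S$, which by the semi-ring order relation unpacks to the identity $1_S = a + 1_S$ in $S$. Applying $\chi$, which sends $+$ to $\min$ and $1_S$ to $0$, I get
\[0 \;=\; \chi(1_S) \;=\; \chi(a+1_S) \;=\; \min\{\chi(a),\,\chi(1_S)\} \;=\; \min\{\chi(a),\,0\},\]
which forces $\chi(a) \ge 0$. So the restriction takes values in $[0,+\infty]$ as required.

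The remaining checks are essentially bookkeeping. The preceding lemma provides that $(S_+, +, \cdot)$ is a sub-semi-ring of $S$, and $1_S \in S_+$ tautologically while $0_S \in S_+$ follows from $0_S + 1_S = 1_S$ (so $1_S \le 0_S$). Therefore the identities $\chi(0_S) = +\infty$, $\chi(1_S) = 0$, $\chi(ab) = \chi(a)+\chi(b)$ and $\chi(a+b) = \min\{\chi(a),\chi(b)\}$ restrict unchanged from $S$ to $S_+$, showing $\chi\vert_{S_+} \in \hom(S_+,\oR)$. Continuity of the restriction map is obvious from the pointwise convergence topology on both sides. There is no real obstacle in this argument — the whole content is the positivity computation above, which is a direct consequence of the definition of the semi-ring order combined with the morphism property of $\chi$ at the unit $1_S$.
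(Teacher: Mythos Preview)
Your proof is correct and follows exactly the same approach as the paper: the key positivity computation $0 = \chi(1_S) = \chi(a+1_S) = \min\{\chi(a),0\}$ forcing $\chi(a)\ge 0$ is identical to the paper's argument. You are simply more explicit about the bookkeeping (verifying $0_S\in S_+$ and that the morphism axioms restrict), which the paper leaves implicit.
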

\begin{proof}
Indeed, let $\chi\in \tspec$, and $f\in S_+$, we then have that $1_S + f = 1_S$, and hence \[0= \chi(1_S)=\chi (f+1_S)  = \min\{\chi(f), \chi(1_S)\}\le \chi(f)\]
\end{proof}
\begin{corollary}
Let $S$ be a semi-ring such that $S=S_+$, then 
\[\tspec S = \hom(S, \oR)\]
\qed
\end{corollary}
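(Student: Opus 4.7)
The plan is to deduce this directly from the preceding lemma, which already contains all the substance. That lemma establishes a restriction map $\tspec S \to \hom(S_+,\oR)$, relying on the key fact that for any $f \in S_+$ one has $f + 1_S = 1_S$, which forces any $\chi \in \tspec S$ to satisfy
\[
0 = \chi(1_S) = \chi(f + 1_S) = \min\{\chi(f),\chi(1_S)\} = \min\{\chi(f),0\},
\]
and hence $\chi(f)\ge 0$. Under the hypothesis $S = S_+$, this ``restriction map'' is nothing but the inclusion $\tspec S \hookrightarrow \hom(S,\oR)$ on underlying sets, i.e.\ every tropical character automatically takes values in $[0,+\infty]$.

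For the reverse inclusion I would just observe that $\oR = [0,+\infty]$ is a sub-semi-ring of $\left]-\infty,+\infty\right]$: it contains the additive identity $+\infty$ and the multiplicative identity $0$, and is closed under $\min$ and $+$. Consequently any semi-ring morphism $S \to \oR$ is, a fortiori, a semi-ring morphism $S \to \left]-\infty,+\infty\right]$, hence an element of $\tspec S$.

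Putting the two inclusions together gives the set-theoretic equality $\tspec S = \hom(S,\oR)$. Since $\oR$ carries the subspace topology from $\left]-\infty,+\infty\right]$, the pointwise convergence topologies on both sides agree, so the identification is in fact a homeomorphism. There is no real obstacle here; the content of the corollary is entirely packaged in the inequality $\chi(f)\ge 0$ already proved in the preceding lemma, and the hypothesis $S = S_+$ is used only to extend that inequality from $S_+$ to all of $S$.
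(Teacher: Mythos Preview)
Your proposal is correct and matches the paper's intended argument exactly: the paper gives no proof at all (the statement ends with \qed), treating the corollary as an immediate consequence of the preceding lemma, and you have simply spelled out the obvious two inclusions that this entails.
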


\begin{defi}
We define a \emph{$\R$-tropical algebra}, $\mathcal A$, as a $\R$-vector space together with an operation $\{\cdot, \cdot\}$ such that 
$S =(\mathcal A\cup\{\infty\}, \{\cdot, \cdot \}, +)$ is a semi-ring, with \[0_S = \infty, \quad \text{ and } \quad 1_S = 0\]
satisfying 
\[0\le f\implies f\le \lambda f\]
for $\lambda\ge 1$ a real number.
\end{defi}

\begin{remark}\label{rem;indempotenttropicalalgebra}
Tropical algebras are unital, and admit multiplicative inverses.

Moreover, if $\mathcal A\cup\{\infty\}$ is idempotent then every element of $\mathcal A$ can be written as a difference of elements of $\mathcal A_+$.
Indeed, if $f\in \mathcal A$ we can write \[f= -\{-f, 0\} - (-\{f, 0\})\]
and we have 
\begin{equation}
\{0, -f\} = \left\{0, \{0, -f\}\right\}\implies 0 = \left\{-\{0, -f\}, 0\right\}
\end{equation}
which implies $0\le -\{0, -f\}$ and therefore $-\{0, -f\}\in \mathcal A_+$, we proceed similarly for $-\{f, 0\}$.
\end{remark}

\begin{example}
The two main examples are:
\begin{itemize}
\item $\R$ is a tropical algebra.
\item $\Cz(K, \R)$ is a tropical algebra.
\end{itemize}
\end{example}

\begin{lemma}\label{lem;tspecalgebra}
Let $\mathcal A$ be an idempotent tropical algebra, then 
\[\tspec (\mathcal A\cup \{\infty\}) = \left\{\varphi\in \mathcal A^* \mid \varphi(\{f, g\}) = \max\{\varphi(f), \varphi(g)\}\right\}, \]
where $\mathcal A^*$ denotes the algebraic dual.
\end{lemma}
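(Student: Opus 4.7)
The plan is to exhibit the stated equality as a bijection via restriction: $\chi \mapsto \chi|_{\mathcal A}$.

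For the forward direction, let $\chi \in \tspec(\mathcal A \cup \{\infty\})$ and set $\varphi \doteq \chi|_{\mathcal A}$. First I would observe that $\varphi$ is finite-valued: every $f \in \mathcal A$ admits an additive inverse $-f \in \mathcal A$, and since the ``multiplication'' on the source semi-ring is the usual vector-space addition, multiplicativity of $\chi$ together with $\chi(0)=0$ gives $\chi(f)+\chi(-f)=0$, ruling out $\chi(f)=\infty$. Additivity $\varphi(f+g)=\varphi(f)+\varphi(g)$ is then exactly the multiplicativity property of $\chi$, and the identity $\varphi(\{f,g\})=\max\{\varphi(f),\varphi(g)\}$ is just $\chi$ preserving the semi-ring sum. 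From additivity and $\varphi(0)=0$ I obtain $\varphi(-f)=-\varphi(f)$, hence $\Q$-linearity.

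The main step, and the principal technical obstacle, is upgrading $\Q$-linearity to $\R$-linearity. For this the monotonicity axiom $0\le f \implies f \le \lambda f$ for $\lambda\ge 1$ is essential: applying $\chi$ to both sides and using that $\chi$ preserves the idempotent operation translates the semi-ring inequality $f\le \lambda f$ on $\mathcal A_+$ into a one-sided comparison between $\varphi(\lambda f)$ and $\varphi(f)$. Combined with $\Q$-linearity, approximating $\lambda\in\R_{\ge 0}$ by rationals from above and below yields $\varphi(\lambda f)=\lambda\varphi(f)$ for all $f\in\mathcal A_+$. The decomposition $f=g-h$ with $g,h\in\mathcal A_+$ supplied by Remark~\ref{rem;indempotenttropicalalgebra} then extends $\R$-linearity to the entire vector space $\mathcal A$.

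For the reverse direction, given any $\varphi\in\mathcal A^*$ satisfying $\varphi(\{f,g\})=\max\{\varphi(f),\varphi(g)\}$, extend it to $\chi$ on $\mathcal A\cup\{\infty\}$ by declaring $\chi(\infty)$ to be the identity element of the target for the idempotent operation. A direct verification shows that $\chi$ preserves the semi-ring sum $\{\cdot,\cdot\}$, the multiplication $+$, and both identities $\infty$ and $0$, so $\chi\in\tspec(\mathcal A\cup\{\infty\})$. The two assignments are mutual inverses by construction, proving the claimed equality.
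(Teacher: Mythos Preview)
Your proposal is correct and follows essentially the same route as the paper: restrict $\chi$ to $\mathcal A$, use additive inverses to get finiteness and $\Q$-linearity, then upgrade to $\R$-linearity on $\mathcal A_+$ via the monotonicity axiom and rational squeezing, and finally extend to all of $\mathcal A$ using the decomposition of Remark~\ref{rem;indempotenttropicalalgebra}; the reverse direction likewise extends $\varphi$ by sending $\infty$ to the neutral element. The only difference is presentational: the paper writes the squeezing step explicitly with sequences $p_n\nearrow\lambda$ and $q_n\searrow\lambda$, whereas you summarize it in words.
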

\begin{proof}
For a max commuting linear functional $\varphi\in \mathcal A^*$, we can define $\varphi(\infty)$ as $+\infty$ and  $\varphi\in \tspec(\mathcal A\cup\{\infty\})$.

On the other hand, if $\chi\in \tspec(\mathcal A\cup\{\infty\})$, we observe that taking $f\in \mathcal A$ we have
\[0_\R = \chi(0_{\mathcal A}) =\chi(f + (-f)) = \chi(f) + \chi(-f)\]
getting that $\chi$ is finite  and $\Q$-linear on $\mathcal A$.

We are left to prove that $\chi$ is $\R$-linear.
Indeed if $\lambda\in \R_{>0}$, $p_n\in \Q_{>0}$ an increasing sequence, $q_n\in \Q_{>0}$ a decreasing sequence, both converging to  $\lambda$, and $f\in \mathcal A_+$, we then have:
\[0\le p_n f\le p_{n+1}f\le \lambda f\le q_{n+1}f\le q_n f,
\]
thus getting 
\[p_n\chi(f)\le\chi(\lambda f)\le q_n\chi(f),
\]
taking the limit we get:
\[\chi(\lambda f) = \lambda \chi(f).\]

Applying Remark~\ref{rem;indempotenttropicalalgebra} and the $\Q$-linearity we get the desired result.
\end{proof}

\addtocontents{toc}{\SkipTocEntry}
\subsection{PL spaces}
Let $K$ be a compact Hausdorff topological space.
\begin{defi}
  A \emph{PL structure on $K$} is a $\Q$-linear subspace of the set of continuous functions, $\PL(K)\subseteq\Cz(K,\R)$, such that:
  \begin{itemize}
    \item It separates points;
    \item It contains all the $\Q$-constants;
    \item It is stable by $\max$.
  \end{itemize}
  We refer to the pair $(K,\PL(K))$ as a \emph{PL space}.

  A map $f\colon K_1\to K_2$ is a \emph{morphism of PL spaces} if it is continuous and \[f^*\colon \Cz(K_2, \R)\to \Cz(K_1, \R)\] maps $\PL(K_2)$ to $\PL(K_1)$.
  Moreover, it is an \emph{ismorphism of PL structures} if the induced map $f^*\colon \PL(K_2)\to \PL(K_1)$ is bijective.
\end{defi}

\begin{remark}
  If $\PL(K)$ is a PL structure on $K$, then $(\PL, \max, +)$ is an idempotent semiring,  $\PL_\R(K)\doteq \PL(K)\otimes\R$ is a subtropical algebra of $\Cz(K,\R)$, and by Proposition~\ref{prop:tropicalhomeo}
  \[K\simeq \left(\tspec \PL_\R(K)\right)\setminus\{0\}/\R_{>0}.\]
  In particular, an isomorphism of PL structures $f\colon K_1\to K_2$ is also a homeomorphism.
\end{remark}

\section{Monomial valuations and Lelong--Kiselman numbers}
\label{apx;monomial}

\addtocontents{toc}{\SkipTocEntry}
\subsection{The Lelong--Kiselman number} 
Let $\Delta\subseteq \C^n$ be the unit polydisk centered at $0$, $T\doteq (S^1)^n$ the compact torus, and $w\in \R_{\ge 0}^n$. 
Consider $\varphi\colon \Delta^*\to \R_{\le 0}$ a psh function.
\begin{defi}
For $w\in (\R_{\ge 0})^n$ the \emph{Lelong-Kiselman number of $\varphi$ at $0$ with weight $w$} is given by
\[\nu_w(\varphi, 0)\doteq \sup\left\{ \delta>0\bigm\vert\exists\, U \text{ open neighborhood of } 0, \varphi(z)\leq \delta \max_{w_i\neq 0} \frac{\log\lvert z_i\rvert}{w_i},\,\, \forall z\in U  \right\}\]
More generally, given a complex manifold $X$, $p\in X$ and $\psi\colon X\to \R_{\le 0}\cup\{-\infty\}$ a quasi-psh function on $X$ we can define $\nu_w(\psi, p)$ similarly.

\end{defi}

We will show now that if $p\in \cap_{w_i\neq 0}\{z_i=0\}\subseteq \Delta$, then $\nu_w(\varphi, 0) = \nu_w(\varphi, p)$.
To do that, we'll see that it is locally independent of $p$. More precisely,  we'll show that if there exists a open neighborhood $U$ of $0$, such that $\varphi(z)\leq \nu_w(\varphi, 0)\min_{w_i\ne 0} \frac{\log\lvert z_i\rvert}{w_i}$ for every $z\in U$, then  the inequality \[\varphi(z)\leq \nu_w(\varphi, 0)\max_{w_i\ne 0} \frac{\log\lvert z_i\rvert}{w_i}\]
holds for every $z\in \Delta$.

For that, define
\begin{align*}
\tilde \varphi \colon \Delta^*&\to\R_{\le 0}\\
z&\mapsto \sup_{\xi\in T}\varphi(\xi\mathbin{\cdot} z)
\end{align*}
By the maximum principle  $\tilde \varphi(z) = \sup_{\lvert \alpha_i\rvert\le 1}\varphi(\alpha_1 z_1, \dotsc, \alpha_n z_n)$.

Now, since $\tilde \varphi$ is $T$-invariant,  there exists a convex function $\chi\colon (\R_{\ge 0})^n\to \R_{\le 0}$, such that 
\begin{equation}
\tilde\varphi(z) = \chi\left(-\log\lvert z_1\rvert, \dotsc, -\log\lvert z_n\rvert\right)
\end{equation} 
where $\chi$ is decreasing in each variable, in particular it is also decreasing on rays, i.e. for every $w\in (\R_{\ge 0})^n$ the map $t\mapsto \chi(t\mathbin{\cdot} w)$ is decreasing.

Now, since every bounded above decreasing convex function has a finite slope at infinity we can define:
\begin{equation}
\chi^\prime_\infty(w)\doteq \lim_{t\to+\infty}\frac{\chi(tw)}{t}.
\end{equation}
It is clear that $\chi^\prime_\infty(w) = -\nu_w(\varphi, 0)$.

We have, for $t>0$, $\frac{\chi(tw)-\chi(0)}{t}\leq \chi^\prime_\infty(w)$,  hence
\begin{equation}
\chi(tw)\le t\mathbin{\cdot} \chi^\prime_\infty(w) + \chi(0)\le t\mathbin{\cdot} \chi^\prime_\infty(w)
\end{equation}
For $t=-\max_{w_i\ne 0} \frac{\log\lvert z_i\rvert}{w_i}= \min_{w_i\ne 0} \frac{-\log \lvert z_i\rvert}{w_i}$ gives us the following inequality:
\begin{equation}
\begin{aligned}
\varphi(z)&\le\tilde\varphi(z)=\chi(-\log\lvert z_1\rvert, \dotsc, -\log\lvert z_n\rvert)\le\\
\chi(tw)&\le \chi^\prime_\infty (w)\min_{w_i\ne 0} \frac{-\log\lvert z_i\rvert}{w_i}\\
&\   =\nu_w(\varphi, 0)\max_{w_i\ne 0} \frac{\log\lvert z_i\rvert}{w_i}
\end{aligned}
\end{equation}
for every $z\in \Delta$.

\addtocontents{toc}{\SkipTocEntry}
\subsection{An analytic interpretation of monomial valuations}

Now, let $B = \sum_{i\in I}B_i$ be a reduced snc divisor, $Z$ a connected component of the intersection, and $p\in Z\subseteq X$.

Let $\iu\in \ideal_X$, and $f=\log\lvert \iu\rvert$ a quasi-psh function $f\colon X\to\R\cup\{-\infty\}$, with singularities along $\iu$.

Then  the monomial valuation defined by Equation~\eqref{eq;monomial} satisfies:
\begin{equation}
v_{w, Z, p}(\iu) = \nu_w(f, p).
\end{equation}
In particular, by the discussion the previous section, the right hand side does not depend on locally on $p\in Z$.
Hence  the left hand side also does not depend locally on $p\in Z$, therefore retrieving Proposition~\ref{prop;independencep}.

\section{Basic linear algebra of bilinear forms}
%\label{apx;linearextensions}
%\marginpar{\textcolor{red}{Je peux jeter cette annexe, je ne l'ai écrite que pour être complète, mais elle n'est ni claire ni pertinente.}}

\begin{lemma}\label{lem;bilinear}
Let $V$ be a real finite dimensional vector space, and $B\colon V\times V\to\R$ a symmetric bilinear form, such that
\begin{enumerate}
\item There exists a base $e_i\in V$ such that $B(e_i, e_j)\ge 0$ for $i$ different than $j$.
\item There exist an element of the kernel, $v=\sum_i v^i e_i\in V$, that is for every $i$: \[B(v,e_i) =0,\] such that $v^j> 0$ for every $j$.
\end{enumerate}
Then $B$ is negative semi-definite.
\end{lemma}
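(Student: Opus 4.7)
The plan is to directly estimate $B(x,x)$ for an arbitrary vector $x = \sum_i x^i e_i \in V$, using the AM--GM inequality to control off-diagonal terms and the kernel condition to make the diagonal contributions cancel. This is a standard ``Zariski-style'' argument for negative semi-definite intersection forms, and it is exactly the finite-dimensional linear algebra statement that underpins Lemma~\ref{lem:zariskilemma} (Zariski's Lemma) in the body of the paper.

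First I would expand
\[
B(x,x) = \sum_i (x^i)^2 B(e_i,e_i) + 2\sum_{i<j} x^i x^j\, B(e_i,e_j).
\]
Since $v^i > 0$ for every $i$, the weighted AM--GM inequality gives, for all $i \ne j$,
\[
2|x^i x^j| \le \frac{v^j}{v^i}(x^i)^2 + \frac{v^i}{v^j}(x^j)^2.
\]
Multiplying by the non-negative coefficient $B(e_i,e_j)$ from hypothesis (1), one obtains
\[
2 x^i x^j\, B(e_i,e_j) \le \left(\frac{v^j}{v^i}(x^i)^2 + \frac{v^i}{v^j}(x^j)^2\right) B(e_i,e_j),
\]
and summing over $i<j$ and symmetrizing yields
\[
B(x,x) \le \sum_i (x^i)^2 B(e_i,e_i) + \sum_{i\neq j} \frac{v^j}{v^i}(x^i)^2\, B(e_i,e_j) = \sum_i \frac{(x^i)^2}{v^i} \sum_j v^j\, B(e_i,e_j).
\]

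The final step is then to recognize that the inner sum is exactly $B(e_i, v)$, which vanishes by the kernel condition (2). Hence $B(x,x) \le 0$ for every $x \in V$, which is the desired negative semi-definiteness. The argument is essentially routine, so there is no serious obstacle; the only point requiring a bit of care is checking the bookkeeping when symmetrizing the AM--GM bound across the indices $i<j$ to recover the sum over all ordered pairs $i \ne j$.
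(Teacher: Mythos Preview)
Your proof is correct and follows essentially the same Zariski-style argument as the paper. The only cosmetic difference is that the paper first rescales the basis so that $v^i=1$ for all $i$ and then uses the unweighted inequality $2x^ix^j\le (x^i)^2+(x^j)^2$ to obtain the explicit identity $2B(x,x)=-\sum_{i\ne j}(x^i-x^j)^2B(e_i,e_j)$, whereas you keep the weights $v^i$ and use the weighted AM--GM directly; the underlying computation is the same.
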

\begin{proof}
After changing bases we can suppose that $v = \sum_i e_i$, and then the second item reduces to \[ B(e_i, e_i) = - \sum_{j\neq i} B(e_j, e_i)\]
and thus taking $x = \sum_j x^j e_j \in V$,  
\begin{align*}
B(x,x) &= \sum_i(x^i)^2 B(e_i, e_i) + \sum_{i\ne j}x^i x^jB(e_i, e_j)\\
&= \sum_{i\ne j}\big(x^i x^j - (x^i)^2\big) B(e_i, e_j)\\
&= \sum_{i\ne j}\big(x^i x^j - (x^j)^2\big) B(e_i, e_j)
\end{align*}
where the last equality is given by symmetry.
Thus, by changing the roles of $i$ and $j$,
\begin{align*}
2 B(x,x) = -\sum_{i\ne j}\big(x^i- x^j\big)^2 B(e_i, e_j)\le 0
\end{align*}
we get the desired result.
\end{proof}

\section{A synthetic comment}\label{apx:synthetic}
In this paper we assume the synthetic pluripotential theory developed in \cite{BJ23synthetic}, for $X^\beth$, where $X$ is a compact Kähler manifold.

In fact,  
\begin{itemize}
\item The set $X^\beth$ is underlying the compact Hausdorff topological space.
\item The ``smooth" test functions $\mathcal D$ are the set $\PL_\R(X^\beth)\simeq \varinjlim_{\cX} \VCar_\R(\cX)$, which is dense in $\Cz(X^\beth, \R)$ by Proposition~\ref{prop:pldense}.
\item The  vector space $\mathcal Z$ in our case corresponds to $\varinjlim_{\cX}H^{1,1}(\cX/\Pro)$.
\item The $\ddc\colon \mathcal D\to \mathcal Z$ operator assigns:
\[\PL_\R\ni \varphi_D\mapsto [\mathrm c_1\left(\mathcal O_{\cX}(D)\right)]\]
\item For $\beta\in \mathcal Z$  \[\beta\ge 0,\] if $\beta_{\cX}\in \Nef(\cX/\Pro)$ for some determination $\beta_{\cX}$.
\item The \emph{dimension} of $X^\beth$ is defined to be $\dim X$.
\item The assignment $\mathcal Z^n \to \Cz(X^\beth)^\vee$, $(\beta_1, \dotsc,\beta_n )\mapsto \beta_1\wedge\cdots\wedge \beta_n $, is given by
\[\Cz(X^\beth)\ni f\mapsto \inf\left\{(0, \varphi)\cdot(0,\beta_1)\cdots (0, \beta_n)\mid \varphi\ge f\right\}, \]
that satisfies all the required properties by all the results on Section~\ref{sec;plenergy}, in particular the version of Zariski's Lemma of Lemma~\ref{lem:zariskilemma} gives the seminegativity of 
\[\cD\times\cD\ni(\varphi, \psi)\mapsto \int_{X^\beth} \varphi \, \ddc \psi\wedge \beta_1\wedge \cdots\wedge \beta_n,\]
for $\beta_i\ge 0$.
\end{itemize}

%\newpage

\bibliography{pietro}{}
\bibliographystyle{alpha}

\end{document}